\documentclass[a4paper,reqno]{amsart}
\pdfoutput=1
\usepackage[T1]{fontenc}
\usepackage{lmodern}

\usepackage{amsmath,amsfonts,amssymb,amsthm,stmaryrd,bbm,graphicx,mathtools,enumerate}
\usepackage{mathrsfs}
\usepackage[utf8]{inputenc}  

\usepackage[english]{babel}
\usepackage[tracking,spacing,kerning,babel]{microtype}
\usepackage{wasysym} 
\usepackage{color}
\usepackage{xcolor}
\usepackage{framed} 
\usepackage{caption} 
\usepackage{comment}
\usepackage{tikz}
\usetikzlibrary{arrows,shapes,positioning,quotes}
\usetikzlibrary{calc,decorations.markings}

\usepackage{wrapfig} 

\usepackage[normalem]{ulem}
\usepackage{soul}

\usepackage[final]{hyperref}   
\hypersetup{
    linktoc=page,
    linkcolor=red,          
    citecolor=blue,        
    filecolor=blue,      
    urlcolor=cyan,
   colorlinks=true           
}

\usepackage{lipsum} 

\usepackage[toc,page]{appendix}

\DeclareRobustCommand{\SkipTocEntry}[5]{}

\usepackage{multicol}

\newtheoremstyle{mine}
{\baselineskip}
{\baselineskip}
{\itshape}
{
}
{\bfseries}
{.}
{.5em}
{#1 #2\ifx#3\relax\else~(#3)\fi}

\theoremstyle{mine}

\newtheorem{theorem}{Theorem}
\numberwithin{theorem}{section}
\newtheorem{corollary}[theorem]{Corollary}
\newtheorem{proposition}[theorem]{Proposition}
\newtheorem{lemma}[theorem]{Lemma}

\newtheorem{definition}[theorem]{Definition}

\newtheorem{OP}{Open Problem} 
\numberwithin{equation}{section}

\theoremstyle{remark}
\newtheorem{remark}{Remark}

\theoremstyle{remark}
\newtheorem{question}{Question}

\colorlet{shadecolor}{blue!10}

\def\rm{\reversemarginpar}

\renewcommand{\epsilon}{\varepsilon}

\newcommand{\R}{\mathbb{R}}
\newcommand{\D}{\mathbb{D}}

\newcommand{\C}{\mathbb{C}}
\newcommand{\Z}{\mathbb{Z}}
\newcommand{\N}{\mathbb{N}}
\renewcommand{\S}{\mathbb{S}}

\newcommand{\1}{\mathbf 1}
\renewcommand{\d}{\mathbf d}

\def\T{\mathbb{T}}

\def\calC{\mathcal{C}}

\def\calE{\mathcal{E}}

\def\calL{\mathcal{L}}

\def\calX{\mathcal{X}}

\newcommand{\caF}{{\mathcal F}}

\newcommand{\caX}{{\mathcal X}}
\newcommand{\caY}{{\mathcal Y}}

\def\sle{\mathrm{SLE}}
\def\cle{\mathrm{CLE}}
\def\diam{\mathop{\mathrm{diam}}}
\def\im{\mathop{\mathrm{im}}}

\def\Var#1{\mathrm{Var}\bigl[ #1\bigr]}
\def\cov{\mathrm{Cov}}

\def\dist{\mathrm{dist}}

\def\iso{\xrightarrow[]{\sim}}

\newcommand{\haus}{\mathrm{Haus}}

\def\P{\mathbb{P}} 
\def\E{\mathbb{E}} 
\def\md{\mid}

\def \eps {\epsilon}

\def\Bb#1#2{{\def\md{\bigm| }#1\bigl[#2\bigr]}}
\def\BB#1#2{{\def\md{\Bigm| }#1\Bigl[#2\Bigr]}}

\def\Pb{\Bb\P}
\def\Eb{\Bb\E}

\def\EB{\BB\E}

\def\FK#1#2#3{{\def\md{\bigm| } \P_{#1}^{\,#2}  \bigl[  #3 \bigr]}}

\def \p {{\partial}}

\def\<#1{\langle #1\rangle}

\definecolor{darkgreen}{rgb}{0,0.6,0.05}

\def\Wick#1{\,\colon\!\! \, #1 \, \!\colon}

\def\bi{\begin{itemize}}  
\def\ei{\end{itemize}}
\def\bnum{\begin{enumerate}} 
\def\enum{\end{enumerate}}
\def\ni{\noindent}
\def\bf{\bfseries}

\def\free{\mathrm{free}}

\def\dimM{\overline{\dim}_M}

\def\nes{\mathsf{n}}
\def\Expl{\mathsf{Expl}}

\title[The magnetization field can be reconstructed from the nested--$\cle_3$]{The critical Ising magnetization field can be reconstructed from its $+/-$ interfaces}
\author{Paul Cahen}
\author{Christophe Garban}
\author{Avelio Sep\'{u}lveda}
\date{}

\address
{Universit\'e Claude Bernard Lyon 1, CNRS UMR 5208, Institut Camille Jordan, 69622 Villeurbanne, France}
\email{cahenpaul25@gmail.com}

\address
{Universit\'e Claude Bernard Lyon 1, CNRS UMR 5208, Institut Camille Jordan, 69622 Villeurbanne, France, and Courant Institute (NYU), New York, USA}
\email{christophe.garban@gmail.com}

\address{Universidad de Chile,  Centro de Modelamiento Matemático (AFB170001), UMI-CNRS 2807, Beauchef 851, Santiago, Chile.}
\email{lsepulveda@dim.uchile.cl}

\begin{document}

\begin{abstract}

The critical Ising model admits several natural scaling limits. For an Ising model on a lattice approximation $D^a$ of a planar domain $D$, the rescaled spin field converges, as $a\to0$, to the critical Ising magnetization field (IMF), a rough random distribution on $D$, while the spin interfaces converge to a nested $\cle_3$ and the FK--Ising representation converges to a colored $\cle_{16/3}$.
	
	In this work, we prove that the IMF can be directly reconstructed from the nested $\cle_3$. This complements the previously known construction of the IMF from the colored $\cle_{16/3}$ \cite{Camia_2015}. Our result also implies that 
if one constructs a colored $\cle_{16/3}$ from the nested $\cle_3$ through CLE percolation \cite{miller2017cle}, the resulting construction recovers the same magnetization field.
	
	A main difficulty in this present reconstruction comes from the fact that the renormalization exponent of the IMF, $15/8$, is smaller than the Hausdorff dimension of the $\cle_3$ carpet, $187/96$. Consequently, the field cannot be recovered by a direct Minkowski-content construction from the nested $\cle_3$ as is the case for $\cle_{16/3}$.
	
	We conjecture that the converse measurability also holds, namely that the nested $\cle_3$ can itself be reconstructed from the IMF.

%
%
%

\end{abstract}

\maketitle

\begin{figure}[!htp]
	\centering  
	\begin{tikzpicture}[
		node distance = 1cm and 2cm,
		box/.style = {
			rectangle, draw, rounded corners,
			minimum width=4cm,
			minimum height=1.5cm,
			align=center
		},
		base arrow/.style = {
			very thick,
			-stealth,
			shorten >=6pt,
			shorten <=6pt
		},
		arrow/.style = {base arrow},
		BIDIRarrow/.style = {
			very thick,
			<->,
			shorten >=6pt,
			shorten <=6pt
		},
		DASHEDarrow/.style = {base arrow, dashed},
		REDarrow/.style = {base arrow, draw=red}
		]
		
		\node (a1) [box] {nested--$\cle_3$};
		\node (a2) [box, right=3.8cm of a1] {Magnetization Field $\Phi$};
		\node (blank) [below=3cm of a1] {};
		\node (a3) [box, right=2cm of blank] {colored--$\cle_{16/3}$};
		
		\draw[REDarrow] 
		([yshift=0.3cm]a1.east) -- ([yshift=0.3cm]a2.west) 
		node[midway, above, text=red] {THIS PAPER}; 
		
		\draw[DASHEDarrow] 
		([yshift=-0.3cm]a2.west) -- ([yshift=-0.3cm]a1.east) 
		node[midway, below] {Open Problem \ref{c.main}};
		
		\draw[arrow] 
		(a3) to[
		"CLE percolation\\(\cite{miller2017cle} and \cite{kohler2024fuzzy})"
		{align=center, left}
		] (a1);
		
		\draw[arrow] 
		(a3) to["\cite{Camia_2015}"{align=center, right}] (a2);
		
	\end{tikzpicture}
	
	\caption{Arrows represent proven measurability results. Our main result is the top arrow in red. We also show in this paper that the lower path (i.e. going through the colored-$\cle_{16/3}$ box) can be followed as well; however, this result is obtained as a corollary of our measurability theorem. Let us remark that when going from nested-CLE$_3$ to the colored CLE$_{16/3}$, additional randomness is needed.
}
	\label{f.1}
\end{figure}

\tableofcontents

\section{Introduction}

\subsection{Context.}
In this paper we are interested in the interplay between different ways of taking the scaling limit of a critical Ising model in a domain $D\subset \C$ with a sufficiently regular boundary. If $D$ is such a domain, then any scaling limit of the critical Ising model in $D$ starts with an Ising model defined on a lattice approximation $D^a$ of $D$ with small mesh size $a$: for example $D^a:=a\Z^2 \cap D$ or $D^a:=a\T \cap D$ where $\T$ stands for the triangular lattice. An Ising spin configuration $\sigma=\sigma_D^a$ is a random element of $\{\pm 1\}^{D^a}$ sampled according to the Gibbs measure
\begin{align*}\label{}
\FK{D^a, \beta}{}{\sigma} \propto \exp\big( \beta \sum_{i\sim j \in D^a}  \sigma_i \sigma_j \big)
\end{align*}
where $\beta$ is the inverse temperature. A well-known symmetry breaking arises when $\beta$ exceeds  a critical inverse temperature $\beta_c$ which depends on the underlying microscopic lattice used (for example $\beta_c= \frac 1 2 \log(1+\sqrt{2})$ for $\Z^2$). In relation with this symmetry breaking, it is of great interest to prescribe different types of {\em boundary conditions} around the boundary $\p D$. The most prevalent ones are the $\{+,-,\free\}$ boundary conditions.  In this paper, we will work with  $+$ boundary conditions. For precise definitions and for a modern account of the planar Ising model, we refer to \cite{duminil2017lectures,friedli2017statistical}.  
\smallskip

For each $a>0$, we may sample a critical Ising model $\sigma_D^a \sim \P_{D^a,\beta_c}$. A natural question is to ask what happens to $\sigma_D^a$ as $a\searrow 0$? Does it have an interesting scaling limit? The answer to this question is surprisingly rich and depends on the information we choose to keep when we pass to the limit. The information we keep is encoded in the topology (or the Borel $\sigma$-algebra it induces) used to pass to the limit. Here are some celebrated examples of scaling limit results for the planar critical Ising model. 
\bnum

\item {\em $\cle_3^\nes$ and scaling limit of the contour process.} It is easier to describe this convergence in the case of the microscopic triangular lattice $a\T$. Sample an Ising configuration $\sigma_D^a$ on the graph $D^a = D \cap a \T$,  with $+$ boundary conditions. On the hexagonal lattice, dual of $a\T$, the set of edges which separate a $+$ spin from a $-$ spin forms a set of closed loops in $D^a$, called the \emph{(discrete and nested) $\cle_3$}; we will denote it by $\cle_3^{\nes,a}$. As $a\to 0$, the set of closed loops is known to converge to a non-trivial random countable family of fractal curves, the $\cle_3^\nes$ process, i.e. the nested--$\cle_3$. 
See \cite{benoist2019scaling} as well as Sections \ref{ss.CLEs} and \ref{ss.CLE3} below for the appropriate choice of topology and the precise statement. If we keep track only of the {\em first layer} loops separating the $+$ spins connected to the boundary from the first layer of $-$ spins, 
we end up with a subset of $\cle_3^\nes$, usually called the \emph{simple} $\cle_3$ in the literature, which we shall denote by $\cle_3^1$.

\item {\em $\cle_{16/3}$ and scaling limit of the FK--Ising percolation.} It is well known that many properties of  the Ising model are naturally encoded by a dependent percolation model called the FK--Ising percolation. We refer here also to \cite{duminil2017lectures, friedli2017statistical} for background. The main and most useful connection between these two models is the identity 
\begin{align*}\label{}
\<{\sigma_x \sigma_y}_{G,\beta} = \FK{p(\beta)}{}{x \longleftrightarrow y}
\end{align*}
If one considers the FK--Ising model on $D^a = a \Z^2 \cap D$, it is known that the large connected clusters have a scaling limit as $a\to 0$ (for a suitably chosen topology, for example the so-called Schramm-Smirnov topology as introduced in \cite{Schramm_2011}). The limit of this percolation is called  a $\cle_{16/3}$ process and if one keeps track of the (independent, balanced) coin flip assigned to each cluster, this is called the {\em colored}-$\cle_{16/3}$ process.

\item\label{it:magn} {\em The critical magnetization field.}  It is tempting to collect the information given by all Ising spins $\{\sigma_x^a\}_{x\in D^a}$ within one random Schwartz distribution (suitably rescaled). This is the viewpoint considered in \cite{Camia_2015,camia2016planar} where the following is proved: the random Schwartz distribution $\Phi^a:= a^{15/8} \sum_{x\in a \Z^2} \sigma_x \delta_x$ converges in $H^{-3}$ to a random distribution on $D$ called the critical magnetization field $\Phi$. Furthermore, in \cite{Camia_2015}, this non-Gaussian field is proved to be measurable w.r.t. item (2) above, namely the colored $\cle_{16/3}$ process.   See Subsection \ref{ss.IMF} for more background on this field. 

\enum

\bnum
\setcounter{enumi}{3}

\item {\em $n$-point correlation functions.} There are several of these: the magnetization, the energy, mixed versions of these etc.  For example the $n$-point correlation function of the magnetization field, say in the full plane $\C$, corresponds to the function $z_1,\ldots, z_n \in \C^n \mapsto \<{\sigma_{z_1} \ldots \sigma_{z_n} }_{\C, \beta_c}$ defined as the limit when the mesh $a \to 0$ of  $a^{- n/8}\<{\sigma_{z_1^a} \ldots \sigma_{z_n^a}}_{a\Z^2, \beta_c}$ where $z_k^a \in a \Z^2$ and $|z_k^a-z_k| \leq a$ for $k\in\{1,\ldots,n\}$. Notice the renormalization factor $a^{-n/8}$ which ensures that the limiting function is non-trivial.

We refer for example to \cite{chelkak2015conformal, chelkak2023universality} for convergence and universality results on these $n$-point functions. 
 
\item {\em The critical Energy field.} As mentioned in the previous item, several $n$-point correlation functions of interest have been proved to converge for the critical Ising model. This goes back to the success of {\em conformal field theory} which in some sense computed and classified such correlation functions. Next to the magnetization $n$-point correlation functions are the $n$-point correlation functions of the {\em Energy field} which have a very nice explicit limit in $2d$ as proved in
\cite{boutillier2010critical,boutillier2011critical,giuliani2012scaling,hongler2013energy,chelkak2021correlations,izyurov2024energy}. 
As in item \ref{it:magn} above, it is a natural question to collect all the energy variables indexed by the edges of $D^a$ into one (suitably rescaled) random Schwartz distribution $\calE^a$. The purpose of the work \cite{garban2025energy} is in some sense to show that in the limit $a\to 0$ , such a field $\calE$, viewed as a random distribution, does not exist. 

\item {\em The height function of the double current model.} In \cite{lopez2026ising} a new representation of the discrete Ising model is introduced, analogous in spirit to the Edwards-Sokal coupling, but based on the double random current model. Combining this representation with the convergence of the associated height function to the GFF \cite{DCLQ1,DCLQ2} yields a joint scaling limit coupling the GFF with the critical magnetization field. In this coupling, two-valued sets associated with the GFF can be used to construct the critical magnetization field together with an associated XOR-Ising model. This coupling is furthermore compatible with the bosonisation relation identifying the associated XOR--Ising fields with the sine or cosine of the GFF \cite{JSW,AlS}. 
\enum

By combining the seminal works \cite{chelkak2012universality, chelkak2014convergence, chelkak2015conformal, benoist2019scaling,  chelkak2023universality}, it is known that none of these scaling limits depends on the underlying lattice used ($a\Z^2$ or $a\T$ or any other ``reasonable'' choice of microscopic lattice). 

To date, the interplay between these different scaling limits is only partially understood. It is briefly summarized in Figure \ref{f.1} at the beginning of this paper (see also the Section \ref{se.Preliminaries} for more details). The main goal of this paper is to strengthen our understanding of the link between some of these notions of scaling limits, answering a question in \cite{chelkak2018planar}.

\subsection{Main result.}

As already stated in Figure \ref{f.1}, our main result is that we can construct the critical Ising magnetization field (IMF) as a measurable (and explicit) function of the nested $\cle_3$.
\begin{theorem}\label{th.main}
Let $D$ be a Jordan domain. For the critical Ising model on a discretization $D^a$ of $D$ equipped with $+$ boundary conditions, the joint coupling $(\cle_3^{\nes,a}, \Phi^a)$ converges in law as the mesh $a\to 0$ to the continuum coupling $(\cle_3^{\nes}, \Phi)$ in the topology\footnote{See Sections \ref{ss.CLEs} and \ref{ss.Sobolev} for the definitions of these.} of $\caX_D\times H^{-2}(D)$
where the magnetization field $\Phi$ is a measurable function of the nested $\cle_3$. We will denote this property by 
\begin{align*}
\Phi = \Phi(\cle_3^\nes).
\end{align*}
Here and below, the superscript $\nes$ indicates that we are considering the nested collection of loops.
\end{theorem}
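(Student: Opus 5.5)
We plan to build the magnetization field as an explicit limit of approximations that depend only on the loops, and to show these approximations are close to $\Phi$ uniformly in the mesh. The first step is tightness. The pair $(\cle_3^{\nes,a},\Phi^a)$ is tight in $\caX_D\times H^{-2}(D)$: the loop marginal converges by \cite{benoist2019scaling}, and $\Phi^a$ is tight in $H^{-2}$ by the two-point bound $\<{\sigma_x\sigma_y}\lesssim a^{1/4}|x-y|^{-1/4}$. So we may extract subsequential limits $(\cle_3^\nes,\Psi)$, where $\Psi$ has the law of $\Phi$. The theorem then reduces to one claim: in every such subsequential coupling, $\Psi$ is a fixed measurable function of $\cle_3^\nes$. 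Once this holds, the joint limit is unique and the whole sequence converges.

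The key construction is a scale-$\delta$ approximation. Fix $\delta>0$ and a test function $f$. Consider the loops of $\cle_3^\nes$ whose outermost structure has diameter $\ge\delta$. They cut $D$ into finitely many relevant regions, and each region $O$ carries a sign $\epsilon_O=\pm$, obtained from the parity of the nesting depth. Inside a region $O$, the remaining microscopic spins are a critical Ising model with $\epsilon_O$ boundary conditions on the (random, fractal) boundary of $O$, plus smaller loops. By the spatial Markov property of the discrete model, conditionally on the big loops we have
\[
\E\big[\langle\Phi^a,f\rangle \mid \text{loops of size}\ge\delta\big]=\sum_O \epsilon_O\, a^{15/8}\sum_{x\in O}\<{\sigma_x}^{+}_{O^a}f(x).
\]
The plan is to define
\[
\Phi_\delta(f):=\sum_O \epsilon_O\int_O \rho_O(z)f(z)\,dz ,
\]
where $\rho_O$ is the continuum one-point function with $+$ boundary condition on $O$, i.e. the limit of $a^{-1/8}\<{\sigma_z}^+_{O}$. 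By conformal covariance this equals $C\,\mathrm{crad}(z,O)^{-1/8}$. This quantity is a measurable function of the loops. Note that it is not a Minkowski-content construction, so it sidesteps the obstruction that $15/8<187/96$: the region interiors have full dimension, and we only integrate a boundary-dependent density over them. We need to justify that the discrete conditional expectations converge to $\Phi_\delta(f)$ in the coupling. The inputs are convergence of discrete loops to continuum ones and convergence of one-point functions in rough domains, taken from \cite{chelkak2015conformal}, together with the fact that the regions are Jordan domains. The contribution of spins near tiny regions must also be controlled.

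The final and main step is the $L^2$ estimate
\[
\limsup_{a\to0}\E\big[(\langle\Phi^a,f\rangle-\E[\langle\Phi^a,f\rangle\mid\mathcal F_\delta])^2\big]\xrightarrow[\delta\to0]{}0 .
\]
Expanding, this variance equals $\E\sum_O\iint_{O\times O}f f\,\big(a^{-1/4}\<{\sigma_x\sigma_y}^+_O-\rho_O\rho_O\big)$. The conditional covariance in a region of size $\lesssim\delta$ is bounded by $|x-y|^{-1/4}$ restricted to $|x-y|\lesssim\delta$, which integrates to $O(\delta^{7/4})$. The difficulty lies in regions that are not small: the region containing a given point may have diameter $\ge\delta$ because loops of size $\ge\delta$ only bound it from outside. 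We would first redefine $\mathcal F_\delta$ so that it reveals loops iteratively, exploring until every point lies in a region of diameter $<\delta$. This uses the nested/Markov structure of $\cle_3$ and the a.s. finiteness of the number of macroscopic loops. The obstacle is the required uniformity in $a$. We need discrete a priori bounds showing that the conditional two-point function in a region with fractal $+$ boundary is dominated by the free/full-plane one. For this, FKG and monotonicity in boundary conditions give $\<{\sigma_x\sigma_y}^+_O-\<{\sigma_x}^+_O\<{\sigma_y}^+_O\le\<{\sigma_x\sigma_y}^{\free}_O$. We also need a domination of the number of regions via RSW for Ising interfaces. With this in hand, $\Psi(f)=\lim_{\delta}\Phi_\delta(f)$ in $L^2$ for each $f$ in a countable dense family, which makes $\Psi$ measurable with respect to $\cle_3^\nes$ and completes the proof.
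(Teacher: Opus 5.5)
Your overall architecture is the right one: tightness, then an $L^2$ approximation of $\<{\Phi,f}$ by conditional expectations that are explicit functions of the loops. The central step, however, fails as written.

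\textbf{The conditional-expectation identity is false.} Conditioning on $\{\ell\in\Gamma^a:\diam\ell\ge\delta\}$ is not a Markovian (stopping-set) conditioning.
\begin{itemize}
\item Given these loops, the regions are indeed conditionally independent, each with a constant boundary sign $\epsilon_O$.
\item But inside $O$ the law is the Ising model with $\epsilon_O$ boundary conditions \emph{conditioned on the event that $O$ contains no further interface of diameter $\ge\delta$}.
\item This conditioning is vacuous only when $\diam O<\delta$. There is always a macroscopic region: the one adjacent to $\partial D$, which contains the boundary $+$ cluster.
\item In that region the one-point function is not $\<{\sigma_x}^+_{O^a}$, and the same region also defeats your variance bound.
\end{itemize}
So everything rests on the ``redefined'' $\mathcal F_\delta$, which you leave unspecified. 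That is exactly where the difficulty of the theorem lies.

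\textbf{Why a loop-only exploration does not work.} If the exploration reveals only loops, the only Markovian way to cut the boundary cluster into small pieces is to reveal whole layers: all of $\Gamma^1$, then deeper layers inside the large loops, and so on.
\begin{itemize}
\item The revealed carpet spins then carry mass of order $a^{15/8-187/96}\to\infty$.
\item Consequently $\sum_O\epsilon_O\int_O\rho_O f$ is not the limit of the discrete conditional expectations. For $\Gamma^1$ it equals $-\calC_\T\int_D f(z)\,CR_{D\setminus\Gamma^1}(z)^{-1/8}\,dz$, which is negative for $f\ge0$, $f\neq0$, although $\E\<{\Phi,f}>0$.
\end{itemize}
So the obstruction $15/8<187/96$ is not sidestepped; it reappears precisely here.

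\textbf{The missing idea.} Reveal all spins along a deterministic grid $\eta(\rho)=\rho\Z^2\cap D$, together with every interface crossing it. Then:
\begin{itemize}
\item[(i)] All unexplored components lie in $\rho$-squares. Hence the conditional variance is at most $C\|f\|_\infty^2\iint_{D\times D,\,|z-z'|<\sqrt2\rho}|z-z'|^{-1/4}\,dz\,dz'$. The covariance upper bound comes from GHS (covariances increase with the domain); FKG only gives their sign.
\item[(ii)] The explored set has upper Minkowski dimension $\le 11/8<15/8$, by Wu's alternating two-arm exponent $5/8$. This makes $\calC_\T\int f(-1)^{\nes}CR_{D^\eta}^{-1/8}$ integrable. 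Combined with uniform exponential moments of $\<{\Phi^a,f|_C}$ on deterministic $\eps$-boxes and a union bound over $O(\eps^{-2})$ boxes, it shows that the mass within distance $\eps$ of the explored set is negligible uniformly in $a$. This is the step you only flag as ``must be controlled''.
\item[(iii)] Almost surely no loop is tangent to the grid, by Wu's boundary exponent $5/3>1$. This gives $\Gamma^a(\eta)\to\Gamma(\eta)$ and Carath\'eodory convergence of the components, so the one-point convergence theorem applies.
\end{itemize}

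\textbf{A smaller point on tightness.} With $+$ boundary conditions, $\<{\sigma_x\sigma_y}^+_{D^a}\ge\<{\sigma_x}^+\<{\sigma_y}^+$, so the two-point function is not $O(a^{1/4}|x-y|^{-1/4})$ near $\partial D$. Tightness therefore requires splitting off the mean and proving
$\iint_{D\times D} G_D(u,v)\dist(u,\partial D)^{-1/8}\dist(v,\partial D)^{-1/8}\,du\,dv<\infty$ for a possibly rough Jordan domain.
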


We refer to Section \ref{se.Preliminaries} and more specifically Subsections \ref{ss.CLEs} and \ref{ss.Sobolev} for a formal definition of the spaces where the $\cle_3^\nes$ and the magnetization field live, and the topologies on these spaces used for the convergence. The theorem above holds without any assumption on the regularity of the domain $D$\footnote{Besides the fact that it is Jordan, an assumption that \cite{benoist2019scaling} used to prove their main result on which our theorem relies}. However, in the case of a piecewise smooth boundary, or even a boundary whose roughness is controlled, one may show the convergence to $\Phi$ in a slightly stronger topology. See Section \ref{s.RoughBoundary} and Remarks \ref{rk.reason-Omega}, \ref{rk.reason-Omega-2} for more details. 


\medskip

Theorem \ref{th.main} is not the first result in the literature which discusses the link between two scaling limits (i.e. two topologies) for one probabilistic object.
We emphasise in the list below other such fundamental examples in probability theory for which several scaling limits exist under different natural choices of topologies.
\bnum
\item {\em Critical percolation and 2d white noise.} If one considers critical site percolation on a triangular lattice $a\T$ of mesh $a>0$, then one may either view it as a discrete white noise (by viewing it as a random point $\mathrm{WN}^a$ in a Sobolev space $H^{-1-\eps}(\R^2)$) or instead as a collection of interfaces between open and closed sites (by viewing it as a discrete nested--$\cle_{6}$ process).  In this case, it is now well known (see \cite{benjamini1999noise,Schramm_2011,garban2014noise}) that 
\begin{align*}\label{}
(\mathrm{WN}^a, \mathrm{nested}-\cle_6^a) \overset{(law)}\longrightarrow (\mathrm{WN},\cle_6)
\end{align*}
but this time as opposed to our main result for Ising (Theorem \ref{th.main}) $\mathrm{WN}$ and $\cle_6$ are now independent of each other (even though they are obviously deterministic functions of each other in the discrete). This is reminiscent of the notion of {\em noise sensitivity} \cite{benjamini1999noise,garban2011oded,garban2014noise}.

\item {\em (nested)-$\cle_4$ and Gaussian free field}. 
If one now considers the discrete GFF on $D^a:=D \cap a\T$, or rather its cable extension (as  introduced in \cite{lupu2016loop,lupu2018convergence}), then we naturally end up with a coupling of a distribution field $\Gamma^a$ (the discrete cable GFF) and a discrete nested--$\cle_4^a$. It is known from \cite{lupu2018convergence,aru2019bounded,QW,ALS2,ALS4} that
\begin{align*}\label{}
(\Gamma^a, \text{nested--}\cle_4^a) \overset{(law)}\longrightarrow (\Gamma,\text{nested--}\cle_4)
\end{align*}
and furthermore both components of the limiting coupling are measurable functions of each other! I.e.
\begin{align*}\label{}
& \Gamma   = \Gamma(\text{nested--}\cle_4) \\
& \text{ and } \\
& \text{nested--}\cle_4  = \text{nested--}\cle_4(\Gamma)
\end{align*}
We believe the same structure holds for the limiting Ising coupling $(\Phi, \text{nested--}\cle_3)$.  Our main Theorem provides the first measurability identity for Ising while the second, i.e. $ \text{nested--}\cle_3
\overset{?}= \text{nested--}\cle_3(\Phi)$ is still open, see our Conjecture \ref{c.main} below.

\item {\em Weakly asymmetric simple exclusion process and space-time white noise.} The celebrated KPZ equation is written as the following potentially ill-defined SPDE in space/time $\p_t h = \p_{xx} h \, + \Wick{(\p_x h)^2}  + \xi$ (where $\xi$ is a space-time white noise on $\R\times \R$). Before the breakthrough works 
\cite{hairer2013solving, gubinelli2017kpz} it was not clear whether $h$---which can be defined by the indirect Cole-Hopf transform---would be measurable or not with respect to $\xi$.  It is now a consequence of these works.

\item {\em KPZ fixed point and space-time white noise.}

In the breakthrough works \cite{matetski2021kpz,dauvergne2022directed}, the so-called {\em KPZ fixed point} was constructed. Another natural stochastic process happens to live at the same $[1:2:3]$ scale, namely the underlying space-time white noise which is driving the KPZ fixed point (at least when the discrete mesh is still present). One may then ask about the joint scaling limit and the situation seems to be similar to what happens with critical $2d$ percolation, i.e. {\em noise sensitivity} and asymptotic independence between the two limits. See the recent works \cite{ganguly2024stability, himwich2024directed,ahlberg2026noise}. 

\item {\em Planar maps, Brownian surface and LQG.} The description of the limit of uniformly chosen planar map can be done using different representations. The limit of the Schaeffer bijection was first found in \cite{MM} and defined as the Brownian map, then as a consequence of the works \cite{Mier,LG} it was shown that this can be seen as a random metric space which is a function of the continuum Schaeffer bijection.
A rather different bijection and topology (the peanoshpere topology) was introduced in \cite{Sheffield} to study FK-decorated planar maps and keep track of both the percolation clusters and the counting measure on the maps. It was then shown in  \cite{DMS,MSBrownianMap} that the peanosphere limit of uniformly chosen maps (i.e. with FK parameter equal to 1) also determines its metric structure.

\enum

\subsection{Strategy of proof.}\label{ss.Strategy Proof}

Our main goal is to construct the magnetization field from the set of interfaces $\Gamma\sim \cle_3^\nes$, in other words, for a given test function $f$, to understand the conditional expectation
\begin{align*}\label{}
\Eb{\<{\Phi, f} \md \Gamma}.
\end{align*}

Since this is too complicated to define right away, it is natural to first focus on the conditional expectation w.r.t. the first layer $\Gamma^1 \sim \cle_3^1$
\begin{align*}\label{}
\Eb{\<{\Phi, f} \md \Gamma^1}
\end{align*}
for which we have a natural candidate: 
\begin{equation}\label{eq.attempt}
H^1_f(\Gamma^1):= -\int_{D}   f(z) \,  \,  CR_{D \setminus \Gamma^1}(z)^{-1/8} dz\,, 
\end{equation}
where $z \mapsto CR_{D \setminus\Gamma^1}(z)$ is the conformal radius of the open set $D\setminus\Gamma^1$ seen from the point $z$ (this conformal radius only depends on the connected component of $D\setminus \Gamma^1$ which contains $z$) and where $z\mapsto\nes_{\Gamma^1}(z)$ is the index of nestedness of $z$, i.e. the number of loops in $\Gamma^1$ surrounding $z$: this index is of course a.s. equal to $+1$ for almost every point $z\in D$. (N.B. The superscript $1$ in the notation $H_f^1$ is present to highlight that we want a conditional expectation w.r.t. the \emph{first} layer.)
\smallskip

\par 
Unfortunately, this candidate has no chance to be the right one. Indeed, assume this expression is accurate. Then for any $f:D\to \R$ which is positive, we would have the inequality $\E[\<{\Phi,f}]>0$. However a.s. $H^1_f(\Gamma^1)<0$, leading of course to a contradiction. The problem comes from the fact that the carpet of the $\cle_3$ does carry some mass of the field which is not taken into account in the ``attempted definition'' \eqref{eq.attempt}. This mass can be seen to be infinite from the dimension inequality \begin{align*}
	{187}/{96}=\dim(\{z\in D: z\text{ not surrounded by any }\ell\in\Gamma^1\})>15/{8},
\end{align*} where $15/8=2-1/8$ is the renormalization exponent of the magnetization field.

We then expect the correct conditional expectation to be some  suitably renormalized 
\begin{align*}\label{}
H^1_f(\Gamma^1):= ``  -\int_{D}   f(z) \,  \,  CR_{D \setminus \Gamma^1}(z)^{-1/8} dz\,  \ + \ \infty. " 
\end{align*}
The need for such an infinite renormalization can be seen as a consequence of the infinitely many small loops discovered in the exploration which together carry an infinite mass.

\begin{remark}
Though it is not clear how to define it directly, it will be a corollary of our work that we can give an explicit description of $\Eb{\<{\Phi,f}\md\Gamma^1}$, see Theorem \ref{th.IMFwrtFirst-layer}. 
\end{remark}

The need for such a renormalization is where a smooth curve $\eta$ enters the scene: it allows us to remove many divergences in our above attempt~\eqref{eq.attempt} while still keeping non-trivial information on $\<{\Phi, f}$. Indeed, instead of taking \emph{all} the loops of the first layer in $\Gamma^1$ we will restrict to those intersecting $\eta$. There are far fewer such loops (see Figure \ref{f.simulation} for a simulation of this) which makes it plausible that the small loops don't contribute too much to the conditional expectation. Thus, it is possible to define a candidate $H_{f,\eta}$ for the conditional expectation of $\<{\Phi,f}$ w.r.t. this exploration. See \eqref{eq.H f eta} and Proposition \ref{pr.Def H f eta} stating that $H_{f,\eta}$ is indeed well defined.

\begin{figure}[h!]
\begin{center}
\includegraphics[width=0.9\textwidth]{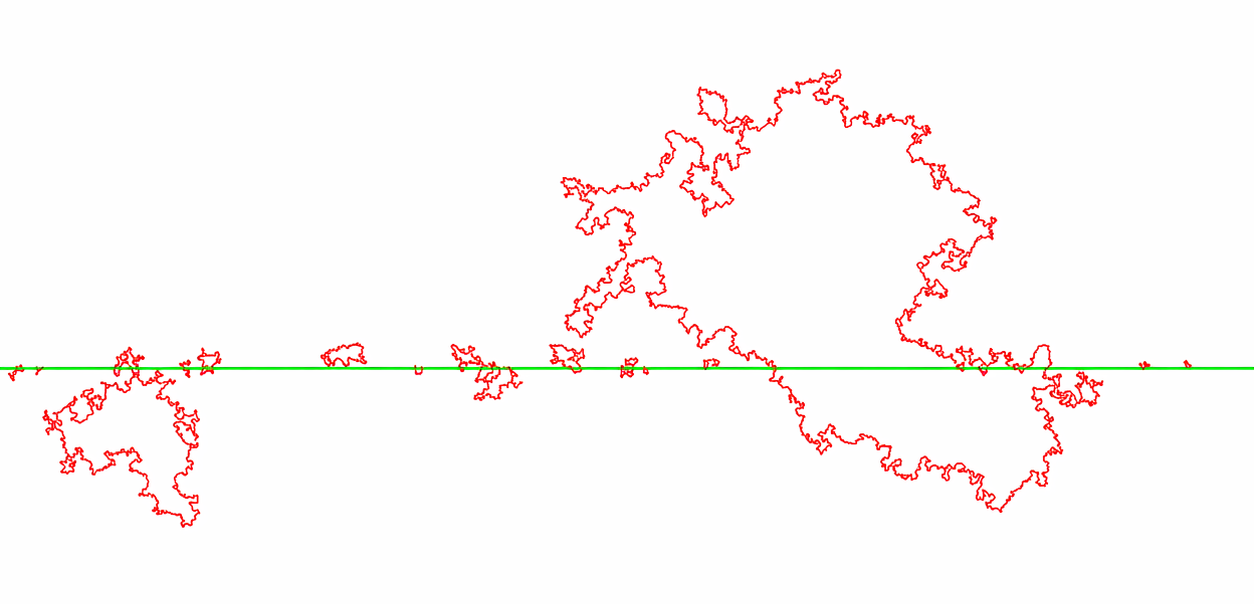}
\caption{Simulation of the loops of a discrete $\cle_3^{1,a}$ which intersect a vertical line $\eta$.}\label{f.simulation}
\end{center}
\end{figure}

In fact, it is somehow simpler to explore all the interfaces intersecting the line instead of just the first layer. Then the proof of Theorem \ref{th.main} may be summarized as follows:
\bnum
\item First, if $\Gamma(\eta)$ denotes all the loops of the nested-$\cle_3$ which intersect $\eta$ (as pictured in Figure \ref{f.simulation}), we introduce an explicit candidate for the conditional expectation $\Eb{\<{\Phi, f} \md \Gamma(\eta)}$, which is measurable w.r.t. $\Gamma$. 
By coupling Ising models on small mesh grids so that they converge to the continuum nested $\Gamma\sim \cle_3^\nes$,  we show that, as the mesh $a$ goes to zero, the discrete analog of the above conditional expectation converges in probability to this candidate.  One of the technical difficulties which is needed here is to control the contribution coming from the possible large oscillations of the magnetization field $\Phi$ close to $\eta$. This is the first main part of the proof handled in Subections \ref{ss.Skorokhod} to \ref{ss.Proof PhiCondGamma(Eta)}.   
\item Finally, the second step of the proof is to show that  the conditional expectation $\Eb{\<{\Phi, f} \md \Gamma(\eta)}$  -- once well defined --  must be close to $\<{\Phi, f}$ (in $L^2$ norm, say) at least when the mesoscopic grid $\eta$ is small enough. This boils down to showing that the unexplored spin field in $D \setminus \Gamma(\eta)$ contains little macroscopic information.  This crucial step was not mentioned so far and involves in a key way the GHS inequality. This is handled in Subsection \ref{ss.InfoHoles}. 
\enum


\bigskip
\ni
\textbf{A natural but unsuccessful approach.} Another tempting strategy of proof would be to use CLE percolation: indeed, it is known from \cite{miller2017cle, kohler2024fuzzy} that the interfaces of the $+/-$ clusters and the interfaces of the FK--Ising clusters converge jointly in law in the Edwards-Sokal coupling, to a continuous coupling between the $\cle_3$ and the $\cle_{16/3}$. The same holds if the clusters of the FK--Ising percolation remember their Ising-sign, an object we call a colored $\cle_{16/3}$. Furthermore, the original proof of the convergence of the Ising Magnetization Field (IMF) in \cite{Camia_2015} actually shows that it is a measurable function of these colored continuous FK--Ising clusters.
\par In order to prove the measurability of the field in the $\cle_3$, one may try to show that in this coupling $\big(\Gamma,\omega^c,\Phi\big)$ (respectively the $+/-$ interfaces, the FK--Ising clusters with their sign, and the magnetization field) the last term does not depend on the additional randomness used to construct $\omega^c$ conditionally on $\Gamma$. Unfortunately, \textbf{there is a deep gap there}: see Section \ref{se. limits of literature} {for a discussion on this approach. In particular Open question \ref{op la loi du cle(16/3) conditionnellement au cle(3) vient du discret}  would close such a gap}. 
\par However, let us emphasise that our proof implies the following partial answer: our theorem implies that this path indeed works, \emph{a posteriori}: see Corollary \ref{co. the road through cle percolation gives a measurable cle_3}.

\subsection{Main open question raised by our work.}
We conclude this introduction by stating the following open problem about the measurability. 
\begin{OP}\label{c.main}
Our main result in this work is that $\Phi = \Phi(\mathrm{nested}-\cle_3)$. 
Prove the other direction, namely, that the nested--$\cle_3$ is a measurable function of the magnetization field $\Phi$. 
Note that this corresponds to the hard direction in the analogous situation of mutual measurability between nested--$\cle_4 \longleftrightarrow $ GFF (see the references \cite{dubedat2009sle,schramm2013contour,aru2019bounded}). 
\end{OP}

\subsection{Organization of the paper.}

Section \ref{se.Preliminaries} introduces the relevant objects, tools and theorems that we need for our paper. Section \ref{se.IMFwrtCurve} is devoted to the definition of $H_{f,\eta}$ and the proof that it is indeed the correct conditional expectation. 
Section \ref{se.Proof-Main-thm} concludes the proof of the main Theorem. It deals in particular with the amount of information left in the small unexplored regions.
Section \ref{s.RoughBoundary} strengthens our main Theorem, under some hypothesis on the boundary of $D$ by showing the convergence of the discrete fields in a stronger topology.
Section \ref{se.MainThm-consequences} proves a Markov Property for the IMF, and describes the conditional expectation $\Eb{\<{\Phi,f}\md\Gamma^1}$.
Section \ref{s.NearC} discusses the possible consequences of this work for the singularity of the near-critical limit (in temperature) and Section \ref{se. limits of literature} explains why our main Theorem does not follow easily from the theory of CLE percolation.


\medskip
\ni
{\bf Acknowledgments.} 
We wish to thank Roland Bauerschmidt, Gefei Cai, Jean-Christophe Mourrat and Jason Miller for very useful discussions.

The three authors acknowledge support from the ERC grant VORTEX 101043450. 
C.G. also acknowledges support from the Institut Universitaire de France (IUF) and 
A.S. also acknowledges support from Centro de Modelamiento Matem\'{a}tico Basal Funds FB210005 from ANID-Chile and by Fondecyt Grant 1240884.  

\medskip
\ni 
{\bf AI use.}
The use of AI tools were very limited in this work. It was not used for the proof of the main theorem (which was announced in June 2025). However it was used during the writing of the paper in the following ways: 1) finding a proof of Lemma \ref{l.Domains-to-be-explored-are-Jordan} (in particular it allowed us to find the reference \cite{kuratowski1968topology2}). 2)  Handling the  technical details in the proof of Theorem \ref{th.MarkovIMF} and finally 3) Proofreading and removing minor mistakes.

\section{Preliminaries}\label{se.Preliminaries}

\subsection{Conformal Loop Ensembles.}\label{ss.CLEs}
Conformal Loop Ensembles (abbreviated CLE) are a one-parameter family of random sets $\cle_\kappa$, $\kappa\in(8/3,8]$, of non-(self)-crossing loops, first introduced by Sheffield in \cite{sheffield2009exploration} and studied in more depth in \cite{sheffield2012conformal}. They describe collections of loops (conjecturally) arising as the scaling limits of the interfaces of many models in statistical physics.
\par The \emph{simple} $\cle_\kappa$ measure is defined on any proper planar simply connected domain and consists of a collection of non-nested loops; its law is conformally invariant, i.e. invariant under any conformal map. From the simple $\cle_\kappa$, which we denote in the rest of the article $\cle_\kappa^1$, one can define iteratively what is known to be the \emph{nested--}$\cle_\kappa$ by applying the following procedure: first sample a simple $\cle_\kappa^1$, then inside each of the loops of it, sample independent $\cle_\kappa^1$ to get a set of loops $\cle_\kappa^2$ (that consists of those loops of the first $\cle_\kappa^1$, and the loops of the $\cle_\kappa^1$ inside). To define $\cle_\kappa^3\supset\cle_\kappa^2$, repeat the operation in each of the loops in $\cle_\kappa^2\setminus\cle_\kappa^1$, and by induction one can define $\cle_\kappa^k$ for any $k$; the full collection of nested loops $\bigcup_k\cle_\kappa^k$ is by definition the nested--$\cle_\kappa$, which we denote $\cle_\kappa^\nes$ in the remainder of the article. (The superscripts $1,k,\nes$ are here to stress that we are respectively considering the first layer of the loops, the first $k$ layers, and the full nested collection of loops). 

\par Since we will need to deal with the $\cle$ often, it is important to carefully define the measure space where they live, and thus its topology.

\subsubsection{The space $\caX$ of loop sets.}\label{ss.TopologyLoopSet}
A \emph{(non-oriented) loop} in $D$ is an equivalence class of continuous functions $\S^1\to \overline{D}$, modulo pre-composition by a homeomorphism of $\S^1$; on the set $\widehat{\calL_D}$ of non-oriented loops. It comes equipped with a distance $\d_\calL$ defined by
\begin{equation*}
    \d_\calL(\ell_1,\ell_2)=\inf_{[L_1]=\ell_1, \, [L_2]=\ell_2}\|L_1-L_2\|_{\infty},
\end{equation*}
where the infimum is taken over all pairs $L_1,L_2$ of continuous functions $\S^1\to\mathbb C$ representing the classes $\ell_1,\ell_2\in\widehat{\calL_D}$ respectively.
\par We define the space $\calL_D$ to be the closure in $\widehat{\calL_D}$ of the set of simple loops, and we will call the elements of $\calL_D$ \emph{non-self-crossing loops}. 

\par For any loop $\ell\in\calL_D$, we define its image $\im(\ell)$ to be $L(\S^1)$, the image of $\S^1$ by any representative $L:\S^1\to \overline D$ of $\ell$. The mapping $\ell\mapsto\im(\ell)$ is continuous from the set of non-self-crossing loops to the space $K(D)$ of compact subsets of $D$ equipped with the Hausdorff topology (see Subsection \ref{ss.Hausdorff Topology} for the definition of the Hausdorff topology).  The loops whose image consists of a single point are named \emph{trivial loops.} They are contained in $\calL_D$.

\medskip
Let $\calX_D$ be the set of (at most) countable collections $\Gamma=(\ell_i)_{i\in I}$ (i.e. sets with possible repetitions) of loops in $\calL_D$ that do not contain any trivial loops and are \emph{locally finite}, i.e. for all $\varepsilon>0$, the set $\{i\in I, \, \mathrm{diam}(\ell_i)>\varepsilon\}$ is finite. Elements of $\caX_D$ are called loop sets, and we equip the space $\calX_D$ with the distance $\d_\caX$ defined by
\begin{equation*}
    \d_\calX(\Gamma,\Gamma')=\inf_{\pi:J\iso J'}\Big(\sup_{i\in J}\d_\calL(\ell_{i},\ell'_{\pi(i)}) \vee \sup_{i\not\in J}\diam(\ell_i)\vee \sup_{i'\not\in J'}\diam(\ell'_{i'})\Big)
\end{equation*}
where $\Gamma=(\ell_i)_{i\in I}$ and $\Gamma'=(\ell'_{i'})_{i'\in I'}$ are two loop sets, and the infimum is taken over all bijections $\pi$ between finite subsets $J\subset I$ and $J'\subset I'$.

\begin{proposition}[Lemma 5 in \cite{benoist2019scaling}]\label{pr. measurable properties of the loop set topology}
    The space $\calX_D$ equipped with the topology from $\d_\caX$ is Polish; moreover, the following events are measurable for the Borel $\sigma$-algebra of $\caX$:

    \begin{itemize}
        \item All loops are distinct.
        \item All loops are disjoint. 
        \item The collection is non-nested.
        \item The collection consists of simple loops. 
        \item All loops have distinct diameters; moreover, the $k$--th largest loop (by diameter order) and its nesting index are measurable with respect to the collection.
    \end{itemize}
\end{proposition}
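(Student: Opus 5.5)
The plan is first to establish that $(\calX_D,\d_\calX)$ is a Polish space, and then to check each measurability claim by writing the event as a countable Boolean combination of open or closed sets. For the metric part, there are three checks. First, $\d_\calL$ is a genuine metric on $\widehat{\calL_D}$: after quotienting by reparametrisation, zero distance forces equality of classes. This is the standard Fréchet distance argument, where compactness of $\overline D$ lets us extract converging reparametrisations. Second, $\widehat{\calL_D}$ is complete and separable, and hence so is its closed subset $\calL_D$. Separability comes from polygonal loops with rational vertices; completeness comes from uniform limits of representatives. Third, we lift these properties to $\calX_D$. Separability uses finite collections of rational polygonal loops: by local finiteness, any $\Gamma$ is approximated by its finitely many loops of diameter larger than $\epsilon$. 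For completeness, take a Cauchy sequence $\Gamma_n$. For each $\epsilon$, the loops of diameter $\ge\epsilon$ are eventually matched by bijections. A diagonal argument then produces limiting loops, and their collection is locally finite. Two points need care here. One is that limits of nontrivial loops might be trivial; these are simply discarded, which is consistent with the $\sup\diam$ terms in $\d_\calX$. The other is that the triangle inequality for $\d_\calX$ requires composing partial bijections, which works because unmatched loops are controlled by their diameters.

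For the measurable events, the key structural lemma is the following. For each $\epsilon>0$ outside a countable set of ``bad'' values, the map sending $\Gamma$ to its finite set of loops with $\diam>\epsilon$ is locally continuous on the set where no loop has diameter exactly $\epsilon$. We then express each event as a countable intersection over rational $\epsilon$ of events concerning only finitely many loops.

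\emph{Distinct.} For two loops, $\d_\calL(\ell,\ell')>0$ is an open condition.

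\emph{Disjoint.} $\dist(\im\ell,\im\ell')>0$ is open, since $\im$ is continuous into $K(D)$.

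\emph{Non-nested.} We use the nesting index of a rational point with respect to the loop. This index is measurable, being a Borel function of the loop given by the winding number, defined where the point is off the loop.

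\emph{Simple loops.} A loop is simple iff for every rational $\delta$ there is $\eta>0$ such that $|L(s)-L(t)|<\eta$ implies $|s-t|<\delta$. This is a countable $\cap\cup$ of conditions that are closed or open modulo reparametrisation.

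\emph{Ordering by diameter.} $\diam$ is continuous on $\calL_D$, so ``all diameters distinct'' is a countable intersection of open sets. On that event, the $k$-th largest loop is a continuous function of $\Gamma$ locally, and hence measurable. Its nesting index is obtained from winding numbers with respect to the other, finitely many larger loops.

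I expect completeness of $\calX_D$ to be the main obstacle. It needs bookkeeping to ensure that the matchings at different scales are compatible and that the limit is locally finite. I would handle it by passing to a subsequence with $\d_\calX(\Gamma_n,\Gamma_{n+1})<2^{-n}$, and then following chains of matched loops: each chain converges in $\calL_D$, and chains whose diameters tend to $0$ are discarded.
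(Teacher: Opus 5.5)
The paper gives no proof of this statement; it imports it from Benoist--Hongler (Lemma 5 there), so your outline has to stand on its own. The overall structure is sensible: chains of matched loops along a subsequence with $\d_\calX(\Gamma_n,\Gamma_{n+1})<2^{-n}$ for completeness, finite truncations for separability, and continuity of $\Gamma\mapsto\{\ell\in\Gamma:\diam\ell>\epsilon\}$ on the open set $U_\epsilon$ of configurations with no loop of diameter exactly $\epsilon$. However, the step you flag and then wave through is false: with the diameter penalty, $\d_\calX$ does not satisfy the triangle inequality.

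When you compose partial matchings, take a loop $\ell_i$ with $\d_\calL(\ell_i,\ell'_{\pi(i)})<a$ whose partner becomes unmatched, so that partner has diameter $<b$. Then $\ell_i$ only satisfies $\diam\ell_i<b+2a$, because $\diam$ is $2$-Lipschitz for $\d_\calL$. Concretely, let $\ell,\ell'$ be concentric circles of radii $\rho+a$ and $\rho$, and set $\Gamma=\{\ell\}$, $\Gamma'=\{\ell'\}$, $\Gamma''=\emptyset$. Then $\d_\calX(\Gamma,\Gamma')\le a$ and $\d_\calX(\Gamma',\Gamma'')=2\rho$, but $\d_\calX(\Gamma,\Gamma'')=2\rho+2a$.

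The fix is to replace $\diam\ell$ by $r(\ell):=\inf_{p\in\overline D}\sup_t|L(t)-p|$, the $\d_\calL$-distance to the constant loops, which is $1$-Lipschitz. The resulting $\d'$ is a genuine metric with $\d'\le\d_\calX\le 2\d'$. It therefore generates the same topology and has the same Cauchy sequences as $\d_\calX$. Your completeness and separability arguments then show that $(\calX_D,\d')$ is Polish, which is the actual content of the statement.

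Similarly, with loops taken modulo homeomorphisms of $\S^1$, $\d_\calL$ is only a pseudometric. If $L$ is injective and $\phi$ is a monotone degree-one map of $\S^1$ that is constant on an arc, then $L\circ\phi$ is at distance $0$ from $L$ (approximate $\phi$ by homeomorphisms), but it is not of the form $L\circ h$. Your extraction argument only yields a monotone, possibly non-injective, limit of reparametrisations, so you must identify loops at $\d_\calL$-distance zero. Both defects are present in the paper's definitions as written, but your proof asserts the opposite, so it must handle them.

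This identification in turn invalidates your simplicity criterion, which depends on the representative: a simple loop now has non-injective representatives that pause. Use instead the reparametrisation-invariant condition $|L(s)-L(t)|<\eta\Rightarrow\min\bigl(\diam L([s,t]),\diam L([t,s])\bigr)\le\delta$. This is closed under $\d_\calL$-convergence, and simplicity is $\bigcap_\delta\bigcup_\eta$ of these conditions.

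Finally, a rational $\epsilon$ may be exactly the diameter of some loop of $\Gamma$. So write your events as $\bigcap_n\bigcup_{\epsilon\in\mathbb Q\cap(0,1/n)}\bigl(U_\epsilon\cap\{\cdots\}\bigr)$. This is legitimate because each $\Gamma$ has only finitely many loop diameters in $(1/(m+1),1/m]$.

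With these repairs, the remaining items go through as you sketch at the level of an outline. These are distinctness, disjointness, non-nestedness via winding numbers at rational points, and the $k$-th largest loop on the $G_\delta$ event of distinct diameters, with its nesting index read off from the finitely many larger loops.
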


\begin{remark}
    We have considered collections of loops (rather than sets of loops) only to ensure that the space $\calX_D$ is complete for $\d_\caX$, hence Polish. However, the first point of the proposition above tells us that whether a collection is actually a set is measurable, and since all the random collections that we will consider will be a.s. simple, we can safely consider that loop sets are indeed sets (whence the name). 
\end{remark}

\begin{remark}\label{rk.LoopSets as ClosedSets}
The map $\calX_D\rightarrow K(D)$ defined by $\Gamma\mapsto\overline{\bigcup_{\ell\in\Gamma}\im(\ell)}$ is measurable for the Borel $\sigma$--algebras. Therefore, we may see loop sets as closed sets for the Hausdorff topology.
\end{remark}

\subsection{Convergence to the nested--$\cle_3$}\label{ss.CLE3} We will rely on the convergence to the $\cle_3^\nes$ of the interfaces of a critical Ising model proven in \cite{benoist2019scaling}. Before we state their theorem, let us introduce the notion of \emph{discretization}, which specifies the way the discrete domains $D^a$ converge to the domain $D$ under consideration.

\subsubsection*{Discretizations of Jordan domains.} In the following, we will always consider simply connected domains that are Jordan, usually with (piecewise) smooth boundary, i.e.  the interior of a simple closed loop which is piecewise $C^1$. For such a Jordan domain $D$ enclosed by a loop $\gamma$, we will say that $(D^a)_{a>0}$ is a \emph{discretization} of $D$ if $D^a$ is a Jordan domain of $a\T$ (recall $\T$ stands for the triangular lattice) such that the simple loops $\gamma^a$ enclosing $D^a$ converge as $a\to0$ to $\gamma$ for the topology of uniform convergence up to reparametrization.
\par The reason for this definition is that we need a concept for subgraphs of $a\T$ approximating $D$; and this definition is precisely the one in \cite{benoist2019scaling} for which their Theorem \ref{th.CLE3} holds. We chose to use the lattice $\T$ instead of $\Z^2$ since the exploration procedure is easier to define. 

\medskip Now, we can state properly the convergence in law of the discrete (nested) $\cle_3$ to the (nested) $\cle_3$, using the topology on the space of loop sets introduced above.

\begin{theorem}[Convergence to $\cle_3^\nes$, Theorem 1 in \cite{benoist2019scaling}]\label{th.CLE3}
Let $(D^a)_{a>0}$ be a discretization of the simply connected Jordan domain $D$. For $a>0$, let $\sigma^a$ be an Ising configuration on $D^a$ at critical temperature with + boundary condition, and let $\Gamma^a=\Gamma(\sigma^a)$ be the set of $+/-$ interfaces in $\sigma^a$. Then, in the topological space $\caX_D$ defined in Subsection \ref{ss.CLEs}, we have the convergence in law $\Gamma^a\xrightarrow[a\to0]{(d)}\Gamma$, where $\Gamma\sim\cle_3^\nes$.
\end{theorem}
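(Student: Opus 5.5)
The plan is to reduce the nested statement to the convergence of the first layer $\cle_3^1$ and then iterate using the spatial Markov property of the Ising model. For the first layer, I would explore the $+/-$ interfaces by a discrete exploration process started from boundary points. Writing the discrete domains as $D^a$, the exploration successively discovers the interfaces that can be reached from $\partial D^a$ through $+$ spins. Each step of this exploration is a chordal interface with Dobrushin-type or mixed (free/$+$) boundary conditions in the yet-unexplored domain. Its scaling limit is given by the convergence of Ising interfaces to $\sle_3$ and its $\sle_3(\rho)$ variants with $\rho=-3/2$ on the free arcs, which I would take from the Chelkak–Smirnov program. The required tightness in the space of curves would come from Kemppainen–Smirnov-type crossing estimates (RSW for FK--Ising transferred to spins). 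Matching the limiting exploration with Sheffield's branching construction of $\cle_3^1$ in the continuum identifies every subsequential limit of $\Gamma^{1,a}$ as $\cle_3^1$ in $\caX_D$.

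The key steps, in order, are as follows.
\begin{enumerate}
\item Establish tightness of $(\Gamma^a)$ in $\caX_D$. This uses uniform bounds on the number of interfaces of diameter larger than $\varepsilon$ and rules out loops degenerating or pinching, both via RSW/arm estimates for the critical Ising model.
\item Prove convergence of the exploration at each finite stage. This requires convergence of the explored domains in a sense strong enough that the next interface, with its induced boundary conditions, converges. Carathéodory convergence is insufficient, so I would prove uniform convergence of the domain boundaries using the fact that the discovered interfaces do not create fjords (a six-arm / boundary arm estimate).
\item Show that, for fixed $\varepsilon$, all loops of diameter larger than $\varepsilon$ are discovered after finitely many steps with high probability, uniformly in $a$, so the finite-stage convergence yields convergence in $\caX_D$.
\end{enumerate}

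For the nesting, conditionally on the first layer $\Gamma^{1,a}$, the spins just inside each outermost loop are $-$. By the domain Markov property and the global spin flip, the configuration inside each loop is an independent Ising model with $+$ boundary conditions after flipping. The discrete interior domains are Jordan and converge uniformly to the limiting loop interiors, using the loop convergence in the $\d_\calL$ sense together with the absence of near-touching, again by arm estimates. Inductively applying the first-layer result then gives joint convergence of $\cle_3^{k}$ for every $k$. Local finiteness, meaning that deep layers contain only small loops uniformly in $a$, lets us pass from $k$ layers to the full nested collection.

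I expect the main obstacle to be step 2: upgrading convergence of the explored curves to convergence of the complementary domains, which is needed to feed the next step of the exploration. The difficulty is that the limiting $\sle_3$-type curves come close to themselves and to the boundary. I would first try to control this with a priori estimates that exclude discrete near-pinchings not seen in the limit, via a boundary-arm/six-arm exponent bound for the spin model.
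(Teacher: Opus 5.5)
The paper does not prove this statement; it imports it from \cite{benoist2019scaling}, so the relevant comparison is with Benoist--Hongler's argument. Measured against that, your proposal has a genuine gap in its first step, the exploration of the first layer.

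With $+$ boundary conditions, the outermost Ising loops do not touch $\partial D$ in the limit. Along smooth parts of the boundary this is the half-plane exponent $5/3>1$ of \cite{wu2018alternating}. Moreover, an exploration that only reveals spins only ever produces fixed $\pm$ boundary conditions, never free ones. Concretely, the first time your exploration from the boundary meets a $-$ spin, the ``Dobrushin'' condition it creates has a $-$ arc reduced to a single site. That is a degenerate bubble, about which the chordal convergence results you invoke say nothing. The free arcs carrying $\sle_3(-3/2)$-type interfaces never appear at all.

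The missing idea is the Edwards--Sokal coupling. FK--Ising loops \emph{do} touch the boundary, so they can be explored from it, with a conformally invariant limit. Conditionally on the FK cluster of the wired boundary, the spins in each hole form an Ising model with \emph{free} boundary conditions. Ising loops touch FK loops, so they are reached through the Ising interfaces attached to these free arcs, and the exploration then recurses in the remaining $+$ regions. Even there, a free arc carries infinitely many Ising interfaces accumulating along it, so convergence of a single interface is not enough. Benoist--Hongler use the joint convergence of the Free Arc Ensemble (Benoist--Duminil-Copin--Hongler), together with an enhancement of the Kemppainen--Smirnov precompactness framework for the branches of the exploration.

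Your identification step does not work as stated either. Sheffield's branching construction of $\cle_\kappa$ is driven by $\sle_\kappa(\kappa-6)$, and for $\kappa=3$ one has $\rho=-3<-2$. That process is therefore only defined through a L\'evy compensation, and there is no readily available discrete Ising exploration converging to it that you could match a subsequential limit against. The paper points out this very difficulty in the proof of Lemma~\ref{l. curve not tangent cle_3}. Benoist--Hongler instead show that subsequential limits of the first layer are locally finite collections of disjoint simple loops. These collections are conformally invariant (from the convergence of the exploration above) and satisfy the restriction-type Markov property inherited from the discrete domain Markov property. They then conclude with the Markovian characterization of \cite{sheffield2012conformal}.

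Your nesting argument is sound: domain Markov property and global spin flip inside each outermost loop, induction on layers, then uniform local finiteness to pass to the full nested collection. This is how the nested statement follows once the first layer is known. The gap lies entirely in how you obtain and identify the first layer.
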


\subsection{Convergence of the one--point function.}\label{ss.1pointFunction} A key result to complement the convergence of the interfaces will be the convergence of the spin correlation functions of the Ising model, a famous
and difficult theorem proven in \cite{chelkak2015conformal}.
For simplicity, we will only state below the case of the one-point function which is all we need in our paper.
Let us also emphasise that given the recent breakthrough results about the convergence of the one-point function for the $q\in\{3,4\}$ critical Potts model  (see \cite{duminilcopin2026gaussianfreefieldconvergence, chen2026critical} and the manuscript in preparation \cite{alpha1}), it might be possible to extend our work to the $q\in\{3,4\}$ case, at least to some extent.
\par Before stating the theorem, recall that if $z$ is an interior point of a proper open subset $D$ of $\C$, which is simply connected, one can define the \emph{conformal radius of $D$ seen from $z$}, which we denote $CR(z,D)$, and which is defined as $CR(z,D)=\vert\phi'(z)\vert^{-1}$ where $\phi:D\iso\D$ is any conformal bijection sending $z$ to $0$. Recall that by Koebe quarter theorem $\dist(z,\p D)\le CR(z,D)\le4\dist(z,\p D)$. We will only use that $\dist(z,\p D)\asymp CR(z,D)$ uniformly on $z,D$.

\begin{theorem}[Convergence of the one-point function of Ising \cite{chelkak2015conformal}]\label{th.1pointFunction}
There is a constant $\calC_\T>0$, which depends only on the lattice used, such that for any sequence of pointed discrete domains $(D^a,z^a)_{a>0}$ (where $z^a$ is a vertex of $D^a$) converging in the Caratheodory sense\footnote{The Caratheodory topology is recalled in Appendix \ref{ap. Topologies and properties of (nested) CLE}, see Subsection \ref{ss.Caratheodory Topology}} to a pointed simply connected domain $(D,z)$, one has \begin{equation*}
\<{\sigma_{z^a}^a}_{D^a}^+\underset{a\to0}{\sim} {\calC_\T}a^{1/8}CR(z,D)^{-1/8}
\end{equation*}
where $\sigma^a$ follows the law of an Ising model on $D^a$ at inverse critical temperature $\beta_c(\T)$ with + boundary conditions.
\par Furthermore, in the case that $D^a$ is a discretization of $D$, the convergence is uniform on each compact $K\subset D$.
\end{theorem}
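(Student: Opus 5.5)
The plan is the discrete complex analysis approach of \cite{chelkak2015conformal}: express \emph{ratios} of one-point functions through a discrete holomorphic spinor observable, pass to the limit in the observable, and fix the global normalization by a separate comparison. Concretely, for a vertex $u^a\in D^a$ I would introduce the fermionic observable
\begin{align*}
F^a_{D^a,u^a}(z)=\frac{\<{\sigma_{u^a}\,\psi(z)}^+_{D^a}}{\<{\sigma_{u^a}}^+_{D^a}},
\end{align*}
where $\psi(z)$ is the discrete fermion (disorder–spin pair) at a corner or edge $z$. It lives on the double cover of $D^a$ branched at $u^a$ and is s-holomorphic away from $u^a$. With $+$ boundary conditions it satisfies a Riemann-type boundary condition: $F^a$ is parallel to $(\text{tangent})^{-1/2}$ on $\p D^a$. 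Near $u^a$ it has a prescribed discrete square-root singularity, normalized by $F^a(u^a+\tfrac a2)=1$.

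The first main step is the convergence of the observable. Using the discrete primitive $H^a=\mathrm{Im}\int (F^a)^2dz$, which is approximately sub/superharmonic and constant on the boundary, one gets a priori bounds and precompactness. Any subsequential limit must then be the unique continuous spinor $f_{D,u}$ on the double cover with boundary condition $f\parallel \tau^{-1/2}$ and expansion
\begin{align*}
f(z)=(z-u)^{-1/2}\bigl(1+2\mathcal A_D(u)(z-u)+O(|z-u|^2)\bigr).
\end{align*}
Conformal covariance plus the explicit half-plane solution identifies $\mathrm{Re}\,\mathcal A_D(u)$ with $\tfrac12\p_x\log CR(u,D)^{-1/8}$, and similarly for the imaginary direction. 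The delicate part, which I expect to be the main obstacle, is upgrading this to convergence of the subleading coefficient, i.e. to the \emph{next-order} behaviour of $F^a$ at distance $O(a)$ from the branching. I would handle this with a discrete analogue of the full-plane spinor $\sqrt{z-u}$ (explicitly computable on the lattice), subtracted off, and a harmonic-measure estimate near the singularity. This should yield
\begin{align*}
\frac{\<{\sigma_{u^a+2a}}}{\<{\sigma_{u^a}}}=1+2a\,\mathrm{Re}\,\mathcal A_D(u)+o(a),
\end{align*}
uniformly for $u$ in compacts and under Carathéodory convergence.

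Telescoping along a lattice path then gives $\<{\sigma_{u^a}}_{D^a}/\<{\sigma_{w^a}}_{D^a}\to (CR(u,D)/CR(w,D))^{-1/8}$. Comparing different domains works the same way: deform through a path of domains, or equivalently compare each domain to a large lattice square containing it using monotonicity (FKG) together with the same ratio identity. The global constant $\calC_\T a^{1/8}$ is then fixed by a single reference computation. For instance one can use the infinite-volume two-point asymptotics (Wu) combined with the decorrelation $\<{\sigma_u\sigma_w}\approx\<{\sigma_u}^+_{B}\<{\sigma_w}^+_{B'}$ in well-separated boxes, or the explicit magnetization of a square via the transfer matrix. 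The resulting constant depends only on the lattice; for the triangular lattice one invokes the universality framework of \cite{chelkak2023universality}.

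Finally, uniformity on compacts for a discretization follows by a compactness argument. The observable convergence is uniform over pointed domains in a Carathéodory-compact family, and $\{(D,z):z\in K\}$ is such a family. Otherwise a sequence violating uniformity would admit a convergent subsequence of pointed domains contradicting the pointwise statement.
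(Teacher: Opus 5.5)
The paper does not prove this statement. It is quoted from \cite{chelkak2015conformal} (CHI below); the triangular lattice is covered by the isoradial universality of \cite{chelkak2023universality}, and the Carath\'eodory version by \cite[Section 2.6]{chelkak2015conformal}, as recorded in the remark after the theorem. Your first half is, in outline, CHI's strategy: a spinor observable branched at $u$, a priori control through $H=\mathrm{Im}\int F^2\,dz$, identification of the limit, and the analysis at distance $O(a)$ from the branch point via explicit full-plane discrete spinors, yielding the discrete logarithmic derivative. Telescoping then gives convergence of $\langle\sigma_u\rangle^+_{D^a}/\langle\sigma_w\rangle^+_{D^a}$ inside a fixed domain, and your compactness argument for uniformity on compacts is fine. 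There is one bookkeeping slip. With your expansion $f=(z-u)^{-1/2}(1+2\mathcal A(z-u)+\dots)$, the half-plane solution $f\propto((z-u)(z-\bar u))^{-1/2}$ and conformal covariance give $\mathcal A_D(u)=2\p_u\log CR(u,D)^{-1/8}$, hence $\mathrm{Re}\,\mathcal A_D=\p_x\log CR^{-1/8}$. With your factor $\tfrac12$, the telescoping would produce the exponent $-1/16$.

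The genuine gap is the normalization, which you treat as an afterthought.

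\emph{Comparing domains.} The log-derivative identity controls moving the spin inside a fixed domain, not moving the boundary. FKG monotonicity only gives inequalities. So neither ``deforming through a path of domains'' nor ``comparing with a large square'' yields an asymptotic identity between domains.

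\emph{Your reference computations.} The factorization $\langle\sigma_u\sigma_w\rangle\approx\langle\sigma_u\rangle^+_B\langle\sigma_w\rangle^+_{B'}$ for well-separated boxes holds only up to multiplicative constants (RSW quasi-multiplicativity). Already in the continuum, $|u-w|^{-1/4}$ and $CR(u,B)^{-1/8}CR(w,B')^{-1/8}$ are different functions, so this cannot identify $\calC_{\T}$. There is also no closed-form critical $+$ magnetization of a square to compare with.

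\emph{What works.} Fix the constant inside each domain through the two-point function; as far as I recall this is, in substance, how CHI proceed. This needs:
\begin{itemize}
\item your observable with two branch points, so that ratios of $\langle\sigma_u\sigma_w\rangle^+_{D^a}$ converge as well;
\item two mesh-uniform RSW estimates (cf.\ \cite{duminil2011connection}) in which decorrelation becomes exact, namely $\langle\sigma_u\sigma_w\rangle^+_{D^a}/\langle\sigma_u\sigma_w\rangle_{a\T}\to1$ as $|u-w|/\dist(u,\p D)\to0$, and $\langle\sigma_u\sigma_w\rangle^+_{D^a}/(\langle\sigma_u\rangle^+_{D^a}\langle\sigma_w\rangle^+_{D^a})\to1$ as $\dist(w,\p D)\to0$;
\item the matching behaviours of the explicit continuum two-point function: $\sim|u-w|^{-1/4}$ and $\sim\langle\sigma_u\rangle\langle\sigma_w\rangle$ respectively;
\item the full-plane asymptotics $a^{-1/4}\langle\sigma_u\sigma_w\rangle_{a\T}\to c_{\T}|u-w|^{-1/4}$ (Wu on $\Z^2$; for $\T$ one relies on \cite{chelkak2023universality}).
\end{itemize}
Together these determine the limit of $a^{-1/8}\langle\sigma_u\rangle^+_{D^a}$ in every domain, with $\calC_{\T}=2^{1/4}c_{\T}^{1/2}$, and no comparison between different domains is needed.
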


\begin{remark}\label{rk.CaratheodoryForCHI}
In \cite{chelkak2015conformal} the convergence result is expressed with the slightly stronger assumption that $\partial D^a\to\partial D$ in the Hausdorff sense. However, in Subsection 2.6. of their paper, the authors explain that their result holds with the weaker assumption of Caratheodory convergence as well.
We will need this generality in the proof of Proposition \ref{pr.NicePart}.
\end{remark}

We will actually need to use here and there uniform estimates on the 1-point and 2-point functions, that can easily be established as consequences of the previous theorem. These bounds are not new: slightly different ones appear in \cite{Camia_2015} (see Appendix B there), which were used for similar reasons.

\begin{proposition}\label{pr.BoundTwoPoint}
There exists a constant $C>0$ such that, for any finite domain $\Lambda\subset\T$, denoting by $\<{\cdot}_\Lambda$ the expectation of the critical Ising model with $+$ boundary condition on $\Lambda$, 
\bnum
\item if $x\in\Lambda$, $\<{\sigma_x}_\Lambda\le C \dist(x,\p \Lambda)^{-1/8}$;
\item if $x,y\in\Lambda$ are at distance at least $\|x-y\|$ from the boundary of $\Lambda$, $\<{\sigma_x\sigma_y}_\Lambda\le C\|x-y\|^{-1/4}$.
\enum
The second point holds as well for the (unique) infinite-volume measure (i.e. $\Lambda=\T$), without assumption on $x,y$.
\end{proposition}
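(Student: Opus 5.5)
The plan is to derive both bounds from Theorem \ref{th.1pointFunction}, using monotonicity in boundary conditions (via the FKG/Griffiths inequalities) to reduce to a small family of reference domains, and scale invariance to reduce to a single scale.

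\textbf{Point (1).} Let $r=\dist(x,\p\Lambda)$ and let $B=B(x,r)\cap\T$. With $+$ boundary conditions, the magnetization $\<{\sigma_x}_\Lambda$ is increasing in the domain when the boundary spins are fixed to $+$. Indeed, $\<{\sigma_x}^+_\Lambda\le\<{\sigma_x}^+_{B}$, since restricting to $B$ amounts to conditioning the spins of $\Lambda\setminus B$ to be $+$, which can only increase $\sigma_x$ by FKG. It therefore suffices to show $\<{\sigma_x}^+_{B(x,r)}\le Cr^{-1/8}$ uniformly in $r\ge1$.

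By translation invariance of $\T$, put $x$ near the origin and rescale by $a=1/r$. The domains $a\cdot B(x,r)$ are then discretizations (Carathéodory convergent) of the unit disc pointed at $0$. Theorem \ref{th.1pointFunction} gives $\<{\sigma_0}_{B(0,r)}\sim\calC_\T r^{-1/8}$ as $r\to\infty$, hence a uniform bound for $r\ge r_0$. For $r<r_0$ the trivial bound $\le1\le r_0^{1/8}r^{-1/8}$ covers the rest. The only subtlety is that the discrete ball is not exactly centred at a lattice point in a canonical way. Since the pointed domains still converge, one argues by contradiction: if the ratio were unbounded along some sequence, extract a Carathéodory-convergent subsequence of pointed rescaled domains, whose limit is the unit disc pointed at $0$, and contradict Theorem \ref{th.1pointFunction}.

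\textbf{Point (2).} Let $d=\|x-y\|$. Take a domain $\Lambda'$ of size comparable to $d$ containing $x,y$, for instance $B(x,d)\cap\Lambda$. By the same monotonicity, that is, conditioning spins to $+$ on the complement (FKG for the increasing function $\sigma_x\sigma_y$ under $+$ boundary conditions, or equivalently Griffiths' second inequality with infinite positive field), we get $\<{\sigma_x\sigma_y}_\Lambda\le\<{\sigma_x\sigma_y}^+_{B(x,d)}$, using $B(x,d)\subset\Lambda$ by the hypothesis on distances. To handle $y$ lying near $\p B(x,d)$, use $B(x,2d)$ instead; this requires distance $\ge2d$, so I would adjust the constant or use the ball centred at the midpoint of radius $d$, which does lie in $\Lambda$ with $x,y$ at distance $d/2$ from its boundary.

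Then apply the GHS/Griffiths bound to decouple. The cleanest route is the FK representation: $\<{\sigma_x\sigma_y}^+=\P[x\leftrightarrow y \text{ or both }\leftrightarrow\p]\le\P[x\leftrightarrow \p B(x,d/4)]\,\P[y\leftrightarrow\p B(y,d/4)]$-type bound, up to a multiplicative constant, using quasi-multiplicativity/RSW for FK-Ising. A simpler alternative avoids FK entirely: by the GKS/Ginibre inequality with $+$ boundary, $\<{\sigma_x\sigma_y}\le\<{\sigma_x\sigma_y}^+_{\text{wired on } \p B(x,d/4)\cup\p B(y,d/4)}$. Wiring both small circles to $+$ factorizes the expectation into two independent $+$-boundary one-point functions, $\<{\sigma_x}^+_{B(x,d/4)}\<{\sigma_y}^+_{B(y,d/4)}\le C^2(d/4)^{-1/4}$ by point (1). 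Factorization holds because the spins inside the two balls are independent given $+$ on both boundaries; the monotonicity step is adding $+$ boundary spins, which increases the correlation of increasing events by FKG.

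For the infinite-volume statement, take $\Lambda\uparrow\T$; the $+$ measures converge to the unique Gibbs measure at criticality, and the bound passes to the limit. The main obstacle is making the uniformity in point (1) rigorous over arbitrary lattice positions; I would handle it by the compactness/contradiction argument above.
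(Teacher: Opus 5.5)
Your proof is correct. Point (1) is the paper's argument: monotonicity in the domain under $+$ boundary conditions, followed by Theorem \ref{th.1pointFunction} applied to balls centred at $x$. There is in fact no centring subtlety, since $x$ is a vertex of $\T$ and $\T$ is invariant under its lattice translations, so the compactness/contradiction step is not needed. The infinite-volume statement is also handled exactly as in the paper.

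For point (2) the two routes differ.

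\emph{The paper's route.} It works in the FK representation. It writes $\<{\sigma_x\sigma_y}^+_\Lambda$ as a wired FK connection probability and bounds it by the probability that $x$ and $y$ are each connected to the boundary of disjoint boxes of size $\|x-y\|/3$ around them. It then bounds the conditional probability of one arm given the other by the wired one-arm probability in a box, which equals $\<{\sigma_0}^+_{\Lambda_n}$. This uses only the domain Markov property and comparison of boundary conditions, so the product bound holds with constant $1$. Your Route A sketches this, but the appeal to RSW/quasi-multiplicativity and the loss of a multiplicative constant are unnecessary.

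\emph{Your Route B.} This is a genuinely different, spin-side argument. Forcing $+$ spins on $\Lambda\setminus\bigl(B(x,d/4)\cup B(y,d/4)\bigr)$ amounts to sending the magnetic field to $+\infty$ there, which increases $\<{\sigma_x\sigma_y}$ by Griffiths' second inequality. The resulting measure then factorizes exactly into $\<{\sigma_x}^+_{B(x,d/4)}\<{\sigma_y}^+_{B(y,d/4)}$, once the balls are non-adjacent; small $d$ is covered by the trivial bound $1$. This avoids the Edwards--Sokal coupling entirely and is arguably cleaner. It also makes your preliminary reduction to a ball of radius $d$, and the midpoint-ball discussion, superfluous.

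One correction is needed. The monotonicity in point (2) must be justified by GKS, not FKG. The function $\sigma_x\sigma_y$ is not increasing in the spins: if $\sigma_y=-1$, flipping $\sigma_x$ from $-1$ to $+1$ lowers it from $+1$ to $-1$. So ``FKG for the increasing function $\sigma_x\sigma_y$'' and ``increases the correlation of increasing events by FKG'' are false as stated, and they are not ``equivalent'' to the Griffiths argument. FKG is legitimate only in the FK picture, where the relevant connection event is increasing in the edge configuration; that is precisely why the paper passes through FK. Since you also cite GKS, the argument stands once the FKG remarks are discarded.
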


\begin{proof}
For the first point, if $n:=\lfloor\dist(x,\p \Lambda)\rfloor\asymp\dist(x,\p \Lambda)$, then $\<{\sigma_x}_\Lambda\le \<{\sigma_0}_{\Lambda_n}$, by the classical monotonicity in the domain of the $+$ boundary Ising measures, where $\Lambda_n$ is the box of size $n$ centered at $0$. Furthermore, $\<{\sigma_0}^+_{\Lambda_n}\asymp n^{-1/8}$ by Theorem \ref{th.1pointFunction}.
\par For the second point, let $n:=\lfloor\|x-y\|/3\rfloor\asymp\|x-y\|$. Then we have
\begin{align*}
\<{\sigma_x\sigma_y}_\Lambda=\P_{FK,\Lambda}^{\mathrm{w}}(x\leftrightarrow y)\le\P_{FK,\Lambda}^{\mathrm{w}}\big(\{x\leftrightarrow x+\p\Lambda_n\},\{y\leftrightarrow y+\p\Lambda_n\}\big).
\end{align*}
Furthermore, since the FK--Ising model satisfies the lattice FKG condition, one has $\P_{FK,\Lambda}^{\mathrm{w}}\big(\{x\leftrightarrow x+\p\Lambda_n\}\md\{y\leftrightarrow y+\p\Lambda_n\}\big)\le\P_{FK,\Lambda_n+x}^{\mathrm{w}}\big(\{x\leftrightarrow \p\Lambda_n\}\big)$ so that 
\begin{align*}
\P_{FK,\Lambda}^{\mathrm{w}}\Big(\{x\leftrightarrow x+\Lambda_n\}, \{y\leftrightarrow y+\Lambda_n\}\Big) \le \P_{FK,\Lambda_n}^{\mathrm{w}}\big(\{0\leftrightarrow \p\Lambda_n\}\big)^2=\<{\sigma_0}_{\Lambda_n}^2.
\end{align*}
We get the desired conclusion using $\<{\sigma_0}^+_{\Lambda_n}\asymp n^{-1/8}$ once again. Finally, the inequality in the infinite volume case holds by taking the limit $\Lambda\to\T$ in point 2).
\end{proof}

\subsection{Sobolev spaces of negative index $H^{-\alpha}$}\label{ss.Sobolev}

Let $\Omega$ be a Jordan domain in $\R^2$. Recall that $H_0^1(\Omega)$ is the Hilbert space obtained as the closure of $C_0^\infty(\Omega)$ under the norm induced by the scalar product 
\begin{equation}
\<{f,g}_{H_0^1(\Omega)}:=\<{f,g}_\nabla= \int_\Omega \nabla f(x) \cdot \nabla g(x) dx.
\end{equation}

We may define $H^{-1}(\Omega)$ to be the dual of the Hilbert space $H^{1}_0(\Omega)$ so that for each $T\in H^{-1}(\Omega)$
\begin{align*}\label{}
\|T\|_{H^{-1}(\Omega)} = \sup_{g \in C_0^\infty(\Omega), \|g\|_{H_0^1(\Omega)} \leq 1} \<{T,g}_{L^2(\Omega)}\,.
\end{align*}
The Hilbert space $H^{-1}(\Omega)$ can also be seen as the closure of $C^\infty(\Omega)$ under the above norm (and associated scalar product).

\medskip One very convenient orthogonal basis of $H_0^1(\Omega)$ is obtained by considering an orthogonal basis of eigenfunctions $\{e_i\}_{i=1}^\infty$ of the Laplacian $\Delta$ on the domain $\Omega$ with Dirichlet boundary conditions and normalised so that $\| e_i\|_{L^2} = \int_\Omega e_i(x)^2  dx = 1$.  Let $\lambda_i >0$ be their corresponding eigenvalues $(-\Delta) e_i = \lambda_i e_i$. 
Using this natural $L^2$ orthonormal basis, one may check that for any $f\in C_0^\infty(\Omega)$, 
\begin{align*}\label{}
\|f\|_{H_0^1}^2 = \sum_{i=1}^\infty \lambda_i  \<{f,e_i}_{L^2(\Omega)}^2\,.
\end{align*}
(This follows readily from the integration by parts identity $\int_\Omega\nabla f(x)\cdot\nabla f(x)dx=\int  f(x) (-\Delta) f(x)  dx$, in other words, $\|f\|^2_{H_0^1} = \<{f, (-\Delta) f}_{L^2(\Omega)}$. We make extensive use here of the Dirichlet boundary conditions rooted in the definition of $H_0^1(\Omega)$). 

For any $\alpha>0$ and $f\in C_0^\infty(\Omega)$, we may also define

\begin{align}\label{e.H0A1}
\|f\|_{H_0^{\alpha}}^2 & = \sum_{i=1}^\infty (\lambda_i)^\alpha  \<{f,e_i}_{L^2(\Omega)}^2  \\
\|f\|_{H^{-\alpha}}^2 & = \sum_{i=1}^\infty \frac 1 {(\lambda_i)^\alpha}  \<{f,e_i}_{L^2(\Omega)}^2  =  \Big( \sup_{g \in C_0^\infty(\Omega), \|g\|_{H_0^\alpha(\Omega)} \leq 1}  \<{f,g}_{L^2(\Omega)} \Big)^2 
\label{e.H0A2}
\end{align}
as well as the associated Hilbert spaces $H_0^{\alpha}(\Omega)$ and $H^{-\alpha}(\Omega)$.  
One disadvantage of this spectral viewpoint is that even though the behaviour of the sequence $\{ \lambda_i\}_{i\geq 1}$ is rather well understood by Weyl's formula, the control on the eigenfunctions $\{ e_i\}_{i\geq 1}$ as a function of the domain $\Omega$ is more difficult.  
Especially in the case of $H^{-1}(\Omega)$, we may overcome this spectral difficulty by using the following useful expression of the $H^{-1}(\Omega)$ norm: for any function $f\in L^2(\Omega)$, 

\begin{align}\label{eq.Green-Norm H-1}
\|f\|_{H^{-1}}^2 = \iint_{\Omega\times\Omega}  f(x)f(y) G_\Omega(x,y)dx dy\,,
\end{align}
where $G_\Omega$ is the Green's function with Dirichlet boundary conditions of the domain $\Omega$ normalized to be the inverse of $-\Delta$, so that $G_\Omega(x,y)\sim-\frac1{2\pi}\log(\|x-y\|)$. (Recall that $L^2(\Omega)$ is embedded continuously in $H^{-1}(\Omega)$ so that $\|f\|_{H^{-1}}$ is a priori well-defined).

\begin{remark}\label{}
See also the recent work \cite[Section 2.2]{lupu2025relation} for an extension of this integral kernel type expression to other index Sobolev norms (at least in the setting where $\Omega=\C$). Note also that if this extension was also available in smooth domains $D$, this would provide a soft proof of the optimal Sobolev regularity result $H^{-1/8-}(\Omega)$ of \cite{Furlan_2017}.
\end{remark}

The following Proposition provides us with a useful sufficient criterion for tightness in $H^{-\alpha}(\Omega)$. 
\begin{proposition}[A sufficient condition for tightness]\label{pr.SuffTightness}
Let $\Omega$ be a Jordan domain and $0 < \alpha' < \alpha$. Let $\{f_n\}_{n\geq 1}$ be a sequence of random distributions in  $H^{-\alpha'}(\Omega)$ such that the sequence $n \mapsto\Eb{ \| f_n \|_{H^{-\alpha'}(\Omega)}}$ is bounded. Then the family $(f_n)$ is tight in $H^{-\alpha}(\Omega)$.
\end{proposition}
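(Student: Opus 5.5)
The plan is to show that the embedding $H^{-\alpha'}(\Omega)\hookrightarrow H^{-\alpha}(\Omega)$ is compact, and then deduce tightness from Markov's inequality. In the spectral picture \eqref{e.H0A2}, an element $f\in H^{-\alpha'}(\Omega)$ is encoded by its coefficients $c_i=\<{f,e_i}$, with $\|f\|^2_{H^{-\alpha'}}=\sum_i\lambda_i^{-\alpha'}c_i^2$. Here $\{e_i\}$ are the Dirichlet eigenfunctions and $\lambda_i\to\infty$.

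\textbf{Compactness of balls.} For $R>0$, let $K_R$ be the closed ball of radius $R$ in $H^{-\alpha'}(\Omega)$, viewed as a subset of $H^{-\alpha}(\Omega)$. We show $K_R$ is compact in $H^{-\alpha}$.

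First, split each $f\in K_R$ into its low modes $i\le N$ and its high modes $i>N$. The high-mode tail satisfies
\begin{equation*}
\sum_{i>N}\lambda_i^{-\alpha}c_i^2\le \lambda_{N}^{-(\alpha-\alpha')}\sum_{i>N}\lambda_i^{-\alpha'}c_i^2\le \lambda_N^{-(\alpha-\alpha')}R^2 .
\end{equation*}
Since $\lambda_i$ is nondecreasing and tends to infinity, this bound goes to $0$ uniformly on $K_R$. No Weyl asymptotics are needed, only $\lambda_i\to\infty$, which holds because $\Omega$ is bounded and the Dirichlet Laplacian has compact resolvent.

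Second, the projection onto the first $N$ modes is a bounded set in a finite-dimensional space, hence totally bounded. Combining the two steps, $K_R$ is totally bounded in $H^{-\alpha}$.

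Finally, $K_R$ is closed in $H^{-\alpha}$. If $f_k\in K_R$ converge to $f$ in $H^{-\alpha}$, then each coefficient converges. Fatou's lemma applied to the weighted sum then gives $\|f\|_{H^{-\alpha'}}\le R$. Being closed and totally bounded in a complete space, $K_R$ is compact.

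\textbf{Tightness.} Let $M=\sup_n\E\|f_n\|_{H^{-\alpha'}}<\infty$. By Markov's inequality,
\begin{equation*}
\P(f_n\notin K_R)=\P\big(\|f_n\|_{H^{-\alpha'}}>R\big)\le M/R
\end{equation*}
uniformly in $n$. Choosing $R=M/\varepsilon$ gives tightness in $H^{-\alpha}(\Omega)$.

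\textbf{Measurability.} The only point needing care is that $f_n$ is a random variable in the Polish space $H^{-\alpha}$. This holds because the inclusion $H^{-\alpha'}\hookrightarrow H^{-\alpha}$ is continuous, since $\lambda_i\ge\lambda_1>0$. It is also worth checking that the spectral definition of $H^{-\alpha}(\Omega)$ is a genuine Hilbert space, namely weighted $\ell^2$ sequences, so that completeness holds.

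There is no real obstacle. The key step is the uniform tail estimate, which is exactly where $\alpha'<\alpha$ is used.
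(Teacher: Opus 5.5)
Your proposal is correct and follows the same route as the paper: Markov's inequality applied to $\|f_n\|_{H^{-\alpha'}(\Omega)}$, combined with compactness in $H^{-\alpha}(\Omega)$ of closed $H^{-\alpha'}$-balls. The only difference is that you prove this compact embedding directly, via the spectral tail bound $\lambda_N^{-(\alpha-\alpha')}R^2$ and a closedness argument using Fatou, whereas the paper simply cites Rellich's theorem; this makes your argument self-contained.
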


\begin{proof} Let $A$ be a uniform bound for $\Eb{ \| f_n \|_{H^{-\alpha'}(\Omega)}}$, and let $\eps>0$. By Markov's inequality, \begin{align*}
		\P\big( \| f_n \|_{H^{-\alpha'}(\Omega)} > A/\eps\big) \le \eps.
	\end{align*}
By Rellich Theorem, $H^{-\alpha'}(\Omega)$ is compactly embedded in $H^{-\alpha}(\Omega)$, i.e. the set $K_\eps:=\{g\in H^{-\alpha}\md \|g\|_{H^{-\alpha'}}\le A/\eps\}$ is a compact subset of $H^{-\alpha}(\Omega)$. Thus, for some compact $K_\eps$, the family of numbers $\big(\P(\{f_n\not\in K_\eps\})\big)_n$ is bounded by $\eps$. In other words, the sequence $(f_n)$ is tight in $H^{-\alpha}(\Omega)$, as desired. 
\end{proof}

Finally, the following lemma will be useful:

\begin{lemma}\label{le.MeasurableCoeff}
Let $\Omega$ be a Jordan domain and $\alpha>0$ and let $\phi$ be a random distribution in  $H^{-\alpha}(\Omega)$. Let $(e_i)$ be any orthonormal basis of the space $H^\alpha_0(\Omega)$, and let $a_i=\<{\phi,e_i}$. Then $\phi$ is an explicit function of the $a_i$, which is measurable in the product $\sigma$--algebra of the $a_i$.
\end{lemma}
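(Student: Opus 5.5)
The idea is to reconstruct $\phi$ from its coefficients through the Hilbert space isometry between $H^{-\alpha}(\Omega)$ and the dual of $H^\alpha_0(\Omega)$. By definition (\eqref{e.H0A2}), $H^{-\alpha}(\Omega)$ is the dual of $H^\alpha_0(\Omega)$, with $\<{\cdot,\cdot}$ the duality pairing extending the $L^2$ product. Let $R:H^{-\alpha}(\Omega)\to H^\alpha_0(\Omega)$ be the Riesz isometry, so that $\<{\phi,g}=\<{R\phi,g}_{H^\alpha_0}$ for all $g\in H^\alpha_0$. Since $(e_i)$ is an orthonormal basis of $H^\alpha_0$, we have $R\phi=\sum_i \<{R\phi,e_i}_{H^\alpha_0}e_i=\sum_i a_i e_i$, with the series converging in $H^\alpha_0$ and $\sum_i a_i^2=\|\phi\|^2_{H^{-\alpha}}<\infty$. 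Hence
\begin{align*}
\phi = R^{-1}\Big(\sum_{i\ge1} a_i e_i\Big)=\sum_{i\ge 1} a_i\, R^{-1}e_i ,
\end{align*}
where the second series converges in $H^{-\alpha}(\Omega)$ and $(R^{-1}e_i)$ is an orthonormal basis of $H^{-\alpha}$. Concretely, in the spectral picture $R^{-1}e_i=(-\Delta)^{\alpha}e_i$ as a distribution. This gives the explicit formula.

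It remains to prove measurability. Define $\Psi:\R^{\N}\to H^{-\alpha}(\Omega)$ by $\Psi((x_i))=\lim_{N\to\infty}\sum_{i\le N}x_iR^{-1}e_i$ on the set
\begin{align*}
\mathcal A=\Big\{(x_i):\sum_i x_i^2<\infty\Big\},
\end{align*}
and set $\Psi=0$ elsewhere. The set $\mathcal A$ belongs to the product $\sigma$-algebra, because $\sum_i x_i^2$ is a countable supremum of continuous functions of finitely many coordinates. Each partial sum $\Psi_N$ is continuous from $\R^N$ to $H^{-\alpha}$, so it is measurable for the product $\sigma$-algebra. A pointwise limit of measurable maps into the metric (indeed Polish) space $H^{-\alpha}$ is Borel measurable. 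Therefore $\Psi\mathbbm 1_{\mathcal A}$ is measurable, and almost surely $\phi=\Psi((a_i))$ because $(a_i)\in\mathcal A$.

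Two checks remain. First, each $a_i=\<{\phi,e_i}$ is a random variable: the evaluation map $\phi\mapsto\<{\phi,e_i}$ is continuous on $H^{-\alpha}$. Second, the $\sigma$-algebra generated by $(a_i)$ contains that of $\phi$. This follows from the identity $\phi=\Psi((a_i))$: for a Borel set $B$, the event $\{\phi\in B\}=\{(a_i)\in\Psi^{-1}(B)\}$ is $\sigma(a_i)$-measurable.

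The only mildly delicate step is identifying $H^{-\alpha}$ with the dual of $H^\alpha_0$ for a general Jordan domain, together with the role of the Riesz map. This is built into the definitions \eqref{e.H0A1}--\eqref{e.H0A2}, since both spaces are diagonalized by the Dirichlet eigenbasis.
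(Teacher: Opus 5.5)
Your proof is correct and is essentially the paper's argument: your $R^{-1}e_i$ is exactly the dual basis $e_i^*$ the paper uses, since $\langle R^{-1}e_i,e_j\rangle=\langle e_i,e_j\rangle_{H^\alpha_0}=\delta_{ij}$, and the paper likewise writes $\phi$ as the $H^{-\alpha}(\Omega)$-limit of $\sum_{i\le N}a_ie_i^*$. The only difference is that you spell out, via the map $\Psi$ on $\R^{\N}$, why this limit is measurable for the product $\sigma$-algebra, a step the paper leaves implicit.
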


\begin{proof}
	Indeed, if $(e_i^*)$ is the dual basis of $(e_i)$, then $\phi$ is the $H^{-\alpha}(\Omega)$--limit of the random variables $\sum_{i\le N} a_ie_i^*$ as $N\to\infty$.
\end{proof}

\subsection{Critical magnetization field}\label{ss.IMF}

The Ising magnetization field (IMF)  is a random distribution introduced in \cite{Camia_2015}. It has been proven to converge as the mesh-size vanishes to a random distribution living in the Sobolev space $H^{-1/8-\eps}$ \cite{Camia_2015,Furlan_2017}. Our paper is concerned with analyzing the interplay between this continuous magnetization field and the $\cle_3$, the other main scaling limit of the critical Ising model, introduced above. More precisely, as explained in the Introduction, we aim to show that the IMF can be reconstructed in the limit from the set of interfaces. In particular, it provides a new proof of its existence (see as well \cite{lopez2026ising} for another work constructing the IMF from another perspective).
\par To this end, let us introduce formally the discrete IMF.

\begin{definition}\label{d.Phi}
Let $(D^a)_{a>0}$ be a discretization of a Jordan domain $D$.
We define the Ising Magnetization Field (IMF) to be the random distribution in $H^{-1}(D)$ defined by \begin{equation}
\Phi^a = \Phi^a(\sigma^a):= a^{-1/8} \sum_{z\in D^a} \sigma^a_z 1_{C_a(z)},
\end{equation} where for each vertex $z\in D^a$, $C_a(z)$ stands for the $a\times a$ square centered around $z$. 
\end{definition}

\begin{remark}
We chose to sum indicators of squares instead of a more standard $\delta_x$ mass to ensure, as in \cite{Camia_2015}, that the discrete fields $\Phi^a$ belong to $H^{-1}(D)$.
\end{remark}

In the rest of the article, for a spin configuration $\sigma^a$ on $D^a$, with a slight abuse of notation, we will extend $\sigma^a$ to the filling of $D^a$, i.e. $\overline{D}^a:=\bigcup_{z^a\in D^a} C_a(z^a)$ so that it is constant on the interior of each square $C_a(z^a)$ for each lattice point $z^a\in D^a$; the way we define $\sigma^a$ on the boundary of the squares is irrelevant and can be chosen arbitrarily. 
\par A very important fact to study the magnetization field is to show its tightness in an appropriate Sobolev space. This is the purpose of the following proposition.

\begin{proposition}[Tightness of the IMF]\label{pr.PhiTight}
For any Jordan domain $D$, the sequence of random distributions $\Phi^a$ is tight in $H^{-2}(D)$. 
\end{proposition}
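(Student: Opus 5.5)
The plan is to apply Proposition \ref{pr.SuffTightness} with $\Omega=D$, $\alpha=2$ and some $\alpha'\in(1,2)$. In fact we take $\alpha'=1$ and the target $H^{-2}$, which leaves room. So it suffices to show that $\Eb{\|\Phi^a\|_{H^{-1}(D)}}$ is bounded uniformly in $a$. By Cauchy--Schwarz it is enough to bound the second moment $\Eb{\|\Phi^a\|_{H^{-1}(D)}^2}$.

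Since $\Phi^a\in L^2(D)$, the Green's function representation \eqref{eq.Green-Norm H-1} gives
\begin{align*}
\Eb{\|\Phi^a\|^2_{H^{-1}(D)}} = a^{-1/4}\sum_{z,w\in D^a}\<{\sigma_z\sigma_w}^+_{D^a}\iint_{C_a(z)\times C_a(w)} G_D(x,y)\,dx\,dy .
\end{align*}
Here we would intersect the squares with $D$, or enlarge $D$ slightly to a fixed Jordan domain containing all $\overline D^a$. Monotonicity of $G$ in the domain keeps this harmless, and the $H^{-1}$ norm only increases under enlargement for functions supported inside. We use the positivity $G_D\ge0$ together with the bound $G_D(x,y)\le \frac1{2\pi}\log\frac{\diam(D)}{\|x-y\|}+C$.

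For the correlations, we use Proposition \ref{pr.BoundTwoPoint} in the form $\<{\sigma_z\sigma_w}^+_{D^a}\le C(\|z-w\|/a)^{-1/4}$ for $z\ne w$, and the trivial bound $1$ when $z=w$. The catch is that Proposition \ref{pr.BoundTwoPoint}(2) requires both points to be at distance at least $\|z-w\|$ from the boundary. To remove this constraint, we use monotonicity in boundary conditions. Through the FK representation with wired boundary, $\<{\sigma_z\sigma_w}^+_{D^a}=\P^{\mathrm w}_{FK}(z\leftrightarrow w \text{ or both}\leftrightarrow\p)$. We bound this by $\P(z\leftrightarrow \p B(z,r))\,\P(w\leftrightarrow\p B(w,r))$ with $r=\|z-w\|/3$, as in the proof of Proposition \ref{pr.BoundTwoPoint}. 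Each factor is at most $C(r/a)^{-1/8}$ by the first point of that proposition, applied in a box of size $r$ with wired/$+$ boundary. This yields $\<{\sigma_z\sigma_w}\le C(\|z-w\|/a)^{-1/4}$ uniformly, without any boundary condition on the points.

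Plugging this in, the sum is bounded by a Riemann sum:
\begin{align*}
C\iint_{\tilde D\times\tilde D} \big(\|x-y\|\vee a\big)^{-1/4}\Big(1+\log\tfrac{\diam \tilde D}{\|x-y\|}\Big)\,dx\,dy .
\end{align*}
The factors $a^{-1/4}$ and $a^{1/4}$ cancel exactly, and the diagonal and near-diagonal terms are absorbed by the $\vee a$ and by local integrability of the logarithm. This integral is finite uniformly in $a$, since $r^{-1/4}\log(1/r)$ is integrable against $r\,dr$ near $0$.

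The main obstacle is the uniform two-point bound near $\p D$. That is exactly the FK/FKG decoupling step above, and I would follow the argument of Proposition \ref{pr.BoundTwoPoint} verbatim, with the wired FK boundary treated as part of the connection event. With $\Eb{\|\Phi^a\|_{H^{-1}}}\le C$, Proposition \ref{pr.SuffTightness} (via Rellich, with $H^{-1}\hookrightarrow H^{-2}$ compact) gives tightness in $H^{-2}(D)$.
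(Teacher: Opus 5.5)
The step that fails is your claimed uniform bound $\langle\sigma_z\sigma_w\rangle^+_{D^a}\le C(\|z-w\|/a)^{-1/4}$ ``without any boundary condition on the points''. With $+$ boundary conditions, the \emph{uncentered} two-point function does not decay near $\partial D^a$. By Griffiths' inequalities, $\langle\sigma_z\sigma_w\rangle^+_{D^a}\ge\langle\sigma_z\rangle^+_{D^a}\langle\sigma_w\rangle^+_{D^a}$. For $z,w$ adjacent to $\partial D^a$, each factor is at least $\tanh\beta_c$, so the left-hand side stays of order one even when $\|z-w\|$ is macroscopic. In your FK argument, the error is the comparison with a wired box. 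The bound $\mathbb{P}^{\mathrm w}_{FK,D^a}(z\leftrightarrow\partial B(z,r))\le C(r/a)^{-1/8}$ requires $B(z,r)\subset D^a$. Otherwise the event is realized simply by $z$ connecting to the nearby wired boundary, which has probability of order one. This is exactly why Proposition \ref{pr.BoundTwoPoint}(2) carries its distance assumption.

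What your argument misses is the mean field. The correct pointwise bound is $\langle\sigma_z\sigma_w\rangle^+_{D^a}=\cov_{D^a}(\sigma_z,\sigma_w)+\langle\sigma_z\rangle^+\langle\sigma_w\rangle^+\le C(\|z-w\|/a)^{-1/4}+C\,a^{1/4}\dist(z,\partial D)^{-1/8}\dist(w,\partial D)^{-1/8}$. The covariance is handled by GHS: by Lemma \ref{l.covGHS} it is dominated by the full-plane two-point function, which does obey your bound with no boundary restriction. The second term comes from the one-point bound. This is the paper's split $\Phi^a=\widehat\Phi^a+a^{-1/8}\langle\sigma^a_\cdot\rangle$, and your computation is essentially the paper's proof for $\widehat\Phi^a$.

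The second term, however, contributes $I_D(D)=\iint_{D\times D}G_D(u,v)\dist(u,\partial D)^{-1/8}\dist(v,\partial D)^{-1/8}\,du\,dv$, and this is the genuinely delicate part for an arbitrary Jordan domain. With your crude bound $G_D\lesssim\log(\diam D/\|x-y\|)$, it is comparable (up to logarithms) to $\big(\int_D\dist(z,\partial D)^{-1/8}dz\big)^2$. That quantity is infinite for sufficiently rough boundaries, e.g.\ snowflake-type curves of dimension $>15/8$. One needs the decay of $G_D$ at the boundary, \eqref{eq.bounds-Green} (from Beurling's estimate), summed over a Whitney decomposition as in Lemma \ref{l.int-finite-Green-Dirichlet}. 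For the same reason, enlarging $D$ to a fixed domain containing all $\overline D^a$ is the wrong move: it discards exactly this boundary decay, which is why Theorem \ref{th.main-smooth-bdry} needs a Minkowski-dimension assumption.
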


To prove this proposition, it is convenient to consider first the case of the \emph{centered} IMF, that is the random distribution $\widehat{\Phi}^a=\Phi^a-\E[\Phi^a]=\big(z\mapsto a^{-1/8}\cdot(\sigma^a_z-\<{\sigma^a_z})\big)$.

\begin{proposition}[Tightness of the centered IMF]\label{pr.Phi-Centered-Tight}
	For any Jordan domain $D$, the sequence of random distributions $\widehat{\Phi}^a$ is tight in $H^{-2}(D)$. 
\end{proposition}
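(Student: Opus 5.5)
By Proposition \ref{pr.SuffTightness} (applied with $\alpha'=1<\alpha=2$), it suffices to show that $\Eb{\|\widehat{\Phi}^a\|_{H^{-1}(D)}}$ is bounded uniformly in $a$. By Cauchy--Schwarz it is enough to bound the second moment $\Eb{\|\widehat{\Phi}^a\|^2_{H^{-1}(D)}}$. We use the Green's function expression \eqref{eq.Green-Norm H-1}. Since $\widehat\Phi^a$ is a piecewise constant function in $L^2(D)$ (extended by zero outside $\overline D^a$; for simplicity we may assume $\overline D^a\subset D$ or work in a slightly larger domain, using monotonicity of Green's functions), Fubini gives
\begin{align*}
\Eb{\|\widehat{\Phi}^a\|_{H^{-1}}^2} = a^{-1/4}\sum_{z,w\in D^a} \cov(\sigma_z^a,\sigma_w^a)\iint_{C_a(z)\times C_a(w)} G_D(x,y)\,dx\,dy .
\end{align*}

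The key input is a bound on covariances valid uniformly up to the boundary. Here I would avoid the boundary-distance hypothesis in Proposition \ref{pr.BoundTwoPoint}(2) as follows. By the GKS/Griffiths inequalities (or simply positive association of the FK representation), $0\le\cov(\sigma_z,\sigma_w)\le \<{\sigma_z\sigma_w}_{D^a}^{+}$. Moreover, $\<{\sigma_z\sigma_w}^+_{D^a}$ is bounded by the corresponding plus-measure correlation on $D^a$ and, by FK monotonicity, by $\P^{\mathrm w}_{FK}$ of the connection event of $z$ and $w$ to boxes of size $\asymp|z-w|$, \emph{or} to the boundary. This requires care: with $+$ boundary conditions the correlation does \emph{not} decay, since both spins see the wired boundary. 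The covariance does decay, however. I would bound $\cov(\sigma_z,\sigma_w)\le \P^{\mathrm w}(z\leftrightarrow w,\ \text{not via }\p)\le C\,(|z-w|/a)^{-1/4}$, using the same FKG and box argument as in the proof of Proposition \ref{pr.BoundTwoPoint}. Concretely, $\cov(\sigma_z,\sigma_w)=\P^{\mathrm w}(z\leftrightarrow w)-\P^{\mathrm w}(z\leftrightarrow\p)\P^{\mathrm w}(w\leftrightarrow\p)$. On the event that $z$ and $w$ are connected to each other, either they lie in a cluster disjoint from the boundary or in the boundary cluster. The boundary part is cancelled up to an error controlled via FKG by the probability that both $z$ and $w$ connect to distance $|z-w|/3$. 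That probability is at most $\<{\sigma_0}^2_{\Lambda_n}\lesssim (|z-w|/a)^{-1/4}$ by the mixing argument already used in Proposition \ref{pr.BoundTwoPoint}. I expect this step, getting a boundary-uniform covariance bound, to be the main obstacle. A cruder fallback is $\cov\le 1$ for $|z-w|\le a$, together with the Cauchy--Schwarz and Proposition \ref{pr.BoundTwoPoint}(1)-type bound $\cov\le\<{\sigma_z\sigma_w}^{\mathrm{free}}\le C(|z-w|/a)^{-1/4}$. The latter holds since $\cov^+\le\cov^{\mathrm{full\ plane}}$-type comparisons follow from the GHS inequality, which asserts that covariances decrease under increasing external field, and the $+$ boundary condition acts as a positive field.

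With $\cov(\sigma^a_z,\sigma^a_w)\le C\min(1,(|z-w|/a)^{-1/4})$ in hand, we plug this into the sum and compare it with an integral. Since $a^{-1/4}(|x-y|/a)^{-1/4}=|x-y|^{-1/4}$, we get
\begin{align*}
\Eb{\|\widehat{\Phi}^a\|_{H^{-1}}^2}\le C\iint_{D\times D}|x-y|^{-1/4}\,|G_D(x,y)|\,dx\,dy + O(1),
\end{align*}
where the $O(1)$ accounts for near-diagonal cells and is controlled by $a^{-1/4}\cdot a^{-2}\cdot a^{4}\log(1/a)\to0$. The remaining integral is finite because $G_D(x,y)\lesssim 1+\log(\diam D/|x-y|)$ and $|x-y|^{-1/4}\log$ is integrable in dimension $2$ on the bounded domain $D$. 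This gives the uniform bound and hence tightness in $H^{-2}(D)$.
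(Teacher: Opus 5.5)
Your proposal is correct once you rely on your fallback, and that fallback is exactly the paper's argument. By Lemma~\ref{l.covGHS} (GHS, with the $+$ boundary condition viewed as a nonnegative field),
$$\cov_{D^a}(\sigma_z,\sigma_w)\le\cov_{a\T}(\sigma_z,\sigma_w)=\<{\sigma_z\sigma_w}_{a\T}\le C(|z-w|/a)^{-1/4}.$$
The last inequality is the full-plane case of Proposition~\ref{pr.BoundTwoPoint}, which needs no distance-to-boundary assumption. The rest of your argument is the same as in the paper: Proposition~\ref{pr.SuffTightness}, the Green-kernel formula, and integrability of $|x-y|^{-1/4}\log(1/|x-y|)$.

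You should, however, drop the FK route you put first, because it does not work as written. FKG only gives $\P(z\leftrightarrow\p,\,w\leftrightarrow\p)\ge\P(z\leftrightarrow\p)\P(w\leftrightarrow\p)$, which is a lower bound, so it cannot control the boundary part of the covariance from above. Moreover, for $z$ within $O(a)$ of $\p D^a$, the event that $z$ reaches distance $|z-w|/3$ has probability of order one, since it can pass through the wired boundary. So no $(|z-w|/a)^{-1/4}$ decay comes out of that argument. GHS is precisely the idea that gets around the boundary.

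There is also a small misattribution in your fallback. The bound $\<{\sigma_z\sigma_w}^{\mathrm{free}}_{D^a}\le C(|z-w|/a)^{-1/4}$ does not follow from Cauchy--Schwarz or from point (1) of Proposition~\ref{pr.BoundTwoPoint}. It follows from Griffiths' second inequality (free-boundary correlations increase with the domain) together with the full-plane estimate. More simply, apply Lemma~\ref{l.covGHS} directly with the full plane as the larger graph, as the paper does.
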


The tightness of the IMF will be a corollary of this last proposition together with the estimates on the one-point function derived in Proposition \ref{pr.BoundTwoPoint} and a technical but elementary result postponed in Appendix \ref{ap.Green-bounds-proof-finiteness}. 

\subsubsection*{GHS inequality and a consequence} One can prove Proposition \ref{pr.Phi-Centered-Tight} with the help of the following GHS inequality first proved in \cite{griffiths1970concavity}. The GHS inequality has many important corollaries and will be a key tool once again in Section \ref{se.IMFwrtCurve}.
Let us recall it here.

\begin{proposition}[GHS inequality \cite{griffiths1970concavity}]\label{pr.GHS}
Consider a ferromagnetic Ising model on a finite graph $G=(V,E)$ with nonnegative magnetic field. Then, for any $u,v,w\in V$ vertices \begin{equation}
\<{\sigma_u\sigma_v\sigma_w}-\big(\<{\sigma_u\sigma_v}\<{\sigma_w}+\<{\sigma_u}\<{\sigma_v\sigma_w}+\<{\sigma_u\sigma_w}\<{\sigma_v}\big)+2\<{\sigma_u}\<{\sigma_v}\<{\sigma_w}\le 0.
\end{equation}
\end{proposition}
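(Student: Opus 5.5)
The strategy is to recognize the left-hand side as the third Ursell (truncated) function
\begin{equation*}
u_3(u,v,w)=\frac{\partial^3 \log Z}{\partial h_u\,\partial h_v\,\partial h_w},
\end{equation*}
where $Z$ is the partition function with site-dependent fields $h=(h_x)_{x\in V}$, all $h_x\ge 0$. We will show $u_3\le 0$ by Lebowitz's method of duplicated variables. The first, purely algebraic step is to expand $u_3$ using four independent replicas $\sigma,\sigma',\sigma'',\sigma'''$ of the same Ising model, all with the same couplings $J_e\ge0$ and fields $h_x\ge 0$. Each product of one-, two- and three-point functions appearing in the expression is then the expectation of a product of spins taken from distinct replicas. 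This writes $u_3$ as a single expectation in the product measure $\langle\cdot\rangle^{\otimes 4}$.

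Next we perform the orthogonal change of variables at each site $x$:
\begin{align*}
\alpha_x&=\tfrac12(\sigma_x+\sigma'_x+\sigma''_x+\sigma'''_x),&
\beta_x&=\tfrac12(\sigma_x+\sigma'_x-\sigma''_x-\sigma'''_x),\\
\gamma_x&=\tfrac12(\sigma_x-\sigma'_x+\sigma''_x-\sigma'''_x),&
\delta_x&=\tfrac12(-\sigma_x+\sigma'_x+\sigma''_x-\sigma'''_x).
\end{align*}
Because the transformation is orthogonal, the interaction keeps its form:
\begin{equation*}
\sum_{\text{replicas}}\sigma_x\sigma_y=\alpha_x\alpha_y+\beta_x\beta_y+\gamma_x\gamma_y+\delta_x\delta_y .
\end{equation*}
The field term becomes $\sum_x 2h_x\alpha_x$. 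In the new variables we therefore have a ferromagnetic system of four coupled "spins" per site, with a nonnegative field acting only on $\alpha$. It has a product a priori single-site measure $\mu$, namely the image of the uniform measure on $\{\pm1\}^4$. Expanding the replica expression for $u_3$ in these variables should collapse it, after the cancellations, to
\begin{equation*}
u_3(u,v,w)=-2\,\big\langle \beta_u\gamma_v\delta_w\big\rangle_{\mathrm{dup}},
\end{equation*}
up to the correct combinatorial constant, which I expect to be $2$. Here $\langle\cdot\rangle_{\mathrm{dup}}$ denotes expectation in the duplicated system. Pinning down this identity is the routine but error-prone part; I would verify it by expanding each term in the replicas and using that the replicas are exchangeable.

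The remaining step, which I expect to be the main obstacle, is to show $\langle \beta_u\gamma_v\delta_w\rangle_{\mathrm{dup}}\ge 0$ by a Griffiths-type (GKS) argument. We expand the Boltzmann weight
\begin{equation*}
\exp\Big(\sum_e J_e(\alpha\alpha+\beta\beta+\gamma\gamma+\delta\delta)_e+\sum_x 2h_x\alpha_x\Big)
\end{equation*}
as a power series. Every coefficient in this expansion is nonnegative, since $J_e\ge 0$ and $h_x\ge 0$. After this expansion, both the numerator and the denominator of $\langle \beta_u\gamma_v\delta_w\rangle_{\mathrm{dup}}$ are nonnegative combinations of products, over sites, of single-site moments
\begin{equation*}
\int \alpha^{a}\beta^{b}\gamma^{c}\delta^{d}\,d\mu .
\end{equation*}
The conclusion therefore follows once these single-site moments are all nonnegative for every $a,b,c,d\ge 0$. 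This is where the specific choice of signs in the transformation matters.

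To check this positivity I would enumerate the $16$ points of $\{\pm1\}^4$ and their images $(\alpha,\beta,\gamma,\delta)$. The symmetry group generated by flipping the signs of pairs of coordinates, composed with replica permutations, should force every moment with a "wrong" parity to vanish. The surviving moments would then be sums of nonnegative terms. One convenient route is to note that $\alpha\beta\gamma\delta\ge 0$ on the support of $\mu$, or at least that the odd parts cancel pairwise under these symmetries. If this direct check turns out messy, the fallback is to reduce to the standard Ellis–Monroe–Newman formulation, which proves the same single-site inequality for the $\pm1$ measure. Finally we set $h\equiv$ the given nonnegative field, which gives the claim.
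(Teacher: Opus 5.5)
The paper gives no proof of this proposition; it quotes the inequality from Griffiths--Hurst--Sherman and uses it only through Lemma \ref{l.covGHS}. Your argument is the duplicated-variable proof due to Lebowitz (later systematized by Ellis, Monroe and Newman). It is correct as written, and both points you hedge on check out.

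\emph{The constant.} It is easiest to expand $\langle\beta_u\gamma_v\delta_w\rangle_{\mathrm{dup}}$ in the replicas, rather than expanding $u_3$ in the new variables. Write $\beta_u=\tfrac12\sum_i b_i\sigma^{(i)}_u$, $\gamma_v=\tfrac12\sum_i c_i\sigma^{(i)}_v$, $\delta_w=\tfrac12\sum_i d_i\sigma^{(i)}_w$, with $b=(1,1,-1,-1)$, $c=(1,-1,1,-1)$, $d=(-1,1,1,-1)$. Then $\sum_i b_i=\sum_i c_i=\sum_i d_i=0$ and $\sum_i b_ic_id_i=-4$. Hence the three-point function, the three mixed products and $\langle\sigma_u\rangle\langle\sigma_v\rangle\langle\sigma_w\rangle$ receive coefficients $-4,+4,+4,+4,-8$, all divided by $8$. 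This gives exactly $u_3=-2\langle\beta_u\gamma_v\delta_w\rangle_{\mathrm{dup}}$.

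\emph{Single-site moments.} The image of $\{\pm1\}^4$ under your map consists of the eight points $\pm2e_i$ together with the eight points of $\{\pm1\}^4$ satisfying $\alpha\beta\gamma\delta=+1$. This is where your sign of $\delta$ matters: with $-\delta$ the all-odd moments would be negative.

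Flipping the signs of any two of the four coordinates maps this set to itself. Each such flip is a replica permutation, possibly composed with a global spin flip. Consequently every moment whose exponents $a,b,c,d$ are not all of the same parity vanishes. All-even moments are integrals of nonnegative functions. All-odd moments equal $8/16=1/2$, since the axis points contribute $0$. So the fallback to Ellis--Monroe--Newman is unnecessary.

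Compared with the paper's bare citation, your route makes the result self-contained. It also gives the inequality for arbitrary site-dependent $J_e\ge0$ and $h_x\ge0$, together with the interpretation $u_3=\partial_{h_w}\mathrm{Cov}(\sigma_u,\sigma_v)$. That is exactly the form the paper needs in the proof of Lemma \ref{l.covGHS}, where $+$ boundary conditions are encoded as nonnegative fields and restriction to a subgraph as an infinite field.
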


This inequality has the following useful corollary, whose proof is included below for the sake of completeness.

\begin{lemma}[Covariances are increasing in the domain] \label{l.covGHS}
Let $G=(V,E)$ be a finite graph, with boundary $\partial G\subset V$, and $H$ be a subgraph of $G$ which is induced by a subset of vertices, with $\partial H$ containing $\partial G\cap V[H]$. For any $x,y\in V[H]$, the following inequality holds: $\cov_{H}(\sigma_x,\sigma_y)\le \cov_{G}(\sigma_x,\sigma_y)$ where covariances are taken with respect to Ising measure with + boundary condition (respectively on $\p H$ and $\p G$) and same nonnegative coupling constants.
\end{lemma}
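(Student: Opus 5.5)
The plan is to view the $+$ boundary condition on $\p H$ as a limit of magnetic fields on $G$, and then read the statement off the GHS inequality (Proposition \ref{pr.GHS}) as a monotonicity property of the covariance in the field. Concretely, for the Ising model on $G$ with $+$ boundary condition on $\p G$, add a field $h\ge 0$ at each vertex of $\p H\setminus \p G$. Write the resulting expectation as $\<{\cdot}_{G,h}$, with the same nonnegative couplings. The hypothesis $\partial H\supset \partial G\cap V[H]$ is what makes this work. All vertices of $H$ already frozen in $G$ stay frozen, and the extra boundary vertices of $H$ are exactly those that get the field.

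\textbf{Step 1: endpoints of the field interpolation.} At $h=0$ we have $\<{\cdot}_{G,0}=\<{\cdot}_G$. As $h\to+\infty$, the spins on $\p H\setminus\p G$ are forced to $+1$. Spins in $V[G]\setminus V[H]$ are then separated from the interior of $H$ by plus spins: every edge from the interior of $H$ leaving $V[H]$ ends in $\p H$, since $H$ is induced and we take the interior of $H$ as $V[H]\setminus \p H$. With nearest-neighbour interactions, the Markov property shows that the conditional law on $V[H]$ is exactly the $+$ boundary Ising measure on $H$. Hence $\cov_{G,h}(\sigma_x,\sigma_y)\to\cov_H(\sigma_x,\sigma_y)$ as $h\to\infty$. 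This is a finite-dimensional limit, so there is no issue of exchanging limits. If $x$ or $y$ lies on $\p H$, both covariances that involve boundary spins are $0$ on the $H$ side, and the inequality follows from Griffiths' second inequality ($\cov_G\ge0$). So we may assume $x,y$ are in the interior of $H$.

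\textbf{Step 2: monotonicity in $h$.} Fix a vertex $w$ with field $h_w$. The derivative is
\begin{align*}
\frac{\p}{\p h_w}\cov_{G,h}(\sigma_x,\sigma_y)=\<{\sigma_x\sigma_y\sigma_w}-\<{\sigma_x\sigma_y}\<{\sigma_w}-\<{\sigma_x}\<{\sigma_y\sigma_w}-\<{\sigma_x\sigma_w}\<{\sigma_y}+2\<{\sigma_x}\<{\sigma_y}\<{\sigma_w},
\end{align*}
which is the third truncated correlation (Ursell function). This computation is routine, obtained by differentiating $\<{\sigma_x\sigma_y}-\<{\sigma_x}\<{\sigma_y}$ using $\p_{h_w}\<{A}=\<{A\sigma_w}-\<{A}\<{\sigma_w}$. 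The $+$ boundary condition on $\p G$ counts as a nonnegative field, since it amounts to a boundary field on neighbours. So GHS applies, and the derivative is $\le0$ for every $w$ and every $h\ge0$.

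\textbf{Step 3: conclusion.} Integrate along the path $t\mapsto h=t\mathbf 1_{\p H\setminus\p G}$ from $t=0$ to $t=\infty$. The map $t\mapsto\cov_{G,t}(\sigma_x,\sigma_y)$ is then nonincreasing, so $\cov_H\le\cov_G$. The only point needing care is Step 1, namely checking that the $h\to\infty$ limit really gives the $H$-measure with $+$ boundary. This is where the hypothesis on $\p H$ and the fact that $H$ is induced enter. Everything else is a direct application of Proposition \ref{pr.GHS}.
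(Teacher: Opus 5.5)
Your argument is the paper's argument. GHS says exactly that $\p_{h_w}\cov(\sigma_x,\sigma_y)\le 0$ for fields $h\ge 0$, with the $+$ condition on $\p G$ counted as a nonnegative field, and the $+$ measure on $H$ is recovered as an infinite-field limit. You spell out the interpolation, placing the field on $\p H\setminus\p G$ where the paper speaks of an infinite field ``outside the subgraph''.

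One correction concerns Step 1. The separation you need is that no edge of $G$ joins $V[H]\setminus\p H$ to $V\setminus V[H]$. This does not follow from $H$ being induced. Your sentence that such edges ``end in $\p H$'' cannot be right either, since $\p H\subset V[H]$. Inducedness only guarantees that the couplings between vertices of $V[H]$ agree in $G$ and $H$.

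Separation is an extra hypothesis, left implicit both in the lemma and in the paper's one-line proof, and the statement is false without it. Take $G$ to be the path $b\sim x\sim c$ with coupling $J>0$ and $\p G=\{b,c\}$. Take $H$ to be the graph induced by $\{x,c\}$, with $\p H=\{c\}=\p G\cap V[H]$. Then
$$\cov_H(\sigma_x,\sigma_x)=1-\tanh^2 J>1-\tanh^2(2J)=\cov_G(\sigma_x,\sigma_x).$$
Here the field on $\p H\setminus\p G=\emptyset$ does nothing, so your $h\to\infty$ limit is still the $G$-measure.

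In the paper's application the separation does hold, because $\p H$ consists of all explored sites adjacent to the component $D_i^a$. So you should state it as an assumption rather than derive it: $\p H$ must contain every vertex of $V[H]$ having a $G$-neighbour outside $V[H]$. With that assumption added, your proof is complete.
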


We will actually use this lemma in the case where $G$ is infinite; it follows from the finite case by taking limits since $\sigma_x,\sigma_y$ are cylinder functions.

\begin{proof}[Proof of lemma \ref{l.covGHS}]
    GHS inequality says precisely that the partial derivatives of the covariance functions w.r.t. the external magnetization field $h$ are nonpositive, on the range $h\ge0$. As restricting to a subgraph amounts to imposing an infinite magnetization field outside of the subgraph, the covariances must decrease.
\end{proof}

\begin{proof}[Proof of the tightness of the centered IMF, Proposition \ref{pr.Phi-Centered-Tight}]
By Proposition \ref{pr.SuffTightness}, it is sufficient to show that 
\begin{align*}\label{}
\limsup_{a\to 0} \E\left[\| \widehat{\Phi}^a \|_{H^{-1}(D)}^2\right] < \infty.
\end{align*}
Now we have for each $a>0$ and using~\eqref{eq.Green-Norm H-1}, 
\begin{align*}\label{}
 \E\left[\| \widehat{\Phi}^a \|_{H^{-1}(D)}^2\right]
 & =  a^{-1/4} \iint_{D\times D} \cov_{D^a}(\sigma^a_u, \sigma^a_v) G_D(u,v)du dv.
\end{align*} 

\par Since covariances are increasing in the domain, we can bound $\cov_{D^a}(\sigma^a_u, \sigma^a_v)$ by $\cov_{a\T}(\sigma^a_u,\sigma_v^a)=\<{\sigma^a_u\sigma^a_v}_{a\T}$ where the subscript $a\T$ means that we are considering expectation w.r.t. the full-plane critical Ising model on $a\T$ (well defined since the phase transition is continuous). However, the full-plane correlation function can be upper bounded from Proposition \ref{pr.BoundTwoPoint}, so that one only needs to show that 
\begin{align*}
\iint_{D\times D} \|u-v\|^{-1/4} G_D(u,v)du dv<+\infty.
\end{align*}
This is straightforward: we can write $G_D(u,v)=O\big(-\log(\|u-v\|)\big)$ and $\log(\|u-v\|)\|u-v\|^{-1/4}$ is integrable on $D$.
\end{proof}


\begin{proof}[Proof of the tightness of the IMF, Proposition \ref{pr.PhiTight}]
By the previous proposition, what remains to be done is to show that the family of functions $z\mapsto\<{\sigma^a_z}$ is tight in the Sobolev space $H^{-2}(D)$; to this end, by Proposition \ref{pr.SuffTightness} again, one just needs to show that \begin{align}\label{eq.Norm H-1 --1-pt}
\|a^{-1/8}\<{\sigma_\cdot^a}\|_{H^{-1}(D)}^2= a^{-1/4}\iint_{D\times D}G_D(u,v)\<{\sigma_u^a}\<{\sigma_v^a}dudv
\end{align} is bounded uniformly on $a>0$. Now, $\<{\sigma_u^a}\le \dist(u,\p D)^{-1/8}a^{1/8}$ by Proposition \ref{pr.BoundTwoPoint}, and similarly for $v$. Hence, the uniform boundedness of the norm \eqref{eq.Norm H-1 --1-pt} boils down to the finiteness of
\begin{align}\label{e.I_D(D)}
I_D(D):=\iint_{D\times D}G_D(u,v)\dist(u,\p D)^{-1/8}\dist(v,\p D)^{-1/8}dudv \,.
\end{align}
We give a proof of the finiteness of this integral in Appendix \ref{ap.Green-bounds-proof-finiteness}, see Lemma \ref{l.int-finite-Green-Dirichlet} and the proof following it.
\end{proof}

\subsubsection*{Remarks on the space $H^{-2}(D)$ where our fields live}

Note that under the setting of Definition \ref{d.Phi}, the magnetization field cannot be tested against test functions that do not vanish on the boundary, since they are not elements of $H^1_0(D)$. This is not a defect of our work since for domains such that $\int_D\dist(z,\p D)^{-1/8}=+\infty$ the random variables $m^a=\<{\Phi^a,\1}$ can be seen not to be tight (using the tools developed above) and so there can be no hope of proving a tightness result such as Proposition \ref{pr.PhiTight} in a framework that allows testing against the function $\1$.

In Section \ref{s.RoughBoundary} we explain how to fix this in the case of a domain whose boundary is not too rough in a precise sense, see Theorem \ref{th.main-smooth-bdry} there.



\section[Magnetization field conditioned on loops]{Magnetization field conditioned on the loops intersecting a curve.}\label{se.IMFwrtCurve}


In this section, given a test function $f\in C_0^\infty(D)$, 
we will study the conditional expectation \begin{equation}\label{eq.CondExpectation}
\Eb{\<{\Phi,f}\md\Gamma(\eta)},
\end{equation} where $\Gamma(\eta)$ is the set of loops $\ell\in\Gamma$ that intersect a given smooth curve $\eta$\footnote{We will consider more generally $\eta$ made of a finite union of smooth curves.}. 
This is a key step for the proof of our main theorem since we ought to explicitly describe the random variable $\<{\Phi,f}$ as a function of $\Gamma$. 
\par Our goal will be to define in the continuum a candidate $H_{f,\eta}$ for this conditional expectation \eqref{eq.CondExpectation} as a measurable function of $\Gamma(\eta)$ and then to show that the analogous conditional expectation in the discrete, which we will denote $H^a_{f,\eta}$ in the following (see Proposition \ref{pr.MarkovForPhi eta}), converges to it. This convergence will have as a corollary that our candidate is indeed the correct conditional expectation. This will be the content of Theorem \ref{th. cond. expect. of magn wrt Gamma(eta)}.
 
\medskip The first step will be to explain how to explore in a Markovian way (in the discrete) the loops intersecting $\eta$. We will define the objects of interest in the continuum by analogy. We will then proceed with the proof of the convergence.
\par To prepare our work, let us define a \emph{smooth curve in} $D$ as the injective image of a $C^1$ path $\eta:[0,1]\to \overline{D}$ such that $\eta((0,1))\subset D$ but $\eta(0),\eta(1)\in\p D$. A \emph{smooth grid in} $D$ is simply a finite union of smooth curves, having finitely many intersection points.
In particular, smooth curves and smooth grids are closed subsets of $\overline D$. It will be important in Section \ref{ss.Set-up-mainThm} to consider smooth grids rather than smooth curves.
\par Let us fix as well the notations that we will use: $(D^a)_{a>0}$ is a discretization of a Jordan domain $D$ and $\eta=\bigcup_{i=1}^k\eta_i$ is a smooth grid in $D$ where $(\eta_i)_{1\le i\le k}$ are smooth curves.


\subsection{Setup and discrete exploration procedure.}\label{ss.ExplorationProcedure}

In this Subsection, we will define an exploration procedure of some spins of a $\{\pm\}$-configuration $\sigma$ on $a\T$ which is identically + outside of $D^a$. The exploration corresponds to disclosing the interfaces of the $+/-$ clusters of $\sigma$ that intersect the grid $\eta$, as explained in Subsection \ref{ss.Strategy Proof}.

\medskip For each $a$ small enough and $i\in\{1,\ldots,k\}$, we choose a self-avoiding path $\eta_i^a$ in $a\T$ such that:
\bi
\item[(i)] For any $i$, $\eta_i^a$ converges to $\eta_i$ (in the Hausdorff topology) as $a\to 0$. 
\item[(ii)] The beginning and end of each of the paths $\eta_i^a$ are both outside of $D^a$. 
\ei
We will write $\eta^a=\bigcup_i\eta_i^a$, viewed as a subset of $D^a$, and we will say that $(\eta^a)_{a>0}$ is a \emph{discretization} of $\eta$ (associated to the discretization $(D^a)$).

\begin{figure}[!htp]
\begin{center}
\includegraphics[width=\textwidth]{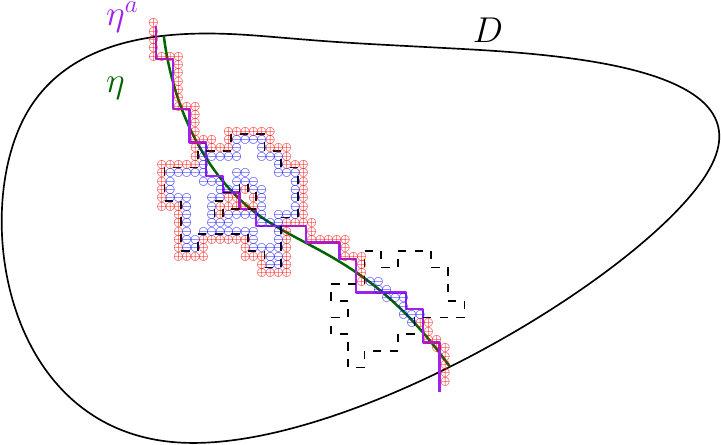} 
\end{center}
\caption{Illustration of the exploration procedure leading to $\Gamma^a(\eta)$ in the case of a square lattice. As opposed to the cleaner case of the triangular lattice (harder to draw), such an exploration would require to prescribe some conventions for the $\Z^2$ lattice. However, as proven in \cite{benoist2019scaling}, any given such convention converges in the limit to the same $\cle_3$. The convention used in the above picture is not explained and is not relevant in the limit.}\label{f.exploration-nested}
\end{figure}


\bigskip\ni
\textbf{Exploration procedure.} Let us assume first that $\eta=\eta_1$ is a smooth curve in $D$. Write the self-avoiding path $\eta^a$ as $(x_0,\ldots, x_N)$ where $\sigma^a(x_0)=\sigma^a(x_N)=+$, and write $\Gamma^a=\Gamma^a(\sigma^a)$ for the collection of loops on the dual lattice separating + and $-$ clusters. 
\bi 
\item[a)] We reveal all the spins $\sigma^a(x_0),\ldots, \sigma^a(x_N)$.
\ei
\ni Now, let $I=\{i_1,\ldots, i_M\}$ be the set of indices $i\in\{1,\ldots, N-1\}$ such that $\sigma^a(x_i)\ne\sigma^a(x_{i+1})$.

\bi
\item[b)] For $i\in I$, there is a loop $\ell\in \Gamma(\sigma^a)$ passing between $x_i$ and $x_{i+1}$; we explore it, that is we explore the spins as we follow the $+/-$ interface, starting in one direction from the part of the interface between $x_i$ and $x_{i+1}$. 
\ei 
\ni Of course, a loop $\ell$ that will be explored in this step may be explored many times (actually, it will be explored the number of times it intersects $\eta^a$, so an even number of times). This is not a problem: it is just a waste of time. During the exploration of a loop, we reveal precisely the spins which are on either side of the loop. See Figure \ref{f.exploration-nested} for a drawing of the spins discovered during the exploration procedure.
\par In case $\eta$ is a smooth grid, i.e. a finite union $\bigcup_i\eta_i$, just do all the explorations one by one in any order. (The set of vertices revealed doesn't depend on the order of exploration.)

\medskip Let us give names for the data collected during the exploration.
\bnum[{\bf (i)}]
\item We write $\Gamma^a(\eta)$ for the set of loops discovered during the exploration, i.e. the loops intersecting $\eta$: \begin{align}\label{e.GammaEtaA}
\Gamma^{a}(\eta):=  \big\{ \ell \in \Gamma^{a}, \ell \cap \eta^a \neq \emptyset \big\}\,.
\end{align}
\enum 
The set $\Gamma^{a}(\eta)$ contains the same information as what was revealed by the Markovian exploration procedure. The counterpart in the continuum is \begin{align}\label{e.GammaEta}
\Gamma(\eta):=  \left\{ \ell \in \Gamma \md \ell \cap \eta \neq \emptyset \right\}\,,
\end{align}
seen as a random element of the set of loops $\caX_D$; the map $\Gamma\mapsto\Gamma(\eta)$ is indeed measurable, see Lemma \ref{l. intersection with closed set is measurable}. We will prove in Proposition \ref{pr.JointConv} that $\Gamma^a(\eta)$ converges jointly with $\Gamma^a$ to the continuous coupling $\big(\Gamma(\eta),\Gamma\big)$.

\bnum[{\bf (ii)}]
    \item We define $\Expl^{a}(\eta)=\Expl^{a}(\Gamma^a(\eta))$ to be the set of sites in $a\T$ which are visited by the exploration procedure, which we view as a closed set in the Hausdorff topology.   
\enum
    
\ni
In the continuum, we define $\Expl(\eta)$ by \begin{equation}
\Expl(\eta)=\eta\cup\bigcup_{\ell\in\Gamma(\eta)}\im(\ell).
\end{equation}
Lemma \ref{l. intersection with closed set is measurable} shows that this is indeed a closed set, measurable in $\Gamma$. Notice that in the discrete, the random variable $\Expl^a(\eta)$ contains precisely the same information as the set of loops $\Gamma^a(\eta)$. We will often deal with the complementary of $\Expl(\eta)$, which we write $D^\eta=D\setminus\Expl(\eta)$.

\medskip Finally, an important piece of data is the number of loops surrounding a given $z$ since its parity will give the sign of the boundary condition of its connected components of the spins not yet explored: 
\bnum[{\bf (iii)}]
\item for $z\in D^a\setminus\Expl^a(\eta)$, let $D_z^a$ be its connected component in $D^a\setminus\Expl^a(\eta)$ and let $\nes^a(z)=\nes_{\Gamma^a(\eta)}(z)$ be the number of loops of $\Gamma^a(\eta)$ which surround $z$.
\enum
In the continuum, for $z\not\in\Expl(\eta)$, let $D_z$ be its connected component in $D^\eta=D\setminus\Expl(\eta)$ and let $\nes(z)=\nes_{\Gamma(\eta)}(z)$ be the number of loops of $\Gamma(\eta)$ surrounding $z$. It is almost surely finite for any such $z$ since $\Gamma$, and hence $\Gamma(\eta)$, is locally finite (in the sense that it contains a finite number of loops of diameter larger than $\eps$ for any $\eps>0$). 

\medskip An immediate consequence of this exploration procedure is the following Proposition:

\begin{proposition}[Markov Property for the Discrete Field]\label{pr.MarkovForPhi eta}
Let $f:\overline D\to \R$ be a continuous function and let $H_{f,\eta}^a:=\Eb{\<{\Phi^a,f}\md\Gamma^a(\eta)}$. Almost surely, we have \begin{equation}\label{eq.H f eta discret}
H_{f,\eta}^a := a^{-1/8}\sum_{z\in D^a}\bar{f}(z)\<{\sigma^a(z)}^+_{D_z^a}(-1)^{\nes^a(z)}
\end{equation}
where $\bar{f}$ is the convolution of $f$ with the indicator of a square of size $a$ centred at $0$.
\rm 
\end{proposition}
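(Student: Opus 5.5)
The plan is to combine the discrete spatial Markov property of the Ising model with the structure of the exploration. First, write $\<{\Phi^a,f}$ as a sum over sites. Since $\Phi^a=a^{-1/8}\sum_z\sigma^a_z 1_{C_a(z)}$, we get
\begin{align*}
\<{\Phi^a,f}=a^{-1/8}\sum_{z\in D^a}\sigma^a_z\int_{C_a(z)}f,
\end{align*}
and the coefficient $\int_{C_a(z)}f$ is exactly $\bar f(z)$, the convolution of $f$ with the indicator of the square centred at $0$ (up to the normalisation convention for $\bar f$). By linearity of conditional expectation it then suffices to identify $\Eb{\sigma^a_z\md\Gamma^a(\eta)}$ for each $z\in D^a$.

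Second, condition on $\Gamma^a(\eta)$, equivalently on the revealed set $\Expl^a(\eta)$ and all spins on it. There are two cases.
\begin{itemize}
\item If $z\in\Expl^a(\eta)$, then $\sigma^a_z$ is revealed and is the spin on its side of the adjacent explored interface. This sign is again $(-1)^{\nes^a(z)}$, with the convention that a revealed site counts the loops surrounding it. To make the formula literally correct there, we adopt the convention that $\<{\sigma^a_z}^+_{D^a_z}=1$ when $D^a_z$ reduces to a boundary point, i.e. the spin is fixed.
\item If $z\notin\Expl^a(\eta)$, then $z$ lies in a connected component $D^a_z$ of $D^a\setminus\Expl^a(\eta)$.
\end{itemize}

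In the second case, the key observation is that all sites of $a\T$ adjacent to $D^a_z$ belong to $\Expl^a(\eta)$ or lie outside $D^a$ (spin $+$). Moreover, they all carry the same spin, namely $(-1)^{\nes^a(z)}$. The reason is that the outer boundary of $D^a_z$ consists of sites bordering explored interfaces from the side facing $z$. Crossing one loop of $\Gamma^a(\eta)$ flips the sign, and starting from the $+$ boundary the sign after $\nes^a(z)$ crossings is $(-1)^{\nes^a(z)}$. There can be no unexplored interface separating $z$ from this boundary inside $D^a_z$ that changes the boundary sign, because all boundary sites of $D^a_z$ are adjacent to one another through the explored region, or to a common explored loop.

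Third, the exploration is a stopping-set procedure: the event $\{\Expl^a(\eta)=E,\ \sigma|_E=s\}$ depends only on the spins in $E$. The domain Markov property of the nearest-neighbour Ising model then gives that, conditionally on this event, the spins in each component of $a\T\setminus E$ are independent Ising models with boundary condition $s$. For $D^a_z$ this boundary condition is constant and equal to $(-1)^{\nes^a(z)}$. By spin-flip symmetry, $\Eb{\sigma^a_z\md\Gamma^a(\eta)}=(-1)^{\nes^a(z)}\<{\sigma^a_z}^+_{D^a_z}$. Summing over $z$ yields \eqref{eq.H f eta discret}.

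The main obstacle is the topological claim that the boundary of every unexplored component is monochromatic with sign $(-1)^{\nes^a(z)}$. I would argue it on the triangular lattice, where $\pm$ clusters and their interface loops are dual. The outer boundary of $D^a_z$ consists of sites lying either on $\eta^a$ or adjacent to an explored loop. Any two consecutive such boundary sites are connected without crossing an interface, since otherwise that interface would meet the boundary and hence be explored. The only exception is a crossing through $\eta^a$, and any sign change along $\eta^a$ triggers exploration of the corresponding loop. Hence the boundary spin is constant, and its value is read off by parity of surrounding explored loops.
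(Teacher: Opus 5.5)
Your reduction is the right one, and it is what the paper has in mind when it calls the proposition an immediate consequence of the exploration. You write $\langle\Phi^a,f\rangle=a^{-1/8}\sum_z\bar f(z)\sigma^a_z$, use that the exploration is a stopping set so the domain Markov property applies to each component of $D^a\setminus\Expl^a(\eta)$, and finish with spin-flip symmetry once the boundary condition of $D^a_z$ is known to be the constant $(-1)^{\nes^a(z)}$.

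The gap is in the step you single out as the main obstacle. The reason you give there is false: an interface separating two consecutive boundary sites ``would meet the boundary and hence be explored''. Touching a revealed site does not make a loop explored. A $+$ site of $\eta^a$, or a $+$ site on the outer side of an explored loop, may well be adjacent to a small $-$ island whose boundary loop never crosses $\eta^a$. The same unproved point sits behind two further assertions: that a revealed site has spin $(-1)^{\nes^a(z)}$, and that the boundary value is ``read off by parity of surrounding explored loops''. A priori $\sigma^a_v$ equals $(-1)$ raised to the number of \emph{all} loops of $\Gamma^a$ surrounding $v$, explored or not.

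What is needed is the following fact, which is where condition (ii) on $\eta^a$ enters: \emph{no loop of $\Gamma^a\setminus\Gamma^a(\eta)$ surrounds an explored site.}

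First case: an interface loop $\ell$ surrounds a site of $\eta^a$. The endpoints of $\eta^a$ lie outside $D^a$, hence in the exterior of every interface loop. So some edge of $\eta^a$ crosses $\ell$, and $\ell\in\Gamma^a(\eta)$.

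Second case: $\ell$ surrounds a site $v$ adjacent to an explored loop $\ell'\neq\ell$. The edge from $v$ across $\ell'$ meets no other interface, so $\ell'$ meets the interior of $\ell$. Since interface loops on $\T$ are disjoint, $\ell'$ lies inside $\ell$. Then so does the edge of $\eta^a$ crossing $\ell'$, together with its endpoints, which brings you back to the first case.

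Two consequences follow.
\begin{itemize}
\item Every explored $v$ satisfies $\sigma^a_v=(-1)^{\nes_{\Gamma^a(\eta)}(v)}$.
\item If $z\notin\Expl^a(\eta)$ and $v$ is a boundary site of $D^a_z$, take a lattice path from $z$ through $D^a_z$ to $v$. Each of its edges has an unrevealed endpoint, while an edge crossing an explored loop has both endpoints revealed. So the path crosses no loop of $\Gamma^a(\eta)$, and $\nes_{\Gamma^a(\eta)}(v)=\nes^a(z)$.
\end{itemize}

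Together these give both the constancy and the value of the boundary condition in one stroke, without walking along consecutive boundary sites. They also justify your case $z\in\Expl^a(\eta)$, and the fact that conditioning on $\Gamma^a(\eta)$ is the same as conditioning on $\Expl^a(\eta)$ and the spins on it. With this lemma inserted, your proof is complete.
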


(For $z\in \Expl^a(\eta)$, the term $\<{\sigma^a(z)}^+_{D_z^a}(-1)^{\nes^a(z)}$ is simply $\sigma^a(z)$ since the spin at $z$ has been discovered by the exploration.)

\subsection{Dimension of the fractal set $\Expl(\eta)$.}\label{ss.dim-Expl(eta)}
It is a key technical point in what follows to analyze the geometry of the set $\Expl(\eta)$ and more specifically to upper-bound its Minkowski dimension. For this, we use exact values of some arm exponents that were derived in \cite{wu2018alternating}.

\begin{proposition}\label{pr.dim-Expl(eta)}
With the notations above, for any compact set $K\subseteq D$ the upper Minkowski dimension of the random set $\Expl(\eta)\cap K$ is almost surely less than or equal to $\frac{11} 8$.
\end{proposition}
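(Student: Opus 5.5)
The plan is a first-moment (covering) argument. Fix a compact $K\subseteq D$ and cover it by a grid of squares $Q$ of side $\eps$. The set $\Expl(\eta)\cap K$ meets a square $Q$ only if $Q$ meets $\eta$ (at most $O(\eps^{-1})$ such squares, since $\eta$ is a finite union of $C^1$ curves) or $Q$ meets a loop $\ell\in\Gamma$ that also intersects $\eta$. Writing $N_\eps$ for the number of squares hit, it suffices to prove $\E[N_\eps]\le C\eps^{-11/8-o(1)}$. Indeed, applying this along $\eps=2^{-n}$ with Markov's inequality and Borel--Cantelli gives $N_{2^{-n}}\le 2^{n(11/8+\delta)}$ eventually, for every $\delta>0$, hence $\dimM(\Expl(\eta)\cap K)\le 11/8$ almost surely.

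Group the squares according to their distance $r$ to $\eta$, using dyadic scales $r\in[2^{j}\eps,2^{j+1}\eps)$. There are $O(\eps^{-1}\cdot 2^j)$ squares at such a distance inside $K$. A square $Q$ at distance $r$ from $\eta$ is hit only if some loop of $\cle_3^\nes$ comes within $\eps$ of the center $z$ of $Q$ and also reaches distance $r$ from $z$. This is a crossing of the annulus $A(z,\eps,r)$ by a loop, which produces two arms of an interface, i.e.\ an alternating two-arm event in the annulus for the nested $\cle_3$ (one $+$ and one $-$ boundary-to-boundary cluster arm). Here I would invoke the exponents computed in \cite{wu2018alternating}: the interior alternating $2j$-arm exponents for $\cle_\kappa$ are $\alpha_{2j}=(16j^2-(4-\kappa)^2)/(8\kappa)$. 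For $\kappa=3$, $j=1$ this gives $\alpha_2=(16-1)/24=5/8$. So the probability of the event is $(\eps/r)^{5/8+o(1)}$, uniformly for $z\in K$, which is away from $\p D$.

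Summing gives
\[
\E[N_\eps]\lesssim \eps^{-1}+\sum_{j}\eps^{-1}2^{j}\,(2^{-j})^{5/8-o(1)}\lesssim \eps^{-1}\,\eps^{-(3/8)-o(1)}=\eps^{-11/8-o(1)},
\]
because the sum is dominated by $2^j\asymp\eps^{-1}$, i.e.\ macroscopic $r$. This yields exactly $1+(2-1-5/8)=11/8$, which matches the statement, so this is the intended decomposition. The nested structure only helps: a single loop crossing suffices, and there is no need to count layers, since one arm event is all we use.

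The main obstacle is the arm estimate itself: we need an up-to-constants, or at least $r^{o(1)}$-precise, upper bound $\P(\text{some loop of }\Gamma\text{ crosses }A(z,\eps,r))\le C(\eps/r)^{5/8-\delta}$ that is uniform over $z\in K$ and over $r$ up to macroscopic scale, including the effect of nesting (loops of all layers). I would deduce it from \cite{wu2018alternating} by conformal invariance and the Markov property of $\cle$: explore the loops surrounding the ball $B(z,r)$ until the first one whose interior contains $B(z,r/2)$ but not $B(z,r)$. Inside it we then have a fresh $\cle_3$ in a domain comparable to a disc of radius $\asymp r$, and Koebe distortion transfers the disc estimate. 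The subtle point is that loops of different layers crossing the annulus could add up. I would handle this by noting that, conditionally on the layer, each crossing costs the same exponent, while the number of nested loops surrounding $B(z,\eps)$ with outer radius $\ge r$ has an exponential tail. This only affects the constant, or at worst costs a $\log(r/\eps)$ factor absorbed in the $o(1)$.
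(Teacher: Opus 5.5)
Your counting argument is the paper's own: dyadic shells at distance $2^j\eps$ from $\eta$ containing $O(2^j\eps^{-1})$ squares, an alternating two-arm bound $(\eps/r)^{5/8}$ from Wu, the sum $\eps^{-1}\sum_j 2^{3j/8}\asymp\eps^{-11/8}$, and Borel--Cantelli along $\eps=2^{-n}$. The gap is in your last paragraph. It tries to justify the arm bound for the \emph{nested} continuum $\cle_3$ at all scales $r$, which is the only nontrivial input, and as written it does not go through.

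First, the stopping loop you describe need not exist: the first loop around $z$ that enters $B(z,r)$ may also enter $B(z,r/2)$, or even $B(z,\eps)$. More seriously, ``conditionally on the layer, each crossing costs the same exponent'' is not what conformal invariance and Koebe distortion give. Suppose the current domain $U=\mathrm{int}(\ell_{i-1})$ has $\rho=\dist(z,\p U)\ll r$ but still exits $B(z,r)$. Then mapping $U$ to the disc only yields a conditional bound $(\eps/\rho)^{5/8}$. A loop around $z$ comes within $\rho$ of $z$ with probability of order $(\rho/r)^{5/96}$, the carpet exponent $2-187/96$. So summing these conditional bounds over layers, even with an exponentially tailed number of layers, gives only about $(\eps/r)^{5/96}$. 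That yields a dimension bound no better than $187/96$.

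To reach $5/8$ in the continuum you would need one of two extra arguments. Either you charge the fact that $\ell_{i-1}$ itself crosses $A(z,\rho,r)$, which is a quasi-multiplicativity argument across layers. Or you prove a boundary arm bound for loops travelling along the rough curve $\ell_{i-1}$. In addition, when the conditioning domain has conformal radius $\gg r$, Koebe sends $A(z,\eps,r)$ to a small annulus around $0$. You would then need the disc estimate at all scales, not only the $\eps\to0$ statement with a fixed macroscopic outer radius. None of this is in the proposal.

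The paper avoids all of this by estimating in the discrete. Via $\Gamma^a\to\Gamma$, the probability that a loop of $\Gamma(\eta)$ meets a square at distance $2^k\eps$ from $\eta$ is bounded by the limsup of the probability of a discrete event. That event is that some $+/-$ interface of the Ising model on $D^a$ connects a slightly enlarged square to distance $\asymp 2^k\eps$. Whatever the nesting level of that interface, this is simply the alternating $(+,-)$ two-arm spin event in the annulus: a single event, with no decomposition into layers. RSW for FK--Ising and the mixing argument of Duminil-Copin--Hongler--Nolin compare it, up to constants, with the full-plane event. Wu computed the full-plane probability to be $(2^{-k})^{5/8}$; an $o(1)$ in the exponent would be harmless. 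Replacing your last paragraph with this discrete argument completes the proof.
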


\begin{proof}
Let us consider the case of one smooth curve $\eta$, the case of a smooth grid being very similar. For each $\eps>0$, we want to upper bound in expectation the number of $\eps$-squares which cover the set $\Expl(\eta)$. Consider the $\eps \Z^2$ tilings of such $\eps$-squares. Group the squares in terms of their dyadic distances from the curve $\eta$ (we should also take into account the distance to $\p D$ but we shall not do it here for simplicity). 
\bi
\item We have $O(\eps^{-1})$ such squares at distance $\eps$ from $\eta$
\item Then for any $1 \leq k \leq O(\log_2 (\frac 1 \eps))$ we have $2^{k}  \eps^{-1}$ such squares at distance between $2^{k-1} \eps$ and $2^k \eps$ from $\eta$. 
\ei
For any $1 \leq k \leq O(\log_2 (\frac 1 \eps))$, the probability for a square at distance $2^k \eps$ from $\eta$ to intersect $\Expl(\eta)$ is bounded from above by a constant times the plane alternating 2-arm probability for spin-Ising as computed by  Hao Wu in \cite{wu2018alternating}. The reason one can compare the finite domain 2-arm exponent with the plane 2-arm exponent is thanks to the mixing argument from \cite{duminil2011connection} (or rather the updated version available on H. Duminil-Copin's website).  Using RSW for FK-Ising,  it implies that the probability to have a spin two-arm event from $\eps$ to $2^k \eps$ inside a finite domain is up to constant the same as having a two-arm event in the full plane. The latter one is computed in \cite{wu2018alternating} and is equal to $\frac 5 8$.
(N.B. A similar -- but more subtle -- mixing argument will be needed in the proof of Lemma \ref{l. curve not tangent cle_3}). 
As such, the probability of a two-arm event from macroscopic distance $\eps$ to distance $2^k \eps$ is bounded above by 
\begin{align*}\label{}
O(1) \, \left( \frac \eps {2^k \eps}\right)^{5/8}\,.
\end{align*}
If $N_\eps$ denotes the number of $\eps$ squares in $\eps \Z^2$ which intersect $\Gamma^\eta$, we obtain the following first moment bound 
\begin{align*}\label{}
\Eb{N_\eps}  & \leq O(1) \sum_{k=0}^{C \log_2(\eps^{-1})} 2^{k} \eps^{-1} (2^k)^{-5/8} \\
& \leq O(1)  \eps^{-1 }\sum_{k=0}^{C \log_2(\eps^{-1})}  (2^{k})^{\frac {3} 8 } \\
& = O(1) \eps^{-1}  \eps^{-\frac {3} 8 } = O(1) \eps^{-\frac {11} 8}.
\end{align*}
This bound, coupled with a standard Borel-Cantelli argument, ends the proof.
\end{proof}

\subsection{An explicit candidate for the conditional expectation of the continuous observable.}\label{ss.candidate}
Recall that our goal in this section is to understand the conditional expectation $\E[\<{\Phi,f}\md\Gamma(\eta)]$. In the same way as in Subsection \ref{ss.Strategy Proof}, we introduce the following expression, a natural candidate for this conditional expectation by Proposition \ref{pr.MarkovForPhi eta}:
\begin{equation}\label{eq.H f eta}
H_{f,\eta}=H_{f,\eta}(\Gamma):= \calC_\T\int_{D}   f(z) \, (-1)^{\nes(z)} \,  CR_{D^\eta}(z)^{-1/8} dz\,,
\end{equation}
where we write $CR_{D^\eta}(z)$ for the conformal radius of $z\in D^\eta=D\setminus\Expl(\eta)$ in its connected component $D_z$ of $D^\eta$, which is a.s. simply connected. (Note that, since $\Expl(\eta)$ is a.s. of Lebesgue measure zero, we may view the integral above over $D^\eta$ instead.)

\begin{proposition}[Definition of $H_{f,\eta}$]\label{pr.Def H f eta}
If $\Gamma\sim\cle_3^\nes$, the integrand in \eqref{eq.H f eta} is a.s. integrable. The functional $H_{f,\eta}$ is thus well defined and measurable in $\Gamma$. 
\end{proposition}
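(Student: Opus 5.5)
Since $f\in C_0^\infty(D)$ is bounded with compact support $K\subset D$, integrability reduces to showing that a.s. $\int_K CR_{D^\eta}(z)^{-1/8}\,dz<\infty$; the sign $(-1)^{\nes(z)}$ plays no role here. By the Koebe quarter theorem, $CR_{D^\eta}(z)\asymp \dist(z,\p D_z)$, and $\p D_z\subset \Expl(\eta)\cup\p D$. On $K$ the distance to $\p D$ is bounded below, so the whole question is controlled by
\begin{align*}
\int_K \dist\big(z,\Expl(\eta)\big)^{-1/8}\,dz .
\end{align*}

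\textbf{Integrability.} I would use the dimension bound of Proposition \ref{pr.dim-Expl(eta)}. Fix $K'$, a compact neighbourhood of $K$ inside $D$. Almost surely $\overline{\dim}_M(\Expl(\eta)\cap K')\le 11/8<2$. Hence, for a random constant $C$ and any fixed $d\in(11/8,2)$, every $r$-neighbourhood satisfies
\begin{align*}
\mathrm{Leb}\{z\in K:\dist(z,\Expl(\eta))\le r\}\le C r^{2-d}.
\end{align*}
Points of $K$ whose closest point of $\Expl(\eta)$ lies outside $K'$ are at distance bounded below, so they contribute a bounded amount. Decomposing $K$ dyadically into shells $\{2^{-j-1}<\dist\le 2^{-j}\}$ then gives the bound $\sum_j 2^{j/8}\,C\,2^{-j(2-d)}<\infty$, since $2-d>1/8$; any $d<15/8$ suffices. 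Points of $\Expl(\eta)$ itself form a null set, being in particular of Minkowski dimension less than $2$. Alternatively, one can obtain finiteness in expectation directly from the arm-exponent estimate used in the proof of Proposition \ref{pr.dim-Expl(eta)}: $\P(\dist(z,\Expl(\eta))\le r)\lesssim r^{5/8}$ for $z$ at macroscopic distance from $\eta$, with the obvious modification near $\eta$. Either route works, and I would use the a.s. Minkowski bound.

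\textbf{Simple connectivity and measurability.} I also need that each $D_z$ is simply connected, so that $CR$ is defined. This holds because $\Expl(\eta)$ is a connected union of loops all attached to $\eta$, and $\eta$ touches $\p D$ at its endpoints. The complement components are therefore simply connected: no component of the complement can be enclosed in a way that leaves a hole, since every component of $\Expl(\eta)\cup\p D$ touches $\p D$. Measurability follows from three facts.
\begin{itemize}
\item $\Gamma\mapsto\Expl(\eta)$ is measurable into closed sets (Lemma \ref{l. intersection with closed set is measurable}).
\item For fixed $z$, the map $(\Expl(\eta),z)\mapsto CR_{D^\eta}(z)$ is measurable: it is a limit of conformal radii of approximating domains, via Carathéodory continuity.
\item $\nes(z)$ is measurable as a countable sum of indicators that loops of $\Gamma(\eta)$ surround $z$, with local finiteness making the sum finite.
\end{itemize}
Joint measurability in $(z,\Gamma)$ then allows Fubini, so the integral is a measurable function of $\Gamma$.

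\textbf{Main obstacle.} The main obstacle is the careful measurability of $z\mapsto CR_{D^\eta}(z)$ jointly in $\Gamma$. I would handle it by approximating $\Expl(\eta)$ by its closed $\delta$-neighbourhoods and using monotonicity of the conformal radius as $\delta\downarrow0$.
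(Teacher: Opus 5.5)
Your proof is correct and follows essentially the paper's route: the a.s.\ bound $\overline{\dim}_M(\Expl(\eta)\cap K')\le 11/8<15/8$ from Proposition~\ref{pr.dim-Expl(eta)} gives integrability of $\dist(\cdot,\Expl(\eta))^{-1/8}$ on the support of $f$, and Koebe transfers this to the conformal radius. Your dyadic-shell tube estimate is a repackaging of the Whitney-square argument in Lemma~\ref{l.dist^-1/8 int} and Remark~\ref{rk.bdry-smooth on a subset}, and your justifications of simple connectivity (via connectedness of $\Expl(\eta)\cup(\C\setminus D)$) and of joint measurability of $(z,\Gamma)\mapsto CR_{D^\eta}(z)$ by monotone approximation supply details that the paper only asserts.
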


\begin{proof}
By Proposition \ref{pr.dim-Expl(eta)} $\Expl(\eta)=\p D^\eta\setminus\p D$ is of Minkowski dimension less than $15/8$. Lemma \ref{l.dist^-1/8 int} (or rather Remark \ref{rk.bdry-smooth on a subset} following it, since $f$ is zero on a neighbourhood of $\p D$) implies that $f(z)\dist(z,\p D^\eta)^{-1/8}$ is integrable on $D^\eta$, and thus the same is true with the conformal radius.

Furthermore, the functional $\Gamma\mapsto H_{f,\eta}$ is clearly measurable. 
\end{proof}

\subsection{Skorokhod coupling.}\label{ss.Skorokhod}

For convenience we now couple discrete configurations and limiting objects on the same probability space. As often, one could proceed without Skorokhod but this will make the proof easier to explain and set up. So, thanks to Theorem \ref{th.CLE3}, let us assume that all the Ising spin configurations $\sigma^a$ are coupled together such that a.s. $\Gamma^a(\sigma^a)\xrightarrow[a\to0]{}\Gamma$ in the topological space of loop sets $\caX_D$ for some random $\Gamma\in\caX_D$, with $\Gamma\sim\cle_3^\nes$. 
\par One advantage of this coupling is that the joint convergence statements we are going to prove are a consequence of the convergence in probability of the observables we are interested in.

\subsection{Joint convergence of the data collected during the exploration.}\label{ss.JointConv}
In this subsection, we state and prove a stabilisation result concerning the sets introduced above. The main argument is to show that a.s. there are no loops in $\Gamma$ tangent to $\eta$, and the rest concerns topological properties of the space $\caX_D$. All the lemmas needed are written down in Appendix \ref{ap. Topologies and properties of (nested) CLE}.

\begin{proposition}\label{pr.JointConv}
Under the Skorokhod assumption $\Gamma^a\to\Gamma$, we have almost surely $\Gamma^a(\eta)\to\Gamma(\eta)$. As a consequence, almost surely the following three points hold:
\bnum 
\item $\Expl^a(\eta)\to\Expl(\eta)$ in $K(D)$;
\item for any $z\not\in\Expl(\eta)$, the pointed domains $(D^a_z,z)$ converge to $(D_z,z)$ in the Caratheodory sense;
\item for any $z\not\in\Expl(\eta)$, the indices $\nes^a(z)$ stabilize to $\nes(z)$. 
\enum 
\end{proposition}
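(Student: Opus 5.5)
The argument splits into a probabilistic input and a deterministic topological part. The probabilistic input is that almost surely no loop of $\Gamma$ is \emph{tangent} to $\eta$. By this we mean: every loop $\ell\in\Gamma$ with $\im(\ell)\cap\eta\neq\emptyset$ actually crosses $\eta$. Equivalently, for every such intersection point, $\ell$ has points on both sides of $\eta$ in every neighbourhood of it. We also need that no loop passes through an intersection point of two curves of the grid, nor through the endpoints $\eta_i(0),\eta_i(1)$.

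The second property is a dimension count, since $\cle_3$ loops have dimension $11/8<2$ and the relevant point sets are finite. The first property is the key lemma (the paper refers to it as Lemma \ref{l. curve not tangent cle_3}). I would prove it by a first moment argument on arm events. A tangency at a point $x\in\eta$ forces, in the annuli around $x$ from scale $\eps$ up to a macroscopic scale, a boundary-type $+/-$ alternating two-arm event. Here the smooth curve $\eta$ plays the role of a straight boundary at small scales, so the two arms must stay on one side. Its exponent is strictly larger than $1$: the half-plane two-arm exponent for Ising interfaces is $>1$, and by contrast the whole-plane one is $5/8$. Summing over the $O(\eps^{-1})$ boxes along $\eta$ then gives a vanishing probability.

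The difficulty is that the ``side'' of $\eta$ is not a genuine boundary condition. I would transfer the estimate using RSW for FK-Ising, a mixing argument as in \cite{duminil2011connection}, and the convergence $\Gamma^a\to\Gamma$. This is the step I expect to be the main obstacle. If exact half-plane exponents are unavailable, I would settle for showing that the probability of a one-sided two-arm event up to scale $r$ is $O(r^{1+\delta})$, obtained through a quasi-multiplicativity/RSW argument comparing with the three-arm-type events that a tangency really creates.

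Given non-tangency, the convergence $\Gamma^a(\eta)\to\Gamma(\eta)$ is deterministic. Fix $\eps>0$ and match the finitely many loops of $\Gamma^a$ and $\Gamma$ of diameter $>\eps$ via $\d_\caX$.
\begin{itemize}
\item If $\ell$ crosses $\eta$, then every loop uniformly close to $\ell$ intersects $\eta^a$ once $a$ is small, since $\eta^a\to\eta$ and crossing is an open condition.
\item If $\ell\cap\eta=\emptyset$, then by compactness $\dist(\ell,\eta)>0$, so the nearby discrete loops miss $\eta^a$.
\end{itemize}
The small loops contribute at most $\eps$ to the distance. Hence $\d_\caX(\Gamma^a(\eta),\Gamma(\eta))\le 2\eps$ eventually.

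The three consequences then follow in order.
\begin{enumerate}
\item Since $\Expl^a(\eta)$ lies within $O(a)$ of $\eta^a\cup\bigcup_{\ell\in\Gamma^a(\eta)}\im(\ell)$, item 1 follows from Hausdorff continuity of $\ell\mapsto\im(\ell)$ together with local finiteness. Small loops only add points within $\eps$ of $\eta$ or of big loops, and the limit set is closed (Lemma \ref{l. intersection with closed set is measurable}).
\item For item 3, fix $z\notin\Expl(\eta)$. Only finitely many loops of $\Gamma(\eta)$ have diameter larger than $\dist(z,\Expl(\eta))/2$, and only these can surround $z$. Each discrete approximant surrounds $z$ if and only if the limit does, since winding number is stable under uniform perturbation away from $z$.
\item For item 2, I would use the Carathéodory criterion from the appendix. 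Boundary points of $D_z$ are approximated by points of $\Expl^a(\eta)\cup\p D^a$, which gives the first half by item 1. For the kernel condition, every compact connected $K\ni z$ inside $D_z$ is at positive distance from $\Expl(\eta)\cup\p D$, so it eventually lies in $D^a_z$, again by item 1.
\end{enumerate}
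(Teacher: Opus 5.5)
\noindent Your deterministic part is sound and matches the paper's appendix lemmas. Item~1 is the continuity of $\Lambda\mapsto\eta\cup\bigcup_{\ell\in\Lambda}\im(\ell)$ on $\caX_{D,\eta}$ (Lemma~\ref{l. intersection with closed set is measurable}), and item~3 is Lemma~\ref{l.NumberOfLoopsSurroundingForall z at once}. Item~2 is Lemma~\ref{l.OpenDomainsConvergeCaratheodory}: you check the kernel conditions directly, while the paper uses the Hausdorff-sublimit criterion of Theorem~\ref{th.CaratheodoryConv}, and either works.

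The gap is in the non-tangency estimate. The event you sum over is one $+$ spin arm and one $-$ spin arm on the same side of $\eta$, from scale $\eps$ to a macroscopic scale. This event does \emph{not} have exponent $>1$, and your fallback bound $O(r^{1+\delta})$ for it is false. The reason is that the event occurs not only at tangencies but at every endpoint $y\in\eta$ of an excursion of a loop off $\eta$. If $\ell$ crosses $\eta$ and one of its excursions to the right reaches distance $r$, that excursion is an interface strand lying right of $\eta$, with a $+$ chain of spins along one side and a $-$ chain along the other. So your one-sided two-arm event holds at $y$ at every scale below $r$. With probability bounded below, some bulk loop reaches distance $r$ on both sides of $\eta$, so such $y$ exist. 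Hence the expected number of $\eps$-boxes along $\eta$ where the event holds stays bounded below, which is incompatible with a uniform $O(\eps^{1+\delta})$ bound. The exponent is at most $1$, as for the percolation half-plane two-arm exponent, and the union bound over $O(\eps^{-1})$ boxes gives nothing. Relatedly, $\eta$ carries no boundary condition, so you cannot read a half-plane exponent off it. The $+$ boundary needed must be produced by the configuration itself.

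What separates a tangency at $x$ from an excursion endpoint is that \emph{both} strands of $\ell$ leave $x$ on the same side of $\eta$. On that side this gives a $+/-/+$ three-arm pattern: the sign enclosed between the strands is flanked along $\eta$, on both sides and down to scale $\eps$, by the opposite sign. This is the event the paper uses in Lemma~\ref{l. curve not tangent cle_3}, in four steps:
\begin{enumerate}
\item Explore from $\eta$ until a $+$ path crossing $Q_{2\eps}$ is found; the remaining $-$ arm lives in an unexplored region with mixed boundary conditions.
\item By monotonicity these dominate boundary conditions with $-$ arcs, and the RSW mixing bound \eqref{e.nice} then compares them with $+$ boundary conditions.
\item Monotonicity reduces to the half-plane $-$ one-arm event in $\oplus\oplus$ boundary conditions, whose exponent $\alpha_1^+=5/3>1$ is computed in \cite{wu2018alternating}.
\item Summing $(\eps/t)^{5/3}$ over $O(\eps^{-1})$ boxes gives $O(\eps^{2/3})\to0$.
\end{enumerate}
Your fallback mentions ``the three-arm-type events that a tangency really creates''. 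That event should be the estimate itself, not something you compare a two-arm bound with: the two-arm event contains the three-arm one, so that comparison goes the wrong way.

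Finally, ``crosses $\eta$ somewhere'' and ``locally two-sided at every intersection point'' are not equivalent. The latter is stronger and is what the three-arm argument gives; the former is all that continuity of $\Gamma\mapsto\Gamma(\eta)$ needs.
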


\begin{proof}
Almost surely, $\Gamma\sim\cle_3^\nes$ is not tangent to the deterministic smooth grid $\eta$. The proof of this fact is similar to the proof of Proposition \ref{pr.dim-Expl(eta)} that upper-bounds the dimension of $\Expl(\eta)$. Since it is a bit more technical, we give it in Appendix \ref{ap. Topologies and properties of (nested) CLE}, see Lemma \ref{l. curve not tangent cle_3}. 

Now, since the points of continuity of the map $\Gamma\mapsto\Gamma(\eta)$ are precisely those $\Gamma$ such that $\eta$ is not tangent to any of the loops in $\Gamma$, we have that $\Gamma\mapsto\Gamma(\eta)$ is a.s. continuous in $\Gamma\sim\cle_3^\nes$, and thus that $\Gamma^a(\eta)\to\Gamma(\eta)$ a.s. 

The rest is purely deterministic and follows from technical lemmas about the space of loop sets $\caX_D$: the first point is a consequence of Lemma \ref{l. intersection with closed set is measurable}, the second one follows from Lemma \ref{l.OpenDomainsConvergeCaratheodory}, and the third one from Lemma \ref{l.NumberOfLoopsSurroundingForall z at once}.
\end{proof}

\subsection{Joint convergence of $H^a_{f,\eta}$ with the set of interfaces $\Gamma^a$}\label{ss. imf cond to Gamma(eta)}
Recall the notations that we used up to now:
\bi 
\item $(D^a)_{a>0}$ is a discretization of a Jordan domain $D$.
\ei
Each $D^a$ comes with a critical Ising configuration $\sigma^a$ defined on it, with + boundary conditions.
\bi\item $\eta$ is a smooth grid in $D$.
\ei We have chosen a discretization $(\eta^a)_{a>0}$ of $\eta$ as in Subsection \ref{ss.ExplorationProcedure}.
\bi\item $f$ is an element of $C^\infty_0(D)$. \ei

Now, we can state the main theorem of this section.

\begin{theorem}\label{th. cond. expect. of magn wrt Gamma(eta)}
With the notations from above, the pair $(\Gamma^a,H^a_{f,\eta})$ jointly converges in law as $a\to0$ to $(\Gamma,H_{f,\eta})$.
In particular, if $(\Gamma,\Phi)$ is any sublimit of the coupling $(\Gamma^a,\Phi^a)$, it must satisfy
    \begin{equation}\label{eq. cond. expect. of magn wrt Gamma(eta)}
     \Eb{\<{\Phi,f} \md \Gamma(\eta)} = H_{f,\eta}\qquad\text{almost surely.}
\end{equation}
\end{theorem}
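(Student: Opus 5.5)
Under the Skorokhod coupling of Subsection \ref{ss.Skorokhod}, it suffices to show that $H^a_{f,\eta}\to H_{f,\eta}$ in probability. The identity \eqref{eq. cond. expect. of magn wrt Gamma(eta)} then follows by a standard argument. Take a subsequential limit $(\Gamma,\Phi)$ of $(\Gamma^a,\Phi^a)$, which exists by Proposition \ref{pr.PhiTight} and Theorem \ref{th.CLE3}. For any bounded continuous $F$ on $\caX_D$, Proposition \ref{pr.MarkovForPhi eta} gives $\E[\<{\Phi^a,f}F(\Gamma^a(\eta))]=\E[H^a_{f,\eta}F(\Gamma^a(\eta))]$. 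We pass to the limit on both sides, using that $\Gamma^a(\eta)\to\Gamma(\eta)$ a.s. (Proposition \ref{pr.JointConv}), that $H^a_{f,\eta}\to H_{f,\eta}$, and uniform integrability. The uniform integrability comes from a uniform second-moment bound: $\E[\<{\Phi^a,f}^2]$ is bounded by the computation in the tightness proofs (the $H^{-1}$ bounds with Proposition \ref{pr.BoundTwoPoint}), and $\E[(H^a_{f,\eta})^2]\le\E[\<{\Phi^a,f}^2]$ by Jensen. The limit identity then identifies $\E[\<{\Phi,f}\md\Gamma(\eta)]=H_{f,\eta}$.

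For the convergence in probability, fix $\delta>0$ and split $D$ into a \emph{nice part} $N_\delta=\{z:\dist(z,\Expl(\eta))>\delta\}$ and a \emph{bad part} $B_\delta=\{z\in\mathrm{supp} f:\dist(z,\Expl(\eta))\le\delta\}$. On the nice part, Proposition \ref{pr.JointConv} gives a.s. Carathéodory convergence $(D^a_z,z)\to(D_z,z)$ and stabilization $\nes^a(z)=\nes(z)$ for each $z\notin\Expl(\eta)$. Theorem \ref{th.1pointFunction}, in its Carathéodory form (Remark \ref{rk.CaratheodoryForCHI}), then gives pointwise convergence $a^{-1/8}\<{\sigma^a(z)}^+_{D^a_z}(-1)^{\nes^a(z)}\to\calC_\T(-1)^{\nes(z)}CR_{D^\eta}(z)^{-1/8}$. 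On $N_\delta$ these integrands are uniformly bounded by $C\delta^{-1/8}$ through Proposition \ref{pr.BoundTwoPoint}(1). Dominated convergence therefore gives convergence of the $N_\delta$ contribution, after a minor point to handle: for $a$ small, points at distance $>\delta$ from $\Expl(\eta)$ are at distance $>\delta/2$ from $\Expl^a(\eta)$, by Hausdorff convergence. In the continuum, the $B_\delta$ contribution of $H_{f,\eta}$ tends to $0$ as $\delta\to0$ by Proposition \ref{pr.Def H f eta} and dominated convergence.

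The main obstacle is to show that the discrete contribution from the bad part is small uniformly in $a$. This means controlling
\begin{align*}
\limsup_{a\to0}\E\Big[\Big|a^{-1/8}\sum_{z:\,\dist(z,\Expl^a(\eta))\le\delta}\bar f(z)\<{\sigma^a(z)}^+_{D^a_z}(-1)^{\nes^a(z)}\Big|\Big]\xrightarrow[\delta\to0]{}0,
\end{align*}
where the points $z\in\Expl^a(\eta)$ carry the actual spin values. There are two pieces, neither of which is a uniform absolute bound, since such a bound would require Minkowski dimension $<15/8$ uniformly at the discrete level.

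First piece, the unexplored points. Bound each term by $C\dist(z,\Expl^a(\eta))^{-1/8}a^{1/8}$ and take expectations. This requires a discrete version of the estimate in Proposition \ref{pr.dim-Expl(eta)}: the probability that a point at distance $r$ from $\eta$ lies within $s$ of $\Expl^a(\eta)$ is $O((s/r)^{5/8})$, uniformly in $a\le s$. This follows from the same alternating two-arm bound of \cite{wu2018alternating} together with RSW and mixing, which are discrete statements. Summing over dyadic shells, the expected bad contribution is $O(\delta^{15/8-11/8})=O(\delta^{1/2})$, up to the contribution of points near $\eta$ itself, which is $O(\delta)$.

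Second piece, the explored spins, which have no cancellation a priori. Here I would use an $L^2$ bound: $\E[(a^{-1/8}\sum_{z\in B}\bar f(z)\sigma_z)^2]$ is bounded by the two-point bound of Proposition \ref{pr.BoundTwoPoint}, restricted to pairs in the random set $B_\delta$. By Cauchy--Schwarz-type splitting and the covariance monotonicity (Lemma \ref{l.covGHS}), this reduces to $\iint_{B\times B}|u-v|^{-1/4}$ weighted by the probability that both points are near $\Expl^a(\eta)$. This is small since $B$ has small expected area. I expect this last estimate, which must be uniform in $a$ and must handle the spins on the exploration path itself, to be the delicate step.
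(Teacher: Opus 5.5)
Your overall architecture matches the paper: the Skorokhod coupling, the nice part by dominated convergence, and the identification of the conditional expectation through uniform integrability. Where you differ is in controlling the region near $\Expl(\eta)$.

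\textbf{Comparison of the two routes.} The paper defines the cut-off region in the continuum, as the union $\Expl(\eta)^\eps$ of $\eps$-squares. It counts these squares with the continuum Minkowski bound of Proposition~\ref{pr.dim-Expl(eta)}, giving $N_\eps=o(\eps^{-3/2})$ a.s. It then bounds the conditional contribution of each square by the GHS-based exponential tail of Proposition~\ref{pr.ConcentrationDiscreteMag}. That tail bound holds for conditional expectations with respect to any $\sigma$-field, and a union bound over the $O(\eps^{-2})$ deterministic squares finishes the argument. This approach never looks at the discrete explored set below scale $\eps$ and needs no pointwise one-point bound near $\Expl^a(\eta)$. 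It also handles explored and unexplored sites in a bad square together, with no cancellation needed.

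Your first-moment approach dispenses with exponential moments and gives an explicit rate $O(\delta^{1/2})$ uniformly in $a$. The price is a discrete, uniform-in-$a$ two-arm upper bound at every scale between $a$ and $\delta$, which amounts to a discrete Minkowski-content estimate. The paper only needs such bounds at fixed scale $\eps$ as $a\to0$, inside the proof of Proposition~\ref{pr.dim-Expl(eta)}. Both inputs come from \cite{wu2018alternating} together with RSW and mixing, so your route is viable.

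\textbf{The explored-spins step.} Your treatment of the explored spins, which you flag as the delicate step, does not work as written. The set $B$ is a function of $\sigma^a$. So $\E[\1_{z,w\in B}\sigma_z\sigma_w]$ cannot be bounded by $\P(z,w\in B)$ times the two-point bound of Proposition~\ref{pr.BoundTwoPoint}. Lemma~\ref{l.covGHS} does not help either: it compares covariances on deterministic subgraphs with $+$ boundary conditions, not sums over a random, spin-dependent set.

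No cancellation is needed, however. Each explored site contributes at most $\|f\|_\infty a^{15/8}$ in absolute value. By the case $s=a$ of your own arm estimate, a site at distance $r$ from $\eta$ is explored with probability $O((a/r)^{5/8})$. Summing over dyadic shells, the expected number of explored sites in $\mathrm{supp} f$ is $O(a^{-11/8})$. The explored spins therefore contribute $O(a^{1/2})$ in $L^1$, and this is simply the lattice-scale term of your first piece.
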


The almost sure equality $\Eb{\<{\Phi,f} \md \Gamma(\eta)} = H_{f,\eta}$ is a formal consequence of the joint convergence since, by Proposition \ref{pr.Def H f eta}, $H_{f,\eta}$ is measurable w.r.t. $\Gamma$; see Lemma \ref{l. conditional expectation of the limit}. Since we coupled everything with Skorokhod's Theorem, and since $H_{f,\eta}$ will be shown to be measurable w.r.t. $\Gamma$, it is enough to show that, in our coupling, the following convergence in probability holds: 

\begin{proposition}\label{pr.Conv H f eta a vers H f eta}
With the notations from above, and under the Skorokhod coupling $\Gamma^a\to\Gamma$,
\begin{equation}\label{eq.Conv H f eta a vers H f eta}
H^a_{f,\eta}\xrightarrow[a\to0]{(\P)}H_{f,\eta}.
\end{equation} 
\end{proposition}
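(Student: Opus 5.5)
We split the domain into a ``nice'' region at positive distance from $\Expl(\eta)$ and a thin neighbourhood of $\Expl(\eta)$, and show that the contributions of both sides to $H^a_{f,\eta}$ and $H_{f,\eta}$ match up to an error that vanishes first as $a\to0$ and then as the neighbourhood shrinks. Fix $\delta>0$ and let $U_\delta=\{z\in D:\dist(z,\Expl(\eta))>\delta\}$. We decompose $H^a_{f,\eta}=A^a_\delta+B^a_\delta$, where $A^a_\delta$ is the sum in \eqref{eq.H f eta discret} restricted to $z\in U_\delta$ and $B^a_\delta$ is the rest. Similarly $H_{f,\eta}=A_\delta+B_\delta$, where $A_\delta$ integrates over $U_\delta$ and $B_\delta$ integrates over the $\delta$-neighbourhood of $\Expl(\eta)$.

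\textbf{The nice part.} We show $A^a_\delta\to A_\delta$ in probability for each fixed $\delta$. By Proposition~\ref{pr.JointConv}, a.s. $\Expl^a(\eta)\to\Expl(\eta)$ in the Hausdorff sense. Hence, for $a$ small, every $z\in U_\delta$ lies at distance $\ge\delta/2$ from $\Expl^a(\eta)$. For each such $z$:
\begin{itemize}
\item the pointed domains $(D^a_z,z)$ converge to $(D_z,z)$ in the Carath\'eodory sense;
\item the nesting indices $\nes^a(z)$ stabilize to $\nes(z)$.
\end{itemize}
Theorem~\ref{th.1pointFunction}, together with Remark~\ref{rk.CaratheodoryForCHI}, then gives $a^{-1/8}\<{\sigma^a(z)}^+_{D^a_z}\to\calC_\T CR_{D^\eta}(z)^{-1/8}$ pointwise. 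Proposition~\ref{pr.BoundTwoPoint} provides the domination $a^{-1/8}\<{\sigma^a(z)}\le C\delta^{-1/8}$. Dominated convergence, applied to the Riemann sum against $\bar f$, yields $A^a_\delta\to A_\delta$ almost surely.

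One subtlety is that pointwise Carath\'eodory convergence must hold for a.e.\ $z$ simultaneously, together with a local uniformity to pass from the Riemann sum over lattice points $z^a$ to the integral. I would handle this by noting that the Carath\'eodory kernel convergence is locally uniform in the base point on compact subsets of $U_\delta$, since the statement of Theorem~\ref{th.1pointFunction} already allows varying pointed domains.

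\textbf{The bad part.} For the continuum term, the same integrability argument as in Proposition~\ref{pr.Def H f eta} gives $|B_\delta|\le\|f\|_\infty\int_{\dist(z,\Expl(\eta))<\delta}\dist(z,\p D^\eta)^{-1/8}dz$, restricted to $\mathrm{supp} f$. This tends to $0$ a.s.\ as $\delta\to0$ by Proposition~\ref{pr.dim-Expl(eta)}, since $11/8<15/8$.

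\textbf{Main obstacle.} The heart of the proof is to show
\[
\lim_{\delta\to0}\limsup_{a\to0}\E\bigl[|B^a_\delta|\bigr]=0,
\]
uniformly in the discretization. We bound
\[
\E|B^a_\delta|\le\|f\|_\infty\, a^{-1/8}\sum_{z}a^2\,\E\bigl[\1_{\dist(z,\Expl^a(\eta))<\delta}\,\<{\sigma^a(z)}^+_{D^a_z}\bigr].
\]
We then group the lattice points $z$ by the dyadic scale $r=2^{-j}\in[a,\delta]$ of $\dist(z,\Expl^a(\eta))$. By Proposition~\ref{pr.BoundTwoPoint}, the conditional one-point function is at most $C(r/a)^{-1/8}$, so the $a^{-1/8}$ factors cancel and we get a contribution of order $r^{-1/8}$. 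The probability that $z$ lies within distance $r$ of $\Expl^a(\eta)$ is controlled, as in Proposition~\ref{pr.dim-Expl(eta)}, by the discrete alternating two-arm estimate of \cite{wu2018alternating}, uniform in $a$ thanks to RSW for FK--Ising and the mixing argument. This probability is bounded by $O\bigl((r/\dist(z,\eta))^{5/8}\bigr)$, or by $O(r)$ when $z$ is near $\eta$ itself. Summing over $z$ then gives a discrete analogue of the first-moment computation, of order $\sum_{r\le\delta}r^{2-11/8-1/8}=\sum_r r^{1/2}=O(\delta^{1/2})$.

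Combining the three estimates, $\limsup_{a\to0}\P\bigl(|H^a_{f,\eta}-H_{f,\eta}|>\epsilon\bigr)$ is small for every $\epsilon>0$ once $\delta$ is chosen small, which proves the proposition. I expect the main difficulty to be making the arm estimate uniform in the lattice scale near the grid $\eta$ and near its intersection points.
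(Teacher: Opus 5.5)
Your proof is correct. For the nice part and for the continuum remainder it coincides with the paper's argument: dominated convergence using Carath\'eodory convergence, stabilisation of $\nes^a$ and Theorem~\ref{th.1pointFunction}, and integrability of $\dist(\cdot,\p D^\eta)^{-1/8}$ coming from the $11/8$ dimension bound.

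The genuine difference is in the discrete remainder.

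\textbf{The paper's route.} The paper takes as bad region the union $\Expl(\eta)^\eps$ of $\eps$-grid squares near the \emph{continuum} explored set. It writes $H^a_{f,\eta}-H^{a,\eps}_{f,\eta}$ as a sum over these squares of $\E[\langle\Phi^a,f|_C\rangle\mid\Gamma^a(\eta)]$ and combines two facts:
\begin{itemize}
\item the number of such squares is $o(\eps^{-3/2})$ almost surely, by Proposition~\ref{pr.dim-Expl(eta)};
\item each conditional expectation exceeds $\delta\eps^{3/2}$ with probability at most $A\exp(-\delta\eps^{-3/8}/\|f\|_\infty)$. This is the GHS-based exponential moment bound of Proposition~\ref{pr.ConcentrationDiscreteMag}, made robust to the random choice of squares by a union bound over all $O(\eps^{-2})$ deterministic squares.
\end{itemize}
This argument never looks at the fine structure of $\Expl^a(\eta)$ inside a box. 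Its only geometric input is an arm estimate at mesoscopic scales, i.e.\ for the continuum $\cle_3$.

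\textbf{Your route.} You use the deterministic bound $a^{-1/8}\langle\sigma^a(z)\rangle^+_{D^a_z}\lesssim \dist(z,\Expl^a(\eta))^{-1/8}$ together with a first-moment computation. This dispenses with the concentration inequality and gives a quantitative $O(\delta^{1/2})$ bound in $L^1$. The price is that you need the alternating two-arm bound for the \emph{discrete} exploration, uniformly in $a$ down to the lattice scale, including the finite-domain/mixing comparison. That is a stronger input than the paper's, and you rightly flag it as the main technical point. Both routes need the same threshold: a two-arm exponent larger than $1/8$.

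\textbf{A small fix.} Your $B^a_\delta$ is defined through the continuum set $U_\delta$, whereas your moment bound is stated in terms of $\Expl^a(\eta)$. Work on the event $\{\d_{\haus}(\Expl^a(\eta),\Expl(\eta))<\delta\}$, whose probability tends to $1$ by Proposition~\ref{pr.JointConv}. On that event, $|B^a_\delta|$ is dominated by the sum over $z$ with $\dist(z,\Expl^a(\eta))<2\delta$, and your estimate applies to that sum.
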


The above convergence in probability~\eqref{eq.Conv H f eta a vers H f eta}  is the core of Theorem \ref{th. cond. expect. of magn wrt Gamma(eta)}; the proof will take us the rest of this Section.

\subsection{Splitting the conditional expectation into two parts}\label{ss.TwoCut-offs}

We introduce an $\eps$-cut-off which is used to control the convergence of the discrete one-point function $a^{-1/8} \<{\sigma_z}_{D^a \setminus \Expl^a(\eta)}^+$ towards $\pm \calC_\T CR(z, D \setminus \Gamma(\eta))^{-1/8}$. Indeed, to ensure this convergence, we shall rely on \cite{chelkak2015conformal} which requires us to be at sufficiently long distance from the conditioned explored domain.

Since we coupled everything using the Skorokhod embedding theorem, it is convenient to only define the geometric cut-off in the continuum.

\begin{definition}\label{d. thickened explored set}
For any $\eps>0$ we define $\Expl(\eta)^\eps$ to be the closed set defined as the union of the squares of side-length $\eps$ from the grid $\eps\Z^2$ which are at distance less than $\eps$ from $\Expl(\eta)$.
\end{definition} 

Now, we can define the $\eps$-cut-off approximations of $H_{f, \eta}^a$ and $H_{f,\eta}$.
 
\begin{definition}\label{d.H-f-eta-Regularized}
For any $\eps>0$, let 
\begin{align}\label{e.H eps1 eps2}
H_{f, \eta}^\eps(\Gamma):= \calC_\T\int_{D} \1_{z\not\in\Expl(\eta)^\eps}  f(z) \, (-1)^{\nes_{\Gamma(\eta)}(z)}  \,  CR_{D^\eta}(z)^{-1/8} dz.
\end{align}

Furthermore, for any $a>0$, we define in the discrete
\begin{equation}\label{eq.H a eps1 eps2}
H^{a,\eps}_{f,\eta} := a^{-1/8}\sum_{z\in D^a\setminus\Expl(\eta)^\eps}\bar{f}(z)\<{\sigma^a(z)}^+_{D_z^a}(-1)^{\nes^a(z)}.
\end{equation}
\end{definition}

\ni By Proposition \ref{pr.Def H f eta}, $H^\eps_{f,\eta}$ is well defined and measurable in $\Gamma$ for any $\eps>0$. Furthermore, $H_{f,\eta}^\eps\xrightarrow[\eps\to0]{a.s.} H_{f,\eta}$.

Our strategy is to prove that, in the decomposition
\begin{align}\label{eq.dec-H a f eta}
H^a_{f,\eta}=H^{a,\eps}_{f,\eta}+ \big(H^a_{f,\eta}-H^{a,\eps}_{f,\eta}\big),
\end{align}
the first part converges to $H^\eps_{f,\eta}$, and the second one is uniformly in $a$ bounded by a term that goes to $0$ when $\eps\to0$. 
Since it is technically easier, let us first deal with the regularized part.

\subsection{Convergence of the regularization $H_{f,\eta}^{a,\eps}$.}

We can use Theorem \ref{th.1pointFunction} to prove that the nicer part $H^{a,\eps}_{f,\eta}$ in \eqref{eq.dec-H a f eta} converges to its continuum counterpart:

\begin{proposition}\label{pr.NicePart}
For any $\eps>0$, we have $H_{f,\eta}^{a,\eps}\xrightarrow[a\to0]{a.s.} H_{f,\eta}^\eps$.
\end{proposition}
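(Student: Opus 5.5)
The plan is to write $H^{a,\eps}_{f,\eta}$ as an integral over $D$ of a function $g^a$ and prove pointwise a.s. convergence of $g^a$ to the integrand of $H^\eps_{f,\eta}$. Then we conclude by dominated convergence, using a deterministic (given $\Gamma$) integrable bound. Concretely, with the convention that $\sigma^a$ and related quantities are extended to be constant on squares $C_a(z^a)$, we have
\begin{align*}
H^{a,\eps}_{f,\eta}=\int_D g^a(z)\,dz,\qquad g^a(z):=\1_{z^a\notin\Expl(\eta)^\eps}\, \bar f(z^a)\,a^{-2-1/8}\<{\sigma^a(z^a)}^+_{D^a_{z^a}}(-1)^{\nes^a(z^a)}\1_{C_a(z^a)}(z).
\end{align*}
Here $z^a$ is the lattice point whose square contains $z$. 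The normalisation is as in Proposition \ref{pr.MarkovForPhi eta}: up to the factor $a^2$ absorbed in $\bar f$, which is $a^2 f(z^a)+o(a^2)$ uniformly, $\bar f/a^2\to f$ uniformly. Since $\eps$ is fixed, the set $\Expl(\eta)^\eps$ is a fixed closed set given $\Gamma$, and $\p\Expl(\eta)^\eps$ is a finite union of grid segments, hence of Lebesgue measure zero. We may therefore restrict to $z$ in the interior of $D\setminus\Expl(\eta)^\eps$, where $\1_{z^a\notin\Expl(\eta)^\eps}=1$ for $a$ small.

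For such a fixed $z$ we have $\dist(z,\Expl(\eta))\ge\eps$. By Proposition \ref{pr.JointConv}, a.s. $\Expl^a(\eta)\to\Expl(\eta)$ in the Hausdorff sense, the pointed domains $(D^a_{z^a},z^a)$ converge to $(D_z,z)$ in the Carathéodory sense, and $\nes^a(z^a)$ stabilizes to $\nes(z)$. The one subtlety is that the proposition is stated at the fixed point $z$, while we need it at the moving point $z^a$. Since $|z^a-z|\le a$ and $z$ is at distance at least $\eps/2$ from $\Expl^a(\eta)$ for small $a$, the points $z$ and $z^a$ lie in the same component, with the same loop count, for small $a$. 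This reduces to the fixed-point statement.

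We then apply Theorem \ref{th.1pointFunction} in its Carathéodory form (Remark \ref{rk.CaratheodoryForCHI}). It gives $a^{-1/8}\<{\sigma^a(z^a)}^+_{D^a_{z^a}}\to\calC_\T CR(z,D_z)^{-1/8}$. Hence $g^a(z)\to \calC_\T f(z)(-1)^{\nes(z)}CR_{D^\eta}(z)^{-1/8}$ for a.e. $z$, almost surely.

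For domination we use Proposition \ref{pr.BoundTwoPoint}(1) together with $|\bar f|\le a^2\|f\|_\infty$. On $\{z:\dist(z,\Expl(\eta))\ge\eps/2\}$ the Ising domain boundary $\p D^a_{z^a}$ consists of explored sites (at distance $\gtrsim\eps$) and $\p D^a$. Since $f$ is compactly supported in $D$ and $\p D^a\to\p D$, the points of $\mathrm{supp} f$ are at distance $\ge c>0$ from $\p D^a$ for small $a$. This gives $|g^a(z)|\le C\|f\|_\infty\min(\eps,c)^{-1/8}\1_{\mathrm{supp}(f)^{+1}}$, which is a constant and hence integrable. Dominated convergence then yields $H^{a,\eps}_{f,\eta}\to H^\eps_{f,\eta}$ almost surely.

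The main obstacle is justifying the pointwise application of Theorem \ref{th.1pointFunction}. Its hypotheses concern discrete domains converging in the Carathéodory sense, and we need Proposition \ref{pr.JointConv}(2) to genuinely deliver this for the discrete components $D^a_{z^a}$ of $D^a\setminus\Expl^a(\eta)$, including the fact that $D_z$ is simply connected. If uniformity of the one-point asymptotics were needed, I would instead fall back on pointwise convergence plus the uniform bound above, which suffices here.
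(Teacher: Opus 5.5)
Your proposal is correct and follows essentially the same route as the paper. You rewrite $H^{a,\eps}_{f,\eta}$ as an integral and obtain a.e.\ pointwise convergence of the three factors: the cut-off indicator away from the null set $\p\Expl(\eta)^\eps$, the one-point function via Proposition~\ref{pr.JointConv} and the Carath\'eodory form of Theorem~\ref{th.1pointFunction}, and the stabilisation of $\nes^a$. You then conclude by dominated convergence, using Proposition~\ref{pr.BoundTwoPoint} and the fact that $f$ vanishes near $\p D$, exactly as the paper does; your handling of the moving base point $z^a$ and of the explicit dominating constant is slightly more careful than the paper's write-up.
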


We will make use of all the convergence results of Proposition \ref{pr.JointConv} in the following proof. 

\begin{proof}
Almost surely, we have the following three a.e. convergences on $D$:
\begin{align*}
\mathbf{1}\big(\overline{D^a}\setminus\Expl(\eta)^\eps\big)(z) & \xrightarrow[a\to0]{}\mathbf{1}\left(D\setminus\Expl(\eta)^\eps\right)(z)  \\
 a^{-1/8}\<{\sigma^a_z}_{D^a_z}^+ & \xrightarrow[a\to0]{} \calC_\T CR(z,D_z)^{-1/8} \\
 f(z)(-1)^{\nes^a(z)} & \xrightarrow[a\to0]{} f(z)(-1)^{\nes(z)}
\end{align*}
\indent For the first one, it is a.s. true for any point $z$ outside $\partial\big(\Expl(\eta)^\eps\big)$, which is a set of Lebesgue measure 0 (indeed, it is piecewise affine).
\par For the second one (for which we interpret $\<{\sigma^a_z}_{D^a_z}^+$ as a constant function on each square centered on an $a$-lattice point), almost surely for any $z$ outside $\Expl(\eta)$ the pointed filling of the discrete domain $(D^a_z,z)$ converges in the Caratheodory sense to the connected component $D_z$ of $z$ in $D^\eta$ so that we can apply Theorem \ref{th.1pointFunction} to obtain the convergence of the one-point function. Furthermore, $\Expl(\eta)$ is almost surely a set of Lebesgue measure 0 since it is the union of $\eta$ and at most countably many loops of $\Gamma(\eta)$ (these loops are of zero area since e.g. they are of upper Minkowski dimension $11/8$).
\par Finally, the third one is true as well for the same set of $z$ (those $z$ not in $\Expl(\eta)$).

\medskip
It can be observed that the integral of the product of the three functions on the left is $H^{a,\eps}_{f,\eta}$ (it is a rewriting of equation \eqref{eq.H a eps1 eps2}) and the integral of the product of the three on the right gives $H_{f,\eta}^\eps$ by definition.
\par Furthermore, since $f$ vanishes on a neighbourhood of the boundary of $D$, the product of these limiting functions is uniformly bounded on $\overline D$. The same is true for the analogous product of the discretized functions, since by Proposition \ref{pr.BoundTwoPoint}, $a^{-1/8}\<{\sigma^a_z}_{D^a_z}^+=O\big(\dist(z,D_z^a)^{-1/8}\big)=O\big(\dist(z,D_z)^{-1/8}\big)$ uniformly in $a$, which is, as above, bounded on the support of the test function $f$. Whence the dominated convergence theorem applies and implies the desired convergence.
\end{proof}



\subsection{Large deviation estimates on the magnetization with GHS inequality.}
Now, we want to handle the remaining part, $H^a_{f,\eta}-H^{a,\eps}_{f,\eta}$. To achieve this, it is important for us to have a control on the tail of the magnetization field. More precisely, we make use of the following concentration result:

\begin{proposition}[Concentration of the Discrete Magnetization]\label{pr.ConcentrationDiscreteMag}
There exists an absolute constant $A>0$ such that, for any continuous function $g:\overline{D}\to\mathbb R$, for all $0<a<\eps$, and for any (deterministic) square $C$ of side-length $\eps$, one has

\begin{equation}\label{eq.ConcentrationMag}
    \P\Big( \big| \<{ \Phi^{a},g\vert_{ C} } \big|\ge \eps^{15/8}\|g\|_{\infty}x\Big)\le Ae^{-x}.
\end{equation}
This inequality holds (with the same constants) as well if we replace $\<{\Phi^{a},g\vert_{ C}}$ by its conditional expectation $\Eb{\<{ \Phi^{a},g\vert_{ C} }\md\caF}$ for any $\sigma$-field $\caF$.
\end{proposition}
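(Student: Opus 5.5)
Since $|\<{\Phi^a,g\vert_C}| \le \|g\|_\infty\, a^{-1/8}\sum_{z\in C\cap D^a} a^2 |\sigma_z|$ is useless (too crude), the plan is to bound the exponential moment of the magnetization in the box, $M_C := a^{-1/8}\sum_{z\in C} a^2 \sigma_z$ (and its weighted version), via GHS/Griffiths-type convexity arguments. Concretely I aim to prove that for some absolute $t_0>0$,
\begin{align*}
\Eb{\exp\big(t_0\,\eps^{-15/8}\|g\|_\infty^{-1}\,|\<{\Phi^a,g\vert_C}|\big)}\le A,
\end{align*}
uniformly in $a<\eps$, the domain, the boundary condition and $C$. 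Markov's inequality then gives \eqref{eq.ConcentrationMag} (after rescaling $x$, absorbing $t_0$ into the constant or accepting $e^{-t_0x}$; one can also take $t_0=1$ by adjusting). The conditional-expectation version follows immediately from Jensen: $\exp(t|\E[X\md\caF]|)\le \E[\exp(t|X|)\md\caF]$, so the same exponential moment bound holds and Markov applies with the same constants.

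To get the exponential moment, first reduce from $g$ to a sign-free quantity. Write $\<{\Phi^a,g\vert_C}=a^{-1/8}\sum_{z\in C}\bar g(z)\sigma_z$ with $|\bar g|\le \|g\|_\infty a^2$. For $t\ge 0$, $\E[e^{t\sum h_z\sigma_z}]$ with a coefficient field $h$ of arbitrary sign: by domain monotonicity and Griffiths/FKG, I would dominate the law in $C$ by the $+$ boundary measure on $C$ itself (imposing $+$ on $\p C$ increases spins) and the $-$ measure for negative deviations, then use the $+/-$ symmetry and GKS to bound $\E^+_C[e^{\sum h_z\sigma_z}]\le \E^+_C[e^{\sum |h_z|\sigma_z}]$ (since $e^{h\sigma}$ expanded in products of spins has coefficients that only increase when flipping signs to positive, and $+$-boundary correlations are nonnegative by Griffiths I). So it suffices to bound $\E^+_C[e^{s M}]$ with $M=\sum_{z\in C}\sigma_z$ and $s = t_0\eps^{-15/8}a^{15/8}$ (the $a^2 a^{-1/8}$ factor).

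The key step, and main obstacle, is this exponential moment of the total magnetization of a box of $n=\eps/a$ lattice sites per side at scale parameter $s\asymp n^{-15/8}$. Here I would use GHS: $\log\E^+_C[e^{sM}]$ is a function of a uniform external field $s$, with derivative $\<{M}_{C,s}$ which by GHS is concave in $s\ge0$, so $\frac{d}{ds}\log Z\le \<{M}^+_C + s\,\mathrm{Var}^+_C(M)$. By Proposition~\ref{pr.BoundTwoPoint}, $\<{M}^+_C\le C\sum_z \dist(z,\p C)^{-1/8}\lesssim n^{15/8}$ and $\mathrm{Var}(M)\le \sum_{z,w}\<{\sigma_z\sigma_w}_{\T}\lesssim n^{2}n^{7/4}=n^{15/4}$ (using Lemma~\ref{l.covGHS} to compare with the full plane). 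Integrating from $0$ to $s=t_0 n^{-15/8}$ gives $\log \E[e^{sM}]\le C t_0 + C t_0^2=O(1)$. Care is needed with lattice/area normalisations and with the boundary $\p D$ cutting $C$ (handled by monotonicity since sites outside $D^a$ just drop out), but this GHS-concavity integration is the heart of the argument.
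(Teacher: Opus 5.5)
Your route has the same skeleton as the paper's proof: Griffiths' first inequality to replace $g$ by $\|g\|_\infty$, FKG to pass to $+$ boundary conditions on the box, Markov, and conditional Jensen for the $\caF$-version. The difference is the key input. The paper cites the uniform exponential-moment bound of \cite{Camia_2015} (Proposition 3.5 there) for the magnetization of a square with $+$ boundary conditions. You instead reprove it via GHS concavity, $\log\E^+[e^{sM}]\le s\langle M\rangle^+ +\tfrac{s^2}{2}\mathrm{Var}^+(M)$, combined with Proposition~\ref{pr.BoundTwoPoint} and Lemma~\ref{l.covGHS}. This is sound, and it is essentially the GHS argument behind the cited result. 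It buys three things: the proof becomes self-contained within the paper's preliminaries; no smallness is needed, so $t_0=1$ is allowed directly; and it applies verbatim to any set of sites $\Lambda$ with $\sum_{z\in\Lambda}\dist(z,\p\Lambda)^{-1/8}\lesssim(\eps/a)^{15/8}$, which is more flexible than a statement about squares only.

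Two points need fixing.

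\emph{Order of the reductions.} For signed weights $h_z$ the functional $e^{\sum_z h_z\sigma_z}$ is not increasing, so you cannot dominate by the $+$ measure on $C$ first. Apply GKS under the $+$ measure on $D^a$, where all correlations are nonnegative, to get $\E^+_{D^a}[e^{\pm\sum_z h_z\sigma_z}]\le\E^+_{D^a}[e^{\sum_z|h_z|\sigma_z}]$ for both tails at once. Only then use FKG; the $-$ measure is not needed.

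\emph{Squares cut by $\p D^a$.} Your claim that these are handled by monotonicity because outside sites ``drop out'' is false. Those sites are frozen to $+$ inside the square without being counted, so after FKG you are left with the $+$ measure on the irregular set $C\cap D^a$. Domain monotonicity then goes the wrong way: for increasing $F$ depending on spins in $\Lambda\subset B$, one has $\langle F\rangle^+_B\le\langle F\rangle^+_\Lambda$. In fact, by Griffiths every site adjacent to $a\T\setminus D^a$ has $\langle\sigma_z\rangle\ge\tanh\beta_c$. Hence a square meeting $\gg(\eps/a)^{15/8}$ such sites violates $\E|\langle\Phi^a,\1_C\rangle|\le A\eps^{15/8}$, which the claimed tail bound implies.

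The paper's argument carries the same implicit restriction, since it reduces to the square of \cite{Camia_2015}. This is harmless in Proposition~\ref{pr.RemainingPartSmall} because $f$ vanishes near $\p D$. So state your result for squares whose sites lie in $D^a$, or under the regularity condition above.
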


The reason why this Proposition is useful is that it shows that the Ising magnetization field against our test function in a small box is concentrated and very rarely takes a large value, which is necessary to upper bound the remaining part of the cut-off procedure. We will actually only use it in the case $g=f$ to control the tail of \begin{equation*}
\Eb{\<{ \Phi^{a},f\vert_{ C} }\md \Gamma^a(\eta)},
\end{equation*}
that is, when $\caF$ is the $\sigma$-field of the r.v. $\Gamma^a(\eta)$. We decided to display inequality \eqref{eq.ConcentrationMag} with $\caF$ the trivial $\sigma$--algebra for clarity reasons.

\medskip To prove Proposition \ref{pr.ConcentrationDiscreteMag}, we will use the following proposition from \cite{Camia_2015} which also relied on GHS inequality.

\begin{proposition}[Proposition 3.5 in \cite{Camia_2015}]
For any $t>0$, one has \begin{equation}
\limsup_{a\to0}\E[e^{tm_+^a}]<+\infty
\end{equation}
where $m^a_+=\<{\Phi^a,1}=a^{15/8}\sum_{z\in a\Z^2\cap[0,1]^2}\sigma_z$ is the (renormalized) spontaneous magnetization of an Ising model $(\sigma_z)_{z\in a\Z^2\cap[0,1]^2}$ with + boundary condition.
\end{proposition}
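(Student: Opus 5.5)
The plan is to use the GHS inequality (Proposition \ref{pr.GHS}) to show that the cumulant generating function of $m^a_+$ is bounded above by a quadratic in $t$. The two coefficients of that quadratic, the mean and the variance of $m^a_+$ under the critical $+$ measure, are then bounded uniformly in $a$ by Proposition \ref{pr.BoundTwoPoint}.

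\textbf{Step 1: rewriting the exponential moment.} Fix $a$ and let $\Lambda=a\Z^2\cap[0,1]^2$. For $s\ge0$, let $\<{\cdot}_s$ be the critical Ising measure on $\Lambda$ with $+$ boundary condition and a homogeneous external field $h=s\,a^{15/8}$ at every vertex. The $+$ boundary condition can itself be encoded as a nonnegative field on the boundary vertices, so GHS applies to every $\<{\cdot}_s$ with $s\ge0$. Writing $Z(s)$ for the partition function, we have $\E[e^{tm^a_+}]=Z(t)/Z(0)$. Setting $L(t)=\log \E[e^{tm^a_+}]$, we get $L(0)=0$, $L'(s)=\<{m^a_+}_s$ and $L''(s)=\mathrm{Var}_s(m^a_+)$.

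\textbf{Step 2: monotonicity of the variance via GHS.} We have
\begin{equation*}
\frac{d}{ds}\mathrm{Var}_s(m^a_+)=a^{45/8}\sum_{u,v,w\in\Lambda}U_3^{(s)}(u,v,w),
\end{equation*}
where $U_3^{(s)}$ is the third Ursell function appearing in Proposition \ref{pr.GHS}. GHS says $U_3^{(s)}\le0$ for $s\ge0$, so $s\mapsto \mathrm{Var}_s(m^a_+)$ is nonincreasing on $[0,\infty)$. This is the same mechanism as in Lemma \ref{l.covGHS}. Integrating twice gives, for $t>0$,
\begin{equation*}
L(t)\le t\,\E[m^a_+]+\tfrac{t^2}{2}\,\mathrm{Var}(m^a_+),
\end{equation*}
with both moments taken under the field-free critical $+$ measure.

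\textbf{Step 3: uniform bounds on the two coefficients.} For the mean, point 1 of Proposition \ref{pr.BoundTwoPoint} (which transfers to $\Z^2$ by the same argument) gives $\<{\sigma_z}\le C(a^{-1}\dist(z,\p[0,1]^2))^{-1/8}$. Hence
\begin{equation*}
\E[m^a_+]\le C a^{15/8}a^{-2}a^{1/8}\int_{[0,1]^2}\dist(z,\p[0,1]^2)^{-1/8}dz=O(1).
\end{equation*}
For the variance, Lemma \ref{l.covGHS} bounds each covariance by the full-plane two-point function. Point 2 of Proposition \ref{pr.BoundTwoPoint} then gives
\begin{equation*}
\mathrm{Var}(m^a_+)\le C a^{15/4}\sum_{u,v\in\Lambda}\big(\|u-v\|/a\big)^{-1/4}\wedge 1.
\end{equation*}
By Riemann-sum comparison this is $\le C a^{15/4}a^{-4}a^{1/4}\iint\|u-v\|^{-1/4}dudv=O(1)$. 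Combined with Step 2, this yields $\limsup_{a\to0}\E[e^{tm^a_+}]\le e^{Ct+Ct^2}<\infty$.

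The only delicate point is Step 2. One must check that GHS is applicable uniformly along the path $s\in[0,t]$, which holds because all fields involved, including the encoding of the $+$ boundary condition, are nonnegative. One must also check that the derivative of $\mathrm{Var}_s$ is exactly a sum of third Ursell functions weighted by the uniform field direction. Everything else is routine summation of the one- and two-point bounds.
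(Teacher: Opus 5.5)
Your proof is correct and follows essentially the GHS route of the cited source; the paper itself does not reprove this statement and only recalls that the argument of \cite{Camia_2015} relies on GHS. GHS makes $s\mapsto\mathrm{Var}_s(m^a_+)$ nonincreasing on $[0,\infty)$, which gives $\log\E[e^{tm^a_+}]\le t\,\E[m^a_+]+\tfrac{t^2}{2}\mathrm{Var}(m^a_+)$, and the one- and two-point bounds (with Lemma~\ref{l.covGHS} for the covariances) make both coefficients $O(1)$ uniformly in $a$.
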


Since the Ising measure has the FKG lattice property, the + boundary condition maximizes the expectation $\E[e^{tm_+^a}]$ so the Proposition holds uniformly in the b.c. This is crucial in what follows.

\begin{proof}[Proof of Proposition \ref{pr.ConcentrationDiscreteMag}] Assume first that $\caF$ is the trivial $\sigma$-field, so that we aim to prove the inequality \eqref{eq.ConcentrationMag}. Let $A>0$ such that $\E[e^{m_+^a}]<A/2$ for all $a>0$, with the notations of the previous proposition; $A$ will be the constant appearing in the desired inequality~\eqref{eq.ConcentrationMag}. We can easily deduce that \begin{equation}\label{eq.InegalIntermConcentrationMag}
\E\left[ \exp\left(\frac{\<{ \Phi^{a},g\vert_{ C}} }{\eps^{2-1/8}\|g\|_{\infty}}\right) \right]<A/2.
\end{equation}
Indeed, if one develops the exponential, by Griffiths' inequality all correlation functions are nonnegative and thus the sum is maximal when $g$ is constant equal to its supremum $\|g\|_{\infty}$, in which case it is equal to $\Eb{\exp\big(m^{a/\eps}\big)}$ with \emph{random} boundary conditions (given by the spins on the boundary of the square $C$) and where $m^{a/\eps}$ is the spontaneous magnetization with {those} random boundary conditions.
 Whence, by the discussion above before the proof, the inequality~\eqref{eq.InegalIntermConcentrationMag} must be true.

\medskip By Markov's inequality one gets \begin{align*}
\P\big(\<{\Phi^{a},g\vert_{ C}}\ge \eps^{15/8}\|g\|_{\infty}x\big)\le (A/2)e^{-x}. 
\end{align*}
Furthermore, one can show exactly in the same way the complementary inequality, that is $\P\big(\<{\Phi^{a},g\vert_{ C}}\le -\eps^{15/8}\|g\|_{\infty}x\big)\le (A/2)e^{-x}$, hence, by the union bound, one gets~\eqref{eq.ConcentrationMag}. 
\par To treat the case of general $\caF$, it is enough to show the analog of inequality \eqref{eq.InegalIntermConcentrationMag} with the random variable $\<{ \Phi^{a},g\vert_{ C}}$ replaced by its conditional expectation w.r.t. $\caF$. This is a general fact: indeed, by the conditional Jensen's inequality, a.s. $e^{\E[X\md\caF]}\le \Eb{e^X\md\caF}$ for any real random variable $X$, so that $\E\big[e^{\E[X\md\caF]}\big]\le\E[e^X]$.
\end{proof}

\subsection{Handling the remaining part $H_{f,\eta}^a-H^{a,\eps}_{f,\eta}$.}

Now that we have Propositions \ref{pr.dim-Expl(eta)} and \ref{pr.ConcentrationDiscreteMag} in hand, we are able to show that in the enlargement $\Expl^{\eps}(\eta)$ of $\Expl(\eta)$, the discrete magnetization field carries mass that vanishes when $\eps\to0$.

\begin{proposition}[Control of the remaining part]\label{pr.RemainingPartSmall}
For any $\delta>0$ there exists $\eps_0>0$ such that for any $0<a<\eps<\eps_0$, we have \begin{equation}
\P\left(\big| H^a_{f,\eta}-H_{f,\eta}^{a,\eps}\big| > \delta \right) < \delta.
\end{equation}
\end{proposition}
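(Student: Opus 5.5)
Write the difference as a sum over the $\eps$-squares $C$ of the grid $\eps\Z^2$ that make up $\Expl(\eta)^\eps$ (only those meeting $\mathrm{supp} f$ matter):
\begin{align*}
H^a_{f,\eta}-H^{a,\eps}_{f,\eta}=\sum_{C\subset \Expl(\eta)^\eps} \Eb{\<{\Phi^a,f\vert_C}\md\Gamma^a(\eta)} + (\text{boundary error}),
\end{align*}
using Proposition \ref{pr.MarkovForPhi eta}. The boundary error comes from replacing $\bar f$ by $f$ and from squares of size $a$ straddling $\p C$; it is $O(a^{15/8}\cdot\#\{\text{boundary cells}\})$, hence negligible. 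The difficulty is that the set of squares is random: $\Expl(\eta)^\eps$ is defined from the continuum $\Expl(\eta)$, which is correlated with $\sigma^a$. I will therefore not condition on this set. Instead, I bound the sum by the maximum over \emph{all} deterministic $\eps$-squares, times the random number $N_\eps$ of squares in $\Expl(\eta)^\eps\cap \mathrm{supp} f$:
\begin{align*}
\big|H^a_{f,\eta}-H^{a,\eps}_{f,\eta}\big|\le N_\eps\cdot \max_{C\subset K}\Big|\Eb{\<{\Phi^a,f\vert_C}\md\Gamma^a(\eta)}\Big| + o(1),
\end{align*}
where $K$ is a fixed compact neighbourhood of $\mathrm{supp} f$ and the max runs over the $O(\eps^{-2})$ squares of $\eps\Z^2$ meeting $K$.

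For the maximum, apply Proposition \ref{pr.ConcentrationDiscreteMag} with $\caF=\sigma(\Gamma^a(\eta))$ and $x=3\log(1/\eps)$, then take a union bound over the $O(\eps^{-2})$ squares. With probability at least $1-O(\eps)$, uniformly in $a<\eps$, the maximum is at most $3\|f\|_\infty\eps^{15/8}\log(1/\eps)$.

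For $N_\eps$, the first-moment argument in the proof of Proposition \ref{pr.dim-Expl(eta)} gives $\E[N_\eps]\le O(1)\eps^{-11/8}$. This uses the two-arm bound, applied to squares at distance at most $2\eps$ from $\Expl(\eta)$ at scale $\eps$, so enlarging by a constant factor is harmless. This bound involves only the continuum $\Gamma$, so it holds independently of $a$. Markov's inequality then gives $N_\eps\le \eps^{-11/8-\theta}$ with probability at least $1-O(\eps^\theta)$, for a small $\theta>0$.

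Combining the two estimates, on an event of probability at least $1-O(\eps^{\theta})$, uniformly in $0<a<\eps$,
\begin{align*}
\big|H^a_{f,\eta}-H^{a,\eps}_{f,\eta}\big|\le O(1)\,\eps^{15/8-11/8-\theta}\log(1/\eps) = O\big(\eps^{1/2-\theta}\log(1/\eps)\big),
\end{align*}
which is smaller than $\delta$ once $\eps<\eps_0(\delta)$.

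The main obstacle is the decoupling step: the random set of squares depends on the configuration, and the tail bound is only available for deterministic squares. I handle it with the union bound, which costs only a logarithm, instead of a first-moment bound on the sum. Two further points need care. First, one must check that the concentration inequality applies to $f\vert_C$ after conditioning. The conditional-Jensen part of Proposition \ref{pr.ConcentrationDiscreteMag} covers this, since $\Eb{\<{\Phi^a,f\vert_C}\md\Gamma^a(\eta)}$ is exactly the $C$-part of the sum in Proposition \ref{pr.MarkovForPhi eta}. Second, one must check that the gap $15/8-11/8=1/2>0$ survives the $\log$ and the $\theta$ loss, which it does.
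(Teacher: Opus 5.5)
Your proposal is correct and is essentially the paper's argument. You bound the sum by $N_\eps$ times the maximum of $\big|\Eb{\<{\Phi^a,f\vert_C}\md\Gamma^a(\eta)}\big|$ over \emph{all} deterministic $\eps$-squares, control that maximum with Proposition \ref{pr.ConcentrationDiscreteMag} and a union bound over $O(\eps^{-2})$ squares, and control $N_\eps$ through the $11/8$ dimension estimate.

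The differences are cosmetic. The paper uses the threshold $\delta\eps^{3/2}$ and the a.s. bound $N_\eps=o(\eps^{-3/2})$, where you use the threshold $\eps^{15/8}\log(1/\eps)$ together with Markov's inequality. The paper also does not track your boundary-cell error. That error is $O(N_\eps\eps a^{7/8})\le O(N_\eps\eps^{15/8})$, so it is controlled on the same event where you bound $N_\eps$.
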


Before turning to the details of the proof, let us explain how we proceed. Recall that, for $\eps>0$, the set $\Expl(\eta)^{\eps}$ is defined as the union of the squares $C$ from the grid $\eps\Z^2$ which are at distance less than $\eps$ from $\eta$. Hence, one can write
\begin{equation}\label{eq.DiffAsCondExpectation}
H^a_{f,\eta}-H_{f,\eta}^{a,\eps} = \sum_{C}\Eb{\<{\Phi^a,f\vert_{C}} \md \Gamma^a(\eta)},
\end{equation}
where the sum is over such squares.

Now, to upper-bound the absolute value of this sum, we have two ingredients: Proposition \ref{pr.dim-Expl(eta)}, which tells us that the number of such squares is bounded from above with high probability, and Proposition \ref{pr.ConcentrationDiscreteMag} that says that on deterministic squares $C'$, the value $\Eb{\<{\Phi^a,f\vert_{C}} \md \Gamma^a(\eta)}$ cannot be too large. However, we must be careful since our squares $C$ are not deterministic and we cannot use Proposition \ref{pr.ConcentrationDiscreteMag} directly.

\begin{proof}[Proof of Proposition \ref{pr.RemainingPartSmall}]
Let $\delta>0$. For $\eps>0$, let $N_\eps$ be the number of $\eps\times\eps$-squares $C$ in the sum above. By equation \eqref{eq.DiffAsCondExpectation}, for the inequality $\big\lvert H^a_{f,\eta}-H_{f,\eta}^{a,\eps}\big\rvert>\delta$ to hold, 
\bi 
\item either $N_\eps>\eps^{-3/2}$,
\item or $\big\lvert\Eb{\<{\Phi^a,f\vert_{C_1}} \md \Gamma^a(\eta)}\big\rvert>\delta\eps^{3/2}$ for some square $C$. 
\ei
(The number $3/2$ has been chosen so that both events will be shown to have small probability, however any number lying in $(11/8,15/8)$ would work.)

By Proposition \ref{pr.dim-Expl(eta)}, a.s. $N_\eps=o(\eps^{-3/2})$. Hence, $\P(N_\eps>\eps^{-3/2})<\delta/2$ for any $\eps$ small enough and the first event has probability bounded by $\delta/2$ for $\eps<\eps_1$.

If the second event happens, then \emph{a fortiori} there must be an $\eps\times\eps$-square $C'$ among \emph{all} the squares of the grid $\eps\Z^2$ at distance at most $\eps$ to $\overline D$ for which the inequality $\big\lvert\Eb{\<{\Phi^a,f\vert_{C'}} \md \Gamma^a(\eta)}\big\rvert>\delta\eps^{3/2}$ holds. Let $N'_\eps$ be the number of such squares $C'$, which is deterministic and of course of order $\eps^{-2}$. By Proposition \ref{pr.ConcentrationDiscreteMag} applied to the squares $C'$ and the union bound, we can bound the probability of the second event:

\begin{align*}
\P\left( \left\{\Big| \Eb{\<{ \Phi^{a},f_{| C} }\md \Gamma^a(\eta)}\Big| > \delta\eps^{3/2} \text{ for some } C\right\}\right) \\ 
\le \P\left( \left\{\Big| \Eb{\<{ \Phi^{a},f_{| C'} }\md \Gamma^a(\eta)}\Big| > \delta\eps^{3/2} \text{ for some } C'\right\}\right) 
 \\  \le N'_\eps\times\bigg(A\exp\left(-\eps^{-15/8}\delta\eps^{3/2}/\|f\|_\infty\right)\bigg)
 \\ = O\big(\eps^{-2}\big)\exp\big(-\eps^{-3/8}\delta/\|f\|_\infty\big) < \delta/2&
\end{align*}
where the last inequality holds for $\eps$ small enough. 

It is straightforward to conclude: if $\eps>0$ is small enough,
\begin{align*}\P\left(\big\{| H^a_{f,\eta}-H_{f,\eta}^{a,\eps}| > \delta\big\}\right)<\delta,
\end{align*}by the union bound. This concludes the proof.
\end{proof}

\subsection{The end of the proof.}\label{ss.Proof PhiCondGamma(Eta)}
We are finally able to give the proof of Proposition \ref{pr.Conv H f eta a vers H f eta} on the convergence of $H_{f,\eta}^a$ to $H_{f,\eta}$. This proof will complete the proof of Theorem \ref{th. cond. expect. of magn wrt Gamma(eta)}, the main result of this section. 

\begin{proof}[Proof of Proposition \ref{pr.Conv H f eta a vers H f eta}]
Let $\delta>0$ be arbitrary. For $\eps$ sufficiently small, we have $\P\big(\lvert H_{f,\eta}^{\eps}-H_{f,\eta}\rvert>\delta\big)<\delta$ and $\P\big(|H_{f,\eta}^{a,\eps}-H^a_{f,\eta}|>\delta\big)<\delta$, the first inequality by Proposition \ref{pr.Def H f eta} and the second by Proposition \ref{pr.RemainingPartSmall}. For $a$ small enough depending on $\eps$, 
\begin{align*}
\P\left(\big\lvert H_{f,\eta}^{a,\eps}-H_{f,\eta}^{\eps}\big\rvert>\delta\right)<\delta
\end{align*}
by Proposition \ref{pr.NicePart}. Therefore, by the union bound, $\P\big(|H^a_{f,\eta}-H_{f,\eta}|>3\delta\big)<3\delta$ for small enough $a$. This concludes the proof of Proposition \ref{pr.Conv H f eta a vers H f eta} and thus of Theorem \ref{th. cond. expect. of magn wrt Gamma(eta)}.
\end{proof}

\section{Proof of the main Theorem.}
\label{se.Proof-Main-thm}

\subsection{Set-up of the proof of the main Theorem}
\label{ss.Set-up-mainThm}

Let us now build the set-up for the proof of our main Theorem \ref{th.main}. Recall that, for $f\in H_0^2(D)$, we have described explicitly the conditional expectation $H_{f,\eta}$ of $\<{\Phi,f}$ with respect to the set $\Gamma(\eta)$ of those loops intersecting a given smooth curve $\eta$, and that we have shown that this conditional expectation is the limit in law of the analogous conditional expectation in the discrete, jointly with the set of $+/-$ interfaces - that was the content of Section \ref{se.IMFwrtCurve} and Theorem \ref{th. cond. expect. of magn wrt Gamma(eta)}.
\par Recall as well that our goal is to show that $\<{\Phi,f}$ is completely determined by $\Gamma$, so we would like to take $\eta$ so that $\Gamma(\eta)$ contains much information.

\bnum
\item We want to show that the difference $\<{\Phi,f} - H_{f,\eta}$ is small for some well-chosen $\eta$;
\enum
Theorem \ref{th. cond. expect. of magn wrt Gamma(eta)} shows that it is enough to argue only in the discrete. Indeed, thanks to this theorem, if one assumes that on a Skorokhod coupling and on some subsequence the random pair $(\<{\Phi^a,f},\Gamma^a)$ converges a.s. to $(\<{\Phi,f},\Gamma)$, one has that $H_{f,\eta}^a\to H_{f,\eta}$ (in probability) so that if we show some bound on $\<{\Phi^a,f}-H^a_{f,\eta}$ uniform in $a$, we will recover a bound on $\<{\Phi,f} - H_{f,\eta}$. This is actually the only place where we make use of Section \ref{se.IMFwrtCurve} and Theorem \ref{th. cond. expect. of magn wrt Gamma(eta)}.

\bnum
\setcounter{enumi}{1}
\item Instead of extracting the information from the loops touching a given curve, we will extract information on the magnetization field $\Phi$ from the loops intersecting the (smooth) grid $\eta(\rho)= \rho\Z^2\cap D$ for a mesh-parameter $\rho$ going to $0$ along the proof.
\enum
Here and in the following, $\rho\Z^2\cap D$ is meant to include the edges between the $\rho$--lattice points, i.e. $\eta(\rho)=\{(x,y)\in D\md x\in\rho\Z\text{\emph{ or }}y\in\rho\Z\}$ which is a finite union of straight lines. Applying Theorem \ref{th. cond. expect. of magn wrt Gamma(eta)} to $\eta(\rho)$ is the reason why we stated it with $\eta$ allowed to be a smooth grid. 
\par Furthermore, since for the proof we will only apply it to these special grids $\eta(\rho)$ for $\rho>0$, let us abbreviate the notation $\Gamma\big(\eta(\rho)\big),H_{f,\eta(\rho)}$ as $\Gamma_\rho,H_{f,\rho}$. This will be convenient for future statements where $\rho\to0$.

\bnum
\setcounter{enumi}{2}
\item In order to run the argument above, we will need to prove that when $\rho$ is tiny, $\<{\Phi^a,f}$ is in fact very close to $\Eb{\<{\Phi^a,f} \md \Gamma^a_\rho}$. 
\enum
The precise statement that we prove is Proposition \ref{pr.SmallHoles} below.


\subsection{Control of the information left in the holes.}\label{ss.InfoHoles}
In this subsection, we wish to prove the following estimate.

\begin{proposition}\label{pr.SmallHoles} 
Let $f\in H_0^2(D)$ as above, and for $\rho>0$, let $\eta(\rho)$ as above be the smooth grid $\rho\Z^2\cap D$. Then, denoting $\Gamma^a_\rho=\Gamma^a\big(\eta(\rho)\big)$, one has \begin{equation}\label{eq.SmallHolesProba}
\limsup_{\rho\to0}\limsup_{a\to0}\EB{\Var{ \<{\Phi^a, f} \ \big\vert \ \Gamma^a_\rho}}= 0.
\end{equation}
\end{proposition}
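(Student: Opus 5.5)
The plan is to compute the conditional variance explicitly using the Markov property behind Proposition \ref{pr.MarkovForPhi eta}. Conditionally on $\Gamma^a_\rho$, the unexplored spins in the different connected components $D^a_z$ of $D^a\setminus\Expl^a(\eta(\rho))$ are independent. In each component they form a critical Ising model with $+$ or $-$ boundary condition, according to the parity of $\nes^a(z)$, and the explored spins are deterministic. Spins in different components are conditionally uncorrelated, and the sign flip does not affect covariances. Hence
\begin{align*}
\Var{\<{\Phi^a,f}\ \big\vert\ \Gamma^a_\rho}
= a^{-1/4}\sum_{O}\ \sum_{z,w\in O}\bar f(z)\bar f(w)\,\cov^+_{O}(\sigma_z,\sigma_w),
\end{align*}
where the outer sum runs over the components $O$ of the unexplored region.

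Next, I bound each covariance by Lemma \ref{l.covGHS} (GHS): covariances increase with the domain. I enlarge $O$ to a deterministic domain containing it, so the bound no longer depends on the random shape of $O$. The key geometric fact is that every vertex of $\eta^a(\rho)$ is explored. So each component $O$ avoids the grid $\eta^a(\rho)$, and is therefore contained in a single open cell $Q$ of the $\rho$-grid (up to lattice effects). Consequently, for $z,w$ in the same $O$ we have $\cov^+_O(\sigma_z,\sigma_w)\le\cov^+_{Q^a}(\sigma_z,\sigma_w)\le\<{\sigma_z\sigma_w}_{a\T}$, the last step by monotonicity in the domain again. Moreover, $z$ and $w$ must lie in the same cell $Q$.

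Dropping the constraint that $z,w$ lie in the same $O$ is not allowed directly, because the summand may be negative when $f$ changes sign. I therefore bound $|\bar f(z)\bar f(w)|\le\|f\|_\infty^2$ and use that covariances are nonnegative (Griffiths' inequality). This gives, deterministically,
\begin{align*}
\Var{\<{\Phi^a,f}\ \big\vert\ \Gamma^a_\rho}\le \|f\|_\infty^2\, a^{-1/4}\sum_{Q}\ \sum_{z,w\in Q\cap D^a}a^4\,\<{\sigma_z\sigma_w}_{a\T}\,.
\end{align*}
The covariance of the pair $(z,w)$ appears only if $z,w$ lie in the same component, so they are in the same cell $Q$. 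I then apply Proposition \ref{pr.BoundTwoPoint} (infinite-volume bound), which gives $\<{\sigma_z\sigma_w}_{a\T}\le C(\|z-w\|/a)^{-1/4}$, i.e. $a^{-1/4}\<{\sigma_z\sigma_w}\le C\|z-w\|^{-1/4}$. The inner double sum is then a Riemann sum bounded by $C\iint_{Q\times Q}\|z-w\|^{-1/4}dz\,dw\asymp\rho^{4-1/4}$, uniformly in $a$; the diagonal terms are handled by the same bound with $\|z-w\|\ge a$. Summing over the $O(\rho^{-2})$ cells meeting the support of $f$ gives a bound $O(\rho^{7/4})\to0$, uniformly in $a$, which yields \eqref{eq.SmallHolesProba}. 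Here $f\in H^2_0(D)\subset C(\overline D)$ in dimension two, so $\|f\|_\infty<\infty$.

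The main obstacle I expect is making rigorous the claim that a component of the unexplored region lies inside a single $\rho$-cell. This relies on the exploration revealing all spins on $\eta^a(\rho)$, and on the discrete grid $\eta^a(\rho)$ genuinely separating the lattice cells. A small lattice-level argument is needed here: the discretized lines must form $*$-connected barriers on $a\T$, and cells cut by $\p D$ must be intersected with $D^a$, where the domain-monotonicity argument still applies. A secondary point is checking the applicability of GHS with mixed $\pm$ boundary conditions. This is handled by the global spin-flip symmetry within a component, which reduces the $-$ case to the $+$ case.
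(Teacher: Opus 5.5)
Your proposal is correct and follows essentially the same route as the paper: conditional independence of the unexplored components, Griffiths' inequality together with Lemma \ref{l.covGHS} to dominate each conditional covariance by the full-plane two-point function, the bound of Proposition \ref{pr.BoundTwoPoint}, and the observation that each component lies inside a single $\rho$-cell. The differences are cosmetic: you get the explicit deterministic bound $O(\|f\|_\infty^2\rho^{7/4})$ uniformly in $a$, which makes the paper's Skorokhod and uniform-integrability remark unnecessary, whereas the paper bounds the sum by $\iint_{D\times D}\|z-z'\|^{-1/4}\mathbf{1}(\|z-z'\|<\sqrt2\rho)\,dz\,dz'$ and lets $\rho\to0$ by dominated convergence.
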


Let us stress that it is the key step that remains after Section \ref{se.IMFwrtCurve}. There will be no difficulty to prove Theorem \ref{th.main} once we have this proposition, see Subsection \ref{ss.ProofMainThm}. The proof we give builds on the monotonicity in the domain of the covariances of the Ising model, Lemma \ref{l.covGHS}, a consequence of GHS inequality. This fact was already helpful for proving the tightness of the field. Notice that another consequence of GHS inequality was crucial in Section \ref{se.IMFwrtCurve}.



\begin{proof}
It will be convenient to couple things using Skorokhod once again: let us assume that $\Gamma^a\to\Gamma$, and we will show the stronger statement that almost surely one has \begin{align}\label{eq.SmallHolesL2}
\limsup_{\rho\to0}\limsup_{a\to0}\Var{ \<{\Phi^a, f}  \md \Gamma^a_\rho}
= 0.
\end{align}
(This is indeed sufficient since the $\<{\Phi^a,f}$ are uniformly exponentially integrable.)

Let $(D_i^a)$ be the connected components of $D^a\setminus\Expl^a_\rho$. Since the various spin configurations $\sigma^a\vert_{D_i^a}$ are independent conditionally on $\Gamma^a_\rho$, the term above decomposes in the following way: \begin{equation}
        \Var{\<{\Phi^a,f} \md \Gamma^a_\rho} =  \sum_{i}\Var{\<{\Phi^a,f\vert_{D_i^a}}}.
    \end{equation}
Any term of this sum can be bounded by \begin{multline*}
         \Var{\<{\Phi^a,f\vert_{D_i^a}}}=\iint_{D_i^a\times D_i^a}f(z)f(z')\cov_{D_i^a}(\sigma_z^a,\sigma^a_{z'})a^{-1/4}d zd z' \\ 
         \underbrace{\le}_{\text{Lemma }\ref{l.covGHS}} \|f\|_\infty^2\iint_{D_i^a\times D_i^a}\cov_{a\T}(\sigma_z^a, \sigma_{z'}^a)a^{-1/4}d zd z' =  \|f\|_\infty^2\iint_{D_i^a\times D_i^a} \langle\sigma^a_z\sigma^a_{z'}\rangle_{a\T}a^{-1/4}d zd z' 
    \end{multline*}
where the integrated functions are meant to be constant on each $a \times a$ square centered on a lattice point of $D^a_i$. Now, by the classical bound of the two-point function on the full plane (see Proposition \ref{pr.BoundTwoPoint}) we deduce \begin{equation*}
\frac1{\|f\|_\infty^2} \mathrm{Var}\Big[\<{\Phi^a,f}\md\Gamma^a_\rho\Big]\le \sum_{i}\iint_{D_i^a\times D_i^a} \langle\sigma^a_z\sigma^a_{z'}\rangle_{a\T}a^{-1/4}d zd z'
        \lesssim\sum_{i}\iint_{D_i^a\times D_i^a}\|z-z'\|^{-1/4} dzdz'
    \end{equation*}
Finally, we can write \begin{equation*}
        \sum_{i}\iint_{D_i^a\times D_i^a}\|z-z'\|^{-1/4}d zd z' \le \iint_{D\times D}\|z-z'\|^{-1/4}1(|z-z'|<\sqrt2\rho)d zd z'
    \end{equation*} since all the components $D_i^a$ are of diameter smaller than $\sqrt2\rho$ (they must be inside a $\rho\times\rho$ square). However, this last term goes to zero as $\rho\to0$ since $\|z-z'\|^{-1/4}dzdz'$ is integrable on $D\times D$, which proves the convergence in equation \eqref{eq.SmallHolesL2}.
\end{proof}

\subsection{End of the proof.}\label{ss.ProofMainThm}
Now, we can gather Theorem \ref{th. cond. expect. of magn wrt Gamma(eta)} and Proposition \ref{pr.SmallHoles} to give the proof of our main theorem.

\begin{proof}[Proof of Theorem \ref{th.main}]
We will show that any sublimit $(\Gamma,\Phi)$ of the discrete coupling $(\Gamma^a,\Phi^a)$ satisfies $\Phi=\Phi(\Gamma)$ for an explicit measurable function $\Phi$; by tightness of the magnetization field (Proposition \ref{pr.PhiTight}), this implies the joint convergence and the measurability of the IMF in terms of the set of interfaces. In addition, it gives a new proof of the convergence of the magnetization field.
\par So let $(\Gamma,\Phi)$ be a sublimit, and assume by Skorokhod that it is the a.s. limit of the discrete set of interfaces - magnetization. Let $f$ be a function in $H_0^2(D)$. By Proposition \ref{pr.SmallHoles}, for any $\delta>0$ one can find $\eps_0>0$ such that for $\eps<\eps_0$ and small enough $a$, $\Pb{|H^a_{f,\eps} -\<{\Phi^a,f}|^2>\delta}<\delta$. Letting $a\to0$ shows that $\Pb{|H_{f,\eps}-\<{\Phi,f}|>\delta}<\delta$ as well, by Proposition \ref{pr.Conv H f eta a vers H f eta}. Therefore, the sequence $\big(H_{f,\eps}\big)_{\eps}$ converges in probability to $\<{\Phi,f}$, which is hence a measurable function of $\Gamma$.
\par Now, let $(e_i)$ be any orthonormal basis of the Hilbert space $H^{2}_0(D)$ made of smooth functions. Applying the above to $f=e_i$ for every $i$, Lemma \ref{le.MeasurableCoeff} shows that $\Phi$ is an explicit measurable function of $\Gamma$. This concludes the proof.
\end{proof}

\section{A stronger result under some regularity of $\partial D$.}
\label{s.RoughBoundary}

As mentioned in the Introduction and Subsection \ref{ss.IMF}, though we stated our main theorem without any assumption on the Jordan domain $D$, there is a slight improvement in the case where the boundary is not too rough, in the following precise sense. 

\begin{theorem}\label{th.main-smooth-bdry}
Let $D$ be a Jordan domain whose closure is contained in $\Omega$, a Jordan domain of the plane.
With the notations of Theorem \ref{th.main}, seeing the distributions $\Phi^a$ as elements of the Sobolev space $H^{-2}(\Omega)$\footnote{instead of elements of $H^{-2}(D)$ as in Definition \ref{d.Phi} and Theorem \ref{th.main}} \emph{and under the assumption that $\p D$ has Minkowski dimension strictly less than $15/8$}, the joint coupling $(\cle_3^{\nes,a}, \Phi^a)$ converges in law as the mesh $a\to 0$ to the continuum coupling $(\cle_3^{\nes}, \Phi)$ in the topology of $\caX_D\times H^{-2}(\Omega)$ where the magnetization field $\Phi$ is a measurable function of the nested $\cle_3$.
\end{theorem}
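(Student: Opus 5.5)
The plan is to follow the proof of Theorem \ref{th.main} verbatim, now testing against $f\in H_0^2(\Omega)$. Such an $f$ no longer vanishes near $\p D$, and this is exactly where the Minkowski assumption on $\p D$ enters. There are three places where the support of $f$ being compactly inside $D$ was used: tightness of $\Phi^a$, integrability of the candidate $H_{f,\eta}$, and the dominated convergence in Proposition \ref{pr.NicePart} together with the control of the remaining part. We revisit each in turn.

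\emph{Step 1: tightness in $H^{-2}(\Omega)$.} The centered part is handled exactly as in Proposition \ref{pr.Phi-Centered-Tight}, using $G_\Omega$ in place of $G_D$ and extending $\Phi^a$ by zero outside $D$. The bound $\iint_{D\times D}\|u-v\|^{-1/4}G_\Omega(u,v)\,du\,dv<\infty$ is unaffected by the boundary. For the mean, we need the finiteness of
\[
\iint_{D\times D}G_\Omega(u,v)\dist(u,\p D)^{-1/8}\dist(v,\p D)^{-1/8}\,du\,dv .
\]
Since $G_\Omega$ has only a logarithmic singularity and is bounded away from the diagonal, by Cauchy--Schwarz it suffices to show $\dist(\cdot,\p D)^{-1/8}\in L^{p}(D)$ for some $p>1$. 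This follows from $\overline{\dim}_M\p D<15/8$ by the layer-cake argument of Lemma \ref{l.dist^-1/8 int}. The layer $\{\dist(z,\p D)\in[2^{-k-1},2^{-k}]\}$ has area $O(2^{-k(2-d)})$ for some $d<15/8$, so $\sum_k 2^{kp/8}2^{-k(2-d)}<\infty$ for $p$ close to $1$. Proposition \ref{pr.SuffTightness} then gives tightness.

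\emph{Step 2: conditional expectation with respect to $\Gamma_\rho$.} We redo Section \ref{se.IMFwrtCurve} for $f\in C_0^\infty(\Omega)$. The set $\p D^\eta$ is now contained in $\Expl(\eta)\cup\p D$. Its upper Minkowski dimension on $\overline D$ is at most $\max(11/8,\overline{\dim}_M\p D)<15/8$; the bound of Proposition \ref{pr.dim-Expl(eta)} extends up to the boundary, since near $\p D$ we simply add the boundary squares. Lemma \ref{l.dist^-1/8 int} then shows that $H_{f,\eta}$ is well defined.

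The cut-off argument goes through with $\Expl(\eta)^\eps$ replaced by the $\eps$-thickening of $\Expl(\eta)\cup\p D$. In Proposition \ref{pr.NicePart}, dominated convergence now holds because we stay at distance $\eps$ from $\p D$. In Proposition \ref{pr.RemainingPartSmall}, the number of $\eps$-squares is still $o(\eps^{-3/2})$ provided we choose the exponent in $(\max(11/8,\overline{\dim}_M\p D),15/8)$. The concentration estimate, Proposition \ref{pr.ConcentrationDiscreteMag}, is insensitive to boundary squares because it uses only $\|f\|_\infty$.

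\emph{Step 3: variance in the holes.} Proposition \ref{pr.SmallHoles} is unchanged, since it only uses $\|f\|_\infty$ and the diameter bound $\sqrt2\rho$ on the components. Combining as in Subsection \ref{ss.ProofMainThm} with an orthonormal basis of $H_0^2(\Omega)$ and Lemma \ref{le.MeasurableCoeff} yields joint convergence and measurability.

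The main obstacle is Step 2: we must ensure that the Minkowski dimension bound for $\Expl(\eta)$ from Proposition \ref{pr.dim-Expl(eta)} holds uniformly up to $\p D$, not just on compacts. There the two-arm estimate near a rough boundary needs care. The first thing to try is to bound squares within distance $\eps$ of $\p D$ trivially by the covering number of $\p D$, and to use the plane two-arm bound only for squares at distance at least $\eps$.
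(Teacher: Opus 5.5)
Your proposal is correct and follows the paper's proof. Tightness in $H^{-2}(\Omega)$ comes from the finiteness of $I_\Omega(D)$, and Propositions \ref{pr.Def H f eta}, \ref{pr.NicePart}, \ref{pr.RemainingPartSmall} and \ref{pr.SmallHoles} are then rechecked using only $\|f\|_\infty$ and $\dimM(\p D)<15/8$. The differences are minor, and two of them make explicit things the paper leaves implicit:
\begin{itemize}
\item For $I_\Omega(D)$ you use H\"older with $\dist(\cdot,\p D)^{-1/8}\in L^p$, $p>1$, instead of the Whitney-square argument of Lemma \ref{l.int-finite-smoothBdry}. Note this is H\"older, not literally Cauchy--Schwarz: Cauchy--Schwarz would need $\dist(\cdot,\p D)^{-1/4}\in L^1$, i.e.\ roughly $\dimM(\p D)<7/4$.
\item You also cut off at scale $\eps$ near $\p D$, instead of applying dominated convergence directly up to $\p D$ as the paper does.
\item You spell out how Proposition \ref{pr.dim-Expl(eta)} extends up to $\p D$. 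Use the bulk two-arm bound only up to scale $\min(\dist(\cdot,\eta),\dist(\cdot,\p D))$; summing over dyadic distances then gives dimension at most $\max(11/8,\dimM\p D)$. The paper uses this extension without stating it.
\end{itemize}
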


\begin{remark}\label{rk.reason-Omega}
The first reason why we want our fields to be elements of $H^{-2}(\Omega)$ is to be able to integrate against test functions which do not necessarily vanish on $\partial D$. For example, if we indeed see $\Phi$ as an element of $H^{-2}(\Omega)$, the magnetization $\<{\Phi^a,\1}$ is well-defined as $\<{\Phi^a,f}$ for any function $f\in H^{2}_0(\Omega)$ which is identically $1$ on $D$, as long as $\overline{D}\subset\Omega$. Such a test function was not allowed in the previous setting of Theorem \ref{th.main} when we instead considered the field $\Phi^a$ as an element of the somehow ``larger'' space $H^{-2}(D)$, whence Theorem \ref{th.main-smooth-bdry} is an improvement of Theorem \ref{th.main}. It is of no importance what $\Omega$ really is, as long as it contains $\overline{D}$, since our distributions have compact support $\overline D$.
\end{remark}

\begin{remark}\label{rk.reason-Omega-2}
Another reason for having a theorem in which the limiting object lives in a Sobolev space that does not depend on the domain is that one can cut our domains into smaller subdomains and ask for a Markov property for the IMF. For an example of such a statement, see Theorem \ref{th.MarkovIMF}.
\end{remark}

In order to prove Theorem \ref{th.main-smooth-bdry}, we have to prove first the tightness of the fields $\Phi^a$ in the topology of $H^{-2}(\Omega)$:

\begin{proposition}[Tightness of the IMF in $H^{-2}(\Omega)$]\label{pr.PhiTight-smooth-bdry}
With the notations of Theorem \ref{th.main-smooth-bdry} and under the assumption that $\p D$ has Minkowski dimension strictly less than $15/8$, the sequence of random distributions $\Phi^a$ is tight in $H^{-2}(\Omega)$. 
\end{proposition}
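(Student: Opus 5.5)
Our plan mirrors the proof of Proposition \ref{pr.PhiTight}: write $\Phi^a=\widehat\Phi^a+a^{-1/8}\<{\sigma^a_\cdot}$ and show, via Proposition \ref{pr.SuffTightness} (with $\alpha'=1<\alpha=2$ on the Jordan domain $\Omega$), that both $\E\big[\|\widehat\Phi^a\|_{H^{-1}(\Omega)}^2\big]$ and $\|a^{-1/8}\<{\sigma^a_\cdot}\|^2_{H^{-1}(\Omega)}$ are bounded uniformly in $a$. The norms are now computed with the Green function $G_\Omega$ through \eqref{eq.Green-Norm H-1}; since $\overline D\subset\Omega$, $G_\Omega$ restricted to $D\times D$ is bounded by $C+\frac1{2\pi}|\log\|u-v\||$ and no longer vanishes near $\p D$. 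The fields $\Phi^a$ are supported in $\overline{D}^a$, which lies in a fixed compact subset of $\Omega$ for $a$ small, so they are genuine elements of $H^{-1}(\Omega)$.

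For the centered part the argument is unchanged. We have $\E\|\widehat\Phi^a\|^2_{H^{-1}(\Omega)}=a^{-1/4}\iint \cov_{D^a}(\sigma_u,\sigma_v)G_\Omega(u,v)\,du\,dv$, and Lemma \ref{l.covGHS} together with Proposition \ref{pr.BoundTwoPoint} bounds the covariance by $C a^{1/4}\|u-v\|^{-1/4}$. Since $\|u-v\|^{-1/4}(1+|\log\|u-v\||)$ is integrable on a bounded set, this term is uniformly bounded, and the boundary plays no role.

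For the mean part we use Proposition \ref{pr.BoundTwoPoint}(1): $a^{-1/8}\<{\sigma^a_u}\le C\,\dist(u,\p D^a)^{-1/8}$, which is comparable to $\dist(u,\p D)^{-1/8}$ up to an $O(a)$ boundary layer that we handle by truncation or monotonicity. Since $G_\Omega$ is merely log-singular on the diagonal, we bound the quantity by
\begin{align*}
\iint_{D\times D}\big(C+|\log\|u-v\||\big)\,\dist(u,\p D)^{-1/8}\dist(v,\p D)^{-1/8}\,du\,dv .
\end{align*}
By Cauchy--Schwarz or Young's inequality applied to the integrable kernel, this reduces to showing $\dist(\cdot,\p D)^{-1/8}\in L^p(D)$ for some $p>1$. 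This is where the hypothesis enters, through a layer-cake computation. If $\overline{\dim}_M\p D=d<15/8$, then $|\{u:\dist(u,\p D)<r\}|\le C r^{2-d'}$ for any $d'\in(d,15/8)$. Summing over dyadic shells gives $\int_D\dist^{-p/8}\lesssim\sum_k 2^{-k(2-d'-p/8)}$, which is finite as long as $p/8<2-d'$. Since $2-d'>1/8$, some $p>1$ works. (Lemma \ref{l.dist^-1/8 int} and Remark \ref{rk.bdry-smooth on a subset} state precisely this type of integrability, and we would invoke them directly.)

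The main obstacle is making the pointwise one-point bound uniform near the discrete boundary. For $u$ within distance $O(a)$ of $\p D^a$, we have $\dist(u,\p D^a)$ of order $a$ while $\dist(u,\p D)$ may not match it. We would handle this with the trivial bound $|a^{-1/8}\sigma|\le a^{-1/8}$ on the set within distance $Ca$ of $\p D$ (or of $\p D^a$, which converges to $\p D$). The Minkowski bound gives this set area $O(a^{2-d'})$, so its contribution is at most $a^{-1/4}\cdot a^{2(2-d')}\cdot O(\log)$, which tends to $0$ because $2-d'>1/8$. One subtlety is that Hausdorff convergence of $\p D^a$ to $\p D$ only gives a distance $o(1)$ rather than $O(a)$. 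In that case we would instead use the Minkowski content of $\p D^a$ along the lattice, or simply compare $\dist(u,\p D^a)$ with $\dist(u,\p D)$ up to the Hausdorff error, and tune the truncation scale accordingly.
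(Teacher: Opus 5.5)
Apart from your closing fallback (discussed below), your argument is correct. Its skeleton is the paper's: split $\Phi^a$ into $\widehat\Phi^a$ plus its mean, bound second moments in $H^{-1}(\Omega)$ through \eqref{eq.Green-Norm H-1}, and conclude with Proposition~\ref{pr.SuffTightness}. The centered part is handled identically.

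The genuine difference is how you show the mean part is bounded. The paper reduces to $I_\Omega(D)<\infty$ and proves this (Lemma~\ref{l.int-finite-smoothBdry}) with a Whitney decomposition. It bounds $\log(1/\ell_i\vee 1/\ell_j)$ by $\log(1/\ell_i)\log(1/\ell_j)$ and reduces to $\sum_i \ell_i^{15/8}\log(1/\ell_i)<\infty$, so the slack in $\overline{\dim}_M(\p D)<15/8$ is spent absorbing the logarithm. You instead apply H\"older (or Young) on $D\times D$, using that $C+|\log\|u-v\||$ lies in every $L^q$, $q<\infty$, on bounded sets. It then suffices that $\dist(\cdot,\p D)^{-1/8}\in L^p(D)$ for some $p>1$, which your layer-cake computation gives; the same slack is now spent raising the integrability exponent above $1$.

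Two precisions. Plain Cauchy--Schwarz ($p=2$) would require $\overline{\dim}_M(\p D)<7/4$, so what you need is H\"older/Young with $p$ close to $1$. Also, Lemma~\ref{l.dist^-1/8 int} as stated only gives $p=1$, which does not control a logarithmic energy. You therefore need your own layer-cake bound (or that lemma's proof run with exponent $p/8$), not the lemma itself.

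Your route is shorter and avoids Whitney cubes, while the paper's reuses machinery it needs anyway for Lemma~\ref{l.int-finite-Green-Dirichlet}. You are also more careful than the paper about passing from $\dist(u,\p D^a)$ to $\dist(u,\p D)$, and about the part of $\overline{D}^a$ lying outside $D$. The paper simply writes $\<{\sigma^a_u}\le a^{1/8}\dist(u,\p D)^{-1/8}$; when $\p D^a$ lies within $O(a)$ of $\p D$, your truncation of a boundary layer of width $Ca$ makes this rigorous.

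What does not work is your last sentence. Suppose one only knows that $\p D^a\to\p D$ uniformly up to reparametrization, which is the paper's definition of a discretization. Then no choice of truncation scale can help, because the conclusion itself can fail, already for $D$ a disc. Let $\p D^a$ zigzag along $\p D$ so as to create corridors of $D^a$ of width $O(a)$ and depth $\delta_a$, with $a^{1/8}\ll\delta_a\to0$, separated by thin teeth of the complement. The domains stay Jordan and their boundaries still converge. But sites deep inside the corridors have $+$ spins on both sides, so $\<{\sigma^a_z}\ge c>0$ there (by FKG, compare with the corridor alone with $-$ boundary condition at its mouth, a quasi-one-dimensional system in a positive field). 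For $f\in H^2_0(\Omega)$ equal to $1$ near $\overline D$, this gives $\E\<{\Phi^a,f}\gtrsim a^{-1/8}\delta_a\to\infty$. Meanwhile $\mathrm{Var}\<{\Phi^a,f}$ stays bounded by the centered estimate, contradicting tightness.

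So a quantitative hypothesis on $D^a$ has to be imposed. One option is $\p D^a\subset\{\dist(\cdot,\p D)\le Ca\}$. The other is a bound $|\{u\in D^a:\dist(u,\p D^a)<r\}|\lesssim r^{2-d'}$ uniform in $a$ and $r\ge a$, which is what your ``Minkowski content of $\p D^a$'' idea would require. Neither follows from the convergence of the boundaries, and the paper tacitly uses the first. Under either hypothesis your proof is complete.
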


\begin{proof}
First, note that the analogous tightness in $H^{-2}(\Omega)$ with the centered field $\widehat{\Phi}^a$ holds true\footnote{recall we defined $\widehat{\Phi}^a=\Phi^a-\E[\Phi^a]=\big(z\mapsto a^{-1/8}\cdot(\sigma^a_z-\E[\sigma^a_z])\big)$ in Section \ref{ss.IMF}}, even without the hypothesis on $\p D$. Indeed, the proof of Proposition \ref{pr.Phi-Centered-Tight} on tightness of the centered field in the space $H^{-2}(D)$ could be written down with $\Omega$ instead of $D$, without any changes: it only relies on $G_D(u,v)=O\big(-\log(\|u-v\|)\big)$, which is true for the Green kernel $G_\Omega$ on $\Omega$ as well.

Hence, the only point which we have to modify is that, instead of proving the finiteness of $I_D(D)$ defined in \eqref{e.I_D(D)}, we need the finiteness of the following integral:
\begin{align}\label{e.I_Om}
I_\Omega(D):=\iint_{D\times D}G_\Omega(u,v)\dist(u,\p D)^{-1/8}\dist(v,\p D)^{-1/8}dudv \,.
\end{align}
This is precisely the content of Lemma \ref{l.int-finite-smoothBdry} proven in Appendix \ref{ap.Green-bounds-proof-finiteness}.
\end{proof}

\begin{proof}[Proof of \ref{th.main-smooth-bdry}]
Now that we have proven the tightness of the discrete IMF in $H^{-2}(\Omega)$, what remains is to check that Theorem \ref{th. cond. expect. of magn wrt Gamma(eta)} and Proposition \ref{pr.SmallHoles}, the core ingredients of our main Theorem \ref{th.main}, are still true if the test function $f$ is an element of $H^2_0(\Omega)$ instead of an element of $H^2_0(D)$.

Theorem \ref{th. cond. expect. of magn wrt Gamma(eta)} can be itself split into three main pieces: Proposition \ref{pr.Def H f eta} showing that the natural integral defining $H_{f,\eta}$ indeed converges, Proposition \ref{pr.NicePart} showing that the "nice part" $H_{f,\eta}^{a,\eps}$ of $H_{f,\eta}^a$ converges to its continuum analog $H_{f,\eta}^\eps$, and Proposition \ref{pr.RemainingPartSmall} justifying that the remaining part does not carry much mass.

Let us check that all the steps listed above still work through in the case of $f\in H^2_0(\Omega)$:

\bnum
\item To prove Proposition \ref{pr.SmallHoles}, the only assumption on $f\in H^2_0(D)$ we use is the fact that $f$ is bounded (which is a consequence of the continuity of $f$). This is still true for $f\in H^2_0(\Omega)$.
\item The same is true for Proposition \ref{pr.RemainingPartSmall}, since it ultimately relies on Proposition \ref{pr.ConcentrationDiscreteMag} where we bounded the values of $f$ by $\|f\|_\infty$.
\item For $H_{f,\eta}$ being well defined, and the convergence of the nice part, i.e. Propositions \ref{pr.Def H f eta} and \ref{pr.NicePart}, this is more subtle since we did use the fact that our test function $f$ vanished on a neighbourhood of $\p D$.
\enum
Under the hypothesis $\dimM(\p D)<15/8$ this can be fixed smoothly: the function $CR(z,D^\eta)^{-1/8}$ is integrable over $D$ by Proposition \ref{pr.dim-Expl(eta)} and Lemma \ref{l.dist^-1/8 int}. Hence the integral defining $H_{f,\eta}$ converges, and we may use the dominated convergence theorem in the proof of the convergence $H_{f,\eta}^{a,\eps}\to H_{f,\eta}^\eps$.
\end{proof}

\section{Some consequences of the measurability of the Field.}
\label{se.MainThm-consequences}

Now that we have finally proven our main result, the measurability in the set of interfaces of the Ising Model of the Magnetization Field, we push our analysis a little further in this Section in order to obtain a spatial Markov property for the IMF, and the explicit description of the conditional expectation of $\<{\Phi,f}$ w.r.t. $\Gamma^1$ the set of loops of the first layer. Let us begin with the Markov Property.

\subsection{The Markov Property for the IMF w.r.t. the loops in $\Gamma(\eta)$.}\label{ss.Markov for IMF}
We give a more satisfying version of Theorem \ref{th. cond. expect. of magn wrt Gamma(eta)}, the main result of Section \ref{se.IMFwrtCurve}. Let us recall that Theorem \ref{th. cond. expect. of magn wrt Gamma(eta)} gives an explicit description of the conditional expectation of the IMF w.r.t. the loops discovered by the exploration of a smooth grid $\eta$\footnote{or rather the IMF integrated against any test function, but it amounts to the same}, and allowed us to prove our main result, namely the measurability in the loops of the IMF.

Now that we have this latter result, we can actually come back to Theorem \ref{th. cond. expect. of magn wrt Gamma(eta)} and prove a slight improvement: the Markov Property for the magnetization field w.r.t. the "stopping set" $\Gamma(\eta)$. We are able to describe the conditional law of the IMF w.r.t. $\Gamma(\eta)$, and it is precisely what one could expect, an IMF in each of the subdomains cut out by the exploration $\Gamma(\eta)$.

Before we state it properly, let us recall and introduce some notations that we use: 
\begin{itemize}
\item $D$ is a Jordan domain, assumed to be of piecewise smooth boundary; $\Omega$ is a Jordan domain containing $\overline D$. Furthermore, $\eta$ is a smooth grid in $\overline D$.
\end{itemize}
It is irrelevant what $\Omega$ is, one may think of it as a large ball.
\bi
\item If $\Gamma$ follows the law $\cle_3^\nes$ in $D$, let $\Gamma(\eta)=\{\ell\in\Gamma\md\im(\ell)\cap\eta\neq\emptyset\}$. 
\end{itemize}
Define as well $\Expl(\eta)=\eta\cup\bigcup_{\ell\in\Gamma(\eta)}\im(\ell)$, and $D^\eta=D\setminus\Expl(\eta)$. We write $D_i$ for the connected components of $D^\eta$, and $\Gamma_i=\{\ell\in\Gamma\md\im(\ell)\subset D_i\}$ for the loops inside $D_i$. \emph{It is a classical fact that, conditionally on $\Gamma(\eta)$, the random variables $\Gamma_i$ are independent and each of them follows the law of a $\cle_3^\nes$ in its domain $D_i$.}
\bi 
\item Let $\nes: D^\eta\to\Z_+$ be the index of nestedness: for $z\not\in\Expl(\eta)$, $\nes(z)$ is the number of loops $\ell\in\Gamma(\eta)$ surrounding $z$. We write $\nes^\eta$ when we want to stress with respect to which set of loops the nesting index is taken with.
\ei
A.s. $\nes$ is a continuous function, hence constant on the connected components of $D^\eta$. Let $\nes_i$ be its value on $D_i$.

Let us now state the Markov property of $\Phi$ w.r.t. the stopping set $\Gamma(\eta)$:

\begin{theorem}[Markov property for the IMF]\label{th.MarkovIMF}
Under the notations above, let $(\Gamma,\Phi)$ be the coupling between a $\cle_3^\nes$ and an IMF given by Theorem \ref{th.main-smooth-bdry}. The law of $\Phi$ conditionally on $\Gamma(\eta)$ is the law of an independent IMF in each of the connected components of $D^\eta$, multiplied by the spatial function $(-1)^\nes$.

More precisely, for any test function $f\in H_0^2(\Omega)$, we have the almost sure equality 
\begin{align}\label{eq.Phi eta decomposedSmallDomains}
\<{\Phi,f}\stackrel{a.s.}{=} \sum_i (-1)^{\nes_i}\<{\Phi_i,f},
\end{align}
where $\Phi_i=\Phi(\Gamma_i)$ is the magnetization field in $D_i$ associated to the loop set $\Gamma_i$. (The right hand side is a convergent serie thanks to Point \ref{point.proof-serie-cv} below.)
\end{theorem}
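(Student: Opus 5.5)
The plan is to avoid re-running the discrete scaling limit in every random component $D_i$. Instead I will exploit the explicit construction of $\Phi=\Phi(\Gamma)$ from the proof of Theorem~\ref{th.main} (and Theorem~\ref{th.main-smooth-bdry}), which recovers $\<{\Phi,f}$ as the limit in probability of $H_{f,\eta'}$ along a sequence of grids $\eta'$ whose holes shrink. The key observation is that this construction is \emph{compatible with refining the grid}.

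\textbf{Step 1: the refined grids.} I take $\eta'_\rho:=\eta\cup\eta(\rho)$, which is still a smooth grid. The argument of Subsection~\ref{ss.ProofMainThm} goes through verbatim for it: Theorem~\ref{th. cond. expect. of magn wrt Gamma(eta)} applies to $\eta'_\rho$, and Proposition~\ref{pr.SmallHoles} holds because every component of $D^a\setminus\Expl^a(\eta'_\rho)$ still has diameter at most $\sqrt2\rho$. Therefore $H_{f,\eta'_\rho}\to\<{\Phi,f}$ in probability as $\rho\to0$, for every $f\in H^2_0(\Omega)$, using the extension to $\Omega$ from Section~\ref{s.RoughBoundary}.

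\textbf{Step 2: the deterministic splitting identity.} A loop of $\Gamma$ meets $\eta'_\rho$ iff either it lies in $\Gamma(\eta)$, or it lies in some $\Gamma_i$ and meets $\eta(\rho)\cap D_i$. Hence
\[
\Gamma(\eta'_\rho)=\Gamma(\eta)\cup\bigcup_i\Gamma_i\big(\eta(\rho)\cap \overline{D_i}\big).
\]
The components of $D\setminus\Expl(\eta'_\rho)$ are exactly the components of the sets $D_i\setminus\Expl_i(\eta(\rho))$. For $z\in D_i$, the nesting index splits as $\nes^{\eta'_\rho}(z)=\nes_i+\nes^{\eta(\rho)}_{\Gamma_i}(z)$. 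Plugging this into \eqref{eq.H f eta} gives the pathwise identity
\[
H_{f,\eta'_\rho}(\Gamma)=\sum_i(-1)^{\nes_i}H^{D_i}_{f,\eta(\rho)\cap D_i}(\Gamma_i),
\]
where $H^{D_i}$ denotes the same functional computed in the domain $D_i$. The series converges absolutely because $\int_D |f|\,CR_{D\setminus\Expl(\eta'_\rho)}^{-1/8}<\infty$ by Proposition~\ref{pr.dim-Expl(eta)} and Lemma~\ref{l.dist^-1/8 int}. Note that $f$ need not vanish on $\p D_i$.

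\textbf{Step 3: identifying each term with $\Phi_i$.} Conditionally on $\Gamma(\eta)$, each $D_i$ is a Jordan domain (Lemma~\ref{l.Domains-to-be-explored-are-Jordan}). Its boundary has upper Minkowski dimension at most $11/8<15/8$ on compacts, by Proposition~\ref{pr.dim-Expl(eta)}, with the boundary of $D$ handled by the piecewise smoothness assumption. Moreover $\Gamma_i\sim\cle_3^\nes(D_i)$, independently across $i$. So Theorem~\ref{th.main-smooth-bdry}, applied to each $D_i$ with the grid $\eta(\rho)$, gives for each fixed $i$ that $H^{D_i}_{f,\eta(\rho)\cap D_i}(\Gamma_i)\to\<{\Phi_i,f}$ in probability as $\rho\to 0$. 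Here $\Phi_i:=\Phi(\Gamma_i)$ is, conditionally, an IMF in $D_i$, and the $\Phi_i$ are independent since the $\Gamma_i$ are.

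\textbf{Step 4: exchanging $\rho\to0$ with the infinite sum (main obstacle).} Fix $r>0$. Only finitely many components have diameter $\ge r$, so Step 3 handles them. For the tail over components with $\operatorname{diam}D_i<r$, I will bound, uniformly in $\rho$, the conditional second moment given $\Gamma(\eta)$ of $\sum_{i:\,\operatorname{diam}D_i<r}(-1)^{\nes_i}H^{D_i}_{f,\rho}$. This splits into two parts.
\begin{itemize}
\item The conditional means are $\calC_\T\int_{D_i}f\,CR_{D_i}^{-1/8}$. Their sum over small components is bounded by $\|f\|_\infty\int_{\bigcup_{\operatorname{diam}D_i<r}D_i}CR_{D^\eta}^{-1/8}$, which tends to $0$ as $r\to0$ by integrability.
\item By conditional independence the variances add. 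Since $H^{D_i}_{f,\rho}$ is a conditional expectation of $\<{\Phi_i,f}$, each variance is at most $\Var{\<{\Phi_i,f}}$. Passing to the discrete and using Lemma~\ref{l.covGHS} together with Proposition~\ref{pr.BoundTwoPoint}, exactly as in the proof of Proposition~\ref{pr.SmallHoles}, this is at most $C\|f\|_\infty^2\iint_{D_i^2}|z-z'|^{-1/4}$. The sum over small components is then at most $C\|f\|_\infty^2\iint_{|z-z'|<r}|z-z'|^{-1/4}\to0$ as $r\to 0$.
\end{itemize}
The same two bounds apply to $\sum_i(-1)^{\nes_i}\<{\Phi_i,f}$, which also proves that the right-hand side of \eqref{eq.Phi eta decomposedSmallDomains} converges.

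Combining Steps 1--4 yields \eqref{eq.Phi eta decomposedSmallDomains}. Taking $f$ in an orthonormal basis of $H^2_0(\Omega)$ and using Lemma~\ref{le.MeasurableCoeff} then identifies the conditional law of $\Phi$ given $\Gamma(\eta)$ as claimed.

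The delicate points are the uniform-in-$\rho$ tail bound in Step 4 and verifying the hypotheses of Theorem~\ref{th.main-smooth-bdry} on the random domains $D_i$. For the latter I would fall back on the discrete Markov property, Proposition~\ref{pr.MarkovForPhi eta}, together with Caratheodory convergence from Proposition~\ref{pr.JointConv}, if applying the continuum result directly on random domains proves awkward.
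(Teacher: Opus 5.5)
Your argument is correct, but it handles the infinitely many components accumulating on $\eta$ differently from the paper.

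\textbf{The paper's route.} It first proves \eqref{eq.Phi eta decomposedSmallDomains} only for $f$ vanishing on a neighbourhood of $\eta$. Then only finitely many $D_i$ meet $\operatorname{supp} f$, and the splitting identity for $H_{f,\eta\cup\gamma_n}$ (with dyadic grids $\gamma_n=2^{-n}\Z^2\cap\overline D$) is a finite sum. Each term converges by $L^2$ martingale convergence, using $\Phi=\Phi(\Gamma)$ and $\bigvee_n\sigma(\Gamma(\eta\cup\gamma_n))=\sigma(\Gamma)$, rather than by re-running Proposition~\ref{pr.SmallHoles}. The paper then shows separately that the series converges: the means are absolutely summable by integrability of $CR_{D^\eta}^{-1/8}$, and the centred parts are summable in $L^2$ by conditional independence and the GHS covariance bound. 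Finally it reaches general $f$ by splitting $f=fg_\eps+f(1-g_\eps)$ with a deterministic cutoff $g_\eps$. Both $\langle\Phi,f(1-g_\eps)\rangle$ and the series evaluated at $f(1-g_\eps)$ vanish as $\eps\to0$, because $f(1-g_\eps)$ is supported in a thin neighbourhood of $\eta$.

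\textbf{Your route.} You keep $f$ general throughout and truncate the family of components by diameter, instead of truncating the test function near $\eta$. This costs you two things. First, the infinite pathwise splitting identity, which is harmless by absolute integrability of $|f|\,CR^{-1/8}$. Second, tail bounds uniform in $\rho$, which you obtain correctly. The conditional means $m_i(f)$ do not depend on $\rho$ by the tower property. Conditional Jensen plus conditional independence bound the variances by those of $\langle\Phi_i,f\rangle$. The same two a priori estimates drive both proofs: yours dispenses with the special case and the cutoff, while the paper's dispenses with any uniformity in the refinement.

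\textbf{A simplification.} Your Step 4 already needs $H^{D_i}_{f,\rho}$ to be the conditional expectation of $\langle\Phi_i,f\rangle$ given the loops of $\Gamma_i$ hitting the grid. Once you have that, Steps 1 and 3 follow from martingale convergence along dyadic $\rho$, without re-checking Proposition~\ref{pr.SmallHoles} in the random domains $D_i$. This identification, like the paper's, tacitly applies the results of Section~\ref{se.IMFwrtCurve} to grids $\rho\Z^2\cap D_i$. Such grids need not be finite unions of smooth curves ending on $\p D_i$. So the caveat you flag is shared with the paper rather than specific to your route.
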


Note that by Proposition \ref{pr.dim-Expl(eta)} almost surely the Minkowski dimension of the boundary of any of the domains $D_i$ is at most $11/8$, and furthermore, by Lemma \ref{l.Domains-to-be-explored-are-Jordan} the domains $D_i$ are a.s. all Jordan. Thus, Theorem \ref{th.main-smooth-bdry} applies and we can make sense of the $\Phi_i$ acting against a test function $f\in H^2_0(\Omega)$.

Though not particularly difficult, the proof is quite technical and requires a careful analysis. We decompose it in a few steps: 
\bnum
\item\label{point.eq is enough} First we explain why it is enough to prove equation \eqref{eq.Phi eta decomposedSmallDomains}.
\item\label{point.eq for special f} Then, we prove \eqref{eq.Phi eta decomposedSmallDomains} for test functions $f$ vanishing on a neighbourhood of $\eta$. (For these functions, the right hand side has finitely non-zero terms.)
\item\label{point.proof-serie-cv} We prove that, for general $f$, the sum in \eqref{eq.Phi eta decomposedSmallDomains} indeed converges in probability.
\item\label{point.eq for any f} Finally, we prove \eqref{eq.Phi eta decomposedSmallDomains} for any test function $f$.
\enum

\begin{proof}[Proof of \ref{point.eq is enough}]
Conditionnally on $\Gamma(\eta)$, the random loop sets $\Gamma_i$ are independent and follow the law of a $\cle_3^\nes$ in their respective domains, hence the $\Phi_i$ are independent and follow the law of IMFs in their respective domains. Now, assuming \eqref{eq.Phi eta decomposedSmallDomains} for a dense countable family $(f_n)$ of test functions, we get the equality of distributions \begin{align*}
\Phi \stackrel{a.s.}{=} \sum_i(-1)^{\nes_i}\Phi_i,
\end{align*}i.e. the Markov Property stated in the Theorem.
\end{proof}

\begin{proof}[Proof of \ref{point.eq for special f}]
Let $f\in C^\infty_0(\Omega)$ which vanishes on a neighbourhood of $\eta$. We prove \eqref{eq.Phi eta decomposedSmallDomains} for $f$.

Let $K:=\operatorname{supp}(f)\cap\overline D$. Since $K$ is at positive distance from $\eta$, only finitely many components $D_i$ meet $K$. Indeed, apart from the finitely many components of $D\setminus\eta$ meeting $K$, every additional component meeting $K$ is separated from $\eta$ by a loop of $\Gamma(\eta)$ whose diameter is at least $\dist(K,\eta)$; the conclusion follows from the local finiteness of the $\cle_3$. In what follows, all sums are restricted to these finitely many indices.

For $n\geq1$, let
\[
\gamma_n:=2^{-n}\Z^2\cap\overline D,
\qquad
H_{f,\eta,n}:=H_{f,\eta\cup\gamma_n},
\qquad
\mathcal G_n:=\sigma\big(\Gamma(\eta\cup\gamma_n)\big).
\]
For a component $D_i$, denote by $H^{D_i}_{f,n}(\Gamma_i)$ the functional constructed in the domain $D_i$ from the loops of $\Gamma_i$ intersecting the grid $\gamma_n$.

We first observe that all these functionals may be represented by their unregularized integrals. Indeed, Proposition~\ref{pr.dim-Expl(eta)}, applied to the smooth grid $\eta\cup\gamma_n$, gives
\[
\overline{\dim}_M\big(\partial D\cup\Expl(\eta\cup\gamma_n)\big)\leq\frac{11}{8}
\qquad\text{almost surely.}
\]
The same statement holds, conditionally on $\Gamma(\eta)$, for the explored set generated by $\gamma_n$ inside each of the finitely many domains $D_i$. Since $11/8<15/8$, a Whitney-decomposition estimate gives the integrability of the corresponding function $\dist(\,\cdot\,,F)^{-1/8}$. Thus dominated convergence in Definition~\ref{d.H-f-eta-Regularized} shows that $H_{f,\eta,n}$ and $H^{D_i}_{f,n}$ coincide with the integrals in~\eqref{eq.H f eta}.

The integral defining $H_{f,\eta,n}$ can now be split according to the components $D_i$. Inside any of the $D_i$, the components created by the further exploration of $\gamma_n$ are exactly those used to define $H^{D_i}_{f,n}(\Gamma_i)$, and the global nesting index $\nes^{\eta\cup\gamma_n}$ in $D$ with respect to the set of loops $\Gamma(\eta\cup\gamma_n)$, restricted to the component $D_i$, is the sum of the constant $\nes_i^{\eta}$ and the nesting index $\nes_{D_i}^{\gamma_n}$ internal to $D_i$.
Therefore
\begin{align}
H_{f,\eta,n}(\Gamma)
=\sum_i(-1)^{\nes_i}H^{D_i}_{f,n}(\Gamma_i)
\qquad\text{almost surely.}
\label{eq.H-Gamma(eta) decomposedSmallDomains}
\end{align}

The dyadic grids are increasing, hence $(\mathcal G_n)_{n\geq1}$ is an increasing sequence of $\sigma$--fields. Moreover,
\[
\bigvee_{n\geq1}\mathcal G_n=\sigma(\Gamma),
\]
because every non-degenerate loop intersects $\gamma_n$ for all sufficiently large $n$. By Theorem~\ref{th. cond. expect. of magn wrt Gamma(eta)},
\[
H_{f,\eta,n}=\Eb{\<{\Phi,f}\md\mathcal G_n}.
\]
Since $\Phi$ is measurable with respect to $\Gamma$, the $L^2$ martingale convergence theorem gives $H_{f,\eta,n}\xrightarrow[n\to\infty]{L^2}\<{\Phi,f}$. 

Conditionally on $\Gamma(\eta)$, the same argument applies in each of the finitely many domains $D_i$: the $\sigma$--fields generated by the loops of $\Gamma_i$ intersecting the dyadic grids increase to $\sigma(\Gamma_i)$, and $\Phi_i$ is measurable with respect to $\Gamma_i$. Hence
\begin{align*}
H^{D_i}_{f,n}(\Gamma_i)\xrightarrow[n\to\infty]{L^2}\<{\Phi_i,f}
\qquad\text{for every relevant }i.
\end{align*}
Taking the limit $n\to\infty$ in~\eqref{eq.H-Gamma(eta) decomposedSmallDomains}, using the finiteness of the sum, proves~\eqref{eq.Phi eta decomposedSmallDomains}.
\end{proof}

\begin{proof}[Proof of \ref{point.proof-serie-cv}]
We prove that for any $f\in H^2_0(\Omega)$, the sum $\sum_i\<{\Phi_i,f}(-1)^{n_i}$ converges in probability. More precisely, we show that, if $$m_i(f):=\E[\<{\Phi_i,f}\md\Gamma(\eta)]\qquad\text{and}\qquad X_i(f):=\<{\Phi_i,f}-m_i(f),$$
then 
\bnum[i)]
\item the serie $\sum_i (-1)^{n_i}m_i(f)$ is absolutely convergent, almost surely on $\Gamma(\eta)$;
\item the serie $\sum_i (-1)^{n_i}X_i(f)$ is convergent in $L^2$.
\enum
For point $i)$ above, let us remark that $m_i(f)=\calC_\T\int_{D_i}CR(z,D_i)^{-1/8}f(z)dz$ (this is e.g. a consequence of Theorem \ref{th. cond. expect. of magn wrt Gamma(eta)} applied to $D_i$ and the empty grid), so that 
\begin{align*}
\sum_i|(-1)^{n_i}m_i(f)|\le\calC_\T\|f\|_\infty \sum_i \int_{D_i}CR(z,D_i)^{-1/8}dz \\ \le  \calC_\T\|f\|_\infty\int_D\dist(z,D^\eta)^{-1/8}dz<+\infty
\end{align*}
by Proposition \ref{pr.dim-Expl(eta)} and Lemma \ref{l.dist^-1/8 int}. Hence the sum is absolutely convergent, and we define $M_\eta(f)=\sum_i(-1)^{n_i}m_i(f)$. Notice that a.s. \begin{align}\label{eq.bound-M_eta}
|M_\eta(f)|\le \int_D|f(z)| \dist(z,D^\eta)^{-1/8}dz,
\end{align}
and that $M_\eta$ is a.s. linear.

For point ii), let us first record the uniform second-moment estimate that will be used below. If $U$ is a Jordan domain to which Theorem~\ref{th.main-smooth-bdry} applies, if $\Phi_U$ denotes the IMF in $U$, and if $h$ is a bounded smooth function, then
\begin{align}
\mathrm{Var}\big(\<{\Phi_U,h}\big)
&\lesssim \iint_{U\times U}|h(z)h(w)|\,\|z-w\|^{-1/4}\,dz\,dw .
\label{eq.MarkovIMF-covariance-bound}
\end{align}
The implicit constant is independent of $U$. Indeed, the analogous estimate for the discrete field follows from Lemma~\ref{l.covGHS} and Proposition~\ref{pr.BoundTwoPoint}, exactly as in the proof of Proposition~\ref{pr.Phi-Centered-Tight}; the continuum estimate then follows by passing to the limit and applying Fatou's lemma, using the convergence of the means. 

In our case, since conditionnally on $\Gamma(\eta)$, the $X_i(f)$ are centered and independent, for any finite set of indices $J$ we have \begin{align*}
\Eb{\big(\sum_{i\in J}(-1)^{n_i}X_i(f)\big)^2\md\Gamma(\eta)} & = \sum_{i\in J}\Eb{X_i(f)^2\md\Gamma(\eta)}
\\ & \lesssim \sum_{i\in J}\iint_{D_i\times D_i}|f(z)f(w)|\|z-w\|^{-1/4}dzdw 
\\ & \le \iint_{D\times D}|f(z)f(w)|\|z-w\|^{-1/4}dzdw 
\end{align*}
where we use \eqref{eq.MarkovIMF-covariance-bound} for the second line.

Taking the expectation over $\Gamma(\eta)$ and letting $J$ increases to the full set of indices gives the convergence in $L^2$ of $\sum_i(-1)^{n_i}X_i(f)$. Let $X_\eta(f)$ be its limit; we have \begin{align}\label{eq.bound-X_eta}
\E[X_\eta(f)^2]\le\iint_{D\times D}|f(z)f(w)|\|z-w\|^{-1/4}dzdw
\end{align}
and notice that the functional $X_\eta$ is a.s. linear.

Finally, the sum $\sum_i(-1)^{n_i}\<{\Phi_i,f}$ converges to $S_\eta(f):=M_\eta(f)+X_\eta(f)$. 
\end{proof}

\begin{proof}[Proof of \ref{point.eq for any f}]
Let now $f\in C^2_0(\Omega)$ be arbitrary. We aim to show that $\<{\Phi,f}=S_\eta(f)$, an equality that holds for any $f$ vanishing on a neighbourhood of $\eta$.

Choose deterministic functions $g_\epsilon\in C^\infty_c(\Omega)$ such that $0\leq g_\epsilon\leq1$, $g_\epsilon=0$ on the $\epsilon$--neighbourhood of $\eta$, and $g_\epsilon=1$ on $D$ outside the $2\epsilon$--neighbourhood of $\eta$. Set
\[
h_\epsilon:=f(1-g_\epsilon),
\qquad
A_\epsilon:=\{z\in D:\dist(z,\eta)<2\epsilon\}.
\]
Then $\operatorname{supp}(h_\epsilon)\subset A_\epsilon$ and $\|h_\epsilon\|_\infty\leq\|f\|_\infty$.

We show that $S_\eta(fg_\eps)\xrightarrow[\eps\to0]{(\P)}S_\eta(f)$, and $\<{\Phi,fg_\eps}\xrightarrow[\eps\to0]{L^2}\<{\Phi,f}$. Since $fg_\epsilon$ vanishes in a neighbourhood of $\eta$, Point \ref{point.eq for special f} allows to conclude.

Since a smooth grid has zero Lebesgue measure, the integrability of $(z,w)\mapsto\|z-w\|^{-1/4}$ on $D\times D$ and of $z\mapsto\dist(z,\partial D)^{-1/8}$ on $D$ implies, by dominated convergence, that
\begin{align*}
\iint_{A_\epsilon\times A_\epsilon}\|z-w\|^{-1/4}\,dz\,dw&\xrightarrow[\epsilon\to0]{}0,
\\
\int_{A_\epsilon}\dist(z,\partial D)^{-1/8}\,dz&\xrightarrow[\epsilon\to0]{}0.
\end{align*}
Thanks to \eqref{eq.MarkovIMF-covariance-bound}, it follows that
\begin{align*}
\<{\Phi,h_\epsilon}\xrightarrow[\epsilon\to0]{L^2}0,\quad\text{hence}\quad \<{\Phi,g_\epsilon}\xrightarrow[\epsilon\to0]{L^2}\<{\Phi,f}.
\end{align*}
Concerning $S_\eta$, we use the same two convergences highlighted above: \eqref{eq.bound-M_eta} and \eqref{eq.bound-X_eta} imply that
\begin{align*}
S_\eta(h_\eps)\xrightarrow[\epsilon\to0]{\P}0,\quad\text{hence}\quad S_\eta(fg_\eps)\xrightarrow[\epsilon\to0]{L^2}S_\eta(f).
\end{align*}
This concludes the proof of Point \ref{point.eq for any f}.
\end{proof}

\begin{proof}[Proof of Theorem \ref{th.MarkovIMF}]
It is the combination of Point \ref{point.eq for any f} and Point \ref{point.eq is enough} above.
\end{proof}

\subsection{The IMF conditioned on the first layers of the $\cle_3^\nes$.}\label{ss.IMFwrt-1st-layer}
We have described explicitly the conditional expectation $H_{f,\eta}$ of the IMF conditioned on the set of all loops intersecting a given curve. It is a natural question, raised in the Introduction, to do the same conditionally on all the loops of the first layer, or more generally the first $k$ layers, something a priori difficult but that our techniques allow us to handle. We will only treat the case of the first layer, since everything is analogous in the case $k\ge2$.
\par First, for $\eta$ a smooth grid and $f$ a test function, we need to define a functional $H^1_{f,\eta}$ which will be the candidate for the conditional expectation of $\<{\Phi,f}$ w.r.t. the set of loops $\Gamma^1(\eta)$ which are the loops of the first layer of $\Gamma$ which intersect the smooth grid $\eta$. Let us start by explaining what the exploration procedure attached to it is.

\subsubsection*{The exploration procedure restricted to the first layer.}
Let us use the notations of Subsection \ref{ss.ExplorationProcedure}. We will modify the exploration procedure described there to explore in a Markovian way the loop set $\Gamma^{1,a}(\eta)$
of the loops of the first layer of $\Gamma^a$ that touch the discretization $\eta^a$ of the smooth grid $\eta$. As in this previous exploration procedure, we restrict first to the case where $\eta$ is a smooth curve.

\begin{figure}[!htp]
\begin{center}
\includegraphics[width=\textwidth]{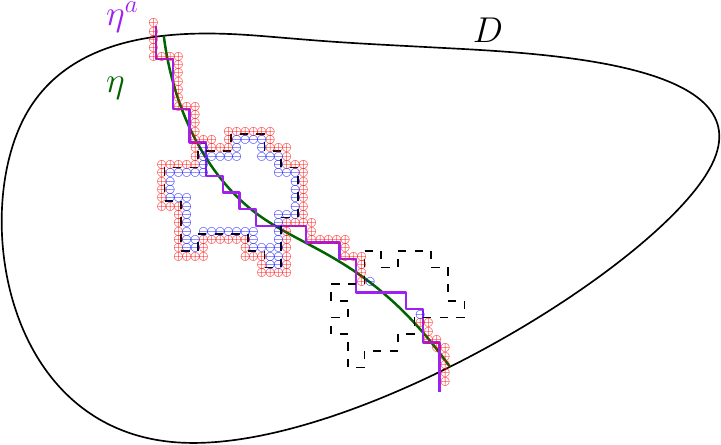}
\end{center}
\caption{The exploration procedure to find the first layer $\Gamma^{1,a}(\eta)$. Compare with Figure \ref{f.exploration-nested} which explored the entire $\Gamma^a(\eta)$. }
\label{f.exploration}
\end{figure}

Recall that $D^a$ is equipped with $+$ boundary conditions, i.e. all vertices in $a \T \setminus D^a$ are set to plus. In particular, by condition $(ii)$ of the discretization (the ends of $\eta$ are outside $D^a$), the beginning and end of the path $\eta^a$ are assigned a plus spin. Write the self-avoiding path $\eta^a$ as $(x_0,\ldots, x_N)$ where $\sigma^a(x_0)=\sigma^a(x_N)=+$ (if $\sigma^a$ is sampled according to the critical Ising model with $+$ boundary conditions in $D^a$). Explore the spins of $\sigma^a$ along the path $\eta^a$, i.e. $\sigma^a(x_0), \sigma^a(x_1),\ldots$. The first time this exploration encounters a $-$ spin at $x_{i+1}$, explore the entire loop in $\Gamma^a $ which goes through the dual edge between $x_i$ and $x_{i+1}$. This loop may intersect $\eta^a$ at many places. Reiterate the exploration starting from all points  $x_j$ explored so far in $\eta^a$ which carry a $+$ spin (in other words, this exploration does not explore inside the loop explored). See Figure \ref{f.exploration}.

\subsubsection*{The conditional expectation w.r.t. the set $\Gamma^1(\eta)$.}
Now that we have defined an exploration procedure, one can redo everything that has been done in Section \ref{se.IMFwrtCurve} for the exploration of the loops of all layers intersecting $\eta$: one gets a similar decomposition for $H^{1,a}_{f,\eta}:=\Eb{\<{\Phi^a,f}\md\Gamma^{1,a}(\eta)}$ as in \eqref{eq.H f eta discret}; in the same way, one introduces $H^1_{f,\eta}$ as an integral that converges by Proposition \ref{pr.dim-Expl(eta)}, and one shows that it is indeed the joint limit in law of $H^{1,a}_{f,\eta}$. Finally, one has the following result:

\begin{theorem}\label{th.cond.exp.wrt.Gamma1(eta)}
As in Theorem \ref{th. cond. expect. of magn wrt Gamma(eta)}, the pair $(\Gamma^a,H^{1,a}_{f,\eta})$ jointly converges in law as $a\to0$ to $(\Gamma,H^1_{f,\eta})$. In particular, if $(\Gamma,\Phi)$ is the limit of the coupling $(\Gamma^a,\Phi^a)$, it must satisfy
    \begin{equation}\label{eq. cond. expect. of magn wrt Gamma^1(eta)}
     \Eb{\<{\Phi,f} \md \Gamma^1(\eta)} = H^1_{f,\eta}\qquad\text{almost surely.}
\end{equation}
\end{theorem}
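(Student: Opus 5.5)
We follow the scheme of Section \ref{se.IMFwrtCurve} line by line, with $\Gamma(\eta)$ replaced by $\Gamma^1(\eta)$. The first step is the discrete Markov property for the first-layer exploration. Conditionally on $\Gamma^{1,a}(\eta)$, the unexplored set $D^a\setminus\Expl^{1,a}(\eta)$ splits into connected components $D^a_z$. Each component is either outside all explored loops, with $+$ boundary condition, or inside one explored first-layer loop, with $-$ boundary condition; in the latter case no further loops have been revealed. This gives
\begin{align*}
H^{1,a}_{f,\eta}=a^{-1/8}\sum_{z\in D^a}\bar f(z)\,\<{\sigma^a(z)}^+_{D^a_z}(-1)^{\nes^{1,a}(z)},
\end{align*}
with $\nes^{1,a}(z)\in\{0,1\}$. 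The continuum candidate is then
\begin{align*}
H^1_{f,\eta}=\calC_\T\int_D f(z)(-1)^{\nes^1(z)}CR(z,D_z^{1,\eta})^{-1/8}dz,
\end{align*}
where $D^{1,\eta}=D\setminus\big(\eta\cup\bigcup_{\ell\in\Gamma^1(\eta)}\im(\ell)\big)$.

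To see that $H^1_{f,\eta}$ is well defined, note that the explored set $\Expl^1(\eta)$ is contained in $\Expl(\eta)$. Proposition \ref{pr.dim-Expl(eta)} therefore gives $\dimM\le 11/8<15/8$ on compacts. Lemma \ref{l.dist^-1/8 int} then yields integrability and measurability, exactly as in Proposition \ref{pr.Def H f eta}.

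For the convergence under the Skorokhod coupling $\Gamma^a\to\Gamma$, we first need $\Gamma^{1,a}(\eta)\to\Gamma^1(\eta)$ almost surely. This holds because $\Gamma^a(\eta)\to\Gamma(\eta)$ by Proposition \ref{pr.JointConv}, and the first-layer subcollection is read off from the nesting indices. These indices are measurable and stabilise, by Proposition \ref{pr. measurable properties of the loop set topology} and Lemma \ref{l.NumberOfLoopsSurroundingForall z at once}, since distinct loops of $\cle_3$ are a.s. at positive distance. The Carath\'eodory convergence of the components and the stabilisation of $\nes^{1,a}$ follow as before. We then introduce the $\eps$ cut-off around $\Expl^1(\eta)$. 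The regularised part converges by dominated convergence together with Theorem \ref{th.1pointFunction}, as in Proposition \ref{pr.NicePart}. The remainder is small by the counting argument of Proposition \ref{pr.RemainingPartSmall}: we use $N_\eps=o(\eps^{-3/2})$, which holds because $\Expl^1(\eta)\subset\Expl(\eta)$, and Proposition \ref{pr.ConcentrationDiscreteMag} with $\caF=\sigma(\Gamma^{1,a}(\eta))$, since the concentration bound holds for an arbitrary $\sigma$-field. Combining these gives $H^{1,a}_{f,\eta}\to H^1_{f,\eta}$ in probability, hence the joint convergence in law of $(\Gamma^a,H^{1,a}_{f,\eta})$.

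It remains to identify the conditional expectation. Since $\Phi^a$ is tight and $\<{\Phi^a,f}$ is uniformly exponentially integrable, we may pass to the limit in
\begin{align*}
\E\big[\<{\Phi^a,f}\,G(\Gamma^{1,a}(\eta))\big]=\E\big[H^{1,a}_{f,\eta}\,G(\Gamma^{1,a}(\eta))\big]
\end{align*}
for bounded continuous $G$. Lemma \ref{l. conditional expectation of the limit} then gives \eqref{eq. cond. expect. of magn wrt Gamma^1(eta)}. Uniqueness of the limit $(\Gamma,\Phi)$ is provided by Theorem \ref{th.main}.

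The main obstacle is the a.s. continuity of the first-layer extraction $\Gamma(\eta)\mapsto\Gamma^1(\eta)$ under the topology of $\caX_D$. Discrete first-layer loops could in principle converge to a loop that touches another loop, which would make the nesting index jump. I would first try to rule this out using the a.s. disjointness of nested $\cle_3$ loops together with Lemma \ref{l.NumberOfLoopsSurroundingForall z at once}, applied to points inside each explored loop.
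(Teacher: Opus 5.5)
Your scheme is the one the paper intends (its proof is ``mutatis mutandis'' Section \ref{se.IMFwrtCurve}). The discrete Markov decomposition, the cut-off and concentration steps with $\caF=\sigma(\Gamma^{1,a}(\eta))$, and the final use of Lemma \ref{l. conditional expectation of the limit} are all fine. The genuine problem is your continuum candidate.

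As you correctly say, the first-layer exploration never reveals the spins of $\eta^a$ lying inside an explored loop. So for $z$ inside $\ell^a\in\Gamma^{1,a}(\eta)$, the component $D^a_z$ sits in the interior of $\ell^a$ and is \emph{not} cut by $\eta^a$, and its Carath\'eodory limit is $\mathrm{int}(\ell)$. Your $D^{1,\eta}=D\setminus\big(\eta\cup\bigcup_{\ell\in\Gamma^1(\eta)}\im(\ell)\big)$ instead cuts each $\mathrm{int}(\ell)$ along $\eta$, which does enter $\mathrm{int}(\ell)$ by non-tangency. Hence, by the Schwarz lemma, $CR(z,D^{1,\eta}_z)<CR(z,\mathrm{int}(\ell))$ for every $z\in\mathrm{int}(\ell)\setminus\eta$. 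For $f\ge0$, your $H^1_{f,\eta}$ is then strictly smaller than the actual limit of $H^{1,a}_{f,\eta}$ whenever some loop of $\Gamma^1(\eta)$ surrounds part of $\{f>0\}$, an event of positive probability. So the claim that the Carath\'eodory convergence of the components ``follows as before'' is false for your domains, and the identity you want to prove fails for your $H^1_{f,\eta}$.

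The correct explored set is
\[
\Expl^1(\eta):=\Big(\eta\setminus\bigcup_{\ell\in\Gamma^1(\eta)}\mathrm{int}(\ell)\Big)\cup\bigcup_{\ell\in\Gamma^1(\eta)}\im(\ell),
\]
so that the components of $D\setminus\Expl^1(\eta)$ with $\nes^1=1$ are exactly the interiors of the loops of $\Gamma^1(\eta)$. Its $\eta$-part now depends on the loops, so the Hausdorff convergence $\Expl^{1,a}(\eta)\to\Expl^1(\eta)$ is no longer supplied by Lemma \ref{l. intersection with closed set is measurable}. You need that convergence both for Lemma \ref{l.OpenDomainsConvergeCaratheodory} and for the uniform bound on $a^{-1/8}\langle\sigma_z\rangle$ off the $\eps$-thickening, so you must add a short argument:
\begin{itemize}
\item The loops of $\Gamma^1(\eta)$ are simple and disjoint, and each is approximated by a discrete first-layer loop. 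Therefore points of $\eta^a$ outside the discrete loops, and small discrete first-layer loops, cannot accumulate strictly inside a continuum loop.
\item Conversely, every point of $\eta$ outside the continuum loops is within $o(1)$ of an explored discrete point.
\end{itemize}
This is where the real extra work lies. The continuity of $\Gamma(\eta)\mapsto\Gamma^1(\eta)$, which you flag as the main obstacle, is handled by the a.s. disjointness of the loops as you suggest. With this correction, $\Expl^1(\eta)\subset\Expl(\eta)$ still gives the dimension bound of Proposition \ref{pr.dim-Expl(eta)}, and the rest of your argument goes through.
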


\begin{proof}
The proof is, mutatis mutandis, the same as the one for Theorem \ref{th. cond. expect. of magn wrt Gamma(eta)}. (Actually, everything is easier in our case since there are fewer loops.)
\end{proof}

\subsubsection*{Conditional expectation w.r.t. all the loops in the first layer.} Now, it is a mere formality to construct in an explicit way $H^1_{f,\eta}$ out of $\Gamma^1\sim\cle_3^1$, the conditional expectation of the IMF w.r.t. the first layer of the loops.

\begin{theorem}\label{th.IMFwrtFirst-layer}
Let $D$ be a Jordan domain, and let $(\Gamma,\Phi)$ be the coupling in Theorem \ref{th.main}. Let $f\in H_0^2(D)$. For $k>0$, let $\eta(k)=2^{-k}\Z^2\cap D$. Then the random variables $H^1_{f,\eta(k)}$ converge almost surely as $k\to\infty$ to $\Eb{\<{\Phi,f}\md\Gamma^1}$.
\end{theorem}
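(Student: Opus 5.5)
The plan is a martingale convergence argument. By Theorem \ref{th.cond.exp.wrt.Gamma1(eta)}, applied to the smooth grid $\eta(k)=2^{-k}\Z^2\cap D$, we have almost surely
\begin{align*}
H^1_{f,\eta(k)}=\Eb{\<{\Phi,f}\md\Gamma^1(\eta(k))}.
\end{align*}
The grids are nested, $\eta(k)\subset\eta(k+1)$, so $\Gamma^1(\eta(k))\subset\Gamma^1(\eta(k+1))$. Hence the $\sigma$--fields $\mathcal G^1_k:=\sigma\big(\Gamma^1(\eta(k))\big)$ form a filtration. The first thing to check is exactly this: the measurable map $\Gamma\mapsto\Gamma^1(\eta(k+1))$ determines $\Gamma^1(\eta(k))$, namely as the subset of its loops meeting $\eta(k)$. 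This is measurable by the analogue of Lemma \ref{l. intersection with closed set is measurable}.

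Next I will identify the limit $\sigma$--field: $\bigvee_k\mathcal G^1_k=\sigma(\Gamma^1)$ up to null sets. Every loop of $\Gamma^1$ is non-trivial, hence has positive diameter. It therefore crosses some horizontal or vertical line of $2^{-k}\Z^2$ for $k$ large, so $\Gamma^1=\bigcup_k\Gamma^1(\eta(k))$. The loop set $\Gamma^1$ is the $\caX_D$--limit of the increasing family $\Gamma^1(\eta(k))$: by local finiteness, only loops of diameter $<2^{-k}\sqrt2$ are missing at stage $k$, so $\d_\caX\big(\Gamma^1(\eta(k)),\Gamma^1\big)\le\sqrt2\,2^{-k}$. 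Thus $\Gamma^1$ is a measurable function of $(\Gamma^1(\eta(k)))_k$. The reverse inclusion $\mathcal G^1_k\subset\sigma(\Gamma^1)$ is clear.

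With the filtration and its limit identified, I will apply L\'evy's upward theorem. The variable $\<{\Phi,f}$ is in $L^2$, since tightness gives bounded second moments and in fact uniform exponential integrability via Proposition \ref{pr.ConcentrationDiscreteMag} and Fatou. So $\Eb{\<{\Phi,f}\md\mathcal G^1_k}$ is a uniformly integrable martingale. It converges almost surely and in $L^2$ to $\Eb{\<{\Phi,f}\md\sigma(\Gamma^1)}=\Eb{\<{\Phi,f}\md\Gamma^1}$, which gives the theorem.

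I expect the main obstacle to be the care needed in Theorem \ref{th.cond.exp.wrt.Gamma1(eta)}, in particular that the conditioning there is on the full $\sigma$--field generated by $\Gamma^1(\eta)$, consistently across $k$. One must also make sure that the identification of conditional expectations holds simultaneously for all $k$ on a single almost sure event. This is automatic, since there are countably many $k$.
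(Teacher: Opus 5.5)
Your proposal is correct and follows the same route as the paper: identify $H^1_{f,\eta(k)}$ with $\Eb{\<{\Phi,f}\md\Gamma^1(\eta(k))}$ via Theorem~\ref{th.cond.exp.wrt.Gamma1(eta)}, note that the $\sigma$-fields $\sigma(\Gamma^1(\eta(k)))$ increase to $\sigma(\Gamma^1)$, and conclude by the martingale convergence theorem (the paper phrases this through the tower property applied to $\Eb{\<{\Phi,f}\md\Gamma^1}$, which is equivalent). Your explicit bound $\d_\caX\big(\Gamma^1(\eta(k)),\Gamma^1\big)\le\sqrt2\,2^{-k}$ justifies the generation property $\bigvee_k\sigma(\Gamma^1(\eta(k)))=\sigma(\Gamma^1)$, which the paper only asserts.
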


\begin{proof}
Indeed, if $H_{f}^1:=\Eb{\<{\Phi,f}\md\Gamma^1}$, then $\Eb{H_f^1\md\Gamma^1(\eta(k))}=H_{f,\eta(k)}^1$, and since \begin{align*}
\sigma\big(\Gamma^1\big)=\bigvee_{k>0}\sigma\big(\Gamma^1(\eta(k))\big),
\end{align*}
by the martingale convergence theorem we have the almost sure (and in $L^1$) convergence of the random variables $H^1_{f,\eta(k)}$ to $H^1_f$.
\end{proof}

\section{Near-critical limit(s) for nested--$\cle_3$}\label{s.NearC}

In this Section, we briefly discuss the consequences of our work for the near-critical scaling limits of the nested--$\cle_3$ induced by a near-critical Ising model on $a\Z^2$ under the following two natural massive limits:
\bnum
\item The near-critical limit in the magnetization direction, which we shall abbreviate by the {\em $h$-near-critical limit.}

It corresponds to scaling the strength of the magnetic field suitably as the mesh $a\to 0$ as follows: for any fixed $h\in \R$, set
\[
h_a:= h a^{15/8}\,.
\]
This limit is discussed for example in \cite{camia2016planar,papon2024interfacescalinglimitcritical}. At least when considered in a bounded window, these $h$-near-critical limits always lead to absolutely continuous limits with respect to the critical one. See subsection \ref{ss.NCh}.
\item The near-critical limit in the temperature direction, which we shall abbreviate by the {\em $\beta$-near-critical limit.} 

Here, instead of acting on the strength of the magnetic field, one tunes the inverse temperature suitably around the inverse critical temperature $\beta_c$ as the mesh $a\to 0$ in the following way: 
\begin{align}\label{e.lambda}
\beta_a:=\beta_c +  \lambda a
\end{align}
for some fixed $\lambda\in \R$. 
\enum

The second limit is more difficult to analyse and has been the focus of the recent works \cite{duminil2022planar,garban2025energy,park2025near}. It is shown in \cite{garban2025energy} that under the topology of FK-Ising percolation (i.e. nested--$\cle_{16/3}$), such a $\beta$-near-critical limit is \textbf{singular} with respect to the critical limit. This does not necessarily imply that the $\beta$-near-critical nested--$\cle_3$ is also singular. Yet we conjecture that this is the case:

\begin{OP}\label{OP.2}
As long as $\lambda\neq 0$ in~\eqref{e.lambda}, the $\beta$-near-critical nested--$\cle_3$ (as $a\to 0$ and under the loop topology defined in Section \ref{ss.TopologyLoopSet}) is singular w.r.t. the critical nested--$\cle_3$. 
\end{OP}
We discuss in Subsection \ref{ss.NCb} below why we believe this conjecture should hold by combining our present main result with the recent work \cite{park2025near}. Before giving more details about the $h$ and $\beta$-near-critical limits, let us conclude this short discussion with the following puzzling question.
\begin{question}
For the $\beta$-near-critical limit in a finite domain $\Omega$
\bnum
\item In the case of $+$ boundary conditions, is the near-critical limit for the first layer of the $\cle_3$, i.e. $\Gamma^1$ in the notations of this text, singular or absolutely continuous w.r.t. a critical $\cle_3^1$? (We do not even know what should be expected here).
\item In the case of Dobrushin $\pm 1$ boundary conditions, is the near-critical limit of the $\pm1$ interface singular or absolutely continuous w.r.t. $\sle_3$? (We believe it should be absolutely continuous). 
\enum
\end{question}

\subsection{Absolute continuity of the $h$-near-critical limit of the nested--$\cle_3$.}\label{ss.NCh}

In the $h$-direction, the mere existence of the $h$-magnetization $m_h=\langle\Phi,h\rangle$ itself (with the fact that it has exponential moments, a result which goes back to \cite{Camia_2015}) allows one to show that the near-critical scaling limit of the Ising interfaces in the $h$ direction (resp. any near-critical scaling limit of the Ising model - the magnetization field itself was considered in \cite{camia2016planar}) is absolutely continuous w.r.t. the $\cle_3$ (resp. the corresponding critical scaling limit).

\par When the $h$-magnetization $m_h$ is furthermore measurable from the scaling limit $\Omega$, one has an explicit formula for the Radon-Nikodym derivative at $\omega$ (a realization of $\Omega$): \begin{equation}
\frac{d\Omega^h}{d\Omega}(\omega)\propto e^{m_h(\omega)}
\end{equation} where $m_h(\omega)$ is a (potentially complicated) measurable function of $\omega$.

\par See Lemma \ref{l.RN-derivative-L1} for a proof of this statement. See also the recent work \cite{papon2024interfacescalinglimitcritical} for the case of an interface $\sle_3$ and its $h$-massive analog which is also absolutely continuous w.r.t. $\sle_3$.


\subsection{Singularity(?) of the $\beta$-near-critical limit of the nested--$\cle_3$.}\label{ss.NCb}

We wish to briefly explain here why we believe Conjecture \ref{OP.2} should hold. Our reasoning can be decomposed in the following steps.
\bnum
\item First, the breakthrough work \cite{park2025near} tells us what the $\beta$-near-critical magnetization field should be. At critical inverse temperature $\beta_c$, it is known that the continuum magnetization field can be identified (at least in terms of correlation functions and then as a random field) as a suitable (Wick)-cosine of a Gaussian Free Field.
The case for the critical point was already done in \cite{JSW} based on the works of \cite{chelkak2015conformal}.

In the work \cite{park2025near}, they introduce the $k$-point functions of a field which should be thought of as the suitable (Wick)-cosine -- not of a GFF -- but instead of a Sine-Gordon field precisely at the Free-Fermion point (see \cite{bauerschmidt2023coleman,park2025near, bauerschmidt2025twisted}).

\item Let us assume that the near-critical magnetization field at finite small mesh $a$ indeed converges to this limiting field theory from \cite{park2025near} as $a\to 0$, which is already known in the case $\lambda>0$ \cite{park2019massivescalinglimitising}.

\item Let us furthermore assume that Conjecture \ref{OP.2} is wrong, i.e. that the $\beta$-near-critical $\cle_3$ is absolutely continuous w.r.t. the critical nested--$\cle_3$. 

\item We claim that our reconstruction operator -- from Theorem \ref{th.main} -- which outputs a field $\Psi$ given a nested--$\cle$ would still provide the near-critical magnetization field in the limit~\eqref{e.lambda}. This point is not obvious and would rely on the fact that in small windows (given by a small $\rho$ grid as in the proof of Theorem \ref{th.main}), the near-critical one-point magnetization function gets quantitatively very close to the critical one. (For example using the control and scaling relations from \cite{duminil2022planar}). 

\item At this stage, given the absolutely continuous near-critical nested--$\cle_3$ and our above extension of Theorem \ref{th.main}, we thus obtain a near-critical field $\Phi^\lambda:=\Phi^\lambda(\cle_3^{\nes,\lambda})$ where $\lambda$ is the near-critical parameter from~\eqref{e.lambda}. Furthermore, by the first item above, $\Phi^\lambda$ should be the Wick-cosine of a Sine-Gordon field.

\item Finally, to show a contradiction, we would need to argue that the Wick cosinus of a Sine-Gordon field is singular with respect to the Wick cosinus of the GFF. This singularity is known at the level of the fields: see \cite{gubinelli2024fbsde} (and \cite{garban2025energy} in the case of a hierarchical Sine-Gordon model w.r.t. a hierarchical GFF). Sadly, the field cannot be recover from the Wick cosinus, however the work \cite{aru2021reconstructing} managed to recover the underlying GFF from its Wick imaginary exponential. (The discrete situation of recovering a lattice GFF from its imaginary exponential was analysed in \cite{garban2023statistical}.) We shall therefore make our last key assumption in this analysis, namely that it is possible to mix the proof of measurability of \cite{aru2021reconstructing} together with the proof of singulary between the Sine-Gordon field and the GFF to conclude. Notice that the measurability result should be extended as well to the Sine-Gordon field, which also seems far from obvious especially at the Free Fermion point.

\item Assuming all assumptions above, the desired contradiction is reached. As such this would show that the near-critical $\cle_3$ should be singular w.r.t. the critical one (and the same for the (near-)critical magnetization field).

\enum

Of course, it seems quite non-trivial to make all these points rigorous, but we feel that it gives a rather robust reason for Conjecture \ref{OP.2}.

\section{Insights and limits of an approach via CLE percolation}
\label{se. limits of literature}

The purpose of this section is to discuss why another seemingly direct strategy to prove Theorem \ref{th.main} would be non-trivial to implement. It is based on the theory of CLE percolation as developed in \cite{miller2017cle}. In short, this other natural strategy goes as follows: 1) sample $\Gamma\sim \cle_3^\nes$ (as we do); 2) sample the continuum FK clusters given $\Gamma$ using, say, \cite{miller2017cle}; and 3) use the approach in \cite{Camia_2015}, i.e. the colored continuum FK clusters to reconstruct the magnetization field $\Phi$. This strategy sounds very legitimate and would provide (given these references) a much shorter proof of Theorem \ref{th.main}! Yet, as we explain below, in order to close the argument, it misses one key and rather subtle ingredient summarised in the Open Problem \ref{op la loi du cle(16/3) conditionnellement au cle(3) vient du discret} below. 

Even though we could not make work this direct strategy a priori, we conclude this section by pointing out that by using Theorem \ref{th.main} one can show that this natural strategy indeed leads to the correct reconstruction of $\Phi$ given $\Gamma$. This is the content of Corollary \ref{co. the road through cle percolation gives a measurable cle_3}. 

First, recall that one of the two proofs in \cite{Camia_2015} of the convergence of the Ising magnetization field relied on the assumption of a (geometric) scaling limit of the FK--Ising percolation which appeared in the work \cite{kemp/smirnov_ii}
\footnote{
To be more precise, the convergence in \cite{kemp/smirnov_ii} is stated for the loop topology rather than the quad topology. The convergence in fact  holds for both topologies. This was rigorously established in the case of critical percolation in the work \cite[Theorem 6.10]{holden2023convergence}. See also \cite[Section 2.3]{garban2013pivotal}.   Now, thanks to the known exponents for critical FK-Ising, together with the mixing property of events using RSW \cite{duminil2011connection}, the proof extends to the FK-Ising case as well.}.


\par The proof was  explicit as it constructed directly out of a quad configuration (decorated by signs) the magnetization field and actually showed a bit more than the mere convergence of the magnetization field: namely, the measurability of the magnetization field out of the scaling limit of the FK--Ising percolation. Below is a precise statement (we refer to section 2.2. of \cite{Camia_2015} for the definitions of the quad configuration, and the decorated quad configuration. We adopted similar notations).

\begin{theorem}[Theorem 2.5 in \cite{Camia_2015}]
\label{th. mesurabilité du champ de magnétisation par rapport au CLE 16/3 coloriée}
    Let $(\underline{\omega}^a,\Phi^a)$ be the coupling between $\underline{\omega}^a=(\omega^a_+,\omega^a_-)$, the decorated quad configuration associated to the Edwards-{Sokal} coupling and $\Phi^a$, the magnetization field. It converges in law to $(\omega_+,\omega_-,\Phi)$, a coupling between two quad configurations $(\omega_+,\omega_-)$ and a generalized function $\Phi$ for which $\Phi=\Phi(\omega_+,\omega_-)$ is $(\omega_+,\omega_-)$-measurable.
\end{theorem}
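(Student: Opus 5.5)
This is Theorem 2.5 of \cite{Camia_2015}, which the paper cites, so the plan is to follow that argument. The cited proof also relies on the FK scaling limit of \cite{kemp/smirnov_ii}, which this paper takes as given.

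\textbf{Step 1 (tightness).} First I would make the pair $(\underline{\omega}^a,\Phi^a)$ tight. The first coordinate is tight in the quad-crossing (Schramm--Smirnov) topology, which is compact, and the signs are Bernoulli labels. The second coordinate is tight by Proposition~\ref{pr.PhiTight}. I would then take a subsequential limit $(\omega_+,\omega_-,\Phi)$ and realise it almost surely by Skorokhod.

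\textbf{Step 2 (discrete identity from Edwards--Sokal).} In the discrete, $\sigma$ is constant on each FK cluster. Hence
\[
\<{\Phi^a,f}=\sum_{\mathcal C}\epsilon_{\mathcal C}\, a^{15/8}\sum_{z\in\mathcal C}\bar f(z),
\]
where each $\epsilon_{\mathcal C}$ is an independent fair sign, except that clusters touching $\p D$ carry sign $+$.

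I would split this sum at a mesoscopic scale $\delta$:
\bi
\item clusters of diameter larger than $\delta$, of which there are finitely many;
\item the remaining small clusters.
\ei
The small clusters form a centred sum, with or without $\Gamma$. Its variance is bounded by
\[
\iint |f(z)f(w)|\,a^{-1/4}\,\P(z\leftrightarrow w \text{ via a cluster of diameter}<\delta)\,dz\,dw,
\]
which is at most $\iint_{|z-w|<\delta}|z-w|^{-1/4}$ by Proposition~\ref{pr.BoundTwoPoint}. This tends to $0$ uniformly in $a$.

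\textbf{Step 3 (area measures of macroscopic clusters).} For each macroscopic cluster I would show that $\mu^a_{\mathcal C}:=a^{15/8}\sum_{z\in\mathcal C}\delta_z$ converges to a measure $\mu_{\mathcal C}$ that is measurable with respect to the quad configuration. Concretely, cover $D$ by $\eps$-boxes and approximate $\mu^a_{\mathcal C}(B)$ by
\[
\calC\,\eps^{15/8}\times\#\{\text{boxes } B \text{ where } \mathcal C \text{ crosses an annulus around } B\}.
\]
The crossing count is a function of $\underline\omega$ through quad-crossing events.

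The error in this approximation is controlled by the second moment:
\bi
\item within one box, by quasi-multiplicativity of one-arm probabilities and the convergence of the one-point function, Theorem~\ref{th.1pointFunction};
\item across boxes, by decorrelation of arm events (the RSW mixing of \cite{duminil2011connection}).
\ei
This gives $L^2$ closeness, uniformly in $a$, between $\mu^a_{\mathcal C}$ and its coarse-grained version. The coarse-grained version converges almost surely, because the relevant crossing events are continuity events for the limit.

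\textbf{Step 4 (conclusion).} Letting $a\to0$, then $\eps\to0$, then $\delta\to0$ gives
\[
\<{\Phi,f}=\sum_{\mathcal C \text{ macro}}\epsilon_{\mathcal C}\mu_{\mathcal C}(f)+o_{L^2}(1),
\]
whose main term is $\underline\omega$-measurable. Applying Lemma~\ref{le.MeasurableCoeff} over a basis of test functions then yields $\Phi=\Phi(\omega_+,\omega_-)$. Since this identifies $\Phi$ as a fixed function of the limit quad configuration, the subsequential limit is unique and the full sequence converges in law.

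\textbf{Main obstacle.} I expect Step 3 to be the hard part: proving uniform $L^2$ closeness of the cluster area to its coarse-grained crossing count, which needs sharp two-point arm estimates conditional on macroscopic connections. I would try first the second-moment method with quasi-multiplicativity (as in percolation pivotal measures, cf.\ \cite{garban2013pivotal}).
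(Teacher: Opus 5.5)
The paper gives no proof of this statement: it imports it from \cite{Camia_2015}, whose argument presupposes the quad-topology FK--Ising scaling limit of \cite{kemp/smirnov_ii}. Your outline (Edwards--Sokal splitting, small clusters removed by a second-moment bound, GPS-style coarse-grained area measures of the macroscopic $\pm$ clusters read off from crossing events of $\omega_\pm$, then Lemma~\ref{le.MeasurableCoeff}) is essentially the route taken there, and you are right that Step~3, where the coupling/mixing machinery of \cite{garban2013pivotal} has to be transported to FK--Ising via \cite{duminil2011connection}, is where the real work lies; two small fixes in Step~2 are that the small-cluster sum is centred conditionally on $\omega^a$, not on $\Gamma^a$ (which determines $\sigma^a$), and that near $\p D$ Proposition~\ref{pr.BoundTwoPoint} does not apply as stated, but FKG gives $\P(z\leftrightarrow w \text{ in a non-boundary cluster})\le\cov^+_{D^a}(\sigma_z,\sigma_w)$, which Lemma~\ref{l.covGHS} bounds by the full-plane $\<{\sigma_z\sigma_w}_{a\T}$.
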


\medskip As such, it was already known that the IMF was measurable as a function of additional randomness. If one could show that this additional randomness is superfluous (as one naturally expects from the discrete case), one could prove Theorem \ref{th.main}. To see this, it is natural to study the continuous coupling $(\underline{\omega},\Gamma)$ arising from the discrete Edwards-Sokal coupling (where $\underline{\omega}$ is the decorated \emph{loop} configuration of an FK--Ising percolation, where each cluster is given an iid sign, and $\Gamma$ is the loop configuration of the spins of an Ising model). \cite{kohler2024fuzzy}, relying on \cite{miller2017cle}, proved that in the continuum one can reconstruct the loops of $\Gamma$ by merging the clusters of $\underline \omega$:

\begin{theorem}[Section 4 in \cite{kohler2024fuzzy}]\label{th. convergence ising géométrique + fk perco lehmkueller}
Let $(\underline{\omega}^a,\Gamma^a)$ be the coupling between $\underline{\omega}^a=(\omega^a_+,\omega^a_-)$, the decorated quad configuration associated to the Edwards-Sokal coupling and $\Gamma^a$, the set of interfaces of the Ising configuration. It converges in law to $(\omega_+,\omega_-,\Gamma)$, the coupling described in \cite{miller2017cle} between a colored CLE(16/3) and a CLE(3). In particular, $\Gamma$ is $\underline\omega$-measurable in the continuum.
\end{theorem}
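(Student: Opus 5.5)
The plan is to argue by tightness plus identification of the limit. Both marginals are already known to converge. The decorated quad configuration $\underline\omega^a$ converges by the Kemppainen--Smirnov convergence of FK--Ising to $\cle_{16/3}$, with the i.i.d.\ signs carried along trivially. The interfaces $\Gamma^a$ converge by Theorem \ref{th.CLE3}. Hence the pair $(\underline\omega^a,\Gamma^a)$ is tight in the product topology. It then suffices to show that every subsequential limit $(\omega_+,\omega_-,\Gamma)$ has the law of the Miller--Sheffield--Werner (MSW) coupling of \cite{miller2017cle}. The measurability of $\Gamma$ in $\underline\omega$ will then be inherited from the MSW construction, which builds the $\cle_3$ as a deterministic function of the colored $\cle_{16/3}$, namely as the outer boundaries of the unions of same-colored clusters together with the boundary.

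The identification would proceed by exploration, mimicking in the discrete the continuum construction of \cite{miller2017cle}. Fix a boundary point and explore, in $D^a$, the interface separating the $+$ spin cluster of the boundary from the adjacent $-$ clusters. This is a path which, at the level of the FK configuration, follows the outer boundaries of the $-$-colored FK clusters touching the current $+$ region, and it is a deterministic function of $\underline\omega^a$. I would show three things.
\begin{enumerate}[(i)]
\item Jointly with $\underline\omega^a$, this path converges to the boundary-touching $\sle_\kappa(\rho)$-type curve used in \cite{miller2017cle} to trace $\cle_3$ loops out of $\cle_{16/3}$ clusters. For the marginal law of the path I would use the known convergence of Ising interfaces with $+$/free (or $+$/$-$) boundary arcs to $\sle_3(\rho)$ processes.
\item The explored data satisfies the same domain Markov property as in the continuum. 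Conditionally on the explored part, the unexplored FK configuration is an FK--Ising model with wired/free arcs, and the spin configuration is an Ising model with the corresponding boundary conditions.
\item Iterating the exploration inside the discovered loops and in the complementary components yields all loops of $\Gamma$ of diameter larger than $\eps$ after finitely many steps, with high probability uniformly in $a$.
\end{enumerate}
Together with the uniqueness of a coupling satisfying this Markovian exploration description (which is how \cite{miller2017cle} characterize it), these steps pin down the limit law.

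The main obstacle is the discontinuity of the map from colored clusters to spin interfaces. Two same-sign FK clusters can be at lattice distance $O(a)$ in the discrete, so that they are merged or separated in $\Gamma^a$, while in the continuum they touch, or conversely. A naive continuous-mapping argument therefore fails. To handle this I would first establish, via RSW for FK--Ising and the mixing argument of \cite{duminil2011connection}, that macroscopic pinch points have vanishing probability. Concretely, I would bound the probability of the relevant multi-arm events around an $\eps$-ball: FK four-arm events of alternating primal/dual type combined with prescribed colors. Their exponents exceed $2$, which rules out spurious near-touchings at scale $a\ll\eps$. With these a priori estimates in hand, I would prove that the discrete exploration path is close to the continuum exploration built from the limiting $\underline\omega$, locally uniformly in $a$. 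This gives (i) jointly rather than just marginally, and it is the step where most of the technical work lies.
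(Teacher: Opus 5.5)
The paper does not prove this statement: it quotes it from \cite{kohler2024fuzzy}, which builds on the continuum construction of \cite{miller2017cle}. The remark following it only records that the argument there identifies the limit of the conditional law of the spin interfaces given the uncoloured FK configuration.

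Your frame is the natural one: tightness from the two marginals, then showing that the discrete spin exploration is asymptotically the \cite{miller2017cle} functional of the limiting coloured $\cle_{16/3}$. Once that joint statement is established, measurability is immediate. But then the marginal $\sle_3(\rho)$ convergence, the Markov property (ii), and the appeal to a ``uniqueness of a coupling satisfying a Markovian exploration description'' contribute nothing. The last one is also not something you can quote. As Section~\ref{se. limits of literature} stresses, \cite{miller2017cle} construct the coupling and identify annealed laws, but do not characterise or describe the conditional law of one ensemble given the other. Everything therefore rests on your final paragraph, and the estimate proposed there is false.

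Alternating primal/dual four-arm events for FK--Ising have exponent $35/24<2$, not larger than $2$. The expected number of $\eps$-boxes from which such arms reach macroscopic distance is of order $\eps^{-13/24}$, and these near-touchings are precisely the points where distinct $\cle_{16/3}$ clusters touch in the limit. Prescribing that the two primal arms carry the same sign does not help. Colours are independent fair coins attached to clusters, so this costs at most a factor $4$ and leaves the exponent unchanged.

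So macroscopic FK pinch points cannot be ruled out. What has to be shown is that at each of them the microscopic spins in the neck do not affect the macroscopic spin connectivity. This is also what gives content to your description of the continuum map as ``outer boundaries of unions of same-coloured clusters'', since touching is the only notion of adjacency available in the limit.

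The right event is a spin event. Suppose two same-sign FK clusters of diameter at least $\eps$ come within distance $r$ of each other but lie in different spin clusters. Then a path of opposite spins must cross the neck, which forces four alternating \emph{spin} arms $-/+/-/+$ between scales $r$ and $\eps$. It is this event whose exponent, $21/8$ by \cite{wu2018alternating}, exceeds $2$. If by ``prescribed colours'' you meant opposite-sign spins along the dual arms, you are already in this spin setting rather than an FK one, and the exponent must come from there.

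Estimates of this type, together with half-plane counterparts near $\p D$, are what the joint version of (i) needs. With the FK bound as you state it, that step, and hence the identification of the limit, does not go through.
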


\begin{remark}
    Actually, the proof presented in \cite{kohler2024fuzzy} shows a deeper statement: \emph{the law of a $\cle_3$ conditionally on a $\cle_{16/3}$ is the scaling limit of the law of the discrete $\cle_3$ conditionally on a discrete $\cle_{16/3}$ in the Edwards-Sokal coupling.}
\end{remark}

\medskip The combination of Theorem \ref{th. mesurabilité du champ de magnétisation par rapport au CLE 16/3 coloriée} and Theorem \ref{th. convergence ising géométrique + fk perco lehmkueller} implies formally the uniqueness of the coupling $(\Gamma,\Phi)$, see Lemma \ref{l. if two limits are measurable with respect to a third, then they are uniquely coupled}.

\medskip However, what precedes turns out to be not sufficient (as one may first think) to deduce the measurability of the magnetization field in terms of the set of interfaces: indeed, if we denote by $X^a,Y^a,Z^a$ the (discrete) spin-loop configuration, decorated FK--Ising loop configuration, magnetization, and $X,Y,Z$ the limit when $a\to0$, we aim to show that $Z$ is $X$-measurable using \begin{enumerate}
    \item $(X,Y,Z)$ is the limit (in law) of the triples $(X^a,Y^a,Z^a)$. Furthermore, ${\mathcal L}(X^a \,|\,Y^a)$ converges to ${\mathcal L}(X\,\md \, Y)$ (still true if taken inside the coupling).
    \item $Z^a$ is $Y^a$- and $X^a$-measurable for any $a>0$
    \item $Z$ is $Y$-measurable
\end{enumerate}
These three points are not enough to conclude. The following fourth assumption would however suffice:
\begin{enumerate}
    \setcounter{enumi}{3}
    \item the random measure ${\mathcal L}(Y^a \,\md \,X^a)$ converges to ${\mathcal L}(Y\,\md \, X)$
\end{enumerate}
Indeed, this fourth point implies that if $(X^a,Y^a,Y'^a)$ is such that, conditionally w.r.t. $X^a$, $Y^a,Y'^a$ are iid of law ${\mathcal L}(Y^a \,\md \,X^a)$, then the coupling converges to the analogous coupling $(X,Y,Y')$. Therefore, the extended coupling $(X^a,Y^a,Z^a,Y'^a,Z'^a)$ (for which $Z^a=Z'^a$ thanks to point 2 above) itself converges to $(X,Y,Z,Y',Z')$ with $Z=Z'$. Since $Z=Z(Y)$ (and $Z'=Z(Y')$), the random variables $Z$ and $Z'$ are conditionally independent w.r.t. $X$: their equality implies that $Z$ is actually a measurable function of $X$ alone.

A natural strategy to actually prove the measurability of the IMF {would} therefore be to prove a strengthening of Theorem \ref{th. convergence ising géométrique + fk perco lehmkueller} (or perhaps better said, the "other direction" of the result described in the remark following that theorem): 

\begin{OP}\label{op la loi du cle(16/3) conditionnellement au cle(3) vient du discret}
    Show that the law of $\omega$ conditionally on $\Gamma$ is the limit in law of the conditional law of the $\omega^a$ w.r.t. $\Gamma^a$, i.e. a "continuous independent percolation" inside $\Gamma$.
\end{OP}

\par Beware, an affirmative answer to this question would be a \emph{strong} improvement of Theorem \ref{th. convergence ising géométrique + fk perco lehmkueller}! The paper \cite{miller2017cle} gives {insights} about the coupling $(\omega,\Gamma)$ in the continuum. However, it does not describe at all the law ${\mathcal L}(\omega \md \Gamma)$: for example, they define a \emph{CPI} (see Section 2.3 in \cite{miller2017cle} for a definition of the CPI) to be the marginal of a coupling and ask that the \emph{coupling} is conformally invariant, and show that the CPI's \emph{annealed} law is {a kind} of SLE (see Proposition 4.1 in their paper). But they do not show (nor claim) that the conditional law possesses such a property. 
\par See also section 1.3.1 of \cite{holden2023convergence} for a similar discussion of the previous difficulties. However, the techniques that they develop to prove their (deep and very difficult) main theorem (somehow an analogue of the conjecture above) cannot be applied in our context. 
\par To prove this conjecture, a major (and unavoidable) step seems to be to properly define the random law ${\calL}(\omega \md \Gamma)$. This is by no means clear, at least to us. This is why we followed in Sections \ref{se.IMFwrtCurve} and \ref{se.Proof-Main-thm} a different route. 

{That said, \emph{once we have proved the measurability (Theorem \ref{th.main}),} interestingly we can justify a posteriori that this other route --- i.e. cutting $\Gamma\sim\cle_3^\nes$ into $\omega\sim\cle_{16/3}^\nes$ and constructing the IMF out of it --- indeed gives the desired random field. In particular, it shows that the field produced this way does not depend on the additional randomness of $\omega$.} Let us state it here as a corollary:

\begin{corollary}\label{co. the road through cle percolation gives a measurable cle_3}
    Let $\Gamma\sim\cle_3$. Conditionally on $\Gamma$, let $\underline\omega$ follow the (abstract) conditional law of the coupling $(\Gamma,\underline\omega)$ described in \cite{miller2017cle}. Finally, let $\Phi$ be the magnetization field constructed out of $\underline\omega$, as described in \cite{Camia_2015}. In the coupling $(\Gamma,\Phi)$, $\Phi$ is $\Gamma$-measurable; thus it does not depend on the additional randomness of $\underline\omega$. 
\end{corollary}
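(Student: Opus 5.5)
We will combine Theorem \ref{th. convergence ising géométrique + fk perco lehmkueller}, Theorem \ref{th. mesurabilité du champ de magnétisation par rapport au CLE 16/3 coloriée} and our main Theorem \ref{th.main}. The plan is to identify the triple obtained from the corollary's construction with the scaling limit of the discrete Edwards--Sokal triple, and then apply Theorem \ref{th.main}.

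First, take the discrete triple $(\Gamma^a,\underline\omega^a,\Phi^a)$ coming from the Edwards--Sokal coupling. Each marginal pair is tight: $\Gamma^a$ by Theorem \ref{th.CLE3}, $\underline\omega^a$ by \cite{kemp/smirnov_ii}, and $\Phi^a$ by Proposition \ref{pr.PhiTight}. Hence we can extract a subsequential limit $(\Gamma,\underline\omega,\Phi)$ along some sequence $a_n\to0$.

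We identify this limit in three steps.
\begin{enumerate}
\item By Theorem \ref{th. convergence ising géométrique + fk perco lehmkueller}, the pair $(\Gamma,\underline\omega)$ has the law of the Miller--Sheffield--Werner coupling of \cite{miller2017cle}. Since this coupling is a law on pairs, disintegrating it with respect to $\Gamma$ gives exactly the abstract conditional law $\mathcal L(\underline\omega\md\Gamma)$ used in the corollary.
\item By Theorem \ref{th. mesurabilité du champ de magnétisation par rapport au CLE 16/3 coloriée}, applied along the same subsequence, the pair $(\underline\omega,\Phi)$ satisfies $\Phi=\Phi_{CGN}(\underline\omega)$, where $\Phi_{CGN}$ is the explicit map of \cite{Camia_2015}.
\item By Theorem \ref{th.main}, the pair $(\Gamma,\Phi)$ satisfies $\Phi=\Phi(\Gamma)$ almost surely.
\end{enumerate}
Putting these together, in the joint limit we have almost surely $\Phi_{CGN}(\underline\omega)=\Phi(\Gamma)$.

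The main obstacle is to justify that these three almost sure identities hold in the \emph{same} triple. For identity (2), the measurability statement of Theorem \ref{th. mesurabilité du champ de magnétisation par rapport au CLE 16/3 coloriée} concerns the limit of pairs. Since the triple's marginal on $(\underline\omega,\Phi)$ is such a limit, the identity $\Phi=\Phi_{CGN}(\underline\omega)$ is a property of that marginal law and transfers to the triple. The same argument applies to identity (3) with the marginal on $(\Gamma,\Phi)$.

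The equality $\Phi_{CGN}(\underline\omega)=\Phi(\Gamma)$ is an almost sure statement about the law of $(\Gamma,\underline\omega)$, and by step (1) this law is the one described in the corollary. Therefore, if we sample $\Gamma\sim\cle_3$ and then $\underline\omega\sim\mathcal L(\underline\omega\md\Gamma)$, we get almost surely $\Phi_{CGN}(\underline\omega)=\Phi(\Gamma)$. This shows that the field built from $\underline\omega$ is $\Gamma$-measurable and does not depend on the extra randomness of $\underline\omega$. Equivalently, for almost every $\Gamma$, the map $\Phi_{CGN}$ is constant $\mathcal L(\underline\omega\md\Gamma)$-almost surely.

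One point needs care: Theorem \ref{th. mesurabilité du champ de magnétisation par rapport au CLE 16/3 coloriée} uses the quad topology, while Theorem \ref{th. convergence ising géométrique + fk perco lehmkueller} uses the loop topology. We will invoke the footnoted equivalence of these topologies so that $\underline\omega$ denotes the same random object in both results.
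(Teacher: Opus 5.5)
Your proposal is correct and follows the paper's intended argument. You take a joint limit of the Edwards--Sokal triple $(\Gamma^a,\underline\omega^a,\Phi^a)$, read off $\Phi=\Phi_{CGN}(\underline\omega)$ and $\Phi=\Phi(\Gamma)$ from the pair marginals, and deduce that $\Phi_{CGN}(\underline\omega)=\Phi(\Gamma)$ almost surely under the Miller--Sheffield--Werner coupling. The only minor difference is that the paper obtains full convergence of the triple from Lemma~\ref{l. if two limits are measurable with respect to a third, then they are uniquely coupled}, whereas your subsequential limit suffices; the loop-versus-quad topology issue you flag is handled in the paper only at the same level, via its footnote.
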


%
%
%
%

\appendix

\section{Topologies and properties of the CLE.}
\label{ap. Topologies and properties of (nested) CLE}

\subsection{The Hausdorff topology on the closed subsets of $\overline D$.}
\label{ss.Hausdorff Topology}

Let $K(D)$ be the set of compact subsets of $\overline{D}$. We define on $K(D)$ the \emph{Hausdorff distance} $\d_\haus$ to be \begin{equation}
\d_\haus(F_1,F_2) = \inf\{\eps>0 \ :\ F_1\subset(F_2)_\eps\} \vee \inf\{\eps>0\ :\ F_2\subset(F_1)_\eps\} 
\end{equation} where $F_\eps=\{x\in \overline{D}, \dist(x,F)<\eps\}$ for $F\in K(D)$. For the topology induced by $\d_\haus$, it is well known that $K(D)$ is a compact space (therefore Polish).

\subsection{Convergence in the Caratheodory sense.}
\label{ss.Caratheodory Topology}

There is another notion of convergence for open subsets of $D$, which is a bit weaker than the more classical Hausdorff convergence for their closure. Actually, as we shall see the Caratheodory topology concerns convergence of \emph{pointed domains}, that is connected open subsets of $\bar \C$ with a distinguished point.
\par If $(D_n,z_n)$ is a sequence of pointed domains, and $(D,z)$ is another pointed domain, we say that $(D_n,z_n)\to(D,z)$ in the Caratheodory sense if the following conditions are satisfied:
\bnum[i)]
\item $z_n\to z$ in the topology of $\overline\C$.
\item For any compact $K\subset D$, $K$ is included in all but finitely many $D_n$.
\item For any connected open set $U$ containing $z$, if $U\subset D_n$ for infinitely many $n$ then $U\subset D$.
\enum 

There is an equivalent definition of this notion of convergence that we will need in Lemma \ref{l.OpenDomainsConvergeCaratheodory} (see, for example, \cite{comerford2013caratheodory}, Theorem 1.1).

\begin{theorem}\label{th.CaratheodoryConv}
Let $(D_n,z_n)$ be a sequence of pointed domains, and $(D,z)$ be another pointed domain. The following are equivalent: \bi 
\item $(D_n,z_n)\to(D,z)$ in the Caratheodory sense.
\item $z_n\to z$ in the topology of $\overline\C$; furthermore, for any sublimit $F$ of $F_n=\overline\C\setminus D_n$ in the Hausdorff topology (with respect to any metric of $\overline\C$), $D$ is the connected component of $z$ in $\overline\C\setminus F$.
\ei 
\end{theorem}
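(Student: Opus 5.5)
Since the theorem is a purely topological equivalence, I will prove both implications directly. The only tool is compactness of the Hausdorff topology on closed subsets of $\overline\C$: the analogue of Subsection \ref{ss.Hausdorff Topology} with $\overline D$ replaced by the compact sphere $\overline\C$. I will use the following elementary fact about a Hausdorff limit $F_{n_k}\to F$: a point $x$ lies in $F$ if and only if there are points $x_k\in F_{n_k}$ with $x_k\to x$. Throughout, $F_n=\overline\C\setminus D_n$.

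\emph{Caratheodory $\Rightarrow$ Hausdorff description.} Let $F$ be the limit of some subsequence $F_{n_k}$, and let $U$ be the component of $z$ in $\overline\C\setminus F$. I will show $D=U$.

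First, $D\subset\overline\C\setminus F$. For $w\in D$, choose a closed disk $\overline B(w,2r)\subset D$. By condition ii) of the definition, $\overline B(w,2r)\subset D_{n}$ for all large $n$. Hence $F_{n_k}$ avoids $B(w,2r)$ for large $k$, so $F$ avoids $B(w,r)$. Since $D$ is connected and contains $z$, this gives $D\subset U$.

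Conversely, exhaust $U$ by an increasing sequence of connected open sets $V_j\ni z$ with $\overline{V_j}$ compact and contained in $U$. For instance, take finite unions of small disks along paths from $z$. Each $\overline{V_j}$ is at positive distance from $F$. By the limit-point characterisation, $\overline{V_j}\cap F_{n_k}=\emptyset$ for $k$ large. Indeed, otherwise points of $F_{n_k}$ in the compact set $\overline{V_j}$ would accumulate at a point of $F\cap\overline{V_j}$. So $V_j\subset D_{n_k}$ for infinitely many indices, and condition iii) gives $V_j\subset D$. Taking the union over $j$ yields $U\subset D$. The convergence $z_n\to z$ is part of both definitions.

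\emph{Hausdorff description $\Rightarrow$ Caratheodory.} For ii), argue by contradiction. Suppose a compact $K\subset D$ fails to lie in $D_n$ for infinitely many $n$. Pick $x_n\in K\cap F_n$ along that subsequence. Extract a further subsequence along which $F_n\to F$ in the Hausdorff sense (by compactness) and $x_n\to x\in K$. Then $x\in F$. But by hypothesis $D$ is a component of $\overline\C\setminus F$, so $D\cap F=\emptyset$, contradicting $x\in K\subset D$.

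For iii), let $U\ni z$ be connected and open with $U\subset D_n$ for infinitely many $n$. Extract from those indices a Hausdorff-convergent subsequence $F_{n_k}\to F$. If some $x\in U\cap F$ existed, it would be a limit of points $x_k\in F_{n_k}$. These $x_k$ would eventually lie in the open set $U\subset D_{n_k}$, which is impossible. So $U\subset\overline\C\setminus F$. Being connected and containing $z$, $U$ lies in the component of $z$, which is $D$ by hypothesis.

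\emph{Main obstacle.} The only delicate point is the inclusion $U\subset D$ in the first implication. Condition iii) only applies to open connected sets contained in $D_n$ infinitely often, so one must exhaust $U$ by relatively compact connected open sets containing $z$ and use the positive distance of their closures to $F$. The remaining steps are routine subsequence extractions.
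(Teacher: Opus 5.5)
Your proof is correct, and it takes a different route from the paper only because the paper gives no argument: it quotes the statement from \cite{comerford2013caratheodory} (Theorem 1.1 there). Your proposal is a self-contained replacement for that citation. It checks the equivalence directly for conditions i)--iii) exactly as the paper states them, so nothing depends on matching the reference's conventions.

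All four inclusions are handled correctly. The only tools you need are compactness of the hyperspace of closed subsets of $\overline\C$ and the fact that any limit of points $x_k\in F_{n_k}$ (also along a further subsequence) lies in the Hausdorff limit $F$. The one non-routine step is proving $U\subset D$ by covering paths from $z$ with connected open sets whose closures are compact subsets of $U$. This is the same device the paper uses in the proof of Lemma \ref{l.OpenDomainsConvergeCaratheodory}. The citation buys brevity; your route makes the appendix independent of the reference.

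A few small points to tidy up:
\begin{itemize}
\item Prove $D\subset\overline\C\setminus F$, hence $z\notin F$, before defining $U$ as the component of $z$.
\item The exhaustion need not be increasing. It suffices that each $w\in U$ lies in some connected open $V\ni z$ with $\overline V$ compact and contained in $U$.
\item If some $D_n=\overline\C$, then $F_n=\emptyset$. Treat $\emptyset$ as an isolated point of the hyperspace so that your compactness extractions still apply.
\end{itemize}
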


One should not get worried by the use of the topology of $\overline\C$: indeed, we will only apply what precedes in the context of bounded domains where we can think of the Euclidean topology on $\C$, and the classical Hausdorff topology as well.

\subsection{Some properties of loop sets}\label{ss.PropertiesLoopSets}

In this Subsection, we will define some observables out of a loop set $\Gamma\in\caX_D$, and we will prove properties about them. Note that our statements are deterministic. 

\begin{lemma}\label{l. intersection with closed set is measurable}
Let $D$ be a bounded domain and let $\eta\subset\overline D$ be a non-empty compact set. For $\Gamma\in\mathcal X_D$, define $\Gamma(\eta) :=\big\{\ell\in\Gamma \md\im(\ell)\cap\eta\neq\varnothing\big\}$
and $\Expl(\eta):=\Expl(\Gamma,\eta) =\eta\cup\bigcup_{\ell\in\Gamma(\eta)}\im(\ell)$. Then the maps
\begin{align*}
\Gamma\longmapsto\Gamma(\eta)\qquad\text{and}\qquad\Gamma\longmapsto\Expl(\eta)
\end{align*}
are Borel measurable, respectively as maps from $\mathcal X_D$ to $\mathcal X_D$ and from $\mathcal X_D$ to $K(\overline D)$.

Moreover, on the closed subspace $\mathcal X_{D,\eta} :=\big\{\Lambda\in\mathcal X_D \md\im(\ell)\cap\eta\neq\varnothing\text{ for every } \ell\in\Lambda\big\}$,
the map
\begin{align*}
\Lambda\longmapsto \eta\cup\bigcup_{\ell\in\Lambda}\im(\ell)
\end{align*}
is continuous.
\end{lemma}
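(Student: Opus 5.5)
The plan is to prove the continuity statement on $\mathcal X_{D,\eta}$ first, and then to deduce measurability of the two maps by writing $\Gamma\mapsto\Gamma(\eta)$ as a pointwise limit of simpler Borel maps.

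\textbf{Continuity on $\mathcal X_{D,\eta}$.} Let $\Lambda_n\to\Lambda$ in $\d_\caX$ with all of these in $\mathcal X_{D,\eta}$, and fix $\eps>0$. By the definition of $\d_\caX$, for $n$ large there is a bijection $\pi$ between finite subfamilies $J\subset\Lambda$ and $J_n\subset\Lambda_n$ such that:
\bi
\item matched loops are $\eps$-close in $\d_\calL$, so their images are $\eps$-close in the Hausdorff distance;
\item every unmatched loop, in either family, has diameter at most $\eps$.
\ei
Each unmatched loop meets $\eta$, so its image lies in the $\eps$-neighbourhood $\eta_\eps$. Now write $E=\eta\cup\bigcup_{\ell\in\Lambda}\im(\ell)$ and $E_n$ for the analogous set built from $\Lambda_n$. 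Every point of $E_n$ is within distance $\eps$ of $E$: it lies on $\eta$, on a matched loop, or on an unmatched small loop. The same holds with the roles of $E$ and $E_n$ exchanged. Hence $\d_\haus(E,E_n)\le\eps$.

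Two remarks complete this step. First, $E$ is indeed closed. By local finiteness, any limit point of $\bigcup_{\ell\in\Lambda}\im(\ell)$ is either a limit of points on one of finitely many large loops, or a limit of points on loops of diameter tending to $0$. Since these small loops touch $\eta$, such a limit point lies in $\eta$. Second, $\mathcal X_{D,\eta}$ is closed in $\caX_D$, because each nontrivial limit loop is the $\d_\calL$-limit of loops meeting the compact set $\eta$, and therefore meets $\eta$ itself.

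\textbf{Measurability of $\Gamma\mapsto\Gamma(\eta)$.} For $\delta>0$ set $\Gamma^{\delta}(\eta):=\{\ell\in\Gamma:\diam(\ell)>\delta,\ \im(\ell)\cap\eta\neq\varnothing\}$. This is a finite subfamily, and $\Gamma^{\delta}(\eta)\to\Gamma(\eta)$ in $\d_\caX$ as $\delta\to0$, since the discarded loops all have diameter at most $\delta$. So it suffices to show that $\Gamma\mapsto\Gamma^{\delta}(\eta)$ is Borel.

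To do this, use Proposition~\ref{pr. measurable properties of the loop set topology}. Off a Borel set where diameters tie, the $k$-th largest loop $\ell_k(\Gamma)$ is a measurable function of $\Gamma$. The condition $\im(\ell_k)\cap\eta\neq\varnothing$ is closed in $\ell_k$, hence Borel. Therefore $\Gamma^\delta(\eta)$ is the loop set assembled from the Borel-selected loops $\ell_k$ with $\diam(\ell_k)>\delta$ that meet $\eta$. The map that builds a finite loop set from finitely many loops is continuous, so $\Gamma\mapsto\Gamma^\delta(\eta)$ is Borel.

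The tie set needs separate handling. Where diameters coincide, I would instead enumerate the loops through a measurable selection: for instance, order loops of equal diameter by a Borel ordering of $\calL_D$, or apply Kuratowski--Ryll-Nardzewski to the finite sets of loops of diameter exceeding $\delta$. This tie-breaking is the step I expect to need the most care. For the CLE application it is avoidable, since a.s. all diameters are distinct.

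\textbf{Measurability of $\Gamma\mapsto\Expl(\eta)$.} The map $\Gamma\mapsto\Gamma(\eta)$ is Borel and takes values in $\mathcal X_{D,\eta}$. Composing it with the continuous map from the first step shows that $\Gamma\mapsto\Expl(\eta)$ is Borel.
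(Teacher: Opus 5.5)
Your continuity argument on $\mathcal X_{D,\eta}$ is the paper's: matched loops have Hausdorff-close images, unmatched loops are small and touch $\eta$, and $E$ is closed by local finiteness. You also supply the short proof that $\mathcal X_{D,\eta}$ is closed, which the paper only asserts. The final composition giving measurability of $\Gamma\mapsto\Expl(\eta)$ is identical too.

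The genuine difference is in the measurability of $\Gamma\mapsto\Gamma(\eta)$.

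\emph{The paper's route.} It notes that $A_\eta=\{\ell\in\calL_D:\im(\ell)\cap\eta\neq\varnothing\}$ is closed, because $\ell\mapsto\dist(\im(\ell),\eta)$ is continuous. It then invokes the standard point-process fact that restricting a locally finite configuration to a Borel set of loops is measurable. This treats every configuration uniformly, with no ordering of loops, but the fact itself is taken for granted.

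\emph{Your route.} You approximate $\Gamma(\eta)$ by the finite truncations $\Gamma^\delta(\eta)$. This is sound: $\d_\caX(\Gamma^\delta(\eta),\Gamma(\eta))\le\delta$, and pointwise limits of Borel maps into a metric space are Borel. You then make each truncation Borel through the diameter enumeration of Proposition~\ref{pr. measurable properties of the loop set topology}. This is more explicit and uses only results already cited. It is complete on the Borel set where all diameters are distinct, which, as you note, carries full $\cle_3^\nes$-measure.

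\emph{What remains.} To get the lemma as stated on all of $\mathcal X_D$, the tie set is still only sketched, and it includes repeated loops, which $\mathcal X_D$ allows. Your two suggested fixes each need work:
\begin{itemize}
\item The $k$-th loop in a refined Borel tie-breaking order is not covered by the cited proposition, so its measurability needs its own argument.
\item A Kuratowski--Ryll-Nardzewski selection from the finite sets $\{\ell\in\Gamma:\diam(\ell)>\delta\}$ does work. The sets $\{\Gamma:\exists\,\ell\in\Gamma\cap U,\ \diam(\ell)>\delta\}$ are open for open $U\subset\calL_D$, so the multifunction is weakly measurable. However, the selection must also keep track of multiplicities.
\end{itemize}
That extra bookkeeping is the price of enumerating loops, which the paper's restriction viewpoint avoids.
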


\begin{proof}
The set
\begin{align*}
A_\eta := \big\{\ell\in\mathcal L_D\md \im(\ell)\cap\eta\neq\varnothing\big\}
\end{align*}
is closed in $\mathcal L_D$, hence Borel, since $\ell\mapsto\dist(\im(\ell),\eta)$ is continuous. Hence $\Gamma(\eta)$ is the restriction of the locally finite loop configuration $\Gamma$ to the Borel set $A_\eta$, and is therefore a measurable function of $\Gamma$. 

It remains to consider
\begin{align*}
U_\eta(\Lambda) := \eta\cup\bigcup_{\ell\in\Lambda}\im(\ell), \qquad
\Lambda\in\mathcal X_{D,\eta}.
\end{align*}
This set is compact: if points belonging to distinct loops converge,
then the diameters of these loops tend to zero by local finiteness; since
every loop intersects $\eta$, the limiting point belongs to $\eta$.

Finally, the definition of $\d_{\mathcal X}$ immediately gives
\begin{align*}
\d_{\mathrm{Haus}}
\bigl(U_\eta(\Lambda),U_\eta(\Lambda')\bigr)
\leq
\d_{\mathcal X}(\Lambda,\Lambda').
\end{align*}
Indeed, matched loops have close images, while every unmatched loop has
small diameter and intersects the common set $\eta$. Thus $U_\eta$
is continuous, and
\begin{align*}
\Expl(\eta)=U_\eta\bigl(\Gamma(\eta)\bigr)
\end{align*}
is measurable.
\end{proof}

\begin{lemma}\label{l.OpenDomainsConvergeCaratheodory}
With the same notation as above, and $D^\eta:= D\setminus\Expl(\eta)$, assume that the sequence $\big(\Gamma^k(\eta)\big)_{k\ge0}$ converges to $\Gamma(\eta)$ in $\caX_D$. Let $z\not\in\Expl(\eta)$. For any $k$ large enough, $z\not\in\Expl^k(\eta)$. Furthermore, if we denote by $D_z$ (resp. $D_z^k$) the connected component of $z$ in $D^\eta$ (resp. $D^{k,\eta}$), then $(D_z^k,z)$ converges to $(D_z,z)$ in the Caratheodory sense as $k$ goes to infinity.
\end{lemma}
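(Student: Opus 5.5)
The plan is to use the Hausdorff characterization of Caratheodory convergence, Theorem \ref{th.CaratheodoryConv}, with the closed complements $F_k:=\overline\C\setminus D^k_z$ and $F:=\overline\C\setminus D_z$. The proof has two steps: first show that $z$ eventually lies outside the explored sets, then identify every Hausdorff sublimit of $F_k$.

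\emph{Step 1 ($z$ eventually unexplored).} Set $\Lambda_k:=\Gamma^k(\eta)$ and $\Lambda:=\Gamma(\eta)$. Every loop of $\Lambda_k$ and of $\Lambda$ meets $\eta$, so all of them lie in the closed subspace $\caX_{D,\eta}$. Lemma \ref{l. intersection with closed set is measurable} then gives
\[
\Expl^k(\eta)=U_\eta(\Lambda_k)\longrightarrow U_\eta(\Lambda)=\Expl(\eta)
\]
in the Hausdorff distance. Since $\Expl(\eta)$ is closed, $r:=\dist(z,\Expl(\eta))>0$. For $k$ large, every point of $\Expl^k(\eta)$ is within $r/2$ of $\Expl(\eta)$, hence at distance more than $r/2$ from $z$. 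The same holds with $\partial D$ added, since that set is fixed. This gives the first claim and shows $B(z,r/2)\subset D_z^k$ for large $k$.

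\emph{Step 2 (identify the sublimits).} Let $E_k:=(\overline\C\setminus D)\cup\Expl^k(\eta)$ and $E:=(\overline\C\setminus D)\cup\Expl(\eta)$. Then $D_z^k$ is the component of $z$ in $\overline\C\setminus E_k$, and $E_k\to E$ in the Hausdorff sense. Take a subsequence along which $F_k\to F'$. Since $F_k\supset E_k$, we get $F'\supset E$. Let $W$ be the component of $z$ in $\overline\C\setminus F'$. We need $W=D_z$.

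For the inclusion $W\subset D_z$: $W$ is connected, contains $z$, and avoids $E$, so $W\subset D_z$.

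For the inclusion $D_z\subset W$: fix $w\in D_z$ and a path $\gamma$ from $z$ to $w$ inside $D_z$. Its distance $d$ to $E$ is positive. For large $k$, $E_k$ lies in the $d/2$-neighbourhood of $E$, so $\gamma$ avoids $E_k$. Hence $\gamma\subset D_z^k$, and a $d/2$-tube around $\gamma$ is disjoint from $E_k$, hence lies in $D^k_z$, for all large $k$. Therefore this tube does not meet $F_k$ for large $k$, so $\gamma\cap F'=\emptyset$. This puts $w$ in $W$. Since $D_z=W$ for every sublimit $F'$, Theorem \ref{th.CaratheodoryConv} concludes.

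The main obstacle is expected to be the inclusion $D_z\subset W$ in Step 2. The danger is that $F_k$, being the complement of a single component, could carry extra limit material inside $D_z$; for instance, other components of $\overline\C\setminus E_k$ could pinch $D_z^k$ off. The tube argument addresses this, because Hausdorff convergence $E_k\to E$ prevents $E_k$ from approaching any fixed compact subset of $D_z$. The one point to verify carefully is that Lemma \ref{l. intersection with closed set is measurable} applies, which requires that all loops in $\Lambda_k$ meet $\eta$. This holds by the definition of $\Gamma^k(\eta)$.
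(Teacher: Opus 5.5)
Your proof is correct and follows essentially the same route as the paper. It gets Hausdorff convergence of the explored sets from the continuity statement in Lemma~\ref{l. intersection with closed set is measurable}, then applies the characterization of Theorem~\ref{th.CaratheodoryConv}: $W\subset D_z$ follows from $F'\supset E$, and $D_z\subset W$ from a tubular neighbourhood of a path in $D_z$ that eventually avoids every $F_k$. Your version is slightly more careful than the paper's in one respect: it puts $\overline{\C}\setminus D$ into $E_k$ explicitly, which keeps $z$ uniformly away from $\partial D$ and gives $F'\supset E$, whereas the paper leaves this implicit.
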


As we have seen in the previous lemma, the convergence $\Gamma^k(\eta)\to\Gamma(\eta)$ implies the Hausdorff convergence $\Expl^k(\eta)\to\Expl(\eta)$. However, if $\dist(z,\Expl(\eta))>\eps$, then $z$ cannot be in $\Expl^k(\eta)$ as soon as $\d_\haus\big(\Expl(\eta),\Expl^k(\eta)\big)<\eps$. Therefore, $z$ is eventually not in $\Expl^k(\eta)$. Let us prove the Caratheodory convergence.

\begin{proof}
We could prove this fact with the usual definition of Caratheodory convergence; however, it would be a bit tedious and technical. Instead, let us rely on the equivalent criteria for Caratheodory convergence given in Theorem \ref{th.CaratheodoryConv}.
\par So, according to this result, we only need to show that $D_z$ is indeed the connected component $U$ of $z$ in $\overline\C\setminus F$ where $F$ is any Hausdorff sublimit of the closed subsets $F^k=\overline\C\setminus D_z^k$ (the convergence of the distinguished point being trivial). Since $F^k$ contains $\Expl^k(\eta)$ by definition, it follows that any sublimit $F$ must contain the Hausdorff limit $\Expl^k(\eta)\to\Expl(\eta)$. In particular, $U\subset D_z$.
\par The reciprocal fact that $U$ contains $D_z$ can be seen as follows: if $w\in D_z$, there is a continuous path $\gamma$ contained in $D_z$ from $z$ to $w$ by path-connectivity of $D_z$. Furthermore, $\dist(\gamma,\partial D_z)>0$ so there is a neighbourhood of $\im\gamma$ such that for large enough $k$, this neighbourhood doesn't intersect the closed set $F^k$; in particular, a neighbourhood of $\im\gamma$ is contained in $\overline\C\setminus F$ and connects $z,w$: therefore $w\in U$, and since $w$ was arbitrary, $D_z\subset U$.
\par We have shown that $D_z=U$, whence by Theorem \ref{th.CaratheodoryConv} we can conclude: $(D^k_z,z)\to(D_z,z)$ in the Caratheodory sense.
\end{proof}

\begin{remark}
Proving the convergence in the Hausdorff sense would be more technical and would need extra assumptions on $\Gamma$ (that are at any rate almost surely satisfied for $\Gamma\sim\cle_3^\nes$). Since Theorem \ref{th.1pointFunction} on the convergence of the spin correlation only requires Caratheodory convergence, we chose to only prove this.
\end{remark}

\begin{lemma}\label{l.NumberOfLoopsSurroundingForall z at once}
For $\Gamma\in\caX_D$ and $z\in D$, denote by $\nes_\Gamma(z)\in\N\cup\{\infty\}$ the cardinality of $\Gamma(z)$, the collection of loops in $\Gamma$ surrounding $z$ (the loops which pass through $z$ are in $\Gamma(z)$).
\par Let $(\Gamma^k)_{k\ge0}$ be a sequence converging to $\Gamma$ in $\caX_D$, and let $z$ be a point which does not lie on any loop of $\Gamma$. Under the assumption that $\inf_{k}\inf_{\ell\in\Gamma^k(z)}\diam(\ell)>0$, the sequence $\nes_{\Gamma^k}(z)$ converges to $\nes_{\Gamma}(z)$ as $k$ goes to infinity.
\end{lemma}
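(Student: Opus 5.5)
The plan is to match the loops surrounding $z$ in $\Gamma^k$ with those surrounding $z$ in $\Gamma$ for $k$ large, using the finiteness provided by the diameter assumption. Set $r:=\inf_k\inf_{\ell\in\Gamma^k(z)}\diam(\ell)>0$. First I would check that $\nes_\Gamma(z)<\infty$. Any loop of $\Gamma$ surrounding $z$ but not through it has diameter at least $\dist(z,\im\ell)>0$, and $z$ lies on no loop by assumption. That alone does not give a uniform lower bound. So I would instead take $\eps<r/2$ with $\eps$ smaller than every diameter of the (countable, locally finite) family, chosen so that no loop of $\Gamma$ has diameter in $[\eps/2,2\eps]$ beyond finitely many, and argue directly from the matching.

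Fix such $\eps$ and let $\Gamma_{>\eps}$ be the finite set of loops of $\Gamma$ with diameter $>\eps$. Set $\delta:=\min_{\ell\in\Gamma_{>\eps}}\dist(z,\im\ell)>0$, which is positive since $z$ is on no loop. By definition of $\d_\caX$, for $k$ large there is a bijection $\pi$ between a finite set $J\supset\Gamma_{>\eps}$ and $J'\subset\Gamma^k$. Under $\pi$, matched loops are within $\d_\calL$-distance $\tau<\min(\delta,\eps)/2$, and all unmatched loops on both sides have diameter $<\tau<r$. In particular, every loop of $\Gamma^k(z)$ is matched, because its diameter is at least $r>\tau$.

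The key topological step is the following: if $\d_\calL(\ell,\ell')<\tau$ and $\dist(z,\im\ell)>\tau$, then $\ell$ surrounds $z$ if and only if $\ell'$ does. The reason is that the linear homotopy between parametrizations stays at distance less than $\tau$ from $\im\ell$, hence avoids $z$, so the winding numbers around $z$ agree. For non-self-crossing loops, "surrounding" should be read as nonzero winding or separation, and separation is likewise stable under uniform perturbation that avoids $z$. Applying this to each matched pair with $\ell\in\Gamma_{>\eps}$ shows that $\pi$ maps $\Gamma(z)\cap\Gamma_{>\eps}$ bijectively onto the matched loops of $\Gamma^k$ surrounding $z$. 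There is a residual issue: matched loops in $J\setminus\Gamma_{>\eps}$ have diameter $\le\eps$. If such a loop is matched to an $\ell'\in\Gamma^k(z)$, then $\diam\ell'\le\eps+2\tau<r$, which is a contradiction once $\eps,\tau$ are small compared with $r$. Likewise, any $\ell\in\Gamma(z)$ with $\diam\ell\le\eps$ would be within $\tau$ of a loop of $\Gamma^k$ surrounding $z$ or unmatched. By the same diameter comparison it cannot be matched to a surrounding loop of $\Gamma^k$. Therefore the two counts $\nes_{\Gamma^k}(z)$ and $\#(\Gamma(z)\cap\Gamma_{>\eps})$ coincide.

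The main obstacle is showing $\nes_\Gamma(z)=\#(\Gamma(z)\cap\Gamma_{>\eps})$, that is, that $\Gamma$ has no small loops around $z$. This holds because a loop of diameter $\le\eps<r/2$ surrounding $z$ would, by the $\d_\caX$ approximation, force a loop of $\Gamma^k$ near $z$ of diameter $<r$ surrounding $z$, or an unmatched one. I would handle it by taking $\dist(z,\im\ell)=:s>0$ and choosing $\tau<s$ (possible since $k\to\infty$ lets $\tau\to0$). The perturbation lemma then gives a surrounding partner in $\Gamma^k$ of diameter at most $\eps+2\tau<r$, if $\ell$ is matched; if $\ell$ is unmatched, its diameter must be $<\tau<s$, which is impossible for a loop surrounding $z$ at distance $s$. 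Both cases contradict the hypothesis. This bounds all loops of $\Gamma(z)$ below in diameter, so $\Gamma(z)$ is finite by local finiteness, and the counts agree for all large $k$, giving $\nes_{\Gamma^k}(z)\to\nes_\Gamma(z)$.
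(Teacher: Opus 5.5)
Your proof is correct, provided ``surrounding'' is read as nonzero winding number. It uses the same ingredients as the paper: the finite matching supplied by $\d_\caX$, stability of surrounding under $\d_\calL$-perturbations that stay away from $z$, and the diameter lower bound that forces every loop of $\Gamma^k(z)$ to be matched. The organization, however, is different.

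The paper proves $\liminf_k\nes_{\Gamma^k}(z)\ge\nes_\Gamma(z)$ by matching any $N$ loops of $\Gamma(z)$. That step needs no hypothesis and also covers $\nes_\Gamma(z)=\infty$. Only when $\nes_\Gamma(z)<\infty$ does it prove the reverse bound, at a scale built from both $r$ and $\min_{\ell\in\Gamma(z)}\diam(\ell)$.

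You instead first transfer the hypothesis to the limit. A loop of $\Gamma(z)$ of diameter at most $\eps<r/2$ would, for $k$ large depending on that loop, have a partner in $\Gamma^k(z)$ of diameter less than $r$. This shows that $\nes_\Gamma(z)<\infty$, so the infinite case is in fact vacuous under the hypothesis. A single matching at a scale $\eps<r/2$, independent of $\Gamma(z)$, then gives an exact bijection between $\Gamma(z)$ and $\Gamma^k(z)$ for all large $k$. The paper's route buys a hypothesis-free lower bound; yours buys finiteness and eventual equality at an explicit scale.

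\textbf{Corrections.}

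First, delete the requirement that $\eps$ be ``smaller than every diameter of the family''. When $\Gamma$ is infinite no such $\eps>0$ exists, and you never use it: any $\eps\in(0,r/2)$ works, since $\Gamma_{>\eps}$ is finite for every $\eps>0$ by local finiteness. Similarly, the sentence beginning ``Likewise'' is only justified once $\tau<\dist(z,\im\ell)$. You arrange this correctly in your last paragraph, so that sentence can simply go.

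Second, your claim that separation is ``likewise stable'' under uniform perturbations avoiding $z$ is false for non-self-crossing loops. Let $\ell$ be the unit circle traversed once counterclockwise and then once clockwise. Let $\ell^k$ be the boundary of the $k^{-2}$-neighbourhood of the arc $\{e^{i\theta}:1/k\le\theta\le 2\pi-1/k\}$. Then $\ell^k\to\ell$ in $\d_\calL$, and no $\ell^k$ separates $0$ from $\infty$, while $\ell$ does. With $\Gamma^k=\{\ell^k\}$, $\Gamma=\{\ell\}$ and $z=0$, the lemma itself fails under the separation reading. You must therefore commit to the winding-number definition, which agrees with separation for the simple loops coming from the discrete interfaces and from $\cle_3$.
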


With the notations of Lemma \ref{l. intersection with closed set is measurable}, we will use this property in the case that $\Gamma^a(\eta)\xrightarrow[a\to0]{}\Gamma(\eta)$, for which the conditions on $z$ hold when $z\not\in \Expl(\eta)$: indeed, $z$ doesn't lie on a loop of $\Gamma(\eta)$, and any loop surrounding $z$ in some $\Gamma^a(\eta)$ must be of diameter at least $\dist(z,\eta)>0$. 

\begin{proof}
Let $\ell_1,\ldots,\ell_\nes$ be the loops surrounding $z$ in $\Gamma$ (with $\nes=\nes_\Gamma(z)$ possibly infinite), and for each finite $i$, let $\eps_i=\dist(z,\ell_i)$, which is nonzero since $z$ is not on $\ell_i$. Let $N\le\nes$ be an integer, and let $\eps=\inf_{i\le N}\eps_i$. Of course, $\diam(\ell_i)>\eps_i$ for any $i$, so that if $\d_{\caX}(\Gamma^k,\Gamma)<\eps$, by definition of $\d_\caX$ there must be $N$ loops $\ell_{1}^k,\ldots,\ell^k_{N}\in\Gamma^k$ at loop distance at most $\eps$ from $\ell_1,\ldots,\ell_N$ respectively. Each of them has to surround $z$, thus $\nes_{\Gamma^k}(z)\ge N$. Letting $N\to\nes\wedge\infty$ shows that $\liminf_k\nes_{\Gamma^k}(z)\ge\nes$.
\par It remains to show that $\limsup_k\nes_{\Gamma^k}(z)\le\nes$ in the case $\nes<+\infty$. Let $\eps$ be one-half of the minimum of $\inf_{k}\inf_{\ell\in\Gamma^k(z)}\diam(\ell)$ and $\inf_{\ell\in\Gamma(z)}\diam(\ell)$, which is positive by hypothesis. Let $\Gamma'$ be the finite subcollection of $\Gamma$ consisting of those loops of diameter at least $\eps$, and let $\delta=\inf_{\ell\in\Gamma'}\dist(z,\ell)\wedge\eps$. For any $k$ large enough so that $\d_\caX(\Gamma^k,\Gamma)<\delta$, we will see that $\nes_{\Gamma^k}(z)\le\nes$, which will conclude the proof. To see that, notice that, if $\ell^k\in\Gamma^k(z)$ \bi 
\item any loop in $\Gamma'\setminus\Gamma(z)$ is at loop distance at least $\inf_{\ell\in\Gamma'}\dist(z,\ell)>\delta$ from $\ell^k$ (since they do not enclose $z$, and $\ell^k$ does).
\item any loop in $\Gamma\setminus\Gamma'$ is of diameter less than $\eps<\diam(\ell^k)/2$, so at loop distance at least $\diam(\ell^k)-\eps>\delta$ from $\ell^k$.
\item $\diam(\ell^k)>\delta$.
\ei
Whence, by definition of $\d_\caX$, the bijection $\pi^k$ between finite sub-collections of $\Gamma^k,\Gamma$ must send $\Gamma^k(z)$ into $\Gamma(z)$, which shows the desired $\nes_{\Gamma^k}(z)\le\nes$.
\end{proof}

\begin{lemma}\label{l.Domains-to-be-explored-are-Jordan}
If $D$ is a Jordan domain, $\eta$ a smooth grid in $D$ and $\Gamma\in\caX_D$ a locally finite collection made of simple loops such that any two loops do not hit each other and that there are no loops tangent to $\eta$, then every connected component of $D\setminus\Expl(\Gamma,\eta)$ is a Jordan domain.
\end{lemma}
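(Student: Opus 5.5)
The plan is to identify the boundary of each component explicitly as a simple closed curve built from finitely many arcs, and then invoke the Jordan curve theorem. Fix a connected component $U$ of $D\setminus\Expl(\Gamma,\eta)$ and set $d:=\dist(U,\eta)$. We argue separately according to whether $U$ touches $\eta$ or not.

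\emph{Case $d>0$.} We show that $U$ is the interior of a single loop. Since $U\cap\Expl=\emptyset$, every point of $U$ lies outside every loop of $\Gamma(\eta)$ or inside it, consistently over $U$. Some loop of $\Gamma(\eta)$ must separate $U$ from $\eta$, since otherwise a path from $U$ to $\eta$ avoiding all loops would put $\eta$-points in $\overline U$. Every such separating loop intersects $\eta$, hence has diameter at least $d$. By local finiteness there are finitely many of them, and they are nested because loops are disjoint and simple. Let $\ell$ be the innermost one, and let $\mathrm{int}(\ell)$ denote its interior, the bounded component of $\C\setminus\im(\ell)$. We then want $U=\mathrm{int}(\ell)$. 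For this, we check that no other point of $\Expl$ lies in $\mathrm{int}(\ell)$. The points of $\eta$ are excluded by the choice of $\ell$ as a separating loop. A loop $\ell'\in\Gamma(\eta)$ lying inside $\ell$ must meet $\eta$, which is impossible without meeting $\ell$ unless a piece of $\eta$ is itself inside $\ell$. Such a piece of $\eta$ inside $\ell$ is exactly the genuine subcase, and it is handled the same way as the next case. Once $\mathrm{int}(\ell)$ is free of $\Expl$, Jordan's theorem for the simple loop $\ell$ concludes.

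\emph{General case.} Consider $D\setminus\eta$. Since $\eta$ is a finite union of $C^1$ arcs with finitely many crossings, each joining boundary points, $D\setminus\eta$ has finitely many components, each bounded by finitely many $C^1$ arcs of $\eta$ and of $\p D$. Inside such a cell $Q$, the relevant loops are those of $\Gamma(\eta)$ that meet $\p Q\cap\eta$. A loop meets $\eta$ transversally because there is no tangency, so it crosses $\eta$ and cuts $Q$ through excursions, namely arcs of the loop inside $Q$ with both endpoints on $\p Q$. Each excursion is a crosscut of $Q$. The component $U$ is then a component of $Q$ minus a family of pairwise disjoint crosscuts, together with possible sub-loops inside as in the previous case. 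We show that only finitely many crosscuts lie on $\p U$: crosscuts of small diameter accumulate only onto $\eta$ by local finiteness, and $U$ is on one side of each of them. In addition, $\p U$ consists of arcs of $\p Q$ alternating with crosscuts. Granting this, $\p U$ is a concatenation of finitely many simple arcs meeting only at their endpoints, hence a Jordan curve, and $U$ is its interior.

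The main obstacle is the finiteness of the excursions bounding $U$, together with the exclusion of degenerate accumulations. Infinitely many tiny loops hit $\eta$, and a priori an infinite sequence of them could lie along $\p U$, making $\p U$ a non-locally-connected set. Our first attempt would be a topological argument: a component bounded by infinitely many crosscuts whose diameters tend to $0$ is still a Jordan domain, provided its boundary is locally connected and has no cut points. Local connectivity follows because the crosscuts have diameters tending to $0$ and converge onto the smooth arc $\eta$; this is the setting of the Kuratowski-type criterion, in the spirit of the reference \cite{kuratowski1968topology2} mentioned in the acknowledgments. The absence of cut points comes from disjointness and simplicity of the loops together with non-tangency, the latter ensuring each excursion endpoint is a genuine transversal crossing. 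We would then conclude by the converse of the Jordan theorem (Schoenflies/Torhorst): a bounded simply connected domain whose boundary is a locally connected continuum without cut points is a Jordan domain.
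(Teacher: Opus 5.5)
Your crosscut decomposition relies on a picture of how loops meet $\eta$ that the hypotheses do not give. Non-tangency in the lemma only says that both components of $D\setminus\im(\ell)$ meet $\eta$. It gives no transversality. A simple loop (a $\cle_3$ loop in particular) can meet $\eta$ in an infinite, even uncountable, set and make infinitely many excursions into a single cell $Q$, accumulating on $\eta$. Add the infinitely many small loops of $\Gamma(\eta)$, and the boundary of a component $U$ typically contains infinitely many crosscuts, while $\p U\cap\eta$ can be totally disconnected. So your claims that only finitely many crosscuts lie on $\p U$, and that $\p U$ alternates between arcs of $\p Q$ and crosscuts, are false. The conclusion via a finite concatenation of arcs is therefore unavailable. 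Your case $d>0$ is in fact vacuous. Non-tangency puts points of $\eta$ inside every $\ell\in\Gamma(\eta)$, so no loop of $\Gamma(\eta)$ separates $U$ from $\eta$, and $U=\mathrm{int}(\ell)$ never holds. Also, no loop of $\Gamma(\eta)$ lies inside a cell, since each one meets $\eta$.

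The fallback in your last paragraph uses a correct criterion: a bounded simply connected domain whose boundary is a locally connected continuum without cut points is a Jordan domain. But you apply it to $\p U$ and only assert its hypotheses, and for $\p U$ these carry essentially the whole difficulty. The null-sequence argument for local connectivity needs a locally connected continuum to which the small crosscuts are attached, and $\p U\cap(\eta\cup\p D)$ need not even be connected. Likewise, ruling out cut points of $\p U$ requires knowing how $\p U$ is assembled, which is what you are trying to prove.

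The paper never describes $\p U$. It checks the two properties on $K:=(\overline\C\setminus D)\cup\Expl(\eta)$, where they are easy:
\begin{itemize}
\item $K$ is a continuum.
\item $K$ is locally connected by property S. Only finitely many loops have diameter at least $\eps/3$, and each smaller loop meets $\eta$, so it is absorbed into one of finitely many small connected sets covering $\eta$ and the big loops.
\item $K$ has no cut points. Each curve of $\eta$ minus a point stays attached to $\p D$. Each $\ell\in\Gamma(\eta)$ meets $\eta$ in at least two points, because non-tangency puts some $\eta_i(t)$ inside $\ell$ while $\eta_i(0),\eta_i(1)\in\p D$ lie outside. Hence $\im(\ell)\setminus\{x\}$ remains attached to $\eta\setminus\{x\}$.
\end{itemize}
Kuratowski's theorem (in the sphere, the complementary domains of a locally connected continuum without cut points are Jordan domains) then transfers these properties from $K$ to the components for you.
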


\begin{proof}We work in the topology of the sphere $\overline\C$.
Set $K:=\big(\overline{\C}\setminus D\big)\cup\Expl(\eta)$.
We prove that $K$ is a connected compact space which is locally connected and without cut points. (Recall that $x$ is a \emph{cut point} of $M$ if $M\setminus{x}$ is disconnected). Theorem 4 in \cite{kuratowski1968topology2} (see Chapter X, p.~512) then implies that the connected components of $\overline\C\setminus K=D\setminus\Expl(\Gamma,\eta)$ are indeed Jordan domains.

Compactness follows from the fact that $\Expl(\eta)$ is closed in $\overline D$ (see Lemma \ref{l. intersection with closed set is measurable}) and connectedness from the fact that $\eta$, and each loop $\ell\in\Gamma$ are obviously connected.

We next prove that $K$ is locally connected. We use the standard characterization by property~$S$: a compact metric space is locally connected if, for every $\eps>0$, it admits a finite cover by connected sets of diameter less than $\eps$.

Fix $\eps>0$. There are only finitely many loops of diameter at least $\eps/3$, and each of them is locally connected, hence covered by a finite number of connected sets of diameter at most $\eps/3$. Furthermore, one can cover $\eta$ by a finite number of connected sets of diameter at most $\eps/3$ as well. We therefore have a finite collection $(A_i)_i$ of connected spaces, each of which of diameter at most $\eps/3$, which together cover $\eta\cup\bigcup_{\ell}\im(\ell)$ where the union is over $\ell\in\Gamma(\eta)$ that are of diameter at least $\eps/3$. Now let $B_i$ be the union of $A_i$ with all the loops in $\Gamma(\eta)$ of diameter less than $\eps/3$ that it intersects: $\diam(B_i)<\eps$ and $B_i$ is still connected. Furthermore, $\bigcup_iB_i=\Expl(\eta)$. Hence, since $\overline\C\setminus D$ is obviously locally connected, $K$ is so as well.

It remains to show that $K$ has no cut point. Fix $x\in K$. If $x\not\in D$, it is clear that $K\setminus x$ is connected. If $x\in\eta$, since all the endpoints of $\eta$ are outside $D$, $\big(\eta\setminus x\big)\cup \big(\overline \C\setminus D\big)$ is connected, thus $K$ as well. Finally, if $x\in\im(\ell)$ for some $\ell\in\Gamma(\eta)$, $\im(\ell)\setminus x$, and therefore $K$, is still connected. Thus $K$ has no cut points. This concludes the proof.
\end{proof}

\subsection{A property specific to a loop set following the law of a nested $\cle_3$.}

Here we prove a lemma about random $\Gamma\in\caX_D$ that follows the law of a $\cle_3^\nes$, which states that almost surely the $\cle_3$ is not tangent to a given smooth grid. This is crucial in our paper.


\begin{lemma}\label{l. curve not tangent cle_3}
Let $\eta$ be a smooth grid in a Jordan domain $D$, and let $\Gamma\sim\cle_3^\nes$. Almost surely, $\eta$ is not tangent to any of the loops in $\Gamma$ in the sense that, for every loop $\ell\in\Gamma(\eta)$, the two connected components of $D\setminus\im(\ell)$ intersect $\eta$.
\end{lemma}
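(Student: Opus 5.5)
We will show that, almost surely, no loop of $\Gamma$ is tangent to $\eta$. The method is a first-moment arm-exponent count, in the spirit of the proof of Proposition \ref{pr.dim-Expl(eta)}.

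Suppose a loop $\ell\in\Gamma(\eta)$ touches $\eta$ but one complementary component of $D\setminus\im(\ell)$ misses $\eta$. Then $\im(\ell)$ meets $\eta$ while staying locally on one side of it. Since $\Gamma$ is locally finite, we may restrict to loops of diameter at least some fixed $r>0$ and to a compact piece of $\eta$ away from $\p D$ and away from the finitely many intersection points of the grid. Locally $\eta$ is then a $C^1$ graph, and after a smooth change of coordinates we may treat it as a straight segment.

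Fix a small $\eps$ and cover this piece of $\eta$ by $O(\eps^{-1})$ boxes of size $\eps$ centred on $\eta$. On the tangency event, some box $B$ has the following property: a loop of diameter at least $r$ enters $B$, and inside the annulus $A(B;\eps,r)$ the half-plane on the empty side of $\eta$ is crossed by two arms of the loop without crossing $\eta$. In spin language, this is the spin Ising alternating two-arm event in a half-plane from scale $\eps$ to scale $r$. The $+$ and $-$ arms cannot cross the line because, on the discrete level, the interface would otherwise intersect $\eta^a$ on the other side.

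We will pass to the discrete picture via the Skorokhod coupling of Theorem \ref{th.CLE3}. A tangency in the continuum produces, for small $a$, a discrete configuration with a macroscopic interface coming within $\eps$ of $\eta^a$ while its two sides lie in the half-annulus. This reduction needs some care about discretizing $\eta$, but it can be done with closed-set events that are stable under the loop topology, together with a portmanteau argument. The probability of the half-plane alternating two-arm event is bounded by $C(\eps/r)^{\alpha^+_2}$, where $\alpha^+_2$ is the boundary two-arm exponent for the Ising interfaces, and this bound must hold uniformly in the domain. The needed uniformity comes from the mixing/RSW arguments of \cite{duminil2011connection}, which allow us to replace the domain boundary conditions by the half-plane ones up to constants; the smoothness of $\eta$ is used here so that the half-annuli look like genuine half-planes at all scales.

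The union bound then gives a total probability of at most $O(\eps^{-1})(\eps/r)^{\alpha^+_2}$. This tends to $0$ as $\eps\to0$ provided $\alpha^+_2>1$, and letting $r\to0$ along a sequence finishes the proof.

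The main obstacle is exactly this exponent. A tangency point of an $\sle_3$-type curve to a line is a boundary two-arm point, and for $\cle_\kappa$ the half-plane two-arm exponent is $\alpha^+_2=(8-\kappa)/\kappa\cdot$(appropriately normalized), which equals $5/3$ for $\kappa=3$ and so exceeds $1$. If this exponent is not available directly, my first attempt would be to compute it in the continuum: touching a line without crossing corresponds to an $\sle_3$ hitting the boundary, with boundary exponent $8/\kappa-1=5/3>1$. This can be proved via the one-arm estimate $\P(\sle_\kappa \text{ hits } B(x,\eps))\asymp\eps^{8/\kappa-1}$ applied in the domain slit along the other part of $\eta$, using the conformal Markov property of the $\cle_3$ exploration. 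The delicate part is justifying this inside a loop ensemble with many loops, which is where the mixing argument is genuinely needed.
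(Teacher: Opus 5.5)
Your skeleton matches the paper's: a union bound over $O(\eps^{-1})$ boxes along $\eta$, a discrete arm event obtained from Theorem~\ref{th.CLE3} with some margin, and an exponent strictly larger than $1$. The gap is in the one estimate that actually carries the proof.

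Near a tangency point, the event you obtain is a \emph{bulk} event. On one side of $\eta$, the half-annulus from scale $\eps$ to scale $r$ is crossed by the two strands of the loop, i.e.\ by three alternating spin arms $+/-/+$. Meanwhile $\eta$ itself carries no boundary condition at all. The exponents of \cite{wu2018alternating} are for half-planes with \emph{prescribed} boundary conditions on the line. The value $5/3=8/\kappa-1$ you quote is Wu's $\alpha_1^+$, the exponent of a single $-$ arm with $\oplus\oplus$ boundary conditions; his half-plane two-arm exponent $\alpha_2^+$ equals $3$ at $\kappa=3$, not $5/3$. Neither is the exponent of your event, which, as the paper notes, does not appear in the literature (it should be $2$).

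Since $+/-/+$ is not a monotone event, you cannot compare it directly with any boundary condition on $\eta$. The mixing argument of \cite{duminil2011connection} does not do this for you. The problem is the missing boundary condition on $\eta$, not the far boundary of $D$, and that mixing argument only gives inequalities in one direction, for suitable monotone events. So ``replacing boundary conditions up to constants'' is not available as a general tool.

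The missing idea is the reduction the paper performs. Explore the spins from $\eta$ into the relevant side until the first $+$ crossing is found. This produces a genuine $+$ boundary piece and leaves a \emph{decreasing} event: a $-$ one-arm event in the unexplored region. The chain of comparisons is then:
\begin{itemize}
\item the probability of this $-$ arm only increases if the remaining mixed boundary is replaced by $-$ arcs;
\item the monotone Markovian coupling together with FK--Ising RSW, which forces $+$ crossings shielding the region from those arcs, gives $\P^{\xi^{+/-}}[A^-]\le C\,\P^{\xi^{+}}[A^-]$ (the reverse inequality would not be available);
\item the region surrounded by $+$ spins is dominated by the half-plane with $+$ boundary conditions, where $\alpha_1^+=5/3>1$ applies.
\end{itemize}

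Your continuum fallback does not fill the gap either. The Markov property of $\cle_3$ with respect to a deterministic set only describes the loops in the complement of that set \emph{together with all loops hitting it}. So if you slit along a piece of $\eta$, one of two things happens. Either the loop you are tracking is removed with the slit, or the tangency point lies in a gap of the slit, i.e.\ in the interior of the remaining domain or near slit tips. In neither case is the tangency point a smooth boundary point, so the estimate $\eps^{8/\kappa-1}$ for $\sle_\kappa$ cannot be applied as stated. As you acknowledge, this ``delicate part'' is left open.
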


\begin{proof}

There are several possible ways to address this property. One may, for example, rely on the Brownian loop soup construction from \cite{sheffield2012conformal}. Another possible route would be to rely on $\sle_\kappa(\rho)$-type constructions of the $\cle_3^\nes$ but the fact that $\kappa=3$ and $\rho=\kappa-6=-3<-2$ would induce some additional technical difficulties here.

Note that this is the approach used  in
Lemma C.3 in \cite{lopez2026ising} for a similar statement in the case of a $\cle_4^\nes$ ($\kappa=4$).

To prove this property when $\kappa=3$, we will take advantage of the following three facts:
\bnum
\item  $\cle_3^\nes$ is the scaling limit of the exploration of discrete $+/-$ interfaces of the spin Ising model.
\item A straightforward monotonicity argument between a $+$ boundary condition and a ``plane'' boundary condition (as pictured on the right-hand side of Figure \ref{f.StochDom}).
\item The computation of the relevant half-plane critical exponent in $+$ boundary conditions by Hao Wu in \cite{wu2018alternating} and the fact that this exponent $\alpha_1^+=\frac 5 3$ is strictly larger than 1. 
\enum 

Let us assume that $\eta$ is a line $\ell$, the case of a smooth line being similar, and the result for a smooth grid is a consequence of the result for smooth lines. Let $Q_\eps$ be an $\eps$ by $\eps$ square along that line in the bulk of the domain $D$. We will prove below that the probability that a $\cle_3^\nes$ loop of diameter greater than $t>0$ intersects the square $Q_\eps$ but does not cross the line $\ell$ is bounded from above by $O(1) (\eps/t)^{5/3}$. By fixing the magnitude $t$ of the loops we consider, letting $\eps\to0$ and using a covering of $\ell$ in the bulk by at most $\eps^{-1}$ squares, this proves the desired result. 

\begin{figure}[!htp]
\begin{center}
\includegraphics[width=\textwidth]{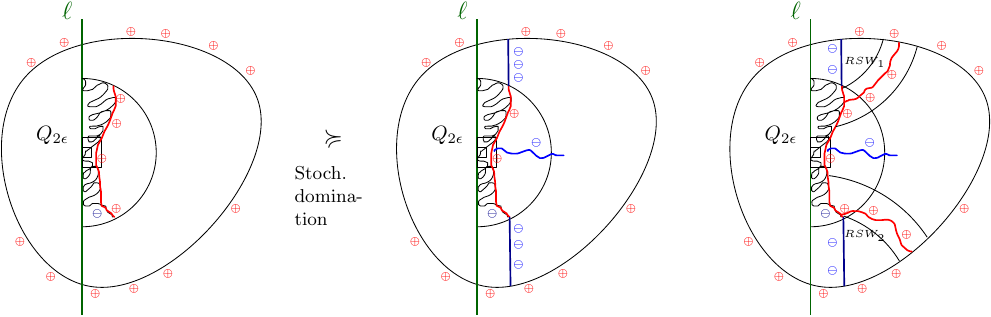}
\end{center}
\caption{Idea of the proof of Lemma \ref{l. curve not tangent cle_3}}\label{f.StochDom}
\end{figure}

\begin{figure}[!htp]
\begin{center}
\includegraphics[width=0.8\textwidth]{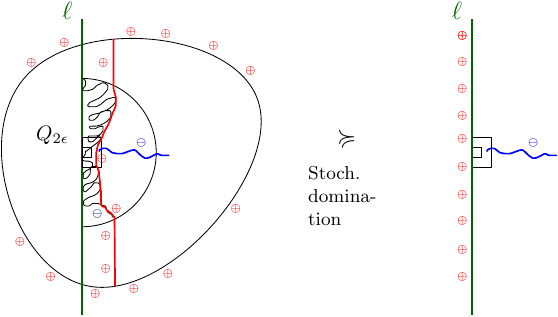}
\end{center}
\caption{}\label{f.StochDom2}
\end{figure}

By the convergence of the discrete $\cle$ exploration process towards its continuum limit (see Subsection \ref{ss.CLE3}), 
the probability we are looking for is less than the limsup as the mesh goes to zero of having a three-arm event $+/-/+$ from a square of side-length $2\eps$ to distance $t/2$. (N.B. We are considering a larger square than $Q_\eps$ to allow some margin with respect to the topology of convergence). 
The exponent driving this event did not --- to our knowledge --- appear in the literature (it should be equal to 2 by applying a celebrated argument by Michael Aizenman which is based on RSW in the setting of critical percolation). We only need the above upper bound which is good enough to conclude since $\frac 5 3 > 1$. 

We, now, argue as follows: in the bulk of the domain $D$, start exploring the spin field from the line $\ell$ to the right of the line $\ell$ and inside the ball centred at $Q_\eps$ and with radius $t/2$ (we also assume without loss of generality that  $t < \dist(Q_\eps, \p D)$). Stop the exploration the first time one sees a line of $+$ crossing the box $Q_{2\eps}$.
See the left picture on  Figure \ref{f.StochDom}.
This exploration process leads us to a complicated mixed boundary conditions. It is easy to see that it stochastically dominates the domain with $-$ arcs as pictured on the RHS of Figure \ref{f.StochDom}. As such the probability of having a minus arm in the explored left region is less than or equal to the one in the right region. Also a $-$ arm to distance $t/2$ is obviously less than having a $-$ arm up to $t/4$ (this small change is important in order to have enough room to apply RSW).

Now we wish to justify that the probability to have a $-$ one-arm up to distance $t/4$ in these mixed $+/-$ boundary conditions is less than a constant times the probability to have such a $-$ arm up to $t/4$ in all $+$ boundary conditions as pictured in the middle of Figure \ref{f.StochDom}.
If we denote by $\xi^{+/-}$ and $\xi^+$ these two boundary conditions and by $A^-(t/4)$ the $-$ one-arm event, we are thus claiming that there exists $C>1$ such that 
\begin{align}\label{e.nice}
\FK{}{\xi^{+/-}}{A^-(t/4)} \leq C \, \FK{}{\xi^{+}}{A^-(t/4)}\,.
\end{align}
To see why this holds, we follow the same argument as in  \cite{duminil2011connection} (or rather the updated version available on H. Duminil-Copin's website). Thanks to RSW for FK-Ising, it is easy to notice that under $\xi^{+/-}$ boundary conditions, there is a positive probability to have a $+$ crossing path in rectangles $RSW_1$ and $RSW_2$ as pictured in the RHS of Figure \ref{f.StochDom}. Let $B^+$ be this RSW event, we thus have 
\begin{align*}\label{}
\FK{}{\xi^{+/-}}{B^+} \geq c >0\,.
\end{align*}
As in  \cite{duminil2011connection}, let $P$ be the natural monotone Markovian coupling of $(\sigma^{\xi^{+/-}}, \sigma^{\xi^+})$ so that $\sigma^{\xi^{+/-}} \leq \sigma^{\xi^+}$ and also so that if one finds a $+$ path for $\sigma^{\xi^{+/-}}$ which realizes $B^+$, this must also be $+$ path for $\sigma^{\xi^{+}}$ and configurations inside do match. Following exactly \cite{duminil2011connection} we thus have 
\begin{align*}\label{}
\FK{}{\xi^{+}}{A^-(t/4)} & = P( \sigma^{\xi^{+}} \in A^-(t/4) )  \\
& \geq P( \{\sigma^{\xi^{+}} \in A^-(t/4) \} \cap  \{ \sigma^{\xi^{+/-}} \in B^+ \}) \\
&   = P( \{\sigma^{\xi^{+/-}} \in A^-(t/4)  \}\cap  \{ \sigma^{\xi^{+/-}} \in B^+ \})  \\
& =  \FK{}{\xi^{+/-}}{B^+  | A^-(t/4)}  \FK{}{\xi^{+/-}}{A^-(t/4)} \\
& \geq \tilde c  \,  \FK{}{\xi^{+/-}}{A^-(t/4)} 
\end{align*}
since by FK-Ising RSW, not only $\FK{}{\xi^{+/-}}{B^+} \geq c >0$ but also $\FK{}{\xi^{+/-}}{B^+  | A^-(t/4)} \geq \tilde c > 0$, this ends the proof of the bound \eqref{e.nice}. (N.B. Note that this proof would not work in the other direction for a generic event $A$ measurable w.r.t $t/4$ ball, namely $\FK{}{\xi^{+/-}}{A} \leq C \, \FK{}{\xi^{+}}{A}$ would work for any such event, but $\FK{}{\xi^{+}}{A} \leq C \, \FK{}{\xi^{+/-}}{A}$ would be much harder to prove as we cannot bring a $-$ path to touch a $+$ path with an FK-Ising RSW. Fortunately, the above mixing argument goes in the right direction for us).

We are thus left with upper bounding $\FK{}{\xi^{+}}{A^-(t/4)}$, i.e. with finding a $-$ cluster inside the unexplored region up to distance $\frac t 4$. Notice that the field of spins in the unexplored region surrounded by plus spins stochastically dominates the exploration of spins in the infinite half-plane with plus boundary conditions. See Figure \ref{f.StochDom2}. As such, the probability of finding a minus path in our unexplored region is smaller than in the infinite half-plane with plus boundary conditions. The latter one is one of the many critical exponents computed by Hao Wu in \cite{wu2018alternating}, namely this is the case $j=1$ of the $\alpha_{2j-1}^+, j\geq 1$ critical exponents with $\oplus\oplus$ boundary conditions (notations from \cite{wu2018alternating}) and is equal to $\frac 5 3 $.  This ends the proof.

\end{proof}

\section{An upper bound on the Green function}
\label{ap.Green-bounds-proof-finiteness}

In this appendix, we give a --- possibly classical --- upper bound on $G_\Omega$, the Green function on $\Omega$ with Dirichlet boundary conditions. We then apply this bound to prove the finiteness of two integrals that are needed to show the tightness of our sequence of random fields $(\Phi^a)$, see Lemmas \ref{l.int-finite-smoothBdry} and \ref{l.int-finite-Green-Dirichlet} below. Furthermore, we prove another integrability result which is used in many occurences in the core of the paper.

\begin{lemma}
Let $\Omega$ be a Jordan domain of $\R^2$. There exists a constant $C>0$ such that \begin{align}\label{eq.bounds-Green}
G_\Omega(x,y)\le C\min\Big(\frac{d(x)}{|x-y|},1\Big)^{1/2}\cdot\min\Big(\frac{d(y)}{|x-y|},1\Big)^{1/2}\cdot\Big(\log(\frac1{|x-y|})+C\Big)
\end{align}
where $d:\Omega\to\mathbb R_{>0}$ is a shorthand for $\dist(\cdot,\p\Omega)$.
\end{lemma}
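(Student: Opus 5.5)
We want a bound on the Dirichlet Green function of a Jordan domain $\Omega$ at points $x,y$ with $r:=|x-y|$, $d(x)=\dist(x,\p\Omega)$. The plan is to combine the trivial logarithmic bound with a boundary-decay estimate coming from the Beurling estimate. Since $\Omega$ is bounded, $G_\Omega(x,y)\le G_{B(x,R)}(x,y)=\frac1{2\pi}\log(R/r)$ with $R=\diam\Omega$, which already gives $G_\Omega(x,y)\le \frac1{2\pi}\log(1/r)+C$. So it suffices to show that, when $d(x)<r$ or $d(y)<r$, we gain the factors $(d(x)/r)^{1/2}$ and $(d(y)/r)^{1/2}$.

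\emph{Decay near the boundary.} Let $w\mapsto G_\Omega(w,y)$, which is harmonic in $\Omega\setminus\{y\}$ and vanishes on $\p\Omega$. Suppose $d(x)<r/4$. On the disk $B(x,r/2)$, which avoids $y$, the function $G_\Omega(\cdot,y)$ is bounded on $\p B(x,r/2)\cap\Omega$ by $M(y,r):=\sup_{|w-y|\ge r/2}G_\Omega(w,y)\le \frac1{2\pi}\log(2R/r)$. By the maximum principle, $G_\Omega(x,y)\le M\cdot \P^x(\text{BM exits } B(x,r/2) \text{ before hitting } \p\Omega)$. Since $\p\Omega$ is a connected set (a Jordan curve) joining a point at distance $d(x)$ from $x$ to outside $B(x,r/2)$ (as $\diam\p\Omega\ge$ something comparable, assuming $r$ smaller than $\diam\Omega$; otherwise the bound is trivial), the Beurling estimate gives this probability $\le C(d(x)/r)^{1/2}$. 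This yields the one-sided factor.

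\emph{Two-sided factor.} To get both factors simultaneously, we iterate: if both $d(x),d(y)<r/4$, apply the argument above at $x$, but bound $G_\Omega(w,y)$ for $w\in\p B(x,r/2)$ using the same Beurling argument at $y$: for such $w$, $|w-y|\ge r/2$, and $G_\Omega(w,y)=G_\Omega(y,w)\le C(d(y)/r)^{1/2}\,\sup_{|u-w|\ge r/4}G_\Omega(u,w)\le C(d(y)/r)^{1/2}(\log(1/r)+C)$ by symmetry of the Green function and the one-sided bound (with $r/4$ in place of $r/2$). Combining, $G_\Omega(x,y)\le C (d(x)/r)^{1/2}(d(y)/r)^{1/2}(\log(1/r)+C)$. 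The mixed cases (only one of $d(x),d(y)$ small) and the case both $\ge r/4$ are covered by the one-sided and trivial bounds, adjusting constants since $\min(\cdot,1)\ge 1/4$ there.

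The main point requiring care is the Beurling step: one needs $\p\Omega$ to connect the $d(x)$-neighbourhood of $x$ to $\p B(x,r/2)$, which holds since $\p\Omega$ is connected and not contained in $B(x,r/2)$ whenever $r/2<\diam(\p\Omega)/2$; for larger $r$ (bounded below) the claimed inequality follows from the trivial bound together with a standard estimate $G_\Omega(x,y)\le C\,d(x)^{1/2}d(y)^{1/2}$ obtained identically with a fixed radius. Everything else is the maximum principle and symmetry of $G_\Omega$.
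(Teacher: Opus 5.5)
Your proposal is correct and follows essentially the paper's route: the logarithmic bound from monotonicity of $G$ in the domain, then a maximum-principle (harmonic-measure) representation on small disks around $x$ and $y$ combined with Beurling's estimate. The paper treats both points at once, via a double harmonic-measure integral over $\partial U_x\times\partial U_y$ with radii $|x-y|/3$, whereas you iterate the one-point bound using the symmetry of $G_\Omega$; this amounts to the same argument. Your final fallback for large $r$ is not needed: since $y\in\Omega$ with $|x-y|=r$, the ray from $y$ pointing away from $x$ meets $\partial\Omega$ at distance more than $r$ from $x$, so the Beurling step applies for every $r$.
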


\begin{proof}
Let us first prove this with the two minima replaced by 1. For $x,y\in\Omega$, $G_\Omega(x,y)\le G_B(x,y)$ where $B$ is the ball centered at $x$ and of radius $\diam(\Omega)$: indeed, $\Omega\subset B$ and the Green function is increasing in the domain. Furthermore, $G_B(x,y)=\log(\diam(\Omega)/|x-y|)$ by conformal invariance and the classical formula $G_\D(0,z)=-\log|z|$. Whence the inequality holds with the minima replaced by 1.
\par Now, let $x,y$ be points and $r=|x-y|/3$; define $U_x=\{z\in\Omega\md |x-z|<r \}$ and $U_y=\{z\in\Omega\md |y-z|<r \}$. By harmonicity of the Green function in both coordinates (and the fact that it is well defined on $U_x\times U_y$ since both sets are disjoint) \begin{align*}
G_\Omega(x,y) = \iint_{\p U_x \times\p U_y} G_\Omega(u,v)\nu_{x,\p U_x}(du)\nu_{y,\p U_y}(dv)
\end{align*}
where $\nu_{x,\p U_x}$ is the harmonic measure on $\p U_x$ viewed from $x$ (and analogously for $\nu_{y,\p U_y}$).
\par We know $G_\Omega(u,v)\le 1_{u,v\in\Omega}(\log(1/r)+O(1))$ by what we proved earlier and by the fact that $G_\Omega$ is zero on the boundary $\p \Omega$. (Recall that $\Omega$ being a Jordan domain, it is in particular regular so that $G_\Omega$ is indeed harmonic outside of the diagonal and 0 on the boundary.) 

\par Therefore, \begin{align*}
G_\Omega(x,y)\le (\log(1/r)+O(1))\int_{\p U_x}1_{u\in\Omega}\nu_{x,\p U_x}(du)\int_{\p U_y}1_{v\in\Omega}\nu_{y,\p U_y}(dv),
\end{align*} and since Beurling's estimate states precisely that (see e.g. Theorem V.4.1 in \cite{bass1994probabilistic}) \begin{align*}
\int_{\p U_x}1_{u\in\Omega}\nu_{x,\p U_x}(du) = \P^x(B \text{ goes to distance $r$ before exiting }\Omega) \le 2\sqrt{d(x)/r},
\end{align*}
the bound on $G_\Omega$ follows.
\end{proof}

We will use Whitney's decomposition lemma to prove our integrability results, which we recall below. See the book \cite{grafakos2008classical} (Appendix J.1) for a reference and a short proof of this classical fact.

\begin{lemma}[Whitney's decomposition lemma]\label{l.Whitney}
Let $D$ be any domain in $\R^2$. We can write $D=\bigcup Q_i$ where the $Q_i$ are dyadic closed squares such that \bnum
\item any two of them have disjoint interiors,
\item $\dist(Q_i,\p D)\asymp \ell_i$ uniformly on $i$, where $\ell_i$ is the side-length of the square $Q_i$,
\item any two squares $Q_i,Q_j$ touching each other satisfy $\ell_j/4\le\ell_i\le4\ell_j$. Moreover, any square has at most $12^2$ squares touching it.
\enum
\end{lemma}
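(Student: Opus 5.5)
This is the Whitney decomposition lemma, a classical fact; the paper just cites \cite{grafakos2008classical}. The plan is the standard dyadic stopping construction.

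Consider the closed dyadic squares $Q$ of side-length $\ell(Q)=2^{-k}$, $k\in\Z$, and call $Q$ \emph{admissible} if $Q\subset D$ and
\begin{align*}
\diam(Q)\le \dist(Q,\p D)\le 4\diam(Q).
\end{align*}
Let $\{Q_i\}$ be the family of \emph{maximal} admissible squares, i.e. those not strictly contained in another admissible square. Nestedness of dyadic squares gives property 1 at once: two dyadic squares either have disjoint interiors or one contains the other. Property 2 holds by construction, with constants $\sqrt2$ and $4\sqrt2$.

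For the covering, fix $x\in D$ and set $d=\dist(x,\p D)>0$. I will choose $k$ with $\sqrt2\,2^{-k}\in(d/5,\,2d/5]$ and let $Q$ be the dyadic square of side $2^{-k}$ containing $x$. Then $\dist(Q,\p D)\ge d-\diam Q\ge 3d/5\ge\diam Q$, and $\dist(Q,\p D)\le d<5\diam Q$. To land exactly in the window $[1,4]$, I will adjust the constants, for instance by using $\diam Q\le\dist\le 5\diam Q$, or by choosing the dyadic scale more carefully. Any such admissible square sits inside a maximal one, because admissible squares containing a fixed point have bounded size (their side is $\lesssim d$). Hence $D=\bigcup Q_i$.

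Property 3 is where the constants matter. If $Q_i$ and $Q_j$ touch, then
\begin{align*}
\diam Q_i\le \dist(Q_i,\p D)\le \dist(Q_j,\p D)+\diam Q_j\le 5\diam Q_j,
\end{align*}
so $\ell_i\le 5\ell_j$. Since both are powers of $2$, this gives $\ell_i\le 4\ell_j$, and symmetrically. Finally, a square of side $\ell$ can be touched only by squares of side at least $\ell/4$, and all of these lie in a $3\ell\times3\ell$ block. Counting $\ell/4$-squares on the boundary layer of that block gives at most $12^2$ neighbours.

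The only delicate point is keeping one consistent pair of constants, so that the covering step and the ratio-4 step both go through. If the stated numbers fail to match exactly, I will cite \cite{grafakos2008classical}, Appendix J.1, since only the qualitative comparability $\dist(Q_i,\p D)\asymp\ell_i$ is used later.
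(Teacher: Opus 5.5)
Your argument is correct and is the standard maximal-dyadic-square proof of \cite{grafakos2008classical}, Appendix J.1, which is what the paper cites. The constants you were unsure about do match: choosing $\sqrt2\,2^{-k}\in[d/4,d/2)$ gives $\diam Q<d/2<d-\diam Q\le\dist(Q,\p D)\le d\le 4\diam Q$, and even with your window $[1,5]$ the touching estimate becomes $\ell_i\le 6\ell_j<8\ell_j$, which still forces $\ell_i\le 4\ell_j$.

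In the neighbour count, the touching squares can have side up to $4\ell_i$, so they need not lie in the concentric $3\ell_i$-square. Instead, pick in each one a dyadic subsquare of side $\ell_i/4$ containing a contact point. These subsquares are distinct and lie in that $3\ell_i$-square, which contains exactly $12^2$ such squares.
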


Let us show a useful and simple application of this lemma:

\begin{lemma}\label{l.dist^-1/8 int}
Let $D$ be a bounded domain whose boundary is of upper Minkowski dimension $\dimM(\p D)<15/8$. Then the map $z\mapsto\dist(z,\p D)^{-1/8}$ is integrable over $D$, i.e. $\int_D\dist(z,\p D)^{-1/8} dz<\infty$.
\end{lemma}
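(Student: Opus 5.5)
We decompose $D$ into Whitney squares and compare the integral with a sum over dyadic scales, using the Minkowski dimension bound to control how many squares live at each scale. Fix $\beta$ with $\dimM(\p D)<\beta<15/8$. By definition of the upper Minkowski dimension, there is a constant $C$ such that for every $r\in(0,1]$ the set $\p D$ can be covered by at most $C r^{-\beta}$ balls of radius $r$. Equivalently, at most $C' r^{-\beta}$ dyadic squares of side $r$ lie within distance $O(r)$ of $\p D$.

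Apply Lemma \ref{l.Whitney} to obtain $D=\bigcup_i Q_i$, and group the squares by side-length $\ell_i=2^{-k}$. Since $D$ is bounded, only finitely many large scales occur, and those contribute a finite amount. On each $Q_i$ we have $\dist(z,\p D)\asymp \ell_i$ uniformly, by point 2 of the lemma. Hence
\begin{align*}
\int_D \dist(z,\p D)^{-1/8}\,dz \lesssim \sum_{k} N_k\, 2^{-2k}\, 2^{k/8},
\end{align*}
where $N_k$ is the number of Whitney squares of side $2^{-k}$.

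The key step is the bound $N_k\lesssim 2^{k\beta}$. Each square $Q_i$ of side $2^{-k}$ lies within distance $c\,2^{-k}$ of $\p D$. Pick a point $p_i\in\p D$ at distance $\asymp 2^{-k}$ from $Q_i$, and cover $\p D$ by $\lesssim 2^{k\beta}$ balls of radius $2^{-k}$. The ball containing $p_i$ lies within distance $O(2^{-k})$ of $Q_i$. Whitney squares have disjoint interiors, so a ball of radius $2^{-k}$ has only boundedly many squares of side $2^{-k}$ within distance $O(2^{-k})$ of it. Thus each ball is charged by $O(1)$ squares, and $N_k\lesssim 2^{k\beta}$.

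Plugging this in gives $\sum_k 2^{k(\beta-15/8)}<\infty$ since $\beta<15/8$, which proves the lemma. The only point that needs care is this counting step, and it is routine. The same argument localized to a compact $K$ yields the variant used in Remark \ref{rk.bdry-smooth on a subset}: when the integrand is multiplied by a function vanishing near part of the boundary, only the Minkowski dimension of the relevant portion of $\p D$ matters.
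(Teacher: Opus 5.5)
Your proof is correct and follows essentially the same route as the paper: a Whitney decomposition, the estimate $\int_{Q_i}\dist(z,\p D)^{-1/8}dz\asymp \ell_i^{15/8}$, and the count $\#\{i:\ell_i=2^{-k}\}=O(2^{k\beta})$ for some $\beta<15/8$, which makes the dyadic sum geometric. The only difference is that the paper asserts this count directly from the Minkowski assumption, whereas you justify it with the covering-and-area argument.
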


\begin{proof}
Let $(Q_i)$ be the Whitney decomposition for $D$. One can see that \begin{align*}
\qquad\qquad\qquad\dist(z,\p D)^{-1/8}\asymp \ell_i^{-1/8}\quad\quad\text{ over }Q_i
\end{align*}uniformly over the squares $Q_i$, by $2)$ in Lemma \ref{l.Whitney}. Thus, $\int_{Q_i}\dist(z,\p D)^{-1/8}dz\asymp\ell_i^{2-1/8}$ from which we derive $\int_D\dist(z,\p D)^{-1/8} dz=O\big(\sum_i\ell_i^{2-1/8}\big)$.

By assumption on $\p D$, there is $\alpha<15/8$ such that $\#\{i\md \ell_i=2^{-n}\}=O(2^{n\alpha})$. Therefore the sum $\sum_i\ell_i^{2-1/8}$ can be upper-bounded by a finite constant time $\sum_n2^{n\alpha}2^{-n(2-1/8)}=\sum_n2^{-n(15/8-\alpha)}<\infty$. This concludes the proof: the integral $\int_D\dist(z,\p D)^{-1/8} dz$ is indeed finite.
\end{proof}

\begin{remark}\label{rk.bdry-smooth on a subset}
Note that if we are integrating $z\mapsto f(z)\dist(z,\p D)^{-1/8}$ where $f$ is a continuous function which is null on a neighbourhood of a subset $U$ of $\p D$, we may only assume that $\dimM(\p D\setminus U)<15/8$ since on the many squares $Q_i$ close to $U\subset\p D$, $\int_{Q_i}f(z)\dist(z,\p D)^{-1/8}dz=0$.
\end{remark}

Before stating our two crucial lemmas for the tightness of the discrete magnetization fields, let us recall the notations we use. If $D\subset\Omega$ are two bounded domains, let $I_\Omega(D)$ be given by \begin{align*}
I_\Omega(D):=\iint_{D\times D}G_\Omega(u,v)\dist(u,\p D)^{-1/8}\dist(v,\p D)^{-1/8}dudv \,.
\end{align*}
(This is the integral of Equation \eqref{e.I_Om} appearing in Section \ref{s.RoughBoundary}.) Now, we have the following lemmas:

\begin{lemma}\label{l.int-finite-Green-Dirichlet}
Let $D$ be any Jordan domain. Then the integral $I_D(D)$ defined above is finite. 
\end{lemma}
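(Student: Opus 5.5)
We will prove the finiteness of $I_D(D)$ by combining the Green function bound \eqref{eq.bounds-Green} (applied with $\Omega=D$) with a dyadic decomposition adapted to the distances $d(u)=\dist(u,\p D)$, $d(v)=\dist(v,\p D)$ and $r=|u-v|$. The point of the bound \eqref{eq.bounds-Green} is that the Beurling factors $\min(d(u)/r,1)^{1/2}$ and $\min(d(v)/r,1)^{1/2}$ compensate the boundary singularities $d(u)^{-1/8}d(v)^{-1/8}$. Crucially, they do so without any assumption on the Minkowski dimension of $\p D$, since $-1/8+1/2>0$.

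By symmetry we restrict to the region where $d(u)\le d(v)$. We split this region into two parts.

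\emph{Case $r\ge d(v)$.} Here the integrand is bounded by
\[
C\, d(u)^{3/8}d(v)^{3/8}\, r^{-1}\bigl(\log(1/r)+C\bigr).
\]
Using $d(u),d(v)\le r$, this is at most $C r^{-1/4}(\log(1/r)+C)$, which is integrable on $D\times D$ since $D$ is bounded.

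\emph{Case $r<d(v)$.} Here the $v$-factor is $1$, and we also have $d(u)\ge d(v)-r>0$. We subdivide:
\bnum[(a)]
\item If $r<d(v)/2$, then $d(u)\ge d(v)/2$, so $d(u)\asymp d(v)$. The integrand is then at most $C d(v)^{-1/4}(\log(1/r)+C)$. Integrating first in $u$ over the ball $B(v,d(v))$ gives $O\bigl(d(v)^{2-1/4}\log(1/d(v))\bigr)$, which is bounded, and the remaining integral in $v$ over the bounded domain $D$ is finite.
\item If $d(v)/2\le r<d(v)$, we keep the factor $\min(d(u)/r,1)^{1/2}$, giving the bound $C\,d(u)^{3/8}r^{-1/2}d(v)^{-1/8}\log(1/r)\le C\, r^{-1/4}\log(1/r)$, since $d(u)\le r$ or otherwise $d(u)^{-1/8}\le$ comparable to $r^{-1/8}$ after splitting $\min(d(u)/r,1)$ as before. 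This is again integrable.
\enum
In every region the integrand is thus dominated by a function of the form $C|u-v|^{-\beta}\bigl(1+\log(1/|u-v|)\bigr)$ with $\beta<2$, plus a bounded term. The last step is the elementary observation that $\iint_{D\times D}|u-v|^{-1/4}\log(1/|u-v|)\,du\,dv<\infty$.

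The main obstacle is the bookkeeping in the regime where $u$ is very close to $\p D$ but $r$ is comparatively large. There one must verify that the exponent $1/2$ coming from Beurling's estimate beats the $-1/8$ singularity. Since $3/8>0$, we expect this to go through cleanly, though we have only sketched the subcase (b) estimate, and it may need a slightly finer split than the one indicated.
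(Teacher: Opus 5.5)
Your proof is correct, and it takes a different and simpler route than the paper's.

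The paper decomposes $D$ into Whitney squares $Q_i$. Pairs of touching squares are handled with the crude bound $G_D=O(\log(1/|x-y|))$ and the area count $\#\{i:\ell_i=2^{-k}\}=O(2^{2k})$. Non-touching pairs use both Beurling factors, which gives $I(i,j)\lesssim \ell_i^{2+\alpha}\ell_j^{2+\alpha}\log(1/d_{ij})/d_{ij}$ with $\alpha=1/2-1/8$. This is followed by a count of the squares $Q_j$ of each size lying in dyadic shells around each $Q_i$.

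You avoid all of this counting by dominating the integrand pointwise, and your case analysis is sound. In subcase (b), the split $d(u)\le r$ versus $d(u)>r$ is exactly what is needed, so no finer split is required.

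In fact your cases collapse into one line. For all $a,r>0$ one has $\min(a/r,1)^{1/2}a^{-1/8}\le r^{-1/8}$: it equals $a^{3/8}r^{-1/2}\le r^{-1/8}$ if $a\le r$, and $a^{-1/8}<r^{-1/8}$ otherwise. Hence \eqref{eq.bounds-Green} gives directly $G_D(u,v)\,d(u)^{-1/8}d(v)^{-1/8}\le C|u-v|^{-1/4}\bigl(\log(1/|u-v|)+C\bigr)$ on all of $D\times D$. This needs no symmetry reduction and no separate integration in case (a). In that case $d(u),d(v)>r$ anyway, which also justifies your closing claim that every region is dominated by a function of $|u-v|$.

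Your route buys brevity, and a bound on $I_D(D)$ that depends on $D$ only through $\diam(D)$ and the constant of \eqref{eq.bounds-Green}. The paper's Whitney route buys a common framework with Lemma \ref{l.int-finite-smoothBdry}. There $G_\Omega$ does not vanish on $\p D$, so no Beurling factor is available, and counting Whitney squares is precisely how the hypothesis $\overline{\dim}_M(\p D)<15/8$ enters.
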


\begin{lemma}\label{l.int-finite-smoothBdry}
Let $D$ be a Jordan domain whose closure is contained in a larger Jordan domain $\Omega$. Under the assumption that $\overline{\dim}_M(\p D)<15/8$, the integral $I_\Omega(D)$  is finite. ($\overline{\dim}_M(S)$ denotes the upper Minkowski dimension of a set $S$).
\end{lemma}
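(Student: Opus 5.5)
The plan is to establish the finiteness of $I_\Omega(D)$ (Lemma \ref{l.int-finite-smoothBdry}) by a Whitney decomposition of $D$ combined with the crude Green bound $G_\Omega(u,v)\le C(\log(1/|u-v|)+C)$. That crude bound holds for $u,v\in D\subset\Omega$ by the first step of the proof of \eqref{eq.bounds-Green}. Since $\overline D\subset\Omega$, we do not need the boundary decay factors here.

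We then only need
\begin{align*}
\iint_{D\times D}\big(\log\tfrac{1}{|u-v|}+C\big)\,\dist(u,\p D)^{-1/8}\dist(v,\p D)^{-1/8}\,du\,dv<\infty .
\end{align*}
To get this, set $w(z):=\dist(z,\p D)^{-1/8}$ and choose $p>1$ close to $1$ such that $\int_D w^p<\infty$. This is possible because the proof of Lemma \ref{l.dist^-1/8 int} with exponent $p/8$ only requires $\sum_n 2^{n\alpha}2^{-n(2-p/8)}<\infty$, i.e.\ $\alpha<2-p/8$. Since $\alpha<15/8$ strictly, such a $p>1$ exists. This step is the only place where the dimension assumption is used.

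With $w\in L^p(D)$, we apply H\"older or Young's inequality in the form $\iint K(u-v)w(u)w(v)\le \|K\|_{L^r}\|w\|_{L^p}^2$, where $1/r+2/p=2$, on a bounded region. This is valid since $K(x)=\log(1/|x|)+C$ lies in every $L^r_{\mathrm{loc}}$, $r<\infty$, and $D-D$ is bounded. Here $r=p/(2p-2)$ is finite for $p>1$, so the integral is finite.

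The main obstacle is simply ensuring a strictly positive margin in the exponent. If one prefers to avoid Young's inequality, an alternative is to sum over pairs of Whitney squares $Q_i,Q_j$. One bounds the log by $\log(1/\ell_i)$ when the squares are close and by $\log(1/\dist(Q_i,Q_j))$ otherwise, and uses $\int_{Q_i}w\asymp \ell_i^{15/8}$ together with the counting bound $\#\{\ell_i=2^{-n}\}=O(2^{n\alpha})$. The logarithmic loss is absorbed by the geometric gain $2^{-n(15/8-\alpha)}$. The Young route is cleaner, and I would write that one.
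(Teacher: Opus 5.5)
Your proof is correct. It shares the paper's crude bound $G_\Omega(u,v)\lesssim\log(1/|u-v|)+C$ and the Whitney count $\#\{i:\ell_i=2^{-n}\}=O(2^{n\alpha})$ with $\alpha<15/8$. It differs in how the double integral against the logarithmic kernel is controlled.

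The paper stays at the level of pairs of Whitney squares. It bounds the contribution of $Q_i\times Q_j$ by $\ell_i^{15/8}\ell_j^{15/8}\log(1/\ell_i\vee 1/\ell_j)$ and dominates $\log(1/\ell_i\vee1/\ell_j)$ by $\log(1/\ell_i)\log(1/\ell_j)$, up to constants. The double sum then factors as $\big(\sum_i\ell_i^{15/8}\log(1/\ell_i)\big)^2$, and the margin $15/8-\alpha$ absorbs the logarithm.

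You spend the margin differently, on extra integrability of the weight. You show $\dist(\cdot,\p D)^{-1/8}\in L^p(D)$ for some $p\in(1,16-8\alpha)$, by the same Whitney argument run with exponent $p/8$. Young's inequality, with $r=p/(2p-2)$, then pays for the singularity, since $\log$ lies in every $L^r_{\mathrm{loc}}$.

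Your route decouples kernel and weight. It needs no pairwise estimate on Whitney squares, and it works for any kernel in $L^r_{\mathrm{loc}}$ with $r$ large enough depending on the margin. The paper's route is more elementary and keeps the exponent $1/8$ fixed. It also matches the Whitney bookkeeping of the companion Lemma \ref{l.int-finite-Green-Dirichlet}.

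Two details for the write-up. Take $p\le 2$, so that $r\ge 1$. Apply Young's inequality to $\dist(\cdot,\p D)^{-1/8}\mathbf 1_D$ and to $|K|\mathbf 1_{B(0,\diam D)}$, or observe that $\log(\diam\Omega/|x|)\ge 0$ on $D-D$. Either way all functions are nonnegative and lie globally in the required Lebesgue spaces.
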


Let us prove first Lemma \ref{l.int-finite-smoothBdry} since it is less technical.

\begin{proof}[Proof of Lemma \ref{l.int-finite-smoothBdry}: $I_\Omega(D)<\infty$ under the assumption $\overline{\dim}_M(\p D)<15/8$]
Replacing the minima in \eqref{eq.bounds-Green} by $1$, what we need to show is that \begin{align*}
\iint_{D\times D}\max\big(1,\log(1/|x-y|)\big)d(x)^{-1/8}d(y)^{-1/8}<+\infty
\end{align*} where $d$ is the function $\dist(\cdot,\p D)$. 
\par Let $(Q_i)$ be the Whitney decomposition for $D$. One can see that \begin{align*}\iint_{Q_i\times Q_j}\big(1\vee\log(1/|x-y|)\big)d(x)^{-1/8}d(y)^{-1/8}dxdy \lesssim \log\big(1/\ell_i\vee 1/\ell_j\big)\ell_i^{2-1/8}\ell_j^{2-1/8}
\end{align*}
where $\vee$ stands for the maximum of two terms, and we want to show that the sum over pairs $(i,j)$ of these integrals converges.
\par The first term $\log\big(1/\ell_i\vee 1/\ell_j\big)$ can be upper bounded by $\log(1/\ell_i)\log(1/\ell_j)$ as soon as $\ell_i,\ell_j< 1$, which is true for all but finitely many $Q_i$ (since our domain is bounded). Therefore, we can write \begin{align*}
\sum_{i,j}\log\big(1/\ell_i\vee 1/\ell_j\big)\ell_i^{2-1/8}\ell_j^{2-1/8} \lesssim \Big(\sum_i\ell_i^{2-1/8}\log(1/\ell_i)\Big)^2
\end{align*}
and we only need to show that $\sum_i\ell_i^{15/8}\log(1/\ell_i)$ is finite. 
\par On the other hand, by the hypothesis on $\overline{\dim}_M(\p D)$, there is $\alpha<15/8$ such that \begin{align*}
\# \{i\md \ell_i = 2^{-k}\} = o(2^{\alpha k})\text{    as }k\to\infty.
\end{align*}
Therefore, $\sum_i\ell_i^{15/8}\log(1/\ell_i)\lesssim\sum_{k\ge0}2^{\alpha k}2^{-15k/8}k$ which is obviously a convergent series. Whence the integral $I_\Omega(D)$ must be finite.
\end{proof}

\begin{proof}[Proof of Lemma \ref{l.int-finite-Green-Dirichlet}: $I_D(D)<\infty$ for any Jordan domain]
This time, we want to use the fact that $G_D(x,y)$ goes to zero polynomially in the distance of $x$ or $y$ to the boundary of $D$, and more specifically Inequality \eqref{eq.bounds-Green} with the terms $\sqrt{\frac{d(x)}{|x-y|}}$ and $\sqrt{\frac{d(y)}{|x-y|}}$. (In what follows, $d$ is the distance to the boundary of $D=\Omega$.) This additional argument allows us to deal with the possible roughness of $\p D$.

\par Again, let $(Q_i)$ be the Whitney decomposition for $D$. We aim to prove that the sum over $(i,j)$ of the integrals \begin{align*}
I(i,j) := \iint_{Q_i\times Q_j} G_D(x,y)d(x)^{-1/8}d(y)^{-1/8}dxdy
\end{align*} is finite. For indices $i,j$ such that $i=j$ or $Q_i$ touches $Q_j$, a relation that we denote by $i\leftrightarrow j$, we can bound $I(i,j)$ using $G_D(x,y)=O\big(\log(1/|x-y|)\big)$ which gives us, \begin{align*}
I(i,j) \lesssim \ell_i^{2-1/8}\ell_j^{2-1/8}\log\big(1/\ell_i\vee 1/\ell_j\big),
\end{align*}
so that the sum over $i,j$ with $i\leftrightarrow j$ can be rewritten as
\begin{align*}
\sum_{\substack{i,j \\ i\leftrightarrow j}}I(i,j) \lesssim \sum_{\substack{i,j \\ i\leftrightarrow j}} \ell_i^{2-1/8}\ell_j^{2-1/8}\log\big(1/\ell_i\vee 1/\ell_j\big)\lesssim \sum_i\ell_i^{4-1/4}\log(1/\ell_i).
\end{align*}
(The second inequality relies on the fact that $\ell_i\asymp\ell_j$ for $i\leftrightarrow j$, and that $\#\{j: i\leftrightarrow j\}$ is uniformly bounded in $i$ by $12^2+1$.) Finally, since by trivial area considerations one has $\#\{i: \ell_i=2^{-k}\}=O(2^{2k})$, the last sum above is dominated by $\sum_k2^{2k}2^{-k(4-1/4)}k=\sum_k k2^{-k(2-1/4)}<+\infty$. Therefore, one may restrict the sum $\sum_{i,j}I(i,j)$ to those pairs $i,j$ for which $i\not\leftrightarrow j$.
\par Now, we use the refined bound \begin{align*}
G_D(x,y)= O\Big(d(x)^{1/2}d(y)^{1/2}\log(1/{|x-y|})|x-y|^{-1}\Big)
\end{align*} to get (denoting $\alpha=1/2-1/8>0$ and $d_{ij}=\dist(Q_i,Q_j)\ge \max(\ell_i,\ell_j)/4$) \begin{align*}
I(i,j) \lesssim \ell_i^{2+\alpha}\ell_j^{2+\alpha}\log(1/d_{ij})/d_{ij}.
\end{align*}
For a fixed index $i$ and fixed integers $m,n$, remark that \begin{align*}
J_{n,m}^i:=\big\{j \md 1/2\le 2^md_{ij}<1, \ell_j=2^{-n}, j\not\leftrightarrow i\big\}
\end{align*}
satisfies $\# J_{n,m}^i= O\big(2^{2(n-m)}1_{n+1\ge m}\big)$, again by area considerations. Therefore, one can combine the previous estimates to compute for a fixed index $i$ \begin{align*}
\sum_{\substack{j \\ j\not\leftrightarrow i}} I(i,j) \ell_i^{-(2+\alpha)}\lesssim \sum_{\substack{n,m \\ n+1\ge m}}\sum_{j\in J_{n,m}^i}\ell_j^{2+\alpha}\log(1/d_{ij})/d_{ij}\lesssim\sum_{\substack{n,m \\ n\ge m}}2^{2(n-m)}2^{-n(2+\alpha)}m2^m.
\end{align*}
The last sum above can be seen to be finite: $\sum_{n\ge m}2^{2(n-m)}2^{-n(2+\alpha)}m2^m\le2\sum_m2^{-m}2^{-m\alpha}m<\infty$, so that we can finally write \begin{align*}
\sum_{\substack{i,j \\ i\not\leftrightarrow j}} I(i,j) \lesssim \sum_i \ell_i^{2+\alpha}\lesssim\sum_n2^{2n}2^{-(2+\alpha)n}=\sum_n2^{-\alpha n}<+\infty
\end{align*} once again since $\#\{i\md \ell_i=2^{-n}\}=O(2^{2n})$. This concludes the proof: $I_D(D)=\sum_{i,j}I(i,j)<\infty$, which is what we wanted to show.
\end{proof}

\section{Useful results on joint convergence in law}
Now, we extend Appendix B of \cite{PS} regarding the continuity of conditional expectations.
\begin{lemma}\label{l. conditional expectation of the limit}
    Let $(X_n,Y_n,Z_n)$ be a tight sequence of random variables living in Polish spaces, such that the sequence $(X_n)_{n\ge1}$ is real-valued and uniformly integrable and almost surely $Z_n=\E[X_n\md Y_n]$.
    \par Then for any sublimit $(X,Y,Z)$, $Z$ is integrable and one has $\E[Z\md Y]=\E[X\md Y]$ almost surely; in particular, if $Z$ is $Y$-measurable, $Z=\E[X\md Y]$ almost surely.
\end{lemma}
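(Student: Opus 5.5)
We plan to pass to a subsequence along which $(X_n,Y_n,Z_n)\to(X,Y,Z)$ in law. By Skorokhod's theorem we may realise this on a single probability space with almost sure convergence. The claimed identity $\E[Z\md Y]=\E[X\md Y]$ is equivalent to the statement that for every bounded continuous $g$ on the Polish space where the $Y_n$ live,
\begin{align*}
\E[Z\,g(Y)]=\E[X\,g(Y)].
\end{align*}
This suffices because bounded continuous functions of $Y$ generate $\sigma(Y)$ and form a determining class, by a monotone class argument. Everything then reduces to passing to the limit in the discrete identity $\E[Z_n g(Y_n)]=\E[X_n g(Y_n)]$. This identity holds for each $n$ since $g(Y_n)$ is $Y_n$-measurable and bounded and $Z_n=\E[X_n\md Y_n]$.

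The key steps are as follows. First, uniform integrability of $(Z_n)$ follows from that of $(X_n)$: conditional expectations of a uniformly integrable family are uniformly integrable. Concretely, by conditional Jensen, $\E[\varphi(|Z_n|)]\le\E[\varphi(|X_n|)]$ for the convex superlinear $\varphi$ given by de la Vallée Poussin's criterion. Second, $X_n g(Y_n)\to X g(Y)$ and $Z_n g(Y_n)\to Zg(Y)$ almost surely under the Skorokhod coupling. Both families are uniformly integrable since $g$ is bounded, so Vitali's theorem gives convergence of expectations. It also gives $\E|Z|\le\liminf\E|Z_n|<\infty$ by Fatou, so $Z$ is integrable. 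Third, passing to the limit yields $\E[Zg(Y)]=\E[Xg(Y)]$. If one prefers to avoid Skorokhod, the same conclusion follows by truncating $x\mapsto (x\wedge M)\vee(-M)$ and using convergence in law together with uniform integrability to control the truncation error uniformly in $n$.

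Finally, for the second claim: if $Z$ is $\sigma(Y)$-measurable, then $\E[Z\md Y]=Z$, hence $Z=\E[X\md Y]$ almost surely. The only mildly delicate point is the integrability transfer for $Z_n$ in the first step; once uniform integrability of the products is secured, the rest is the routine Vitali plus monotone-class argument, so we expect no serious obstacle.
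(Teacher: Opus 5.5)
Your proof is correct and follows essentially the same route as the paper: uniform integrability of $(Z_n)$ inherited from $(X_n)$, passage to the limit in $\E[g(Y_n)Z_n]=\E[g(Y_n)X_n]$ for bounded continuous $g$, and the fact that such test functions of $Y$ determine the conditional expectation. You only make explicit the steps the paper leaves implicit: de la Vall\'ee Poussin with conditional Jensen, Skorokhod plus Vitali, Fatou for the integrability of $Z$, and the monotone class argument.
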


\begin{proof} Since $(X_n)$ is uniformly integrable (u.i.), $(Z_n)$ is u.i. as well. Hence, $Z$ must be integrable.
\par Let $f$ be a continuous bounded function defined on $\caY$, the Polish space where $Y$ lives. One has $\E[f(Y_n)Z_n]=\E[f(Y_n)X_n]$ a.s. for any $n$, so, by u.i., $\E[f(Y)Z]=\E[f(Y)X]$. This being true for any continuous bounded $f$, one must have $\E[Z\md Y]=\E[X\md Y]$.
\end{proof}

\begin{lemma}\label{l.RN-derivative-L1}
Let $(X_n,Y_n)$ converge in law to $(X,Y)$, where the $X_n$ and $X$ live in a Polish space, and $(Y_n)$ is a uniformly integrable family of positive random variables, $Y_n$ being $X_n$-measurable. Let $\mu_n$ be the law of $X_n$, which converges to $\mu$, where $X\sim\mu$, and let $\mu_n'$ be the probability distribution given by $\d\mu'_n(x_n)\propto Y_n(x_n)\d\mu_n(x_n)$. Then $\mu'_n$ converges weakly to $\mu'$, a probability distribution absolutely continuous with respect to $\mu$, the Radon-Nikodym derivative being \begin{equation}
\frac{\d\mu'}{\d\mu}(x) \propto \E[Y\md X](x)
\end{equation} In particular, if $Y$ is $X$-measurable, the Radon-Nikodym derivative is $Y$.
\end{lemma}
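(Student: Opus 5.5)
Throughout, write $Z_n:=Y_n/\E[Y_n]$ and $Z:=Y/\E[Y]$. Uniform integrability together with $Y_n\to Y$ in law gives $\E[Y_n]\to\E[Y]$. I will assume $\E[Y]>0$; this is needed anyway for $\mu'$ to be normalizable, and it holds in the intended application, where $Y_n=e^{m_h}$ is strictly positive. Then $(Z_n)$ is uniformly integrable and $(X_n,Z_n)\to(X,Z)$ in law. Set $\mu'(dx):=\E[Z\md X=x]\,\mu(dx)$. This is a probability measure, absolutely continuous with respect to $\mu$ by construction, and its density is proportional to $\E[Y\md X](x)$, as claimed.

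It remains to prove the weak convergence $\mu'_n\to\mu'$. Fix a bounded continuous function $g$ on the Polish space of the $X_n$. Since $Y_n$ is $X_n$-measurable,
\[
\int g\,d\mu'_n=\E\big[g(X_n)Z_n\big].
\]
The product $g(X_n)Z_n$ is dominated by $\|g\|_\infty |Z_n|$, so the family $\big(g(X_n)Z_n\big)_n$ is uniformly integrable. Moreover $(x,z)\mapsto g(x)z$ is continuous, so $g(X_n)Z_n\to g(X)Z$ in law. Uniform integrability plus convergence in law gives convergence of expectations:
\[
\E\big[g(X_n)Z_n\big]\longrightarrow\E\big[g(X)Z\big].
\]
By the tower property, $\E[g(X)Z]=\E\big[g(X)\,\E[Z\md X]\big]=\int g\,d\mu'$. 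Hence $\int g\,d\mu'_n\to\int g\,d\mu'$ for every bounded continuous $g$, which is the weak convergence.

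Finally, if $Y$ is $X$-measurable then $\E[Y\md X]=Y$, so the Radon--Nikodym derivative $\frac{d\mu'}{d\mu}$ is proportional to $Y$ itself.

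The only delicate point is justifying the passage from convergence in law to convergence of $\E[g(X_n)Z_n]$, since the map $(x,z)\mapsto g(x)z$ is continuous but unbounded in $z$. This is handled by a truncation: replace $z$ by $z\wedge M$, which gives a bounded continuous functional where convergence in law applies directly, and then let $M\to\infty$ using uniform integrability to control the error uniformly in $n$.
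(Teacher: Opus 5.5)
Your proof is correct and follows the paper's own argument: normalize, test $\mu'_n$ against a bounded continuous $g$, pass to the limit in $\E[g(X_n)Y_n]$ using uniform integrability, and identify the limit via the tower property $\E[g(X)Y]=\E\bigl[g(X)\,\E[Y\md X]\bigr]$. Your explicit assumption $\E[Y]>0$ is a useful precision over the paper's ``we can assume $\E[Y_n]=1$'': without it the renormalized family need not remain uniformly integrable, and the claimed density $\propto\E[Y\md X]$ would be meaningless.
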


\begin{proof} Of course, we can assume that $\E[Y_n]=1$ for all $n$. For convenience, let $Y_X=\E[Y\md X]$.
\par For any continuous bounded function $f$ defined on the state space of $X$, we can compute $\mu'_n(f)= \mu_n(fY_n)=\E[f(X_n)Y_n]\xrightarrow[n\to+\infty]{}\E[f(X)Y]=\E[f(X)Y_X]=\mu'(f)$, whence the result.
\end{proof}

\begin{lemma}\label{l. if two limits are measurable with respect to a third, then they are uniquely coupled}
    Let $(X_n,Y_n,Z_n)$ be a sequence of triples of random variables such that $(X_n,Y_n)$ converges in law to $(X,Y)$, $(Y_n,Z_n)$ converges in law to $(Y,Z)$, and $X$ and $Z$ are $Y$-measurable. Then $(X_n,Y_n,Z_n)$ converges in law to $(X,Y,Z)$.
\end{lemma}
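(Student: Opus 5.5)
The proof is a tightness argument followed by identification of every subsequential limit. The sequence of triples is tight: each marginal pair $(X_n,Y_n)$ and $(Y_n,Z_n)$ converges in law, so each of $X_n,Y_n,Z_n$ is tight in its Polish space, and a product of tight families is tight. By Prokhorov's theorem it therefore suffices to show that every subsequential limit $(X',Y',Z')$ has the law of $(X,Y,Z)$. Since any subsequence then has a further subsequence converging to this same law, the whole sequence converges in law.

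So I fix a subsequential limit $(X',Y',Z')$. The projection maps are continuous, so $(X',Y')\overset{(law)}=(X,Y)$ and $(Y',Z')\overset{(law)}=(Y,Z)$. By hypothesis $X=F(Y)$ and $Z=G(Y)$ a.s. for some Borel maps $F,G$. Because $(X',Y')$ has the same joint law as $(Y,F(Y))$, we get $X'=F(Y')$ almost surely. To see this, note that the event $\{x=F(y)\}$ is Borel in the product of Polish spaces, since $F$ is Borel and the graph of a Borel map into a Polish space is Borel. Its probability under the law of $(X,Y)$ is $1$, and the law of $(X',Y')$ is the same, so the event also has probability $1$ for $(X',Y')$. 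In the same way $Z'=G(Y')$ almost surely.

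It follows that $(X',Y',Z')=(F(Y'),Y',G(Y'))$ almost surely. Since $Y'\overset{(law)}=Y$, this triple has the same law as $(F(Y),Y,G(Y))=(X,Y,Z)$, which identifies the limit. The argument contains no real obstacle. The only point needing care is measurability: the identity $X'=F(Y')$ must be transferred through the equality of laws via a Borel graph, not via continuity of $F$, which is not assumed. This step is routine given the Polish setting.
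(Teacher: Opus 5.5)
Your proof is correct and follows the same route as the paper: tightness of the triples, then identification of every subsequential limit as $(F(Y'),Y',G(Y'))$ with $Y'$ distributed as $Y$. Your Borel-graph argument for transferring $X'=F(Y')$ through the equality of laws spells out a step that the paper states without justification.
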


\begin{proof}
Let $\phi,\psi$ be the measurable functions such that $X=\phi(Y)$ and $Z=\psi(Y)$. The sequence $(X_n,Y_n,Z_n)_n$ is tight and for any sublimit $(X',Y',Z')$, we must have $X'=\phi(Y'), Z'=\psi(Y')$ and $Y'\sim Y$ so that $(X',Y',Z')$ is uniquely determined. Hence the sequence converges.
\end{proof}

\bibliographystyle{alpha}
\bibliography{references-HL}

@book{kuratowski1968topology2,
  title={Topology},
  volume={II},
  author={Kuratowski, Kazimierz},
  year={1968},
  publisher={Academic Press \& PWN-Polish Scientific Publishers},
  address={New York--London--Warsaw}
}

@article{duminil2011connection,
  title={{Connection probabilities and RSW-type bounds for the two-dimensional FK Ising model}},
  author={Duminil-Copin, Hugo and Hongler, Cl{\'e}ment and Nolin, Pierre},
  journal={Communications on pure and applied mathematics},
  volume={64},
  number={9},
  pages={1165--1198},
  year={2011},
  publisher={Wiley Online Library}
}

@article{gubinelli2024fbsde,
  title={{The FBSDE approach to sine-Gordon up to $6\pi$}},
  author={Gubinelli, Massimiliano and Meyer, Sarah-Jean},
  journal={arXiv preprint arXiv:2401.13648},
  year={2024}
}

@article{garban2023statistical,
  title={{Statistical reconstruction of the GFF and KT transition}},
  author={Garban, Christophe and Sep{\'u}lveda, Avelio},
  journal={Journal of the European Mathematical Society},
  volume={26},
  number={2},
  pages={639--694},
  year={2023}
}

@article{aru2021reconstructing,
  title={Reconstructing the base field from imaginary multiplicative chaos},
  author={Aru, Juhan and Junnila, Janne},
  journal={Bulletin of the London Mathematical Society},
  volume={53},
  number={3},
  pages={861--870},
  year={2021},
  publisher={Wiley Online Library}
}

@article{ALS2,
	title={{The First Passage Sets of the 2D Gaussian Free Field: Convergence and Isomorphisms}},
	author={Aru, Juhan and Lupu, Titus and Sep{\'u}lveda, Avelio},
	journal={Communications in Mathematical Physics},
	volume={375},
	number={3},
	pages={1885--1929},
	year={2020},
	publisher={Springer}
}

@article{ALS4,
	title={Excursion decomposition of the 2D continuum GFF},
	author={Aru, Juhan and Lupu, Titus and Sep{\'u}lveda, Avelio},
	journal={arXiv preprint arXiv:2304.03150},
	year={2023}
}

@article{bauerschmidt2023coleman,
  title={{The Coleman correspondence at the free fermion point}},
  author={Bauerschmidt, Roland and Webb, Christian},
  journal={Journal of the European Mathematical Society},
  volume={26},
  number={9},
  pages={3137--3241},
  year={2023}
}

@article{bauerschmidt2025twisted,
  title={{Twisted Dirac operators and fractional correlations of the massless sine-Gordon model at the free fermion point}},
  author={Bauerschmidt, Roland and Mason, Scott and Webb, Christian},
  journal={arXiv preprint arXiv:2508.14806},
  year={2025}
}

@inproceedings{duminil2022planar,
  title={Planar random-cluster model: scaling relations},
  author={Duminil-Copin, Hugo and Manolescu, Ioan},
  booktitle={Forum of Mathematics, Pi},
  volume={10},
  pages={e23},
  year={2022},
  organization={Cambridge University Press}
}

@article{chen2026critical,
  title={{Critical exponents for planar random-cluster model with cluster-weight $q= 4$}},
  author={Chen, Hong-Bin and Duminil-Copin, Hugo and He, Tiancheng and Jacopin, Fran{\c{c}}ois and Krachun, Dmitry and Manolescu, Ioan and Xia, Jiaming},
  journal={arXiv preprint arXiv:2605.30030},
  year={2026}
}

@article{wu2018alternating,
  title={Alternating arm exponents for the critical planar Ising model},
  author={Wu, Hao},
  journal={The Annals of Probability},
  volume={46},
  number={5},
  pages={2863--2907},
  year={2018},
  publisher={JSTOR}
}

@article{boutillier2010critical,
  title={{The critical $Z$-invariant Ising model via dimers: the periodic case}},
  author={Boutillier, C{\'e}dric and De Tili{\`e}re, B{\'e}atrice},
  journal={Probability Theory and related fields},
  volume={147},
  number={3},
  pages={379--413},
  year={2010},
  publisher={Springer}
}

@article{boutillier2011critical,
  title={{The critical $Z$-invariant Ising model via dimers: locality property}},
  author={Boutillier, C{\'e}dric and De Tili{\`e}re, B{\'e}atrice},
  journal={Communications in mathematical physics},
  volume={301},
  number={2},
  pages={473--516},
  year={2011},
  publisher={Springer}
}

@article{giuliani2012scaling,
  title={{The scaling limit of the energy correlations in non-integrable Ising models}},
  author={Giuliani, Alessandro and Greenblatt, Rafael L and Mastropietro, Vieri},
  journal={Journal of mathematical physics},
  volume={53},
  number={9},
  year={2012},
  publisher={AIP Publishing}
}

@article{hongler2013energy,
  title={{The energy density in the planar Ising model}},
  author={Hongler, Cl{\'e}ment and Smirnov, Stanislav},
  year={2013}
}

@article{chelkak2021correlations,
  title={{Correlations of primary fields in the critical Ising model}},
  author={Chelkak, Dmitry and Hongler, Cl{\'e}ment and Izyurov, Konstantin},
  journal={arXiv preprint arXiv:2103.10263},
  year={2021}
}

@article{izyurov2024energy,
  title={{Energy correlations in the critical Ising model on a torus}},
  author={Izyurov, Konstantin and Kemppainen, Antti and Tuisku, Petri},
  journal={The Annals of Applied Probability},
  volume={34},
  number={2},
  pages={1699--1729},
  year={2024},
  publisher={Institute of mathematical statistics}
}

@book{friedli2017statistical,
  title={{Statistical mechanics of lattice systems: a concrete mathematical introduction}},
  author={Friedli, Sacha and Velenik, Yvan},
  year={2017},
  publisher={Cambridge University Press}
}

@article{bass1994probabilistic,
  title={Probabilistic Techniques in Analysis},
  author={Bass, Richard F},
  year={1994},
  publisher={Springer}
}

@book{grafakos2008classical,
  title={{Classical Fourier analysis}},
  author={Grafakos, Loukas and others},
  volume={2},
  year={2008},
  publisher={Springer}
}

@misc{park2019massivescalinglimitising,
      title={{Massive Scaling Limit of the Ising Model: Subcritical Analysis and Isomonodromy}}, 
      author={S. C. Park},
      year={2019},
      eprint={1811.06636},
      archivePrefix={arXiv},
      primaryClass={math.PR},
      url={https://arxiv.org/abs/1811.06636}, 
}

@article{benjamini1999noise,
  title={{Noise sensitivity of Boolean functions and applications to percolation}},
  author={Benjamini, Itai and Kalai, Gil and Schramm, Oded},
  journal={Publications Math{\'e}matiques de l'Institut des Hautes {\'E}tudes Scientifiques},
  volume={90},
  number={1},
  pages={5--43},
  year={1999},
  publisher={Springer}
}

@article{park2025near,
  title={{Near-critical Ising, sine-Gordon at the free fermion point, and bosonization}},
  author={Park, SC and Virtanen, Tuomas and Webb, Christian},
  journal={arXiv preprint arXiv:2512.11304},
  year={2025}
}

@article{hairer2013solving,
  title={{Solving the KPZ equation}},
  author={Hairer, Martin},
  journal={Annals of mathematics},
  pages={559--664},
  year={2013},
  publisher={JSTOR}
}

@article{matetski2021kpz,
  title={{The KPZ fixed point}},
  author={Matetski, Konstantin and Quastel, Jeremy and Remenik, Daniel},
  journal={Acta Mathematica},
  volume={227},
  number={1},
  pages={115--203},
  year={2021},
  publisher={Lehigh University Bethlehem, Penn., USA}
}

@article{dauvergne2022directed,
  title={{The directed landscape}},
  author={Dauvergne, Duncan and Ortmann, Janosch and Vir{\'a}g, B{\'a}lint},
  journal={Acta Mathematica},
  volume={229},
  number={2},
  pages={201--285},
  year={2022},
  publisher={Lehigh University Bethlehem, Penn., USA}
}

@article{ganguly2024stability,
  title={{Stability and chaos in dynamical last passage percolation}},
  author={Ganguly, Shirshendu and Hammond, Alan},
  journal={Communications of the American Mathematical Society},
  volume={4},
  number={09},
  pages={387--479},
  year={2024}
}

@article{himwich2024directed,
  title={The directed landscape is a black noise},
  author={Himwich, Zoe and Parekh, Shalin},
  journal={arXiv preprint arXiv:2404.16801},
  year={2024}
}

@article{ahlberg2026noise,
  title={Noise sensitivity in last-passage percolation},
  author={Ahlberg, Daniel and Hillairet, Malo and Toropova, Ekaterina},
  journal={arXiv preprint arXiv:2601.05361},
  year={2026}
}

@article{gubinelli2017kpz,
  title={{KPZ reloaded}},
  author={Gubinelli, Massimiliano and Perkowski, Nicolas},
  journal={Communications in Mathematical Physics},
  volume={349},
  number={1},
  pages={165--269},
  year={2017},
  publisher={Springer}
}

@article{lupu2016loop,
  title={FROM LOOP CLUSTERS AND RANDOM INTERLACEMENTS TO THE FREE FIELD},
  author={Lupu, Titus},
  journal={The Annals of Probability},
  volume={44},
  number={3},
  pages={2117--2146},
  year={2016}
}

@book{garban2014noise,
  title={Noise sensitivity of Boolean functions and percolation},
  author={Garban, Christophe and Steif, Jeffrey E},
  volume={5},
  year={2014},
  publisher={Cambridge University Press}
}

@incollection{garban2011oded,
  title={{Oded Schramm's contributions to noise sensitivity}},
  author={Garban, Christophe},
  booktitle={Selected Works of Oded Schramm},
  pages={287--350},
  year={2011},
  publisher={Springer}
}

@article{lupu2025relation,
  title={Relation between {W}ick powers and excursion clusters of the {2D GFF}},
  author={Lupu, Titus},
  journal={arXiv preprint arXiv:2509.01797},
  year={2025}
}

@incollection{duminil2017lectures,
  title={{Lectures on the Ising and Potts models on the hypercubic lattice}},
  author={Duminil-Copin, Hugo},
  booktitle={PIMS-CRM Summer School in Probability},
  pages={35--161},
  year={2017},
  publisher={Springer}
}

@article{dubedat2009sle,
  title={{SLE and the free field: partition functions and couplings}},
  author={Dub{\'e}dat, Julien},
  journal={Journal of the American Mathematical Society},
  volume={22},
  number={4},
  pages={995--1054},
  year={2009}
}

@article{schramm2013contour,
  title={{A contour line of the continuum Gaussian free field}},
  author={Schramm, Oded and Sheffield, Scott},
  journal={Probability Theory and Related Fields},
  volume={157},
  number={1},
  pages={47--80},
  year={2013},
  publisher={Springer}
}

@article{lupu2018convergence,
  title={{Convergence of the two-dimensional random walk loop-soup clusters to CLE}},
  author={Lupu, Titus},
  journal={Journal of the European Mathematical Society},
  volume={21},
  number={4},
  pages={1201--1227},
  year={2018}
}

@article{aru2019bounded,
  title={{On bounded-type thin local sets of the two-dimensional Gaussian free field}},
  author={Aru, Juhan and Sep{\'u}lveda, Avelio and Werner, Wendelin},
  journal={Journal of the Institute of Mathematics of Jussieu},
  volume={18},
  number={3},
  pages={591--618},
  year={2019},
  publisher={Cambridge University Press}
}

@article{garban2025energy,
  title={{Energy field of critical Ising model and examples of singular fields in QFT}},
  author={Garban, Christophe and Kupiainen, Antti},
  journal={arXiv preprint arXiv:2502.02554},
  year={2025}
}

@article{chelkak2012universality,
  title={{Universality in the 2D Ising model and conformal invariance of fermionic observables}},
  author={Chelkak, Dmitry and Smirnov, Stanislav},
  journal={Inventiones mathematicae},
  volume={189},
  number={3},
  pages={515--580},
  year={2012},
  publisher={Springer}
}

@article{chelkak2014convergence,
  title={Convergence of {I}sing interfaces to {S}chramm's $\mathrm{SLE}$ curves},
  author={Chelkak, Dmitry and Duminil-Copin, Hugo and Hongler, Cl{\'e}ment and Kemppainen, Antti and Smirnov, Stanislav},
  journal={Comptes Rendus. Math{\'e}matique},
  volume={352},
  number={2},
  pages={157--161},
  year={2014}
}

@article{chelkak2023universality,
  title={Universality of spin correlations in the Ising model on isoradial graphs},
  author={Chelkak, Dmitry and Izyurov, Konstantin and Mahfouf, R{\'e}my},
  journal={The Annals of Probability},
  volume={51},
  number={3},
  pages={840--898},
  year={2023},
  publisher={Institute of mathematical statistics}
}

@article{Camia_2015,
   title={Planar {I}sing magnetization field {I}. {U}niqueness of the critical scaling limit},
   volume={43},
   ISSN={0091-1798},
   url={http://dx.doi.org/10.1214/13-AOP881},
   DOI={10.1214/13-aop881},
   number={2},
   journal={The Annals of Probability},
   publisher={Institute of Mathematical Statistics},
   author={Camia, Federico and Garban, Christophe and Newman, Charles M.},
   year={2015}}

@misc{kemp/smirnov_ii,
      title={{Conformal invariance in random cluster models. II. Full scaling limit as a branching SLE}}, 
      author={Antti Kemppainen and Stanislav Smirnov},
      year={2019},
      eprint={1609.08527},
      archivePrefix={arXiv},
      primaryClass={math-ph},
      url={https://arxiv.org/abs/1609.08527}, 
}

@article{Furlan_2017,
   title={A tightness criterion for random fields, with application to the Ising model},
   volume={22},
   ISSN={1083-6489},
   url={http://dx.doi.org/10.1214/17-EJP121},
   DOI={10.1214/17-ejp121},
   number={none},
   journal={Electronic Journal of Probability},
   publisher={Institute of Mathematical Statistics},
   author={Furlan, Marco and Mourrat, Jean-Christophe},
   year={2017},
   month=jan }

@article{holden2023convergence,
  title={{Convergence of uniform triangulations under the Cardy embedding}},
  author={Holden, Nina and Sun, Xin},
  journal={Acta Mathematica},
  volume={230},
  number={1},
  pages={93--203},
  year={2023},
  publisher={International Press of Boston}
}

@article{garban2013pivotal,
  title={Pivotal, cluster, and interface measures for critical planar percolation},
  author={Garban, Christophe and Pete, G{\'a}bor and Schramm, Oded},
  journal={Journal of the American Mathematical Society},
  volume={26},
  number={4},
  pages={939--1024},
  year={2013}
}

@article{chelkak2015conformal,
  title={Conformal invariance of spin correlations in the planar {I}sing model},
  author={Chelkak, Dmitry and Hongler, Cl{\'e}ment and Izyurov, Konstantin},
  journal={Annals of mathematics},
  pages={1087--1138},
  year={2015},
  publisher={JSTOR}
}

@article{benoist2019scaling,
author = {St{\'e}phane Benoist and Cl{\'e}ment Hongler},
title = {{The scaling limit of critical Ising interfaces is $\mathrm{CLE}_{3}$}},
volume = {47},
journal = {The Annals of Probability},
number = {4},
publisher = {Institute of Mathematical Statistics},
pages = {2049 -- 2086},
year = {2019},
doi = {10.1214/18-AOP1301},
URL = {https://doi.org/10.1214/18-AOP1301}
}

@article{griffiths1970concavity,
  title={Concavity of magnetization of an Ising ferromagnet in a positive external field},
  author={Griffiths, Robert B and Hurst, Charles A and Sherman, Seymour},
  journal={Journal of Mathematical Physics},
  volume={11},
  number={3},
  pages={790--795},
  year={1970},
  publisher={American Institute of Physics}
}

@inproceedings{miller2017cle,
  title={{CLE} percolations},
  author={Miller, Jason and Sheffield, Scott and Werner, Wendelin},
  booktitle={Forum of Mathematics, Pi},
  volume={5},
  pages={e4},
  year={2017},
  organization={Cambridge University Press}
}

@article{kohler2024fuzzy,
  title={{The fuzzy Potts model in the plane: scaling limits and arm exponents}},
  author={K{\"o}hler-Schindler, Laurin and Lehmkuehler, Matthis},
  journal={Probability Theory and Related Fields},
  pages={1--73},
  year={2024},
  publisher={Springer}
}

@article{Schramm_2011,
   title={On the scaling limits of planar percolation},
   volume={39},
   ISSN={0091-1798},
   url={http://dx.doi.org/10.1214/11-AOP659},
   DOI={10.1214/11-aop659},
   number={5},
   journal={The Annals of Probability},
   publisher={Institute of Mathematical Statistics},
   author={Schramm, Oded and Smirnov, Stanislav and Garban, Christophe},
   year={2011},
   month=sep }

@article{papon2024interfacescalinglimitcritical,
  title={Interface scaling limit for the critical planar Ising model perturbed by a magnetic field},
  author={Papon, L{\'e}onie},
  journal={arXiv preprint arXiv:2411.16452},
  year={2024}
}

@article{camia2016planar,
  title={Planar {I}sing magnetization field {II}. {P}roperties of the critical and near-critical scaling limits},
  author={Camia, Federico and Garban, Christophe and Newman, Charles M},
  year={2016}
}

@article{lopez2026ising,
  title={{The Ising magnetisation field and the Gaussian free field}},
  author={L{\'o}pez, Tom{\'a}s Alcalde and Heeney, Lorca and Lis, Marcin},
  journal={arXiv preprint arXiv:2602.05886},
  year={2026}
}

@article{AlS,
	title={{Excursion decomposition of the XOR-Ising model}},
	author={L{\'o}pez, Tom{\'a}s Alcalde and Sep{\'u}lveda, Avelio},
	journal={arXiv preprint arXiv:2602.06011},
	year={2026}
}

@article{DCLQ1,
	title={{Conformal invariance of double random currents I: Identification of the limit}},
	author={Duminil-Copin, Hugo and Lis, Marcin and Qian, Wei},
	journal={Proceedings of the London Mathematical Society},
	volume={130},
	number={1},
	pages={e70022},
	year={2025},
	publisher={Wiley Online Library}
}

@article{DCLQ2,
	title={{Conformal invariance of double random currents II: tightness and properties in the discrete}},
	author={Duminil-Copin, Hugo and Lis, Marcin and Qian, Wei},
	journal={arXiv preprint arXiv:2107.12880},
	year={2021}
}

@article{JSW,
	title={Imaginary multiplicative chaos},
	author={Junnila, Janne and Saksman, Eero and Webb, Christian},
	journal={The Annals of Applied Probability},
	volume={30},
	number={5},
	pages={2099--2164},
	year={2020},
	publisher={JSTOR}
}

@article{MM,
	author = {Jean-Fran{\c{c}}ois Marckert and Abdelkader Mokkadem},
	title = {{Limit of normalized quadrangulations: The Brownian map}},
	volume = {34},
	journal = {The Annals of Probability},
	number = {6},
	publisher = {Institute of Mathematical Statistics},
	pages = {2144 -- 2202},
	year = {2006},
	doi = {10.1214/009117906000000557},
	URL = {https://doi.org/10.1214/009117906000000557}
}

@article{LG,
	author = {Jean-Fran{\c{c}}ois Le Gall},
	title = {{Uniqueness and universality of the Brownian map}},
	volume = {41},
	journal = {The Annals of Probability},
	number = {4},
	publisher = {Institute of Mathematical Statistics},
	pages = {2880 -- 2960},
	year = {2013},
	doi = {10.1214/12-AOP792},
	URL = {https://doi.org/10.1214/12-AOP792}
}

@article{Mier,
	author = {Gr{\'e}gory Miermont},
	title = {{The Brownian map is the scaling limit of uniform random plane quadrangulations}},
	volume = {210},
	journal = {Acta Mathematica},
	number = {2},
	publisher = {Institut Mittag-Leffler},
	pages = {319 -- 401},
	year = {2013},
	doi = {10.1007/s11511-013-0096-8},
	URL = {https://doi.org/10.1007/s11511-013-0096-8}
}

@article{Sheffield,
	author = {Scott Sheffield},
	title = {{Quantum gravity and inventory accumulation}},
	volume = {44},
	journal = {The Annals of Probability},
	number = {6},
	publisher = {Institute of Mathematical Statistics},
	pages = {3804 -- 3848},
	year = {2016},
	doi = {10.1214/15-AOP1061},
	URL = {https://doi.org/10.1214/15-AOP1061}
}

@article{DMS,
	title={Liouville quantum gravity as a mating of trees},
	author={Duplantier, Bertrand and Miller, Jason R and Sheffield, Scott},
	journal={Ast{\'e}risque},
	volume={427},
	year={2021}
}

@article{MSBrownianMap,
	title={{An axiomatic characterization of the Brownian map}},
	author={Miller, Jason and Sheffield, Scott},
	journal={Journal de l’{\'E}cole polytechnique—Math{\'e}matiques},
	volume={8},
	pages={609--731},
	year={2021}
}

@article{sheffield2012conformal,
  title={Conformal loop ensembles: the Markovian characterization and the loop-soup construction},
  author={Sheffield, Scott and Werner, Wendelin},
  journal={Annals of Mathematics},
  pages={1827--1917},
  year={2012},
  publisher={JSTOR}
}

@article{sheffield2009exploration,
  title={Exploration trees and conformal loop ensembles},
  author={Sheffield, Scott},
  journal={Duke Math. J.},
  volume={146},
  number={1},
  pages={79--129},
  year={2009}
}

@inproceedings{chelkak2018planar,
  title={{Planar Ising model at criticality: state-of-the-art and perspectives}},
  author={Chelkak, Dmitry},
  booktitle={Proceedings of the International Congress of Mathematicians: Rio de Janeiro 2018},
  pages={2801--2828},
  year={2018},
  organization={World Scientific}
}

@article{comerford2013caratheodory,
  title={{The Carath{\'e}odory topology for multiply connected domains I}},
  author={Comerford, Mark},
  journal={Central European Journal of Mathematics},
  volume={11},
  number={2},
  pages={322--340},
  year={2013},
  publisher={Springer}
}

@misc{duminilcopin2026gaussianfreefieldconvergence,
      title={{Gaussian free field convergence of the six-vertex model with $-1\leq\Delta\leq-\frac12$}}, 
      author={Hugo Duminil-Copin and Karol Kajetan Kozlowski and Piet Lammers and Ioan Manolescu},
      year={2026},
      eprint={2603.06268},
      archivePrefix={arXiv},
      primaryClass={math-ph},
      url={https://arxiv.org/abs/2603.06268}, 
}

@unpublished{alpha1,
    author = {Chen, Hong-Bin and Duminil-Copin, Hugo and He, Tiancheng and Jacopin, Fran{\c{c}}ois and Krachun, Dmitry and Manolescu, Ioan and Xia, Jiaming},
    title = {One-arm exponent for the critical planar random-cluster model with cluster weight $q\in[1,4]$},
    note = {Manuscript in preparation},
}

@article{QW,
	title={Decomposition of Brownian loop-soup clusters.},
	author={Qian, Wei and Werner, Wendelin},
	journal={Journal of the European Mathematical Society (EMS Publishing)},
	volume={21},
	number={10},
	pages={3225},
	year={2019}
}

@article{PS,
	title={{An elementary approach to quantum length of SLE}},
	author={Powell, Ellen and Sep{\'u}lveda, Avelio},
	journal={Probability and Mathematical Physics},
	volume={7},
	number={1},
	pages={175--215},
	year={2025},
	publisher={Mathematical Sciences Publishers}
}

\end{document}